%-----------------------------------------------------------------------
% Beginning of mcom-l-template.tex
%-----------------------------------------------------------------------
%
%     This is a topmatter template file for MCOM for use with AMS-LaTeX.
%
%     Templates for various common text, math and figure elements are
%     given following the \end{document} line.
%
%%%%%%%%%%%%%%%%%%%%%%%%%%%%%%%%%%%%%%%%%%%%%%%%%%%%%%%%%%%%%%%%%%%%%%%%

% TODO:
% *****
% fix nabla hat
% 
% *****
% - check whether $F_k: \RR^2 \to \RR^3$ also works 
%		(read again multipatch analysis of Buffa_Doelz_Kurz_Schoeps_Vazquez_Wolf)

\documentclass{mcom-l}

%     If you need symbols beyond the basic set, uncomment this command.
\usepackage{amssymb}

\usepackage[utf8]{inputenc}
\usepackage{bbm}
\usepackage[bbgreekl]{mathbbol}
\usepackage{graphicx,subfig,tikz} % include figures, create subfigures with \subfloat, draw with tikz
\usepackage{comment}
% \usepackage{showkeys}

%     If the article includes commutative diagrams, ...
%\usepackage[cmtip,all]{xy}
\usepackage{tikz}
\usepackage{tikz-cd}
\usetikzlibrary{arrows,matrix}

% for coloneqq
\usepackage{mathtools}
\mathtoolsset{showonlyrefs=true}

%     Update the information and uncomment if AMS is not the copyright
%     holder.
%\copyrightinfo{2009}{American Mathematical Society}

\newtheorem{theorem}{Theorem}[section]
\newtheorem{corollary}[theorem]{Corollary}
\newtheorem{lemma}[theorem]{Lemma}

\theoremstyle{definition}

\newtheorem{assumption}[theorem]{Assumption}

\theoremstyle{remark}
\newtheorem{remark}[theorem]{Remark}

\numberwithin{equation}{section}

\input{macros}

% Hyperlinks in the document. According to the documentation:
%  "Make sure it comes last of your loaded packages, to give it
%   a fighting chance of not being over-written, since its job
%   is to redefine many LATEX commands"
\usepackage{hyperref}

\begin{document}

% \title[short text for running head]{full title}
\title[Bounded commuting projections for non-matching interfaces]%
	{Bounded commuting projections for multipatch spaces with non-matching interfaces}
% Stable commuting projections for multipatch spaces with local refinements

%    Only \author and \address are required; other information is
%    optional.  Remove any unused author tags.

%    author one information
% \author[short version for running head]{name for top of paper}
\author{Martin Campos Pinto}
\address{Max-Planck-Institut für Plasmaphysik, Boltzmannstr. 2, 85748 Garching, Germany}
% \curraddr{}
\email{martin.campos-pinto@ipp.mpg.de}
% \thanks{}

%    author two information
\author{Frederik Schnack}
\address{Max-Planck-Institut für Plasmaphysik, Boltzmannstr. 2, 85748 Garching, Germany}
% \curraddr{}
\email{frederik.schnack@ipp.mpg.de}
% \thanks{}

%    \subjclass is required.
\subjclass[2020]{%
65N30, %Finite element, Rayleigh-Ritz and Galerkin methods for boundary value problems involving PDEs
% 65N25, %Numerical methods for eigenvalue problems for boundary value problems involving PDEs
65N12,% Stability and convergence of numerical methods for boundary value problems involving PDEs
65D07% Numerical computation using splines
}

\date{}

\dedicatory{}

\keywords{Commuting projection, finite element exterior calculus, de Rham sequence, multipatch spaces, isogeometric analysis}

\begin{abstract}
	We present commuting projection operators on de Rham sequences of 
	two-dimensional multipatch spaces
	with local tensor-product para{\-}metrization and 
	non-matching interfaces. 
	Our construction yields projection operators which are local and stable 
	in any $L^p$ norm with $p \in [1,\infty]$:
	it applies to shape-regular spline patches with different mappings and 
	local refinements, under the assumption that neighboring patches 
	have nested resolutions and that interior vertices are shared by exactly 
	four patches.
	It also applies to de Rham sequences with homogeneous boundary conditions.
	Following a broken-FEEC approach, we first consider tensor-product commuting 
	projections on the single-patch de Rham sequences, and modify the resulting 
	patch-wise operators so as to enforce their conformity and commutation with the 
	global derivatives, while preserving their projection and stability properties 
	with constants independent of both the diameter and inner 
	resolution of the patches.
\end{abstract}

\maketitle

\tableofcontents

\section{Introduction}

Mixed finite element spaces which preserve the de Rham structure 
offer a flexible and powerful framework for the approximation of partial
differential equations.
This discretization paradigm has been extensively studied in the scope of 
electromagnetic modelling
\cite{Bossavit.1988.IEE-A, Bossavit.1998.ap, Hiptmair.2002.anum}
and has given rise to an elegant body of theoretical work which guarantees that 
compatible spaces of nodal, edge, face and volume type
% approximating the $H^1(\Omega)$, $H(\curl;\Omega)$, $H(\Div;\Omega)$ 
% and $L^2(\Omega)$, 
lead to stable and accurate approximations to various 
differential operators %electromagnetic problems 
in domains with non-smooth or non-connected boundaries
\cite{Hiptmair.1999.mcomp, Compatible.2006.IMA, Boffi.2010.anum, boffi2011discrete}. 
% $\partial \Omega$.
% In particular the corresponding schemes do compute convergent solutions to Maxwell 
% or magnetostatic equations in the presence of reentrant corners (despite the fact 
% that singular fields $\bE \notin H^1(\Omega)$ may exist in the neighboorhood of 
% such corners), and they provide faithfully representations 
% of harmonic fields associated with tunnels or cavities in the computational domain.

A notable step has been the unifying analysis of Finite Element Exterior Calculus (FEEC)
\cite{Arnold.Falk.Winther.2006.anum, Arnold.Falk.Winther.2010.bams}
developed in the general framework of Hilbert complexes.
 % with further applications in solid mechanics ???
There, the existence of bounded cochain projections, i.e.~sequences of commuting projection operators with uniform stability properties, is identified as a key ingredient for the stability, spectral accuracy and structure preservation of the discrete problems.
In parallel, $L^2$ stable commuting projection operators based on composition of 
finite element interpolation and smoothing operators have been proposed
by %A related approach Dodziuk and Patodi \cite{} 
Schöberl \cite{schoberl_multilevel_2005, schoberl_posteriori_2007} for 
sequences of compatible Lagrange, N\'ed\'elec, Raviart-Thomas and discontinuous finite element spaces, 
and by Christiansen, Arnold, Falk and Winther in
 \cite{christiansen_smoothed_2007,Arnold.Falk.Winther.2006.anum,%
christiansen_stability_2007} for simplicial finite element spaces of 
differential forms in arbitrary dimensions.
These constructions have later been refined by 
Ern and Guermond \cite{ern_mollification_2016} who introduced shrinking-based mollifiers
to avoid technical difficulties with the domain boundaries, and derived
commuting projections stable in any $L^p$ norm, $p \in [1, \infty]$. 
Local commuting projection operators have also been proposed: first by 
Falk and Winther \cite{falk_local_2014} with uniform stability properties
in the domain spaces ($H^1$, $H(\curl)$, \dots) and by 
Arnold and Guzm\'an \cite{arnold_local_2021} with uniform stability in $L^2$.

Important extensions of these works have been carried out in the scope of 
isogeometric analysis methods \cite{Hughes.Cottrell.Bazilevs.2005.cmame}, 
with structure-preserving spline finite element spaces on multipatch domains 
proposed by Buffa, Sangalli, Rivas and V\'asquez in
\cite{buffa_isogeometric_2010,buffa_isogeometric_2011}.
These discretizations involve compatible sequences of 
tensor-product spline spaces defined on a Cartesian parametric domain
and transported on mapped subdomains (the patches) using pullback operators
such as contravariant and covariant Piola transformations.
The parametric tensor-product structure is attractive as it enables fast 
algorithms at the numerical level, and with the elegant construction 
of \cite{buffa_isogeometric_2011} it admits a variety of commuting projection 
operators starting from general projections for the first space of the sequence.
In particular, this process leads to commuting projections 
with uniform stability properties on single-patch spline spaces.

A difficulty, however, regards the construction of stable commuting
projection operators on multipatch spline spaces.
Because the tensor-product structure breaks down at the patch interfaces
the construction of \cite{buffa_isogeometric_2011} does not apply,
and it is unclear whether the smoothing projection approach of
\cite{schoberl_multilevel_2005,christiansen_smoothed_2007,Arnold.Falk.Winther.2006.anum}
can yield projections which are uniformly stable with respect to the inner grid
resolution of the patches, due to the non-locality of spline interpolation operators.
Although optimal convergence results for multipatch spline approximations
have been established in \cite{buffa_approximation_2015,Buffa_Doelz_Kurz_Schoeps_Vazquez_Wolf_2019},
up to our knowledge no $L^2$ stable commuting projections have been proposed for these spaces.

Another difficulty regards the extension of these constructions to locally refined
spaces. A typical configuration is when adjacent patches are discretized 
with spline spaces using different knot sequences or polynomial degrees.
Then the patches are non-matching in the sense of \cite{buffa_approximation_2015}
and the existence of commuting projection operators, let alone
stable ones, seems to be an open question.
More generally the preservation of the de Rham structure at the discrete level 
is an active research topic when locally refined splines are involved: let us cite
\cite{buffa_isogeometric_2014,johannessen_divergence-conforming_2015} 
on the construction of discrete de Rham sequences of T-spline and locally refined B-splines,
\cite{evans_hierarchical_2020} where sufficient and necessary conditions 
are proposed for the exactness of discrete de Rham sequences on hierarchical 
spline discretizations, and \cite{toshniwal_isogeometric_2021,patrizi_isogeometric_2021} 
for de Rham sequences of splines with multiple degrees and mapped domains with polar singularities.
We note, however, that none of these works propose commuting projection operators
for spline spaces with local refinement.

In this article, we provide a first answer to these questions in the 2D setting,
by constructing $L^p$ stable commuting projection operators on multipatch 
spaces which non-matching interfaces, for any $1 \le p \le \infty$.
Under the assumption that the multipatch decomposition is geometrically conforming,
that local resolutions across patch interfaces must be nested and that interior 
vertices are shared by exactly four patches,
our construction applies to general discretizations %with shape-regular
% patches that 
involving parametric tensor-product spaces with locally stable bases.
% with interpolatory properties at the patch boundaries.
Our commuting projection operators are also local, % in the sense of overlapping basis functions,
and their stability holds with constants independent of both the size 
of the patches and the resolution of the individual patch discretizations.

Our construction follows a broken-FEEC approach 
reminiscent of \cite{campos-pinto_broken-feec_2022,guclu_broken_2022},
where the multipatch finite element spaces are seen as 
the maximal conforming subspaces of broken spaces defined 
as the juxtaposition of the single-patch ones.
The commuting projections are then obtained by a two-step process:
Applying the tensor-product construction of \cite{buffa_isogeometric_2011}
on the individual single-patch spaces 
-- which consists of composing antiderivative operators, stable projections
and local derivatives -- we first obtain stable projection operators on the 
broken space which commute with the patch-wise differential operators.
The second step is to modify these patch-wise projections 
close to the patch interfaces so as to enforce the conformity conditions
and their commuting properties,
while preserving their projection and stability properties. 
On the first space of the sequence where 
the conformity amounts to continuity conditions across patch interfaces,
this is done by composing the patch-wise commuting projection with a 
local discrete conforming projection which essentially consists of averaging
interface degrees of freedom. On the next spaces the patch-wise 
commuting projection are modified with additive correction 
terms which rely on carefully crafted antiderivative, local projection 
and derivative operators associated with the edge and vertex interfaces.
Our main finding is that this constructive process indeed produces
local commuting projection operators on the conforming spaces, with 
uniform $L^p$ stability properties. Moreover, our construction also applies
to de Rham sequences with homogeneous boundary conditions.
A by-product of this analysis is the optimal convergence
and spectral correctness of Hodge-Laplace operators on multipatch spaces
with local refinements. 

The outline is as follows: in Section~\ref{sec:approach} we present
the form of our commuting projection operators and state their main properties.
The structure of the broken and conforming multipatch spaces are respectively 
described in Section~\ref{sec:broken} and \ref{sec:conf}, together with 
our assumptions on the multipatch geometry and the local stability of the bases.
In Section~\ref{sec:antider} we define and study stable antiderivative operators 
associated with patches, edge and vertex interfaces, and 
our construction is finalized in Section~\ref{sec:cpo}
% where we describe  
with a statement of our main results. Section~\ref{sec:proof} is then devoted
to proving these results, using the preliminary properties established
for the various intermediate operators. We conclude with some perspectives.

\section{Broken-FEEC approach and main result}
\label{sec:approach}

In this article, we consider several 2D de Rham sequences, 
namely the $\nabla$-$\curl$ sequence
\begin{equation} \label{dR}
	V^{0} = H^1(\Omega)
	~ \xrightarrow{ \nabla }  ~
	V^{1} = H(\curl;\Omega) 
	~ \xrightarrow{ \curl}  ~
	V^{2} = L^2(\Omega) %H(\Div;\Omega)
\end{equation}
the $\bcurl$-$\Div$ sequence
\begin{equation} \label{cD} V^{0} = H(\bcurl;\Omega)
	~ \xrightarrow{ \bcurl}  ~
	V^{1} = H(\Div;\Omega) 
	~ \xrightarrow{ \Div}  ~
	V^{2} = L^2(\Omega)
\end{equation}
and their counterparts with homogeneous boundary conditions, namely
\begin{equation} \label{dR0} V^{0} = H^1_0(\Omega)
	~ \xrightarrow{ \nabla }  ~
	V^{1} = H_0(\curl;\Omega) 
	~ \xrightarrow{ \curl}  ~
	V^{2} = L^2(\Omega)
\end{equation}
and
\begin{equation}\label{cD0}V^{0} = H_0(\bcurl;\Omega)
	~ \xrightarrow{ \bcurl}  ~
	V^{1} = H_0(\Div;\Omega) 
	~ \xrightarrow{ \Div}  ~
	V^{2} = L^2(\Omega).
\end{equation}
We refer to \cite{Arnold.Falk.Winther.2006.anum,  Arnold.Falk.Winther.2010.bams} for their description as $L^2(\Omega)$ Hilbert complexes. To present our construction, we 
shall consider the first two sequences \eqref{dR} and \eqref{dR0} which can 
be treated in the almost same way.
The sequences \eqref{cD} and \eqref{cD0} will be handled with a standard
rotation argument in Section~\ref{sec:main}. 

Thus, we consider a sequence of finite element spaces in 2D
\begin{equation} \label{dRh}
	V^{0}_{h} 
	~ \xrightarrow{ \nabla }  ~
	V^{1}_{h} 
	~ \xrightarrow{ \curl}  ~
	V^{2}_{h}	
\end{equation}
% with .
included in the continuous spaces \eqref{dR} 
(and in \eqref{dR0} for the homogeneous case),
and defined on a multipatch domain of the form
\begin{equation} \label{Om}
	\Omega = \Int\big(\bigcup_{k \in \cK} \bar \Omega_k\big) 
	\quad \text{ with } \quad \Omega_k = F_k(\hat \Omega) %, \quad  k \in \cK
\end{equation}
with disjoint, geometrically conforming subdomains $\Omega_k$
corresponding to smooth mappings $F_k$
defined on a reference domain $\hat \Omega = \openint{0}{1}^2$.
% and  are disjoint, 
% patches. 
% Specifically
We further assume that each patch %admits local 
is equipped with a local sequence 
\begin{equation} \label{dR_k}
	V^{0}_{k} 
	~ \xrightarrow{ \nabla }  ~
	V^{1}_{k} 
	~ \xrightarrow{ \curl}  ~
	V^{2}_{k} 
\end{equation}
% involving a tensor-product parametrization, 
where $V^{\ell}_k = \cF^\ell_k(\hat V^{\ell}_k)$ is defined 
as the $\ell$-degree pushforward of a logical space $\hat V^{\ell}_k$ 
with a locally stable tensor-product basis
that will be described in the next section.

To allow for local refinements, we further allow neighboring patches 
with different logical spaces,
under nestedness assumptions which will also be specified later on. 
The global finite element spaces are then defined as
\begin{equation} \label{Vh}
	V^{\ell}_h = 
		\{ v \in V^\ell(\Omega) : v|_{\Omega_k} \in V^{\ell}_k  \text{ for } k \in \cK\}
\end{equation}
where again, the spaces $V^\ell(\Omega)$ are given either by \eqref{dR}
or \eqref{dR0} in the homogeneous case.

Our objective is then to design $L^p$ stable projection operators on these discrete 
spaces that yield a commuting diagram:
\begin{equation} \label{cd}
	\begin{tikzpicture}[ampersand replacement=\&, baseline] %=(current  bounding  box.center)]
	\matrix (m) [matrix of math nodes,row sep=3em,column sep=4em,minimum width=2em] {
					~~ V^0 ~ \bbb
							\& ~~ V^1 ~ \bbb
									\& ~~ V^2 ~ \bbb
		\\
		~~ V^0_h ~ \bbb
				\& ~~ V^1_h ~ \bbb
						\& ~~ V^2_h ~ \bbb
		\\
	};
	\path[-stealth]
	(m-1-1) edge node [above] {$\nabla$} (m-1-2)
					edge node [right] {$\Pi^0$} (m-2-1)
	(m-1-2) edge node [above] {$\curl$} (m-1-3)
					edge node [right] {$\Pi^1$} (m-2-2)
	(m-1-3) edge node [right] {$\Pi^2$} (m-2-3)
	(m-2-1) edge node [above] {$\nabla$} (m-2-2)
	(m-2-2) edge node [above] {$\curl$} (m-2-3)
	;
	\end{tikzpicture}
\end{equation}

On a single patch $\Omega_k$, 
the approach of \cite{buffa_isogeometric_2011} starts 
from a general tensor-product projection $\hat \Pi^0_k : L^p(\hat \Omega) \to \hat V^0_k$ 
on the first logical space, and defines
projections on the next spaces of the form 
\begin{equation}
	\label{Pi12k}
	\hat \Pi^1_k \hat \bu \coloneqq \sum_{d \in \{1,2\}} \hat \nabla_d \hat \Pi^0_k \hat \Phi_d(\hat \bu)
	\quad \text{ and } \quad
	\hat \Pi^2_k \hat f \coloneqq \hat \partial_1\hat \partial_2 \hat \Pi^0_k \hat \Psi(\hat f)
\end{equation}
with directional gradient operators $\hat \nabla_d$ 
%= \hat \btau_d \partial_{\hx_d}$ the directional gradient along
and antiderivative operators
\begin{equation*}
	\left\{\begin{aligned}
	\hat \Phi_1(\hat \bu)(\hbx) \coloneqq \int_{0}^{\hx_1} \hat u_1(z, \hx_2) \dd z
	\\
	\hat \Phi_2(\hat \bu)(\hbx) \coloneqq \int_{0}^{\hx_2} \hat u_2(\hx_1, z) \dd z	
\end{aligned}\right.
	\quad \text{ and } \quad
	\hat \Psi(\hat f)(\hbx) \coloneqq \int_{0}^{\hx_1}\int_{0}^{\hx_2} \hat f(z_1,z_2) \dd z_2\dd z_1.
\end{equation*}
% where $\hat \bu$ and $\hat f$ are the 1-form and 2-form pullbacks of $\bu$ and $f$,
% respectively.
The projections \eqref{Pi12k} commute with the logical differential operators
thanks to the tensor-product structure of $\hat \Pi^0_k$, and they preserve 
its stability due to the intrinsic integrability of the antiderivative operators 
and a localization argument that relies on the tensor-product product structure,
as will be explained below.
On the mapped spaces the projections are defined through pullbacks and pushforwards,
\begin{equation*}
	\Pi^{\ell}_k = \cF^\ell_k \hat \Pi^{\ell}_k (\cF^\ell_k)^{-1} :
	\quad L^p(\Omega_k) \to V^\ell_k.
\end{equation*}
Their stability and commuting properties respectively follow from the smoothness of the mapping $F_k$ and from the fact that the pullbacks commute with the differential operators, see e.g. \cite{Hiptmair.2002.anum,buffa_isogeometric_2011}.

At patch interfaces where the parametric tensor-product structure breaks down, this construction must be adapted.
% \subsection{A broken-FEEC approach with interface correction terms}
Our approach is to first consider the patch-wise projections
\begin{equation} \label{Pipw}
\Pi^\ell_\pw = \sum_{k \in \cK} \Pi^\ell_k : \quad L^p(\Omega) \to V^{\ell}_\pw
\end{equation}
on the broken patch-wise spaces 
\begin{equation} \label{Vpw}
	V^{\ell}_\pw \coloneqq 
		\{ v \in L^2(\Omega) : v|_{\Omega_k} \in V^{\ell}_k  \text{ for } k \in \cK\}
\end{equation}
which are fully discontinuous at the patch interfaces,
and to modify the former so as to enforce the conformity conditions at the patch interfaces.
On the first space of \eqref{dR} where the $H^1$ conformity amounts to continuity conditions, 
in the sense that $V^0_h = V^0_\pw \cap C^0(\Omega)$,
this is done by applying a conforming projection 
$P: V^0_\pw \to V^0_h$
which averages interface degrees of freedom: we thus set  %conforming projection 
\begin{equation}
\label{Pi0_intro}
\Pi^0 \coloneqq P \Pi^0_\pw.
\end{equation}
On the next spaces our modification takes the form of additive correction terms
associated with the patch interfaces. The global projection on $V^1_h$ has the form
\begin{equation}
	\label{Pi1_intro}
	\Pi^1 \coloneqq \Pi^1_\pw 
		% \sum_{k \in \cK} \Pi^1_k 
			+ \sum_{e \in \cE} \tilde \Pi^1_e
					+ \sum_{\bv \in \cV} \tilde \Pi^1_\bv
						+ \sum_{\bv \in \cV, e \in \cE(\bv)} \tilde \Pi^1_{e,\bv}
\end{equation}
with correction terms %$\tilde \Pi^1_g : L^2(\Ome)$ with $g = e, \bv$ or $(e,\bv)$ are 
that are localized on patch edges and vertices. Like the single-patch
projections \eqref{Pi12k}, they involve antiderivative operators,
local (patch-wise) projections $\Pi^0_\pw$ and partial derivative operators. In addition, they
also involve local projection operators which vanish on conforming functions. 
Specifically, our correction terms take the following form:
\begin{equation}
	\label{tPi1_intro}
	\left\{\begin{aligned}
	\tilde \Pi^1_e \bu &\coloneqq \sum_{d \in \{\parallel, \perp\}} \nabla^e_d (P^e - I^e) \Pi^0_\pw \Phi^e_{d}(\bu)
	\\
	\tilde \Pi^1_\bv \bu &\coloneqq \nabla_\pw (P^{\bv} - \bar I^{\bv}) \Pi^0_\pw \Phi^\bv(\bu)
	\\[5pt]
	\tilde \Pi^1_{e,\bv} \bu &\coloneqq \sum_{d \in \{\parallel, \perp\}} 
						\nabla^e_d (\bar I^e_{\bv} - P^e_{\bv})\Pi^0_\pw \Phi^{\bv,e}_{d}(\bu).
	\end{aligned}
\right.
\end{equation}
Here, $\nabla_\pw$ is the patch-wise gradient operator and 
$\nabla^e_d$, $d \in \{\parallel,\perp\}$, 
are patch-wise gradients along the logical parallel and perpendicular 
directions relative to a given edge $e$: they will be defined in Section~\ref{sec:broken}.
The various operators $P^g$, $I^g$, $\bar I^g, \dots$ are 
discrete projections on local conforming and broken subspaces associated to patch
edges (for $g = e$), vertices (for $g = \bv$) and edge-vertex pairs (for $g = (\bv,e)$). 
These local projection operators will be designed so as to guarantee the grad-commuting properties 
of $\Pi^1$ and $\Pi^0$, and to vanish on continuous functions: they 
will be described in Section~\ref{sec:conf}.
Finally, the $\Phi^g$ are antiderivative operators associated 
with edges and vertices: they will be studied in Section~\ref{sec:antider}. 

Similarly, the projection on $V^2_h$ reads
\begin{equation}
	\label{Pi2_intro}
	\Pi^2 \coloneqq
		\Pi^2_\pw 
			+ \sum_{e \in \cE} \tilde \Pi^2_e
						+ \sum_{\bv \in \cV, e \in \cE(\bv)} \tilde \Pi^2_{e,\bv}
\end{equation}
with interface correction terms of the form
\begin{equation}
	\label{tPi2_intro}
	\left\{\begin{aligned}
	\tilde \Pi^2_e f & \coloneqq D^{2,e} (P^e - I^e) \Pi^0_\pw \Psi^{e}(f)
	\\
	% \\[5pt]
	\tilde \Pi^2_{e,\bv} f & \coloneqq 
					D^{2,e} 
						(\bar I^e_{\bv} - P^e_{\bv})\Pi^0_\pw \Psi^{\bv,e}(f).	
	\end{aligned}
\right.
\end{equation}
Here, $D^{2,e}$ is a second order patch-wise derivative and 
$\Psi^{e}$, $\Psi^{\bv,e}$ are bivariate antiderivatives: they will be 
described in Section~\ref{sec:broken} and \ref{sec:antider}.

Our findings can be summarized as follows. %that this construction yields following theorem.

% Our main result is 
\begin{theorem}
	The operators $\Pi^\ell_h$ 
	are local projections on the spaces $V^\ell_h$, $\ell = 0, 1, 2$.
	They yield a commuting diagram \eqref{cd}
	and they are uniformly $L^p$ stable
	with respect to the %constants independent of the 
	size and inner resolution
	of the patches.
\end{theorem}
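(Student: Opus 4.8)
The plan is to establish the four asserted properties---projection, locality, commutation, and uniform $L^p$ stability---one at a time, using the patch-wise operators $\Pi^\ell_\pw$ as a scaffold and treating the interface terms in \eqref{Pi1_intro}--\eqref{tPi2_intro} as controlled corrections. I would begin with the \emph{projection property}. For $\ell = 0$ it is immediate: $\Pi^0_\pw$ is a projection onto $V^0_\pw$ since each $\Pi^0_k$ is a projection onto $V^0_k$, and $P$ is a projection onto $V^0_h = V^0_\pw \cap C^0(\Omega)$, so $\Pi^0 = P\Pi^0_\pw$ maps into $V^0_h$ and fixes it. For $\ell = 1, 2$ the claim splits in two. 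That $\Pi^\ell$ restricts to the identity on $V^\ell_h$ follows because $\Pi^\ell_\pw$ already fixes $V^\ell_\pw \supseteq V^\ell_h$ while every correction term carries a factor $(P^g - I^g)$ (or $\bar I^g - P^g$), a difference of local projections designed to annihilate conforming functions; hence all corrections vanish on $V^\ell_h$. That the range lies in $V^\ell_h$ is the substantive half: I would show that the edge, vertex and edge--vertex corrections exactly cancel the interface jumps of $\Pi^\ell_\pw \bu$, which reduces to a local computation on each interface strip and is precisely the purpose for which those operators are crafted.

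Next I would prove the \emph{commuting diagram}. The backbone is the single-patch commutation built into \eqref{Pi12k}, namely $\hat\nabla\hat\Pi^0_k = \hat\Pi^1_k\hat\nabla$ and $\hcurl\,\hat\Pi^1_k = \hat\Pi^2_k\hcurl$, which transfers to $\nabla_\pw\Pi^0_\pw = \Pi^1_\pw\nabla_\pw$ and $\curl_\pw\Pi^1_\pw = \Pi^2_\pw\curl_\pw$ through the pullback commutation of the $\cF^\ell_k$. The task is then to account for the averaging $P$ and the corrections. For $\nabla\Pi^0 = \Pi^1\nabla$, I would write, for $v \in V^0$, $\nabla\Pi^0 v = \nabla_\pw\Pi^0_\pw v + \nabla_\pw(P - \mathrm{Id})\Pi^0_\pw v$ and match the second term against $\big(\sum_e \tilde\Pi^1_e + \sum_\bv \tilde\Pi^1_\bv + \sum_{\bv, e} \tilde\Pi^1_{e,\bv}\big)\nabla v$. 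This rests on the key identities relating the antiderivatives $\Phi^g$ to the gradient---so that $\Phi^g_d(\nabla v)$ reproduces $v$ up to the interface data seen by $P^g - I^g$---together with the commutation of the local projections with the directional derivatives $\nabla^e_d$. The identity $\curl\Pi^1 = \Pi^2\curl$ would be handled analogously, with $\Psi^g$ replacing $\Phi^g$ and the second-order operator $D^{2,e}$ in place of $\nabla^e_d$; the sequences \eqref{cD} and \eqref{cD0} then follow from the rotation argument of Section~\ref{sec:main}.

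\emph{Locality} is comparatively structural: $\Pi^\ell_\pw$ is local by the tensor-product localization underlying \eqref{Pi12k}, each correction term is supported in a fixed neighborhood of its edge or vertex, and the conforming projections $P$, $P^g$ only redistribute degrees of freedom across the adjacent interfaces. For \emph{uniform $L^p$ stability} I would bound $\Pi^\ell_\pw$ and each correction separately. Stability of $\Pi^\ell_\pw$ reduces to that of $\hat\Pi^0_k$ via the integrability of the antiderivatives $\hat\Phi_d, \hat\Psi$ and the same localization argument, the pullbacks contributing mapping-dependent but resolution-independent factors controlled by shape-regularity. Each correction is a composition of a bounded antiderivative $\Phi^g$ (or $\Psi^g$), the stable projection $\Pi^0_\pw$, a bounded difference of local projections, and a derivative operator; the crux is that---exactly as on a single patch---the derivative applied to the \emph{projected} antiderivative is uniformly bounded, so that the apparent inverse powers of the mesh size cancel.

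I expect the main obstacle to lie precisely here, at the \emph{vertex and edge--vertex corrections}. Vertices are where the tensor-product structure most severely breaks and where four patches of possibly different resolutions meet, so the estimate showing that $\nabla_\pw(P^\bv - \bar I^\bv)\Pi^0_\pw\Phi^\bv(\bu)$ and $\nabla^e_d(\bar I^e_\bv - P^e_\bv)\Pi^0_\pw\Phi^{\bv,e}_d(\bu)$ remain bounded independently of the inner resolution is the delicate point on which both the stability bound and the range-conformity in the projection step ultimately rest. Establishing these local estimates uniformly---exploiting the nestedness of neighboring resolutions and the local stability of the bases posited in the earlier sections---is where the bulk of the technical work resides.
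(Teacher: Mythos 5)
Your outline follows the same route as the paper's actual proof: patch-wise tensor-product projections as a scaffold, corrections analyzed through the kernels of the differences of local projections, commutation reduced to antiderivative-of-gradient identities, and stability assembled from local bounds. But at the two points you yourself flag as deferred, the mechanisms you invoke are not quite the ones that make the argument work. For the projection property with $\ell=1,2$, it is not enough that $P^g - I^g$ ``annihilates conforming functions'': the object it acts on is $\Pi^0_\pw\Phi^g(\bu)$, not $\bu$, so what must be shown is that for $\bu \in V^1_h$ the antiderivative $\Phi^g(\bu)$ lies in the broken space $V^0_\pw$ (so $\Pi^0_\pw$ leaves it unchanged) \emph{and} is continuous across the relevant interface (resp.\ vanishes on homogeneous boundary edges). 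This is Lemma~\ref{lem:Phi_V1}, and it rests on the specific design of the integration curves (nestedness so the coarse matching term lies in the fine univariate space, continuity of the curves in $\bx$, tangential continuity of $\bu$); without it the claim that all corrections vanish on $V^\ell_h$ is unsupported. Similarly, the range property is not a generic ``cancellation of jumps'': the paper's Lemma~\ref{lem:range} is a term-by-term trace computation in which specific pairs of edge, vertex and edge--vertex contributions cancel exactly and the survivors are tangential derivatives of functions continuous across $e$ — the edge--vertex terms exist precisely to produce these cancellations.

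On stability, your appeal to the single-patch localization (``exactly as on a single patch \dots the apparent inverse powers of the mesh size cancel'') works for the parallel edge antiderivative but \emph{fails} for the perpendicular and vertex ones: shifting the integration constant is invisible to $\nabla^e_\parallel(P^e-I^e)\Pi^0_\pw$ only because these operators preserve invariance along the parallel direction (Lemma~\ref{lem:parinv}); none of them preserves any invariance perpendicular to $e$. The paper uses a different localization there, namely \eqref{loc_corr_perp}: if $\bu$ vanishes on the two-fold extension of the local support, all integration curves share the same non-local portion, so $\Phi^e_\perp(\bu)$ is \emph{constant} on that extension, and constants are killed by $P^e - I^e$ by Lemma~\ref{lem:0_corr_e}. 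This is the idea genuinely missing from your sketch, and it is what makes the uniform bounds at edges and vertices go through. Finally, your plan of matching the corrections against $\nabla_\pw(P-\mathrm{Id})\Pi^0_\pw\phi$ for the gradient commutation is the right one, but carrying it out requires the combinatorial identities \eqref{sum_e_Iev}, \eqref{Iv_as_bc}, \eqref{V0pw_dec} and the definition \eqref{P} of $P$; that bookkeeping is exactly where the particular signs and the presence of the edge--vertex terms are used.
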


% To overcome this difficulty we begin by summing the single patch projection 
% operators $\Pi^1_k$ that
% design our projection operator on $V^1_h$ 
% by adding correction terms corresponding to the different interfaces to the 
% single patch operators $\Pi^1_k$. 
% 
% adding correction terms
This result will be formally stated and proven in Section~\ref{sec:cpo}.
A precise meaning of the uniform stability will be given by 
listing discretization parameters 
$\kappa_1$, $\kappa_2, \dots$ on which our estimates depend. Throughout the article, we will then write 
\begin{equation*}
f \lesssim g
\end{equation*}
to mean that $f \le C g$ holds for a constant that only depends on these constants $\kappa_m$ 
while $f \sim g$ indicates that both $f \lesssim g$ and $g \lesssim f$ hold.

%%%% %%%% %%%% %%%% %%%% %%%% %%%% %%%% %%%% %%%% %%%% %%%% %%%% %%%% %%%% %%%% 
%%%% %%%% %%%% %%%% %%%% %%%% %%%% %%%% %%%% %%%% %%%% %%%% %%%% %%%% %%%% %%%% 

\section{Broken multipatch spaces}
\label{sec:broken}

In this section, we describe in more detail the multipatch domains 
and the finite element spaces to which our construction applies.

\subsection{Multipatch geometry}
\label{sec:geom}

As described above, we consider a domain $\Omega$ 
of the form \eqref{Om}, made of disjoint open patches
$\Omega_k = F_k(\hat \Omega)$ with smooth mappings
$F_k$, $k \in \cK$.
We denote by $H_k$ the diameter of patch $\Omega_k$, 
and assume that the mappings are $C^1$ diffeomorphisms
with Jacobian matrices satisfying
\begin{equation*}
% \kappa_1 \le 
\norm{DF_k} \le \kappa_1 H_k 
\quad \text{ and } \quad
\norm{(DF_k)^{-1}} \le \kappa_2  (H_k)^{-1}
\qquad \text{ for all } k \in \cK.
\end{equation*}
In particular, %we see that the bounds \eqref{boundsDF} yield
\begin{equation}  \label{boundsDF}
\norm{DF_k} \sim H_k, 
\quad 
\norm{(DF_k)^{-1}} \sim (H_k)^{-1}, 
\quad
\det (DF_k) \sim H_k^2 %, \qquad . 
\end{equation}
hold for all patch $k \in \cK$.
We make the following assumptions: 
\begin{itemize}
	\item[(i)] 
	the patch decomposition is geometrically conforming,
	\item[(ii)] across any interior edge, the patch discretization spaces are nested,
	\item[(iii)] vertices are shared by at most four patches 
	(exactly four in the case of interior vertices)
	with specific nestedness properties. % for the opposite coarse and fine edges.
\end{itemize}
Here, Assumption (i) amounts to saying that the intersection of two closed patches 
is either empty, or a common vertex, or a common edge. 
In addition, we assume that the mappings are continuous on the patch edges, in the sense that
both sides provide the same parametrization up to a possible change in orientation.
Assumption (ii) is standard for locally refined spaces and Assumption (iii)
may be summarised by the fact that every point in the patches contiguous to a vertex
can be connected to a {\em coarse edge} by a curve that intersects two patches at most
and not the vertex itself.
This will be specified by Assumption~\ref{as:e_nested} and \ref{as:v_nested} in Section~\ref{sec:ev}.

\subsection{Tensor-product logical spaces} %Primal and dual bases}
\label{tensorprodspaces}
Following \cite{Hiptmair.2002.anum,buffa_isogeometric_2011,perse_geometric_2021}, 
we consider discrete spaces on each patch which are obtained by pushing forward 
tensor-product de Rham sequences on the logical Cartesian domain 
$\hat \Omega = \openint{0}{1}^2$.
% To allow for local refinements,
Thus, for a patch $\Omega_k$, $k \in \cK$, we consider a logical discrete de Rham sequence on $\hat \Omega$, %of the form
\begin{equation*}
\hat V^{0}_{k}
~ \xrightarrow{ \nabla }  ~
\hat V^{1}_{k} 
~ \xrightarrow{ \curl}  ~
\hat V^{2}_{k} 
\end{equation*}
with tensor-product spaces of the form
\begin{equation*}
\hat V^{0}_{k}  \coloneqq \VV^{0}_k \otimes \VV^{0}_k,
\qquad
\hat V^{1}_{k}  \coloneqq \begin{pmatrix}
  \VV^{1}_{k} \otimes \VV^{0}_k
  \\ \noalign{\smallskip}
  \VV^{0}_k\otimes \VV^{1}_{k}
\end{pmatrix},
\qquad
\hat V^{2}_{k}  \coloneqq \VV^{1}_{k} \otimes \VV^{1}_{k}.
\end{equation*}
The univariate spaces must form de Rham sequences on the reference interval, 
i.e.
\begin{equation} \label{1D_seq}
\VV^{0}_{k} \subset W^{1,1}([0,1])
\quad \xrightarrow{ \mbox{$~ \partial_\hx ~$}}  \quad 
\VV^{1}_{k} \subset L^1([0,1])
\end{equation} 
% holds for both logical dimensions $d \in \{\hx, \hy\}$, 
and antiderivative operators must map back to the first space,
\begin{equation} \label{ad_1d}
	\VV^{0}_{k}
	\quad \xleftarrow{ \mbox{$~ \int^{\hx} ~$}}  \quad 
	\VV^{1}_{k}
\end{equation}
for arbitrary integration constants,
which also implies that constants belong to $\VV^{0}_{k}$.
An important particular case is provided by spline spaces 
\begin{equation*}
	\VV^{0}_{k} = \mathbbm{S}^p_{\bs\alpha},  \qquad
	\VV^{1}_{k} = \mathbbm{S}^{p-1}_{{\bs\alpha} - 1} 
\end{equation*}
where $p$ and ${\bs\alpha}$ are the degree and regularity vector of the first space, 
as described in \cite{buffa_isogeometric_2011}. This also includes
the case of polynomial spaces $\VV^{0}_{k} = \PP^p$ and $\VV^{1}_{k} = \PP^{p-1}$. 
To simplify the matching of functions across patch interfaces we further assume that 
the univariate spaces are invariant by a change of orientation, namely
\begin{equation} \label{sym_VV0}
  \vp \in \VV^{0}_{k} \quad \implies \quad \vp \circ \eta \in \VV^{0}_{k}
	\quad \text{where} \quad \eta(s) = 1-s.
\end{equation}

Our next assumption is that the first space is equipped with basis functions 
\begin{equation*}
\VV^{0}_{k} = \Span(\{\lambda^{k}_{i} : i = 0, \dots, n_k\})
\end{equation*}
with the following properties:
\begin{itemize}
	\item an interpolation property at the endpoints,
	\begin{equation} \label{ipe}
		\lambda^{k}_{i}(0) = \delta_{i,0}
		\qquad \text{ and } \qquad
		\lambda^{k}_{i}(1) = \delta_{i,n_k}.
	\end{equation}

	\item bounded overlapping and quasi-uniformity:
	the open supports of $\lambda^{k}_i$, which we denote by $\hat S^{k}_i$,
	are intervals of diameter 
	\begin{equation} \label{S_size}
		\kappa_3^{-1} \hat h_k \le \diam(\hat S^{k}_i) \le \kappa_3 \hat h_k
		\qquad \text{with} \qquad 
		\hat h_k \coloneqq (n_k+1)^{-1}.
	\end{equation}
	and they overlap in a bounded way, i.e.
	\begin{equation} \label{overlap}
		\#\big(\{ j : \hat S^{k}_j \cap \hat S^{k}_i \neq \emptyset \}\big)
		\le \kappa_4
		\qquad
		\text{ for } i = 0, \dots, n_k.
	\end{equation}

	\item inverse estimate:	
	assuming an $L^\infty$ normalization, it holds
	\begin{equation} \label{l_norm}
		\norm{\lambda^{k}_{i}}_{L^\infty} \le 1,
			\qquad 		
		\norm{\partial_\hx \lambda^{k}_{i}}_{L^\infty} 
			\le \kappa_5 (\hat h_k)^{-1}.
		\end{equation}

	\item local stability:
	there exists dual basis functions $\theta^{k}_{i}$ 
	with the same supports $\hat S^{k}_{i}$ (for simplicity), such that
	\begin{equation*}
	% \sprod{\theta^{k}_{i}}{\lambda^{k}_j} %|_{[0,1]} 
	% \coloneqq 
	\int_0^1 \theta^{k}_{i}\lambda^{k}_j \dd \hx= \delta_{i,j}
	% \quad \text{ for }
	\end{equation*}
	holds for all $i, j \in \{0, \dots, n_k\}$, as well as
	the dual normalization %(for all $i \in \{0, \dots, n_k\}$)
	\begin{equation} \label{t_norm}
		\norm{\theta^{k}_{i}}_{L^\infty} \le \kappa_6 (\hat h_k)^{-1}.
	\end{equation}

\end{itemize}
The basis functions for $\hat V^0_k$, as well as the dual functions, are 
then defined as 
\begin{equation*}
\left\{\begin{aligned}
	\hat \Lambda^{k}_{\bi}(\hbx) \coloneqq \lambda^{k}_{i_1}(\hx_1) \lambda^{k}_{i_2}(\hx_2)
	\\
	\hat \Theta^{k}_{\bi}(\hbx) \coloneqq \theta^{k}_{i_2}(\hx_1) \theta^{k}_{i_2}(\hx_2)
\end{aligned}\right.
\qquad \text{ for } ~
\hbx \in \hat \Omega, \quad \bi \in \cI^k \coloneqq \{0, \dots, n_k\}^2.
\end{equation*}
Both functions have the same supports 
\begin{equation} \label{hSki}
\hat S^{k}_{\bi} = \hat S^{k}_{i_1} \times \hat S^{k}_{i_2}
\end{equation}
which, according to \eqref{overlap}, also overlap in a bounded way.
From \eqref{S_size} and the normalization \eqref{l_norm}, \eqref{t_norm} we infer
\begin{equation} \label{hLT_L2}
	% \left\{\begin{aligned}
	\norm{\hat \Lambda^{k}_{\bi}}_{L^p(\hat S^k_\bi)} 
	%\lesssim \norm{\hat \Lambda^{k}_{\bi}}_{L^\infty} \abs{\hat S^k_\bi}^{\frac 12} 
	\lesssim \hat h_k^{2/p},
	\qquad
% \end{equation}
% and 
% \begin{equation} \label{hTheta_L2}
	\norm{\hat \Theta^{k}_{\bi}}_{L^q(\hat S^k_\bi)} %\lesssim 
% \norm{\hat \Theta^{k}_{\bi}}_{L^\infty} \abs{\hat S^k_\bi}^{\frac 12}
	\lesssim \hat h_k^{2/q-2} %\hat h_k^{2/p - 2} ,
% \end{aligned}\right.
\end{equation}
where we have denoted $\frac 1q = 1-\frac 1p$ for $p \in [1, \infty]$. 
Note that in the case of splines, such dual functionals are standard: 
they can be obtained from the perfect B-spline of de Boor \cite{de_boor_local_1976}
or as piecewise polynomials \cite{buffa_quasi-interpolation_2016,campos_pinto_moment_2020}.
Using these dual functions, we define a projection operator
\begin{equation} \label{hPi0k}
\hat \Pi^0_k: L^p(\hat \Omega) \to \hat V^0_k, \qquad 
\hat \phi \mapsto \sum_{\bi \in \cI^k} 
	\sprod{\hat \Theta^{k}_{\bi}}{\hat \phi} \hat \Lambda^{k}_{\bi}
\end{equation}
where $\sprod{\cdot}{\cdot}$ is the usual $L^q$-$L^p$ duality product in $\hat \Omega$.
Our assumptions classically imply that both the basis and 
$\hat \Pi^0_k$ are locally stable. 
Specifically, if we define the {\em extension} of a domain 
$\hat \omega \subset \hat \Omega$ as
\begin{equation} \label{hEk_ext}
	\hat E_k(\hat \omega) \coloneqq \bigcup_{\bi \in \cI^k(\hat \omega)} \hat S^k_\bi
	\quad \text{ where  } \quad
	\cI^k(\hat \omega) \coloneqq
		\{\bi \in \cI^k : \hat S^k_\bi \cap \hat \omega \neq \emptyset\},
\end{equation}
then we have 
\begin{equation} \label{stab_hPi0}
	\norm{\hat \Pi^0_k \hat \phi}_{L^p(\hat \omega)} 
		\lesssim \norm{\hat \phi}_{L^p(\hat E_k(\hat \omega))}		
		\qquad \text{ for } ~~ \hat \phi \in L^p(\hat \Omega).
\end{equation}
To verify the stability of the basis, take 
$\hat \phi_h = \sum_{\bi \in \cI^k} \phi^k_\bi \hat \Lambda^{k}_\bi \in \hat V^0_k$, 
and use \eqref{hLT_L2}: % to write %estimate
\begin{equation} \label{stab_hbasis}
\begin{aligned} 
	 \abs{\phi^k_\bi} 
	&=  \abs{\sprod{\hat \Theta^{k}_\bi}{\hat \phi_h}} 
		\le  \norm{\hat \Theta^{k}_\bi}_{L^q(\hat S^k_\bi)}\norm{\hat \phi_h}_{L^p(\hat S^k_\bi)}
		\\
	&\lesssim  \hat h_k^{2/q - 2} \norm{\hat \phi_h}_{L^p(\hat S^k_\bi)} 
		\le   \hat h_k^{2/q - 2} \sum_{\bj \in \cI^k(\hat S^k_\bi)} \abs{\phi^k_\bj} \norm{\hat \Lambda^{k}_\bj}_{L^p(\hat S^k_\bi)}\\
	& \lesssim \hat h_k^{2/q+ 2/p - 2}
	 \sum_{\bj \in \cI^k(\hat S^k_\bi)} \abs{\phi^k_\bj} = \sum_{\bj \in \cI^k(\hat S^k_\bi)} \abs{\phi^k_\bj}.
\end{aligned}
\end{equation}
Similarly, we compute for $\hat \phi \in L^p(\hat \Omega)$
\begin{equation*}
% \begin{aligned}
	\norm{\hat \Pi^0_k \hat \phi}_{L^p(\hat \omega)}
	\le \sum_{\bi \in \cI^k(\hat \omega)} 
	\abs{\sprod{\hat \Theta^{k}_\bi}{\hat \phi}} \norm{\hat \Lambda^{k}_\bi}_{L^p} 
	\lesssim \sum_{\bi \in \cI^k(\hat \omega)}  \norm{\hat \phi}_{L^p(\hat S^k_\bi)}
\end{equation*}
so that \eqref{stab_hPi0} follows from the bounded overlapping property 
\eqref{overlap} of the supports. 
Using \eqref{l_norm}, we further infer 
$\norm{\hat \nabla \hat \Lambda^{k}_\bi}_{L^p} \lesssim \hat h_k^{2/p - 1}$
which yields the inverse estimate 
\begin{equation} \label{inverse_est}
	\begin{aligned}
	\norm{\hat \nabla \hat \phi}_{L^p(\hat \omega)} 
			&\lesssim \sum_{\bi \in \cI^k(\hat \omega)} \abs{\phi_\bi} \norm{\hat \nabla \hat \Lambda^{k}_\bi}_{L^p} \\
			& \lesssim \hat h_k^{2/p -1} \sum_{\bi \in \cI^k(\hat \omega)} \norm{\hat \Theta^{k}_\bi}_{L^q} \norm{\hat \phi}_{L^p} \\
			&\lesssim \hat h_k^{2/p + 2/q - 3} \sum_{\bi \in \cI^k(\hat \omega)} \norm{\hat \phi}_{L^p} 
			\lesssim \hat h_k^{-1}\norm{\hat \phi}_{L^p(E_h(\hat \omega))}. 
	\end{aligned}
\end{equation}

A key property of the tensor-product structure is the %well-known 
preservation of directional invariance.

\begin{lemma} \label{lem:dd_Pi0k}
	If $\hat \partial_d \hat \phi = 0$ for some dimension $d \in \{1,2\}$,
	then $\hat \partial_d \hat \Pi^0_k \hat \phi = 0$.
\end{lemma}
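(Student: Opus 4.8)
The plan is to exploit the separable tensor-product form of both the primal functions $\hat\Lambda^k_\bi$ and the dual functions $\hat\Theta^k_\bi$. By symmetry it suffices to treat $d=1$, the case $d=2$ being identical after exchanging the two coordinates. The assumption $\hat\partial_1\hat\phi=0$ means that $\hat\phi$ is independent of the first variable, so I would write $\hat\phi(\hbx)=g(\hx_2)$ for some univariate $g$.

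First I would substitute this into the definition \eqref{hPi0k} and use $\hat\Theta^k_\bi(\hbx)=\theta^k_{i_1}(\hx_1)\theta^k_{i_2}(\hx_2)$ to factor the duality product by Fubini:
\[
\sprod{\hat\Theta^k_\bi}{\hat\phi}
  = \Big(\int_0^1 \theta^k_{i_1}\dd\hx_1\Big)\Big(\int_0^1 \theta^k_{i_2}(z)\,g(z)\dd z\Big),
\]
and denote the two factors by $c_{i_1}$ and $d_{i_2}$. Inserting this and the factorization $\hat\Lambda^k_\bi(\hbx)=\lambda^k_{i_1}(\hx_1)\lambda^k_{i_2}(\hx_2)$, the double sum separates into a product of univariate sums,
\[
\hat\Pi^0_k\hat\phi(\hbx)
  = \Big(\sum_{i_1=0}^{n_k} c_{i_1}\lambda^k_{i_1}(\hx_1)\Big)
    \Big(\sum_{i_2=0}^{n_k} d_{i_2}\lambda^k_{i_2}(\hx_2)\Big).
\]

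The key observation is that the first factor is the univariate projection of the constant function $1$. Indeed, the univariate operator $\pi\colon f\mapsto\sum_i\big(\int_0^1\theta^k_i f\big)\lambda^k_i$ reproduces every element of $\VV^0_k$: for $f_h=\sum_j a_j\lambda^k_j$, the biorthogonality $\int_0^1\theta^k_i\lambda^k_j=\delta_{i,j}$ yields $\pi(f_h)=f_h$. Since constants belong to $\VV^0_k$ (a consequence of the antiderivative property \eqref{ad_1d}), we get $\pi(1)=1$, i.e. $\sum_{i_1}c_{i_1}\lambda^k_{i_1}\equiv 1$ on $[0,1]$. Hence $\hat\Pi^0_k\hat\phi$ reduces to $\sum_{i_2}d_{i_2}\lambda^k_{i_2}(\hx_2)$, which depends on $\hx_2$ only, and therefore $\hat\partial_1\hat\Pi^0_k\hat\phi=0$.

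I do not expect a genuine obstacle here: the argument rests entirely on the separability of $\hat\Theta^k_\bi$ and $\hat\Lambda^k_\bi$ together with the reproduction of constants. The only point deserving care is the claim that the univariate projection reproduces constants, which I would justify by combining the biorthogonality relation with the fact that $1\in\VV^0_k$, both guaranteed by the assumptions on the univariate de Rham sequence.
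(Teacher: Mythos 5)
Your proposal is correct and follows essentially the same route as the paper: factor the tensor-product duality pairing, recognize the $\hx_1$-factor as the univariate projection of the constant $1$, and use that constants belong to $\VV^0_k$ so this factor equals $1$ identically. The extra detail you give on biorthogonality implying reproduction of $\VV^0_k$ is exactly the justification the paper leaves implicit.
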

\begin{proof}
	% Let us write $\hat \psi \hat \phi(\bx)$
	Without loss of generality, assume $d=1$. Then 
	$\hat \phi(\hbx) = \hat \phi(\hx_2)$ and 
	% The tensor-product structure of \eqref{hPi0k} allows us to write
	% for $\hat \psi(\hbx) \coloneqq \hat \phi^k(0, \hx_{d'})$,
	\begin{equation*}
	\begin{aligned}
		\hat \Pi^0_k \hat \phi(\hbx)
			= \sum_{\bi \in \cI^k} \sprod{\hat \Theta^{k}_{\bi}}{\hat \phi} %_{L^2(\hat \Omega)} 
			\hat \Lambda^{k}_{\bi}(\hbx)
			= 
				\underbrace{\Big(\sum_{i_1} \sprod{\theta^{k}_{i_1}}{1} \lambda^{k}_{i_1}(\hx_1) \Big)}_{=~1}
				\Big(\sum_{i_2} \sprod{\theta^{k}_{i_2}}{\hat \phi} \lambda^{k}_{i_2}(\hx_2)\Big)
	\end{aligned}
	\end{equation*}
	where the univariate duality products are over $[0,1]$ and the last equality follows from the fact that 
	the first term in the product is the projection of a constant which belongs to the univariate
	spaces $\hat \VV^{0}_{k}$. This shows that $\hat \partial_1 \hat \Pi^0_k \hat \phi = 0$
	holds indeed.
\end{proof}
% \begin{lemma} \label{lem:dirinv_Pi0k}
% 	If $\hat \vp \in L^2(\hat \Omega)$ is such that $\hat \nabla_d \hat\vp = 0$,
% 	then,
% 	\begin{equation*}
% 	\hat\nabla_d \hat \Pi^0_k \hat \vp = 0.
% 	\end{equation*}
% \end{lemma}

% 
% ADD assumptions on basis functions normalization in V1 and locality od derivative...
% \begin{equation*}
% \norm{\lambda^{1,k}_{i}} \lesssim 1
% % \partial_{\hx} \lambda^{k}_{i} =  - \lambda^{1,k}_{i+1}
% \end{equation*}
% % where we have set $\lambda^{1,k}_{1} \coloneqq 0$ for $\alpha \notin \{1, \dots, n_k\}$.

\subsection{pushforward spaces on the mapped patches}

On each patch $\Omega_k = F_k(\hat \Omega)$,
the local spaces are defined as the image of the logical ones by 
the pushforward operators associated with $F_k$, i.e. 
$V^{\ell}_k \coloneqq \cF^{\ell}_k (\hat V^{\ell}_{k})$.
In 2D these operators read
\begin{equation} \label{pf_gc}
  \left\{
  \begin{aligned}
  &\cF^0_k : \hat \phi \mapsto \phi \coloneqq \hat \phi \circ F_k^{-1}
  \\
  &\cF^1_k : \hat \bu \mapsto \bu \coloneqq  \big(DF_k^{-T} \hat \bu \big)\circ F_k^{-1}
  \\
  &\cF^2_k : \hat f \mapsto f \coloneqq  \big(J_{F_k}^{-1} \hat f \big)\circ F_k^{-1}
  \end{aligned}
  \right.
\end{equation}
where $DF_k = \big(\partial_b (F_k)_a(\hat \bx)\big)_{1 \le a,b \le 2}$
is the Jacobian matrix of $F_k$, and $J_{F_k}$ %(\hat \bx) = \det (DF_k(\hat \bx))$
its (positive) metric determinant, 
see e.g. \cite{Hiptmair.2002.anum,kreeft2011mimetic,Buffa_Doelz_Kurz_Schoeps_Vazquez_Wolf_2019,%
Holderied_Possanner_Wang_2021}.
These operators define isomorphisms between $L^p(\hat \omega)$ and 
$L^p(\omega)$ for any open domain $\hat \omega \subset \hat \Omega$ 
with image $\omega = F_k(\hat \omega)$. Specifically, for all 
$\hat \phi$, $\hat \bu$, $\hat f \in L^p(\hat \Omega)$
the pushforwards
$\phi \coloneqq \cF^0_k \hat \phi$, $\bu \coloneqq \cF^1_k \hat \bu$ and  $f \coloneqq \cF^2_k \hat f$
satisfy
\begin{equation} \label{L2_scale_pf}
	\left\{
  \begin{aligned}
  &\norm{\phi}_{L^p(\omega)} \sim H_k^{2/p} \norm{\hat \phi}_{L^p(\hat \omega)},
  \\
  &\norm{\bu}_{L^p(\omega)} \sim H_k^{2/p - 1} \norm{\hat \bu}_{L^p(\hat \omega)},
  \\
  &\norm{f}_{L^p(\omega)} \sim H_k^{2/p - 2} \norm{\hat f}_{L^p(\hat \omega)}.
  \end{aligned}
  \right.
\end{equation}
A key property is the commutation with the differential operators, 
namely
\begin{equation*}
\nabla \cF^0_k \hat \phi = \cF^1_k \hat \nabla \hat \phi,
\qquad 
\curl \cF^1_k \hat \bu = \cF^2_k \widehat \curl \hat \bu
\end{equation*}
which holds for all $\hat \phi \in H^1(\hat \Omega)$ and 
$\hat \bu \in H(\hcurl; \hat \Omega)$.
In particular, the mapped spaces also form de Rham sequences
\eqref{dR_k}.

\subsection{Broken basis functions and patch-wise projection on $V^0_\pw$} 
\label{sec:bbf}

Basis functions for the single-patch spaces $V^\ell_k = \cF_k^\ell \hat V^\ell_k$ 
are obtained by pushing forward the reference basis functions.
% $\Lambda^{\ell,k}_{\bi} \coloneqq \cF^\ell_k(\hat \Lambda^{\ell,k}_{\bi})$,
Outside $\Omega_k$, we implicitly extend $\cF^\ell_k$ by zero, so that
these functions also provide a basis for the patch-wise spaces \eqref{Vpw}.
Of particular importance to us is the corresponding basis for $V^{0}_\pw$,
\begin{equation*}
\Lambda^{k}_{\bi}(\bx) \coloneqq \cF^0_k(\hat \Lambda^{k}_{\bi})
	=
	\begin{cases}
		\hat \Lambda^{k}_{\bi}(\hbx^k) ~ &\text{ on } \Omega_k
		\\
		0 &\text{ elsewhere}
	\end{cases}
	\quad \text{ for } k \in \cK, \quad \bi \in \cI^k = \{0, \dots, n_k\}^2
\end{equation*} 
where $\hbx^k \coloneqq (F_k)^{-1}(\bx)$.
These functions have local supports mapped from \eqref{hSki},
\begin{equation} \label{Ski}
S^{k}_\bi \coloneqq \supp(\Lambda^{k}_{\bi}) = F_k(\hat S^{k}_\bi) \subset \Omega_k.
\end{equation} 
The stability of the basis follows from \eqref{stab_hbasis}, \eqref{L2_scale_pf}:
for $\phi_h = \sum_{k, \bi} \phi^k_\bi \Lambda^{k}_\bi \in V^0_k$, 
\begin{equation} \label{stab_basis}
	 \abs{\phi^k_\bi}
		\lesssim  h_k^{- 2/p} \norm{\phi_h}_{L^p(S^k_\bi)} 
			\lesssim  \sum_{\bj \in \cI^k(\hat S^k_\bi)} \abs{\phi^k_\bj}		
\end{equation}
holds with  
\begin{equation} \label{hk}
	h_k \coloneqq H_k \hat h_k = \frac{\diam(\Omega_k)}{n_k + 1}. 
%see  from \eqref{Hk} and \eqref{S_size} % \hat h_k \coloneqq \frac {1}{n_k}.	
\end{equation}
% Using \eqref{S_size} this also yields
% \begin{equation} \label{bound_L}
% 	\norm{\Lambda^{k}_\bi} \sim h_k \sim \diam(S^{k}_\bi),
% 	\qquad k \in \cI^k.
% \end{equation}
The single-patch projection operator is then defined as
\begin{equation} \label{Pi0k}
	\Pi^0_k = \cF^0_k \hat \Pi^0_k (\cF^0_k)^{-1}
\end{equation}
% where $\hat \Pi^0_k$ is the reference 
see \eqref{hPi0k},
and the patch-wise projection on $V^0_\pw$ consists of %summing the local contributions,
\begin{equation} \label{Pi0pw}
	\Pi^0_\pw \coloneqq \sum_{k \in \cK} \Pi^0_k : \quad L^p(\Omega) \to V^0_\pw.
\end{equation}

\begin{lemma} \label{lem:stab_Pi0k}
	On a domain $\omega \subset \Omega_k$, $k \in \cK$,
	the single-patch projection satisfies
	\begin{equation} \label{stab_Pi0k}
		\norm{\Pi^0_k \phi}_{L^p(\omega)} 
			\lesssim \norm{\phi}_{L^p(E_k(\omega))}		
			\qquad \text{ for } ~~ \phi \in L^p(\omega),
	\end{equation}
 	where
	\begin{equation} \label{Ek_ext}
		E_k(\omega)			
			\coloneqq F_k(\hat E_k(F_k^{-1}(\omega)))
			= \bigcup_{\bi \in \cI^k: \, S^k_\bi \cap \omega \neq \emptyset} S^k_\bi,
	\end{equation}
	with $\hat E_k(\omega)$ the single-patch extension corresponding to \eqref{hEk_ext}, which is visualized in Figure \ref{fig:dom_ext_patch}.
	In particular, for any constant $c$ we have
	\begin{equation} \label{c_Pi0}
	\phi = c \text{ on } E_k(\omega) \quad \implies \quad \Pi^0_k \phi = c \text{ on } \omega
	\end{equation}
	and the patch-wise projection satisfies the global bound
	\begin{equation} \label{stab_Pi0_pw}
		\norm{\Pi^0_\pw \phi}_{L^p(\Omega)} \lesssim \norm{\phi}_{L^p(\Omega)}.		
	\end{equation}
\end{lemma}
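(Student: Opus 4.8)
The plan is to exploit the fact that $\Pi^0_k$ is built from the logical projection $\hat\Pi^0_k$ by conjugation with the $0$-form pushforward, cf.~\eqref{Pi0k}, so that every claim can be transported to the reference patch, where it has already been established in \eqref{stab_hPi0}. Writing $\hat\phi \coloneqq (\cF^0_k)^{-1}\phi = \phi\circ F_k$ and $\hat\omega \coloneqq F_k^{-1}(\omega)$, the key point is that the two occurrences of the pushforward scaling \eqref{L2_scale_pf} for $\cF^0_k$ carry opposite powers of $H_k$, which cancel and leave an $H_k$-independent constant.

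For the local stability \eqref{stab_Pi0k} I would chain the estimates
\begin{equation*}
  \norm{\Pi^0_k\phi}_{L^p(\omega)}
    = \norm{\cF^0_k\hat\Pi^0_k\hat\phi}_{L^p(\omega)}
    \sim H_k^{2/p}\,\norm{\hat\Pi^0_k\hat\phi}_{L^p(\hat\omega)}
    \lesssim H_k^{2/p}\,\norm{\hat\phi}_{L^p(\hat E_k(\hat\omega))}
    \sim \norm{\phi}_{L^p(E_k(\omega))},
\end{equation*}
where the first and last $\sim$ use \eqref{L2_scale_pf}, the middle $\lesssim$ is the logical stability \eqref{stab_hPi0}, and the identity $E_k(\omega) = F_k(\hat E_k(\hat\omega))$ from \eqref{Ek_ext} together with $\hat\phi = \phi\circ F_k$ ensures that pushing forward $\hat E_k(\hat\omega)$ returns exactly the extended support $E_k(\omega)$. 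Since the constants in \eqref{stab_hPi0} and \eqref{L2_scale_pf} depend only on the parameters $\kappa_m$, the resulting bound is uniform in $k$.

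The constant reproduction property \eqref{c_Pi0} I would derive from locality and exactness on constants. A constant $c$ on $E_k(\omega)$ pulls back under $\cF^0_k$ to the same constant $c$ on $\hat E_k(\hat\omega)$. For $\hbx\in\hat\omega$ the value $\hat\Pi^0_k\hat\phi(\hbx)=\sum_{\bi}\sprod{\hat\Theta^{k}_\bi}{\hat\phi}\hat\Lambda^{k}_\bi(\hbx)$ only involves indices $\bi$ with $\hbx\in\hat S^k_\bi$, hence $\bi\in\cI^k(\hat\omega)$ and $\hat S^k_\bi\subset\hat E_k(\hat\omega)$; since each $\hat\Theta^{k}_\bi$ is supported in $\hat S^k_\bi$, the duality products see only the values of $\hat\phi$ on $\hat E_k(\hat\omega)$, where $\hat\phi=c$. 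As constants belong to $\hat V^0_k$ and $\hat\Pi^0_k$ reproduces them (the computation $\sum_{i}\sprod{\theta^{k}_i}{1}\lambda^{k}_i=1$ already used in the proof of Lemma~\ref{lem:dd_Pi0k}), we obtain $\hat\Pi^0_k\hat\phi=c$ on $\hat\omega$, and pushing forward gives $\Pi^0_k\phi=c$ on $\omega$.

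Finally, the global bound \eqref{stab_Pi0_pw} follows by summation over the disjoint patches. Since $\Pi^0_\pw = \sum_{k}\Pi^0_k$ acts patchwise with $S^k_\bi\subset\Omega_k$, we have $E_k(\Omega_k)=\Omega_k$, so \eqref{stab_Pi0k} with $\omega=\Omega_k$ gives $\norm{\Pi^0_k\phi}_{L^p(\Omega_k)}\lesssim\norm{\phi}_{L^p(\Omega_k)}$ with a $k$-independent constant. Raising to the power $p$ and summing over $k\in\cK$ (or taking the maximum when $p=\infty$), and using that the $\Omega_k$ are disjoint with union $\Omega$, yields \eqref{stab_Pi0_pw}. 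The main subtlety throughout is not any single calculation but keeping track that all implied constants are independent of $H_k$ and $\hat h_k$; this is exactly what the cancellation of the scaling powers in the second step secures, and it is the one place where the pushforward structure is essential.
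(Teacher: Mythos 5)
Your proof is correct and follows essentially the same route as the paper: pull back to the reference patch, invoke the logical stability \eqref{stab_hPi0} together with the cancelling $H_k$-scalings of \eqref{L2_scale_pf}, and sum over the partition for the global bound. The only (harmless) deviation is in \eqref{c_Pi0}, where the paper simply applies the already-proved bound to $\psi=\phi-c$ and uses $\Pi^0_k c=c$, whereas you rederive the locality of the dual functionals directly; both arguments are valid.
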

\begin{proof}
	The bound \eqref{stab_Pi0k} follows by applying \eqref{stab_hPi0} 
	to the pullback $\hat \phi = (\cF_k^0)^{-1}\phi$ 
	and using the scaling properties \eqref{L2_scale_pf}.
	To show \eqref{c_Pi0}, we apply this bound
	to the function $\psi = \phi - c$ and use the fact that $\Pi^0_k c = c$
	holds because the constants belong to $\hat V^0_k$, hence also to $V^0_k$.
	(In passing note that \eqref{c_Pi0} holds for all functions $c \in V^0_k$,
	not only constants.)
	Estimate \eqref{stab_Pi0_pw} then follows 
	by summing the local bounds \eqref{stab_Pi0k} over $k \in \cK$ 
	with $\omega = \Omega_k$ (whose domain extension is $E_k(\Omega_k) = \Omega_k$)
	and by using the fact that the patches form a partition the domain $\Omega$.
\end{proof}

\begin{figure}[ht]
	\centering 
	\fontsize{18pt}{22pt}\selectfont
	 \resizebox{!}{0.25\textheight}{%% Creator: Inkscape 1.2.2 (b0a8486541, 2022-12-01), www.inkscape.org
%% PDF/EPS/PS + LaTeX output extension by Johan Engelen, 2010
%% Accompanies image file 'dom_ext_p.pdf' (pdf, eps, ps)
%%
%% To include the image in your LaTeX document, write
%%   \input{<filename>.pdf_tex}
%%  instead of
%%   \includegraphics{<filename>.pdf}
%% To scale the image, write
%%   \def\svgwidth{<desired width>}
%%   \input{<filename>.pdf_tex}
%%  instead of
%%   \includegraphics[width=<desired width>]{<filename>.pdf}
%%
%% Images with a different path to the parent latex file can
%% be accessed with the `import' package (which may need to be
%% installed) using
%%   \usepackage{import}
%% in the preamble, and then including the image with
%%   \import{<path to file>}{<filename>.pdf_tex}
%% Alternatively, one can specify
%%   \graphicspath{{<path to file>/}}
%% 
%% For more information, please see info/svg-inkscape on CTAN:
%%   http://tug.ctan.org/tex-archive/info/svg-inkscape
%%
\begingroup%
  \makeatletter%
  \providecommand\color[2][]{%
    \errmessage{(Inkscape) Color is used for the text in Inkscape, but the package 'color.sty' is not loaded}%
    \renewcommand\color[2][]{}%
  }%
  \providecommand\transparent[1]{%
    \errmessage{(Inkscape) Transparency is used (non-zero) for the text in Inkscape, but the package 'transparent.sty' is not loaded}%
    \renewcommand\transparent[1]{}%
  }%
  \providecommand\rotatebox[2]{#2}%
  \newcommand*\fsize{\dimexpr\f@size pt\relax}%
  \newcommand*\lineheight[1]{\fontsize{\fsize}{#1\fsize}\selectfont}%
  \ifx\svgwidth\undefined%
    \setlength{\unitlength}{315.62072754bp}%
    \ifx\svgscale\undefined%
      \relax%
    \else%
      \setlength{\unitlength}{\unitlength * \real{\svgscale}}%
    \fi%
  \else%
    \setlength{\unitlength}{\svgwidth}%
  \fi%
  \global\let\svgwidth\undefined%
  \global\let\svgscale\undefined%
  \makeatother%
  \begin{picture}(1,0.54)%
    \lineheight{1}%
    \setlength\tabcolsep{0pt}%
    \put(0,0){\includegraphics[width=\unitlength,page=1]{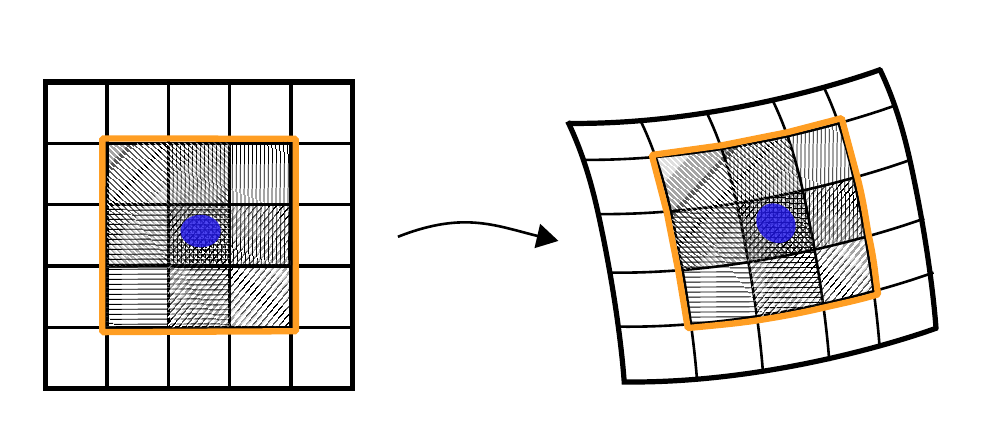}}%
    \put(0.45876556,0.24012577){\makebox(0,0)[lt]{\lineheight{1.25}\smash{\begin{tabular}[t]{l}$F_k$\end{tabular}}}}%
    \put(0.79406708,0.01062641){\makebox(0,0)[lt]{\lineheight{1.25}\smash{\begin{tabular}[t]{l}$\Omega_k$\end{tabular}}}}%
    \put(0.18716763,0.0){\makebox(0,0)[lt]{\lineheight{1.25}\smash{\begin{tabular}[t]{l}$\hat \Omega$\end{tabular}}}}%
    \put(0.37632183,0.31296674){\makebox(0,0)[lt]{\lineheight{1.25}\smash{\begin{tabular}[t]{l}$\hat\omega$\end{tabular}}}}%
    \put(0.95571533,0.32023428){\makebox(0,0)[lt]{\lineheight{1.25}\smash{\begin{tabular}[t]{l}$\omega$\end{tabular}}}}%
    \put(0,0){\includegraphics[width=\unitlength,page=2]{plots/dom_ext_p.pdf}}%
  \end{picture}%
\endgroup%
}
	\caption{Domain extensions of a domain % and mapped patches. 
	$\hat \omega \in \hat \Omega$ (left) 
	and its mapped image $\omega = F_k(\hat \omega)$ on a patch $\Omega_k$ (right), 
	as defined by equations \eqref{hEk_ext} and \eqref{Ek_ext}. 
	In different shading, we show the overlapping basis function supports 
	$\hat S^{k}_\bi$ and $S^{k}_\bi$ that have a non-empty intersection with 
	the respective $\hat \omega$ and $\omega$. The domain extensions 
	$\hat E_k(\hat \omega)$ and $E_k(\omega)$ are %the domains 
	delimited by
	the orange boundaries.
	}
	\label{fig:dom_ext_patch}
\end{figure}

\subsection{Edges and vertices}
\label{sec:ev}

In this section, we introduce some notation relative to edges and vertices,
and specify the nestedness assumptions (ii) and (iii) mentioned in Section~\ref{sec:geom}.

We first denote by $\cE$ the set of patch edges, and for a given $e \in \cE$
we gather the indices of contiguous patches in
\begin{equation*}
\cK(e) = \{k \in \cK: e \subset \partial \Omega_k\}.
\end{equation*}
and define the corresponding neighborhood as 
\begin{equation} \label{Omega_e}
	\Omega(e) \coloneqq \Int\big(\cup_{k \in \cK(e)} \bar\Omega_k\big).
\end{equation}
Due to the geometric conformity of the patch decomposition, 
$\cK(e)$ contains two patches if $e$ is an interior edge, 
and only one patch if it is a boundary edge ($e \subset \partial \Omega$).
Our edge-nestedness assumption (ii) from Section~\ref{sec:geom} then reads:
\begin{assumption} \label{as:e_nested}
	For any interior edge, $\cK(e)$ consists of two patches 
	$k^-(e)$, $k^+(e)$ of nested resolutions, in the sense that 
	\begin{equation} 
		\label{-+_assumption}
		\VV^{0}_{k^-(e)} \subset \VV^{0}_{k^+(e)}.
	\end{equation}
\end{assumption}
% as is that 
% for any interior edge, $\cK(e)$ consists of two patches 
% $k^-(e)$, $k^+(e)$ of nested resolutions in the sense that 
For boundary edges it will be convenient to denote the 
single patch of $\cK(e)$ by
\begin{itemize}
	\item $k^-(e)$ if we are in the inhomogeneous case \eqref{dR},
	\item $k^+(e)$ if we are in the homogeneous case \eqref{dR0}.
\end{itemize}

Moreover, we assume that two adjacent patches across an edge $e$ have similar
diameters and resolutions, i.e.
\begin{equation}\label{sdr}
	\kappa_7^{-1} \le \frac{H_{k^-(e)}}{H_{k^+(e)}} \le \kappa_7
	\quad \text{ and } \quad
	\kappa_8^{-1} \le \frac{n_{k^-(e)}}{n_{k^+(e)}} \le \kappa_8.
\end{equation}
Given an edge $e$ and a point $\bx \in \Omega_k$, $k \in \cK(e)$,
we denote by $(\hx^k_{\parallel}, \hx^k_{\perp})$
(with implicit dependence on $e$)
the components of the reference variable 
$\hbx^k = (\hx^k_1, \hx^k_2) \coloneqq F_k^{-1}(\bx)$ 
in the directions parallel and perpendicular to the 
reference edge $\hat e^k \coloneqq (F_k)^{-1}(e)$. 
In other terms, we set
\begin{equation} \label{ppe}
(\hx^k_{\parallel}, \hx^k_{\perp}) = (\hx^k_{\parallel(e)}, \hx^k_{\perp(e)}) %\coloneqq 
\coloneqq \begin{cases}
	(\hx^k_1, \hx^k_2) & \text{ if $\hat e^k$ is parallel to the $\hx_1$ axis}
	\\
	(\hx^k_2, \hx^k_1) & \text{ if $\hat e^k$ is parallel to the $\hx_2$ axis}
\end{cases}
\end{equation}
and it will be convenient to denote the corresponding reordering function by 
\begin{equation}
	\label{Xke}
	\hat X^k_e : (\hx^k_\parallel,\hx^k_\perp) \mapsto \hbx^k 
	\quad \text{ and } \quad 
	X^k_e \coloneqq F_k (\hat X^k_e) : (\hx^k_\parallel,\hx^k_\perp) \mapsto \bx .
\end{equation}
% Note that this is always an involution, so that 
% $\hat X^k_e(\hx^k_\parallel,\hx^k_\perp) = \hbx$ also holds.
% In particular we may write $X^k_e \coloneqq F_k(\hat X^k_e)$ when convenient.
Using this notation, a logical edge is always of the form 
\begin{equation} \label{e_perp}
	\hat e^k = \{\hbx \in \hat \Omega : \hx_\perp = \hat e^k_\perp\}
\end{equation}
where $\hat e^k_\perp \in \{0,1\}$ is constant for any $e$ and $k \in \cK(e)$.

Next, denoting $\cV$ the set of all patch vertices, 
we gather for any $\bv \in \cV$ the 
indices of the contiguous patches in the set
\begin{equation*}
\cK(\bv) = \{k \in \cK: \bv \in \partial \Omega_k\}. % \qquad  \bv \in \cV.
\end{equation*}
We define the corresponding neighborhood as the union of the contiguous patches,
\begin{equation} \label{Omega_v}
	\Omega(\bv) \coloneqq \Int\big(\cup_{k \in \cK(\bv)} \bar\Omega_k\big), 
\end{equation}
and also denote by $\cE(\bv)$ the set of contiguous edges.

\begin{figure}[ht]
	\centering 
	 {
	\fontsize{18pt}{22pt}\selectfont
	 \resizebox{!}{0.25\textheight}{%% Creator: Inkscape 1.2.2 (b0a8486541, 2022-12-01), www.inkscape.org
%% PDF/EPS/PS + LaTeX output extension by Johan Engelen, 2010
%% Accompanies image file 'vertex_partition.pdf' (pdf, eps, ps)
%%
%% To include the image in your LaTeX document, write
%%   \input{<filename>.pdf_tex}
%%  instead of
%%   \includegraphics{<filename>.pdf}
%% To scale the image, write
%%   \def\svgwidth{<desired width>}
%%   \input{<filename>.pdf_tex}
%%  instead of
%%   \includegraphics[width=<desired width>]{<filename>.pdf}
%%
%% Images with a different path to the parent latex file can
%% be accessed with the `import' package (which may need to be
%% installed) using
%%   \usepackage{import}
%% in the preamble, and then including the image with
%%   \import{<path to file>}{<filename>.pdf_tex}
%% Alternatively, one can specify
%%   \graphicspath{{<path to file>/}}
%% 
%% For more information, please see info/svg-inkscape on CTAN:
%%   http://tug.ctan.org/tex-archive/info/svg-inkscape
%%
\begingroup%
  \makeatletter%
  \providecommand\color[2][]{%
    \errmessage{(Inkscape) Color is used for the text in Inkscape, but the package 'color.sty' is not loaded}%
    \renewcommand\color[2][]{}%
  }%
  \providecommand\transparent[1]{%
    \errmessage{(Inkscape) Transparency is used (non-zero) for the text in Inkscape, but the package 'transparent.sty' is not loaded}%
    \renewcommand\transparent[1]{}%
  }%
  \providecommand\rotatebox[2]{#2}%
  \newcommand*\fsize{\dimexpr\f@size pt\relax}%
  \newcommand*\lineheight[1]{\fontsize{\fsize}{#1\fsize}\selectfont}%
  \ifx\svgwidth\undefined%
    \setlength{\unitlength}{315.62072754bp}%
    \ifx\svgscale\undefined%
      \relax%
    \else%
      \setlength{\unitlength}{\unitlength * \real{\svgscale}}%
    \fi%
  \else%
    \setlength{\unitlength}{\svgwidth}%
  \fi%
  \global\let\svgwidth\undefined%
  \global\let\svgscale\undefined%
  \makeatother%
  \begin{picture}(1,0.65519165)%
    \lineheight{1}%
    \setlength\tabcolsep{0pt}%
    \put(0,0){\includegraphics[width=\unitlength,page=1]{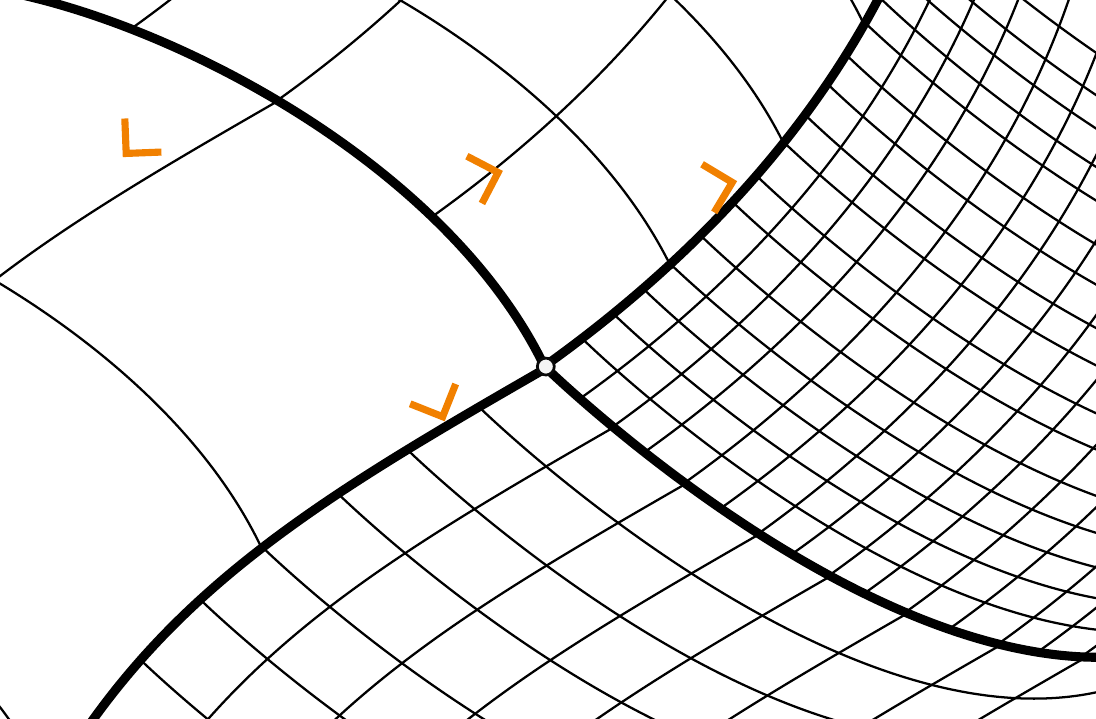}}%
    \put(0.48873267,0.3519984){\makebox(0,0)[lt]{\lineheight{1.25}\smash{\begin{tabular}[t]{l}$\bv$\end{tabular}}}}%
    \put(0.134382,0.35254007){\makebox(0,0)[lt]{\lineheight{1.25}\smash{\begin{tabular}[t]{l}$\colorbox{white}{\boxed{k^l_1(\bv)}}$\end{tabular}}}}%
    \put(0.43268163,0.55221533){\makebox(0,0)[lt]{\lineheight{1.25}\smash{\begin{tabular}[t]{l}$\colorbox{white}{\boxed{k^r_1(\bv)}}$\end{tabular}}}}%
    \put(0.70505233,0.31200439){\makebox(0,0)[lt]{\lineheight{1.25}\smash{\begin{tabular}[t]{l}\colorbox{white}{$\boxed{k^r_2(\bv)}$}\end{tabular}}}}%
    \put(0.01496137,0.57390765){\makebox(0,0)[lt]{\lineheight{1.25}\smash{\begin{tabular}[t]{l}$e^*(\bv)$\end{tabular}}}}%
    \put(0.41705911,0.07230441){\makebox(0,0)[lt]{\lineheight{1.25}\smash{\begin{tabular}[t]{l}$\colorbox{white}{\boxed{k^l_2(\bv)}}$\end{tabular}}}}%
    \put(0,0){\includegraphics[width=\unitlength,page=2]{plots/vertex_partition.pdf}}%
  \end{picture}%
\endgroup%}
		}
	\caption{Adjacent nested patches around a vertex $\bv$
	corresponding to the decomposition \eqref{cK_seqs}, with
	$n(\bv,l) =  n(\bv,r) = 2$ since $\bv$ is an interior vertex,
	%$\cK(\bv) = \{k^l_1(\bv), k^l_2(\bv) \} \cup \{k^r_1(\bv), k^r_{2}(\bv) \}$.
	and dashed curves connecting arbitrary points $\bx \in \Omega(\bv)$
	to a coarse edge $e^*(\bv)$, according to Assumption~\ref{as:v_nested}.
	Observe that here the edge shared by patches $k_1^l$ and $k_2^l$ could 
	also be used as a coarse edge.
	The plotted cells correspond to the minimal intersections of the overlapping
	supports $S^{k}_\bi$ defined in \eqref{Ski}.
	}
	\label{fig:ver_part}
\end{figure}

To specify the vertex-nestedness assumption (iii) from Section~\ref{sec:geom},
we say that a point $\bx$ can be connected 
to an edge $e$ with a {\em monotonic curve of length $L$} if there exists a continuous
curve $\gamma: [0,1] \to \Omega$ such that $\gamma(0) \in e$, $\gamma(1) = \bx$, 
$\gamma$ intersects $L$ patches without touching any vertex and %is such that 
for any $s_1 < s_2$, the patches $\Omega_{k_i} \ni \gamma(s_i)$ satisfy
$
\VV^{0}_{k_1} \subset \VV^{0}_{k_2}.
$
Our nestedness assumption on vertices then reads as follows.
\begin{assumption} \label{as:v_nested}
	For any interior vertex $\bv$, $\cK(\bv)$ contains exactly four patches 
	and there exists an edge $e^*(\bv) \in \cE(\bv)$ such that 
	any $\bx \in \Omega(\bv)$ can be connected to $e^*(\bv)$ with a 
	monotonic curve of length $L \le 2$: 
	we call such an edge a {\em coarse edge} of $\bv$.
	For any boundary vertex, $\cK(\bv)$ contains no more than four patches, and
	% the following nestedness assumption:
	\begin{itemize}
		\item in the inhomogeneous case \eqref{dR}, there also exists a coarse
		edge $e^*(\bv)$ in the sense just defined,
		\item in the homogeneous case \eqref{dR0}, any 
		$\bx \in \Omega(\bv)$ can be connected to some 
		boundary edge $e =e(\bx) \in \cE(\bv)$ with a monotonic curve of length $L \le 2$.
	\end{itemize}
\end{assumption}

\begin{figure}[ht]
	\subfloat[inhomogeneous boundary]
	{
   \fontsize{18pt}{22pt}\selectfont
	\resizebox{!}{0.25\textheight}{%% Creator: Inkscape 1.2.2 (b0a8486541, 2022-12-01), www.inkscape.org
%% PDF/EPS/PS + LaTeX output extension by Johan Engelen, 2010
%% Accompanies image file 'vertex_partition_inhom_bc.pdf' (pdf, eps, ps)
%%
%% To include the image in your LaTeX document, write
%%   \input{<filename>.pdf_tex}
%%  instead of
%%   \includegraphics{<filename>.pdf}
%% To scale the image, write
%%   \def\svgwidth{<desired width>}
%%   \input{<filename>.pdf_tex}
%%  instead of
%%   \includegraphics[width=<desired width>]{<filename>.pdf}
%%
%% Images with a different path to the parent latex file can
%% be accessed with the `import' package (which may need to be
%% installed) using
%%   \usepackage{import}
%% in the preamble, and then including the image with
%%   \import{<path to file>}{<filename>.pdf_tex}
%% Alternatively, one can specify
%%   \graphicspath{{<path to file>/}}
%% 
%% For more information, please see info/svg-inkscape on CTAN:
%%   http://tug.ctan.org/tex-archive/info/svg-inkscape
%%
\begingroup%
  \makeatletter%
  \providecommand\color[2][]{%
    \errmessage{(Inkscape) Color is used for the text in Inkscape, but the package 'color.sty' is not loaded}%
    \renewcommand\color[2][]{}%
  }%
  \providecommand\transparent[1]{%
    \errmessage{(Inkscape) Transparency is used (non-zero) for the text in Inkscape, but the package 'transparent.sty' is not loaded}%
    \renewcommand\transparent[1]{}%
  }%
  \providecommand\rotatebox[2]{#2}%
  \newcommand*\fsize{\dimexpr\f@size pt\relax}%
  \newcommand*\lineheight[1]{\fontsize{\fsize}{#1\fsize}\selectfont}%
  \ifx\svgwidth\undefined%
    \setlength{\unitlength}{315.62072754bp}%
    \ifx\svgscale\undefined%
      \relax%
    \else%
      \setlength{\unitlength}{\unitlength * \real{\svgscale}}%
    \fi%
  \else%
    \setlength{\unitlength}{\svgwidth}%
  \fi%
  \global\let\svgwidth\undefined%
  \global\let\svgscale\undefined%
  \makeatother%
  \begin{picture}(1,0.65519165)%
    \lineheight{1}%
    \setlength\tabcolsep{0pt}%
    \put(0,0){\includegraphics[width=\unitlength,page=1]{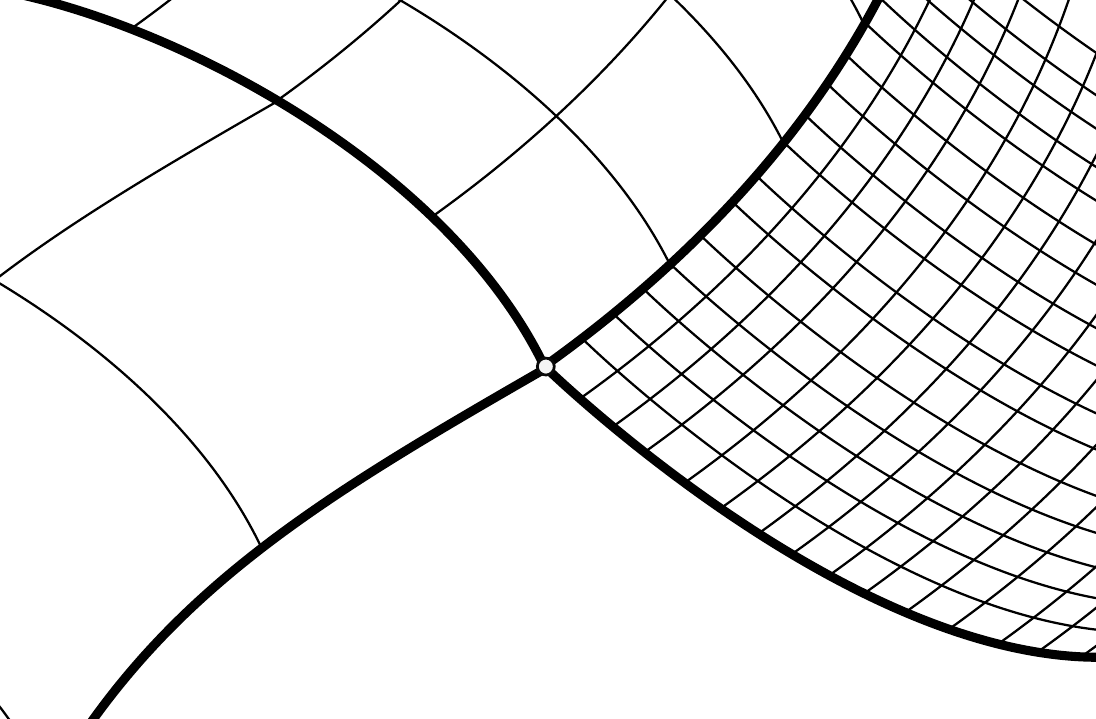}}%
    \put(0.48873267,0.3519984){\makebox(0,0)[lt]{\lineheight{1.25}\smash{\begin{tabular}[t]{l}$\bv$\end{tabular}}}}%
    \put(0.45873267,0.23019984){\makebox(0,0)[lt]{\lineheight{1.25}\smash{\begin{tabular}[t]{l}$\partial\Omega$\end{tabular}}}}%
    \put(0.134382,0.35254007){\makebox(0,0)[lt]{\lineheight{1.25}\smash{\begin{tabular}[t]{l}$\colorbox{white}{\boxed{k^l_1(\bv)}}$\end{tabular}}}}%
    \put(0.43268163,0.54221533){\makebox(0,0)[lt]{\lineheight{1.25}\smash{\begin{tabular}[t]{l}$\colorbox{white}{\boxed{k^r_1(\bv)}}$\end{tabular}}}}%
    \put(0.70505233,0.31200439){\makebox(0,0)[lt]{\lineheight{1.25}\smash{\begin{tabular}[t]{l}$\colorbox{white}{\boxed{k^r_2(\bv)}}$\end{tabular}}}}%
    \put(0.04496137,0.56390765){\makebox(0,0)[lt]{\lineheight{1.25}\smash{\begin{tabular}[t]{l}$e^*(\bv)$\end{tabular}}}}%
    \put(0,0){\includegraphics[width=\unitlength,page=2]{plots/vertex_partition_inhom_bc.pdf}}%
  \end{picture}%
\endgroup%
}
	   }
   % \hspace{5pt}
	   % \centerhfill
	   \\
   \subfloat[homogeneous boundary]	
   {
   \fontsize{18pt}{22pt}\selectfont
   \resizebox{!}{0.25\textheight}{%% Creator: Inkscape 1.2.2 (b0a8486541, 2022-12-01), www.inkscape.org
%% PDF/EPS/PS + LaTeX output extension by Johan Engelen, 2010
%% Accompanies image file 'vertex_partition_hom_bc.pdf' (pdf, eps, ps)
%%
%% To include the image in your LaTeX document, write
%%   \input{<filename>.pdf_tex}
%%  instead of
%%   \includegraphics{<filename>.pdf}
%% To scale the image, write
%%   \def\svgwidth{<desired width>}
%%   \input{<filename>.pdf_tex}
%%  instead of
%%   \includegraphics[width=<desired width>]{<filename>.pdf}
%%
%% Images with a different path to the parent latex file can
%% be accessed with the `import' package (which may need to be
%% installed) using
%%   \usepackage{import}
%% in the preamble, and then including the image with
%%   \import{<path to file>}{<filename>.pdf_tex}
%% Alternatively, one can specify
%%   \graphicspath{{<path to file>/}}
%% 
%% For more information, please see info/svg-inkscape on CTAN:
%%   http://tug.ctan.org/tex-archive/info/svg-inkscape
%%
\begingroup%
  \makeatletter%
  \providecommand\color[2][]{%
    \errmessage{(Inkscape) Color is used for the text in Inkscape, but the package 'color.sty' is not loaded}%
    \renewcommand\color[2][]{}%
  }%
  \providecommand\transparent[1]{%
    \errmessage{(Inkscape) Transparency is used (non-zero) for the text in Inkscape, but the package 'transparent.sty' is not loaded}%
    \renewcommand\transparent[1]{}%
  }%
  \providecommand\rotatebox[2]{#2}%
  \newcommand*\fsize{\dimexpr\f@size pt\relax}%
  \newcommand*\lineheight[1]{\fontsize{\fsize}{#1\fsize}\selectfont}%
  \ifx\svgwidth\undefined%
    \setlength{\unitlength}{315.62072754bp}%
    \ifx\svgscale\undefined%
      \relax%
    \else%
      \setlength{\unitlength}{\unitlength * \real{\svgscale}}%
    \fi%
  \else%
    \setlength{\unitlength}{\svgwidth}%
  \fi%
  \global\let\svgwidth\undefined%
  \global\let\svgscale\undefined%
  \makeatother%
  \begin{picture}(1,0.65519165)%
    \lineheight{1}%
    \setlength\tabcolsep{0pt}%
    \put(0,0){\includegraphics[width=\unitlength,page=1]{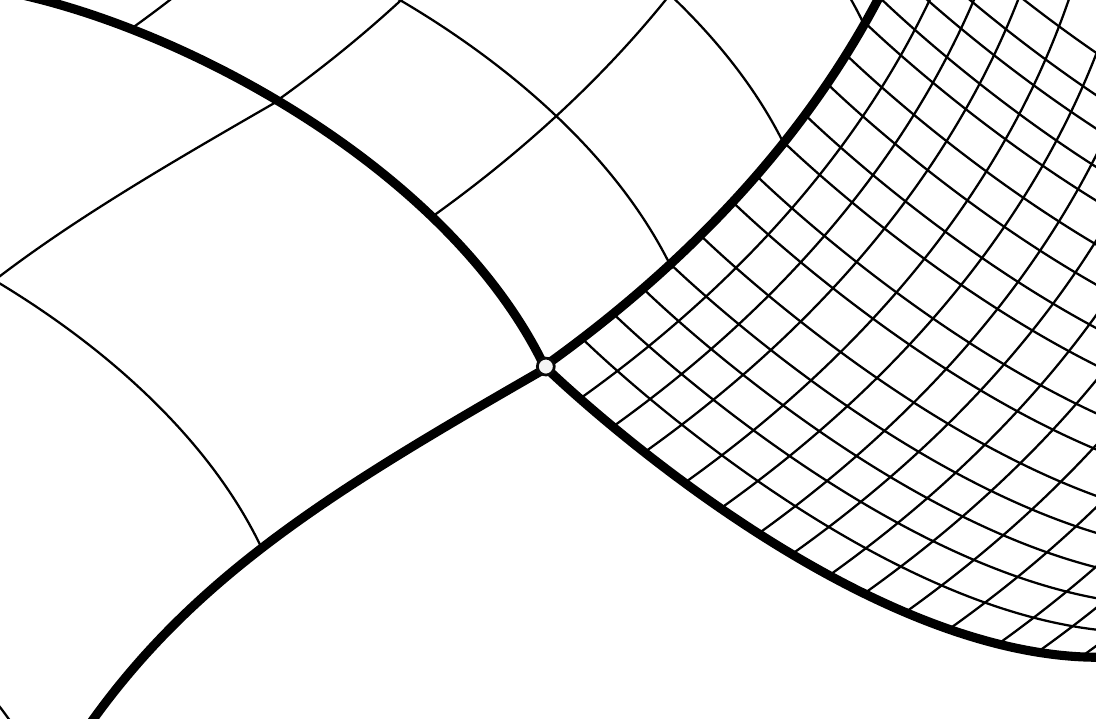}}%
    \put(0.48873267,0.3519984){\makebox(0,0)[lt]{\lineheight{1.25}\smash{\begin{tabular}[t]{l}$\bv$\end{tabular}}}}%
    \put(0.45873267,0.23019984){\makebox(0,0)[lt]{\lineheight{1.25}\smash{\begin{tabular}[t]{l}$\partial\Omega$\end{tabular}}}}%
    \put(0.134382,0.35254007){\makebox(0,0)[lt]{\lineheight{1.25}\smash{\begin{tabular}[t]{l}$\colorbox{white}{\boxed{k^r_1(\bv)}}$\end{tabular}}}}%
    \put(0.43268163,0.54221533){\makebox(0,0)[lt]{\lineheight{1.25}\smash{\begin{tabular}[t]{l}$\colorbox{white}{\boxed{k^r_2(\bv)}}$\end{tabular}}}}%
    \put(0.70505233,0.31200439){\makebox(0,0)[lt]{\lineheight{1.25}\smash{\begin{tabular}[t]{l}$\colorbox{white}{\boxed{k^l_1(\bv)}}$\end{tabular}}}}%
    \put(0,0){\includegraphics[width=\unitlength,page=2]{plots/vertex_partition_hom_bc.pdf}}%
  \end{picture}%
\endgroup%
}
   \label{fig:type_hom}
   }

	 \caption{Adjacent patches around a boundary vertex $\bv$
	 and dashed curves connecting arbitrary points $\bx \in \Omega(\bv)$
	 to (\textsc{a}) one coarse edge $e^*(\bv)$ in the inhomogeneous case, and
	 (\textsc{b}) boundary edges in the homogeneous case,
	 according to Assumption~\ref{as:v_nested}.
	 In each case, we show a decomposition of the form \eqref{cK_seqs}
	 for the adjacent nested patches.
	 }
   \label{fig:bound_ver_part}
\end{figure}

An illustration is provided in Figure~\ref{fig:ver_part} for interior
vertices, and Figure~\ref{fig:bound_ver_part} for boundary vertices.

This assumption allows to decompose any $\cK(\bv)$ in two
sequences of adjacent patches with nested resolutions 
(one of which may be empty), namely
\begin{equation} \label{cK_seqs}
	\cK(\bv) = \{k^l_i(\bv) : 1 \le i \le n(\bv,l)\} \cup 
	\{k^r_i(\bv) : 1 \le i \le n(\bv,r)\}
\end{equation}
(rotating left and right, for instance)
with integers $0 \le n(\bv,l), n(\bv,r) \le 2$, such
that if $n(\bv,s) = 2$ for $s = l$ or $r$, the patches 
$\Omega_{k^s_1(\bv)}$ and $\Omega_{k^s_2(\bv)}$ %(if both exist)
are adjacent and
their FEM spaces satisfy
\begin{equation*}
\VV^{0}_{k^s_{1}(\bv)} \subset \VV^{0}_{k^s_{2}(\bv)}. % \quad \text{ for $s = l$ or $r$}
\end{equation*}
If $\bv$ is an interior vertex, we further have $n(\bv,l)= n(\bv,r) = 2$ 
and the coarse, resp. fine patches of both sequences  
(i.e. the patches $k^l_i(\bv)$ and $k^r_i(\bv)$ for $i = 1$, resp. $i = 2$) must be adjacent.
If $\bv$ is a boundary vertex then different configurations may occur:
if $\bv$ is shared by one or two patches only, then 
we can decompose $\cK(\bv)$ in a single sequence of nested, adjacent patches.
If $\bv$ is shared by three or four patches then both sequences are non-empty, and:
\begin{itemize}
	\item in the inhomogeneous case \eqref{dR}, the coarser patches 
	($k^l_1(\bv)$ and $k^r_1(\bv)$) must be adjacent 
	% and again we denote their common edge by $e^*(\bv$).
	but the finer ones ($k^l_{n(\bv,l)}(\bv)$ and $k^r_{n(\bv,r)}(\bv)$) are not.
	\item in the homogeneous case \eqref{dR0}, the finer patches must be adjacent, 
	while the coarser ones are not.
\end{itemize}
% These patches are adjacent and nested in the following sense:
% \begin{itemize}
% 	\item if both patches
% 
% 	\item if both sequences are non-empty ($n^s(\bv) \ge 1$ for both $s = l$ and $r$), 
% 	then %we further assume that 
% 	the patches $k^l_1(\bv)$ and $k^r_1(\bv)$ must also 
% 	share an edge which is denoted $e^*(\bv)$.
% \end{itemize}
% Note that if only one sequence is non-empty ($n^s(\bv) \ge 1$ for either $s = l$ or $r$,
% which can only happen for boundary vertices), 
% then its first patch $k^s_1(\bv)$ has two edges contiguous to $\bv$ which we also 
% denote by $e^s(\bv)$ and $e^*(\bv)$: if $n^s(\bv) = 2$ then $e^s(\bv)$ is shared by
% two patches as described above while $e^*(\bv)$ is a boundary edge, 
% and if $n^s(\bv) = 1$ then both edges are on the boundary.

For interior and inhomogeneous boundary vertices, 
we denote for later purpose by $k^*(\bv)$ the index of 
one (coarse) patch adjacent to the coarse edge $e^*(\bv)$.
For boundary vertices in the homogeneous case \eqref{dR0} 
these coarse edges and patches are left undefined.

\subsection{Patch-wise differential operators}

Using the pushforward and pullback we also define patch-wise gradient operators.
Given $d \in \{1,2\}$, we define the single-patch directional gradients as
\begin{equation} \label{nabla_kd}
	\nabla^k_d : V^{0}_k \to V^{1}_k, 
	\qquad \phi \mapsto \cF^1_k \big( \hat \btau_d \hat \partial_d (\cF^0_k)^{-1}(\phi)
\end{equation}
where $\hat \btau_d$ is the unit vector of $\RR^2$ along $\hx_d$,
and we set $\nabla^k \coloneqq \nabla^k_1 + \nabla^k_2$.
Like $\cF^1_k$, these single-patch gradients are implicitly extended 
by zero outside their patch. The patch-wise (broken) gradient is then
\begin{equation} \label{nabla_pw}
	\nabla_\pw : V^{0}_\pw \to V^{1}_\pw, 
	\qquad \phi \mapsto 
	\sum_{k \in \cK} \nabla^k (\phi|_{\Omega_k}).
	% \sum_{k \in \cK, d \in \{1,2\}} \nabla^k_d (\phi|_{\Omega_k}).
\end{equation}
Finally for an edge $e \in \cE$, we define broken gradients
along the parallel and perpendicular directions: for $d \in \{\parallel,\perp\}$,
\begin{equation} \label{nabla_ed}
	\nabla^e_d : V^{0}_\pw \to V^{1}_\pw,
	\quad
	\phi \mapsto \sum_{k\in \cK(e)} \cF^1_k \big( \hat \btau^k_d \hat \partial_d (\cF^0_k)^{-1}(\phi|_{\Omega_k})\big)
\end{equation}
where $\hat \btau^k_d$ is the unit vector of $\RR^2$ along $\hx^k_d$, the 
parallel or perpendicular logical directions respective to $e$ according to \eqref{ppe}.
Observe that these operators satisfy
\begin{equation} \label{sum_nabla_ked}
	\nabla^e_\parallel + \nabla^e_\perp = \sum_{k \in \cK(e)} \nabla^k %_1 + \nabla^k_2 = 
	= \nabla_\pw
	\quad 
	\text{ on } \quad \Omega(e), % = \cup_{k\in \cK(e)} \Omega_k.
\end{equation}
see \eqref{Omega_e}.
To design our commuting projection on $V^2_h$, we will need 
broken second order (mixed) derivative operators:
a single-patch operator %$$ defined as 
\begin{equation} \label{D2k}
	D^{2,k} \coloneqq \cF_k^2 \hat \partial_1 \hat \partial_2 (\cF_k^0)^{-1}
	: \quad V^0_k \to V^2_k
	% D^{2,k} \phi|_{\Omega_k} 
	% 	\coloneqq \cF_k^2\big(\hat \partial_1 \hat \partial_2 \hat \phi^k \big)
\end{equation}
and a patch-wise operator $D^{2,e} : V^0_\pw \to V^2_\pw$ 
associated with any edge $e \in \cE$.
We define the latter by its values on the local domain $\Omega(e)$, as 
% patches $\Omega_k$, $k\in \cK(e)$, as
\begin{equation} \label{D2e}
	D^{2,e} \phi|_{\Omega_k} 
		\coloneqq (\det \hat X^k_e) \cF_k^2\big(\hat \partial^k_\parallel \hat \partial^k_\perp \hat \phi^k \big)
\end{equation}
where we have denoted $\hat \phi^k \coloneqq (\cF_k^0)^{-1}\phi \in \hat V^0_k$.
Outside $\Omega(e)$, we set $D^{2,e} \phi = 0$.
Note that $\det \hat X^k_e = \pm 1$ so that $D^{2,e} \phi|_{\Omega_k}$
indeed belongs to $V^2_k$.
This operator satisfies the following relation.
\begin{lemma} \label{lem:D2e}
	Let $\psi_\parallel$, $\psi_\perp$ be functions in $V^0_\pw$ which vanish
	outside $\Omega(e)$.
	We have 
	% The patch-wise curl of
	% reads
	\begin{equation*}
	\curl_\pw \Big(\sum_{d \in \{\parallel, \perp\}} \nabla^e_d \psi_d \Big)
	= D^{2,e} (\psi_\perp - \psi_\parallel).
	\end{equation*}
\end{lemma}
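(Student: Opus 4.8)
The plan is to reduce the identity to the reference square $\hat\Omega$ on each patch separately, using the patch-wise commutation $\curl\,\cF^1_k\hat\bu = \cF^2_k\,\widehat\curl\,\hat\bu$ of the Piola pushforwards, and then to carry out an elementary computation of the reference scalar curl of the directional-gradient field. First I would note that both sides vanish outside $\Omega(e)$: the operators $\nabla^e_d$ in \eqref{nabla_ed} are sums over $k \in \cK(e)$ and hence supported in $\Omega(e)$, while $D^{2,e}$ is defined to vanish there. It therefore suffices to fix a patch $k \in \cK(e)$ and verify the identity on $\Omega_k$. Writing $\hat\psi^k_d \coloneqq (\cF^0_k)^{-1}(\psi_d|_{\Omega_k}) \in \hat V^0_k$, definition \eqref{nabla_ed} gives
\[
\Big(\sum_{d} \nabla^e_d\psi_d\Big)\Big|_{\Omega_k} = \cF^1_k\big(\hat \btau^k_\parallel\,\hat \partial_\parallel\hat\psi^k_\parallel + \hat \btau^k_\perp\,\hat \partial_\perp\hat\psi^k_\perp\big),
\]
so applying $\curl$ and the commuting relation turns the left-hand side on $\Omega_k$ into $\cF^2_k$ of the reference scalar curl of $\hat \btau^k_\parallel\,\hat \partial_\parallel\hat\psi^k_\parallel + \hat \btau^k_\perp\,\hat \partial_\perp\hat\psi^k_\perp$.

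The core is then the reference computation, which I would split according to the two orientations of $\hat e^k$ encoded in \eqref{ppe}. If $\hat e^k$ is parallel to the $\hx_1$-axis, then $(\hat \btau^k_\parallel,\hat \btau^k_\perp)=(\hat \btau_1,\hat \btau_2)$ and $(\hat \partial_\parallel,\hat \partial_\perp)=(\hat \partial_1,\hat \partial_2)$, so the field is $(\hat \partial_1\hat\psi^k_\parallel,\hat \partial_2\hat\psi^k_\perp)$ and its scalar curl $\hat \partial_1(\hat \partial_2\hat\psi^k_\perp)-\hat \partial_2(\hat \partial_1\hat\psi^k_\parallel)$ equals $\hat \partial_1\hat \partial_2(\hat\psi^k_\perp-\hat\psi^k_\parallel)$ by commutativity of the mixed partials; here $\det\hat X^k_e = +1$ and $\hat \partial^k_\parallel\hat \partial^k_\perp = \hat \partial_1\hat \partial_2$, so this matches $(\det\hat X^k_e)\,\hat \partial^k_\parallel\hat \partial^k_\perp(\hat\psi^k_\perp-\hat\psi^k_\parallel)$, which is exactly the reference expression defining $D^{2,e}(\psi_\perp-\psi_\parallel)|_{\Omega_k}$ in \eqref{D2e}. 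If instead $\hat e^k$ is parallel to the $\hx_2$-axis, the two coordinates are swapped, the field becomes $(\hat \partial_1\hat\psi^k_\perp,\hat \partial_2\hat\psi^k_\parallel)$ with scalar curl $\hat \partial_1\hat \partial_2(\hat\psi^k_\parallel-\hat\psi^k_\perp)$; now $\det\hat X^k_e = -1$, and this sign flip is precisely what rewrites $\hat\psi^k_\parallel-\hat\psi^k_\perp$ as $(\det\hat X^k_e)(\hat\psi^k_\perp-\hat\psi^k_\parallel)$, again reproducing \eqref{D2e}. Pushing forward by $\cF^2_k$ yields the claim on $\Omega_k$, and summing over $k \in \cK(e)$ completes the argument.

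The only delicate point, and the step I expect to require the most care, is this sign bookkeeping: the reference scalar curl is antisymmetric, so interchanging the parallel and perpendicular coordinates flips its sign, and one must check that the factor $\det\hat X^k_e = \pm 1$ built into the definition \eqref{D2e} of $D^{2,e}$ exactly compensates this flip, so that both orientations produce the same signed combination $\psi_\perp-\psi_\parallel$. Everything else is routine: the commutativity of the mixed partial derivatives, which is legitimate in the tensor-product logical spaces where $\hat \partial_1\hat \partial_2\hat\phi \in \hat V^2_k$, together with the patch-wise commuting property of the pushforwards recalled above.
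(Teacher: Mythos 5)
Your proposal is correct and follows essentially the same route as the paper: pull back patch-wise via the Piola commutation $\curl\,\cF^1_k = \cF^2_k\,\widehat\curl$, compute the reference scalar curl of $\sum_d \hat\btau^k_d\,\hat\partial^k_d\hat\psi^k_d$ in the two orientation cases, and observe that the antisymmetry-induced sign is exactly absorbed by the factor $\det\hat X^k_e=\pm1$ in the definition of $D^{2,e}$. The paper phrases the case split directly in terms of $\det\hat X^k_e$ rather than the axis to which $\hat e^k$ is parallel, but this is the same distinction and the computations coincide.
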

\begin{proof}
	Let $\bw \coloneqq \sum_{d \in \{\parallel, \perp\}} \nabla^e_d \psi_d$.
	On each patch $\Omega_k$, $k \in \cK(e)$, its pullback 
	 reads
	\begin{equation*}
	\hat \bw^k \coloneqq (\cF_k^1)^{-1} \bw
	= \sum_{d \in \{\parallel, \perp\}} \hat \btau^k_d \hat \partial^k_d \hat \psi^k_d
	= \begin{cases}
		(\hat \partial^k_\parallel \hat \psi^k_\parallel, 
			\hat \partial^k_\perp \hat \psi^k_\perp ) & \text{ if } \det \hat X^k_e = 1
			\\
		(	\hat \partial^k_\perp \hat \psi^k_\perp,
		  \hat \partial^k_\parallel \hat \psi^k_\parallel ) & \text{ if } \det \hat X^k_e = -1
	\end{cases}
	\end{equation*}
	where we have denoted $\hat \psi^k_d \coloneqq (\cF_k^0)^{-1} \psi_d$. 
	In particular, we have
	\begin{equation*}
	\begin{aligned}
		\curl_\pw \bw |_{\Omega_k}
		= \cF_k^2\big( \hcurl \hat \bw^k \big)
		= (\det \hat X^k_e) \cF_k^2\big(\hat \partial^k_\parallel \hat \partial^k_\perp (\hat \psi^k_\perp - \hat \psi^k_\parallel)\big)
		= D^{2,e} (\psi_\perp - \psi_\parallel) |_{\Omega_k}.
	\end{aligned}
	\end{equation*}	
\end{proof}

%%%% %%%% %%%% %%%% %%%% %%%% %%%% %%%% %%%% %%%% %%%% %%%% %%%% %%%% %%%% %%%% 
%%%% %%%% %%%% %%%% %%%% %%%% %%%% %%%% %%%% %%%% %%%% %%%% %%%% %%%% %%%% %%%% 

\section{Conforming multipatch spaces}
\label{sec:conf}

In this section, we specify a basis for the first conforming space 
$V^0_h = V^0_\pw \cap V^0$ %H^1(\Omega)$, 
and we construct several projection operators on local broken and conforming 
spaces that will play a central role in the construction of our commuting operators,
as presented in Section~\ref{sec:approach}. % $\Pi^\ell_h$, $\ell = 0, 1, 2$.

\subsection{Conforming constraints on patch interfaces}

The finite element spaces \eqref{Vh} are the maximal conforming subspaces
of the broken spaces \eqref{Vpw}, namely
\begin{equation*} %\label{Vh_sub}
	V^0_h = V^{0}_\pw \cap H^1(\Omega),
	\qquad 
	V^1_h = V^{1}_\pw \cap H(\curl;\Omega),
	\qquad 
	V^2_h = V^{2}_\pw \cap L^2(\Omega) = V^2_\pw
\end{equation*}
(in this subsection we do not consider boundary conditions for simplicity).
Since each local space $V^\ell_k$ consists of continuous functions,
the conforming subspaces 
are characterized by continuity constraints on the patch interfaces.
Specifically, a function $\phi \in V^{0}_\pw$ 
belongs to $H^1(\Omega)$, and hence to $V^0_h$, 
if and only if we have
\begin{equation} \label{V0_confcond}
	\phi|_{\Omega_{k^-}} = \phi|_{\Omega_{k^+}}
	\qquad \text{ on every } e = \partial \Omega_{k^-} \cap \partial \Omega_{k^+}
\end{equation}
and a function $\bu \in V^{1}_\pw$ belongs to $H(\curl;\Omega)$, and hence to $V^1_h$,
if and only if
\begin{equation} \label{V1_confcond}
	\btau \cdot \bu|_{\Omega_{k^-}} = \btau \cdot \bu|_{\Omega_{k^+}} 
	\qquad \text{ on every } e = \partial \Omega_{k^-} \cap \partial \Omega_{k^+}
\end{equation}
where $\btau$ denotes an arbitrary vector tangent to the interface.
For the last space of the sequence there are no constraints since $V^2_h = V^2_\pw$.

It is possible to reformulate these interface constraints on the pullback fields.
To do so, we consider the parametrization of an edge $e \in \cE$
according to the $k^-$ and $k^+$ patches, 
% to both its patches its coarse ($k^-$) patch, 
namely
\begin{equation} \label{x_ek}
	\bx^k_e : [0,1] \to e, \quad z \mapsto F_{k}(\hat \bx_e^k(z)) %X^k_e(z, \hat e^k_\perp)
	\quad	\text{ with } \quad 
	\hat \bx_e^k(z) \coloneqq \hat X^k_e(z, \hat e^k_\perp)
\end{equation}
% \begin{equation} \label{x_e-}
% 	% \left\{
% 	% \begin{aligned}
% 	% 	&\bx_e : [0,1] \to e, \quad z \mapsto F_{k^-}(\hat \bx_e^-(z))
% 	% 	\\
% 	% 	% \quad 
% 	% 	&\text{ with }
% 	% 	\hat \bx_e^-(z) \coloneqq \hat X^-_e(z, e^-_\perp)
% 	% \end{aligned}\right.
% 	\bx_e : [0,1] \to e, \quad z \mapsto F_{k^-}(\hat \bx_e^-(z))
% 	\quad	\text{ with } \quad 
% 	\hat \bx_e^-(z) \coloneqq \hat X^-_e(z, \hat e^-_\perp)
% \end{equation}
where $\hat e^k_\perp$ is as in \eqref{e_perp}.
% The associated parametrization the logical interface for the $k^+$ side
% On the fine ($k^+$) patch we then let
% \begin{equation} \label{x_e+}
% 	\hat \bx_e^+(z) \coloneqq (F_{k^+})^{-1}(\bx_e(z)).
% \end{equation}
We remind that the continuity assumption on the mappings (see Section~\ref{sec:geom})
implies that these parametrizations coincide up to a possible change in orientation,
namely an affine bijection $\eta_e : [0,1] \mapsto [0,1]$ such that
\begin{equation} \label{cont_param}
	\bx_e^-(z) = \bx_e^+(\eta_e(z))
	% \hat X^k_e(\eta_e(z), \hat e^k_\perp)
	\quad \text{ where } \quad %k \in \{k^-,k^+\}
	\eta_e(z) \coloneqq \begin{cases}
		z &\text{ if orientations coincide}
		\\
		1-z &\text{ if they differ.}
	\end{cases}	
\end{equation}
For later purpose, we denote
\begin{equation} \label{eta}
	\eta_e^-(z) \coloneqq z
	\quad \text{ and } \quad 
	\eta_e^+(z) \coloneqq \eta_e(z).
\end{equation}
The continuity condition \eqref{V0_confcond} 
expressed on the pullbacks $\hat \phi^k \coloneqq (\cF^0_k)^{-1}(\phi|_{\Omega_k})$ then reads
\begin{equation} \label{V0_confcond_ref}
 \hat \phi^-(\hbx_e^-(z)) = \hat \phi^+(\hbx_e^+(\eta_e(z))), \quad z \in [0,1].
\end{equation}

To specify the curl-conforming condition \eqref{V1_confcond},
we consider the tangent vectors to $e$ oriented according to the
$k^-$ and $k^+$ patches, namely
% \begin{equation*}
%  % \btau_e = \btau^-_e \coloneqq \frac{\tilde \btau^-_e}{\norm{\tilde \btau^-_e}}
%  % \quad \text{ with } \quad 
%  % \tilde \btau^-_e(\bx) \coloneqq DF_{k^-}(\hbx^-) \hat \btau^-_\parallel
% \end{equation*}
\begin{equation*}
\btau^k_e(\bx) \coloneqq \frac{\dd \bx^k_e(z)}{\dd z} = DF_{k}(\hbx^k_e(z)) \hat \btau^k_\parallel
\end{equation*}
where $\bx = \bx^k_e(z) = F_{k}(\hbx^k_e(z)) \in e$ and 
$\hat \btau^k_\parallel$ is the 
positive unit vector parallel to the reference edge $\hat e^k = F_{k}^{-1}(e)$.
% Similarly, we consider the tangent vector oriented according to the $k^+$ side, 
% \begin{equation*}
% \tilde \btau^+_e(\bx) \coloneqq DF_{k^+}(\hbx^+) \hat \be^+_\parallel
% \end{equation*}
% with $\hbx^+ = F_{k^+}^{-1}(\bx)$,
According to \eqref{cont_param}, these vectors %$\tilde \btau_{k^-}$ and $\tilde \btau_{k^+}$	
coincide up to their orientation, namely
\begin{equation} \label{tau+-}
	\btau^+_e(\bx) = (\eta_e)' \btau^-_e(\bx)
	\quad \text{ with } \quad (\eta_e)' = \pm 1.	
\end{equation}
% corresponds to the relative orientation of the interface in the $k^+$ patch.
% We also observe that, for $k = k^\pm$ and $\bx = F_k(\hbx^k_\Gamma) \in \Gamma$, 
% the scaled tangent component of $\bu = \cF^1_k \hat \bu^k$ reads
% \begin{equation} \label{tau.u}
% 	(\tilde \btau^k_\Gamma \cdot \bu)(\bx) = (DF_k \hat \be^k_\parallel) \cdot (DF_k)^{-T} \hat \bu^k(\hbx^k_\Gamma)
% 	= \hat \be^k_\parallel \cdot \hat \bu^k(\hbx^k_\Gamma)
% \end{equation}
% in particular,
% \begin{equation} \label{tau.Feperp}
% 	\tilde \btau^k_\Gamma \cdot \cF^1_k \hat \be^k_\perp = (DF_k \hat \be^k_\parallel) \cdot (DF_k)^{-T} \hat \be^k_\perp 
% 	= \hat \be^k_\parallel \cdot  \hat \be^k_\perp = 0.
% \end{equation}
% 
% and using \eqref{tau+-} and \eqref{tau.u} we find that
%
Expressed on the pullbacks 
$\hat \bu^k(\hbx^k) \coloneqq (\cF^1_k)^{-1}(\bu|_{\Omega_k})(\hbx^k) 
= DF_k^T(\hbx^k) \bu|_{\Omega_{k}}(\hbx)$, 
the curl-conformity condition \eqref{V1_confcond} then reads
\begin{equation} \label{V1_confcond_ref}
\hat \btau^-_\parallel \cdot \hat \bu^-(\hbx^-_e(z))
	= (\eta_e)' \hat \btau^+_\parallel \cdot \hat \bu^+(\hbx^+_e(z)), \quad z \in [0,1].
\end{equation}

\subsection{Conforming basis functions in $V^0_h$}
\label{sec:conf_bf}

In the inhomogeneous case \eqref{dR}, a basis for the conforming space $V^0_h$ 
can be obtained as a collection of % the form
\begin{itemize}
	\item single-patch basis functions associated with interior indices
	\begin{equation} \label{Lk0}
		\Lambda^{k}_{\bi} \in H^1(\Omega): ~ k \in \cK, ~~ \bi \in \{1, \dots, n_k-1\}^2 %\cI^k_0		
	\end{equation}
	which are supported in $\Omega_k$ and vanish on $\partial \Omega_k$,
	% which are extended by zero outside the patch $\Omega_k$.
	% and vanish on the boundary $\partial \Omega_k$,
	\item edge-continuous basis functions
	\begin{equation} \label{Le0}
	\Lambda^{e}_{i} \in H^1(\Omega): ~ e \in \cE, ~~ i \in \{1, \dots, n_e-1\}
	\end{equation}
	which are supported in $\Omega(e)$ and vanish on $\partial \Omega(e) \setminus e$,
	see \eqref{Omega_e},
	\item vertex-continuous basis functions 
	\begin{equation} \label{Lv}
		\Lambda^{\bv} \in H^1(\Omega): ~ \bv \in \cV
	\end{equation}
	which are supported in 
	$\Omega(\bv)$ and vanish on $\partial \Omega(\bv) \cap \Omega$, see \eqref{Omega_v}.
\end{itemize}
Here the single-patch basis functions $\Lambda^k_\bi$
have been defined in Section~\ref{sec:bbf}. We now define the edge and 
vertex-continuous ones.

\begin{remark}
	In the homogeneous case \eqref{dR0}, the same basis can be used
	without the functions $\Lambda^{e}_{i} $ and $\Lambda^{\bv}$ associated 
	with boundary edges and vertices.
\end{remark}

\subsubsection{Edge-based conforming basis functions.}

% \begin{itemize}
	% \item
		For $e \in \cE$ which is either an interior edge or a boundary edge 
		in the inhomogeneous case \eqref{dR},
		and for an index $i \in \{0, \dots, n_e\}$ with $n_e \coloneqq n_{k^-(e)}$,
		we define the edge-continuous function %$\Lambda^{e}_{i}$ as
		\begin{equation} \label{Lambda_ei}
			\Lambda^{e}_{i}(\bx) \coloneqq \begin{cases}
				\hat \lambda^{-}_{i}(\hx^-_\parallel)\hat \lambda^{-}_{i^-_\perp(e)}(\hx^-_\perp)
				&\text{ on $\Omega_k$ with $k = k^-(e)$}
				\\
				\hat \lambda^{-}_{i}(\eta_e(\hx^+_\parallel))\hat \lambda^{+}_{i^+_\perp(e)}(\hx^+_\perp)
				&\text{ on $\Omega_k$ with $k = k^+(e)$}
				\\
				0 &\text{ elsewhere}
			\end{cases}
		\end{equation}
		where $X^k_e(\hx^k_\parallel, \hx^k_\perp) = \bx$ for $k \in \cK(e)$, 
		see \eqref{Xke}, and 
		\begin{equation} \label{ikpe}
				i^k_\perp(e) \coloneqq n_k \hat e^k_\perp \in \{0, n_k\}
		\end{equation}
		is the index corresponding to the constant (perpendicular) coordinate 
		of $e$ in the patch $k \in \cK(e)$, see \eqref{e_perp}.		
		These functions are continuous across $e$, i.e. their values
		on the adjacent patches $k^-(e)$ and $k^+(e)$ coincide on $e$. 
		Those with indices $i \in \{1, \dots, n_e-1\}$ 
		further vanish on 
		$\partial \Omega(e) \setminus e$ as mentioned above, 
		so they are actually continuous inside $\Omega$.
		% all the other (closed) edges of patches $k^\pm(e)$.
		Their supports $S^{e}_i \coloneqq \supp(\Lambda^{e}_i)$ 
		are of the form
		\begin{equation} \label{Sei}
		S^{e}_i = \Int ( \bar S^{e,-}_i \cup \bar S^{e,+}_i )
		\subset \Omega(e)
		\end{equation}
		% see \eqref{Omega_e}, 
		where we have set
		$
		S^{e,k}_i \coloneqq \{X^k_e(\hx_\parallel, \hx_\perp) \in \Omega_k: 
				\eta^k_e(\hx_\parallel) \in S^{-}_{i}, % [(j-1) \hat h_e, j \hat h_e], \abs{\hx_\perp - \hat e^k_\perp} \le \hat h_e
				\hx_\perp \in S^{k}_{i^k_\perp(e)} \}.
		$
		
\subsubsection{Vertex-based conforming basis functions.}
	
For $\bv \in \cV$ which is either an interior vertex or a boundary vertex 
in the inhomogeneous case \eqref{dR},
we define the vertex-continuous function
% \begin{equation} \label{Lv}
% 	\Lambda^{\bv}(\bx) \coloneqq \begin{cases}
% 		\hat \lambda^{e_1}_{i^{e_1}(\bv)}(\eta^k_{e_1}(\hx_1)) \hat \lambda^{k}_{i^k_2(\bv)}(\hx_2)
% 		&
% 		\\ 
% 		\mspace{60mu} 
% 		+ \hat \lambda^{k}_{i^k_1(\bv)}(\hx_1) \hat \lambda^{e_2}_{i^{e_2}(\bv)}(\eta^k_{e_2}(\hx_2))
% 		- \hat \Lambda^{k}_{\bi^k(\bv)}(\hbx)
% 		&\text{ on $\Omega_k$, $k \in \cK(\bv)$}, 
% 		\\
% 		0 &\text{ elsewhere}
% 	\end{cases}			
% \end{equation}
\begin{equation} \label{Lv_def}
	\Lambda^{\bv}
			\coloneqq \sum_{e \in \cE(\bv)}\Lambda^{e}_{\bv} - \sum_{k\in \cK(\bv)} \Lambda^{k}_{\bv}
\end{equation}
where we have denoted 
\begin{equation} \label{Lke_v}
	\Lambda^{e}_{\bv} \coloneqq \Lambda^{e}_{i^e(\bv)}
	\qquad \text{ and } \qquad 
	\Lambda^{k}_{\bv} \coloneqq \Lambda^{k}_{\bi^k(\bv)}.
\end{equation}
Here, %$\bi^k(\bv)$ is the local index corresponding to vertex $\bv$,
\begin{equation} \label{ikv}
\bi^k(\bv) \coloneqq n_k \hat \bv^k \in \{0,n_k\}^2
\end{equation}
is the index of $\bv$ in the patch $k$, 
with $\hat \bv^k \coloneqq F_k^{-1}(\bv)$, %the reference vertex on the logical patch.
and 
$i^e(\bv) \coloneqq i^{k^-(e)}_\parallel(\bv)$ is the $e$-parallel component of that 
index for the patch $k=k^-(e)$.
The function $\Lambda^{\bv}$ is then supported in the domain
$\Omega(\bv)$, see \eqref{Omega_v},
and we claim that it is continuous on every edge $e$ contiguous to $\bv$.
Indeed, from the definition \eqref{Lambda_ei}, we observe that for two distinct edges
$e \neq e'$ contiguous to $\bv$ in a patch $k \in \cK(\bv)$, we have
\begin{equation*}
\Lambda^{e'}_\bv |_{e} = \Lambda^k_\bv |_{e}.
\end{equation*}
By applying this to \eqref{Lv_def} we find that
\begin{equation} \label{Levee}
	\Lambda^{\bv}|_e = \Lambda^{e}_{\bv}|_e \qquad \text{ for all $e \in \cE(\bv)$}
\end{equation}
which proves our claim.
As $\Lambda^{\bv}$ vanishes on every edge $e \notin \cE(\bv)$, it is actually continuous 
over the whole domain $\Omega$
and its support satisfies
\begin{equation} \label{Sv}
\supp(\Lambda^{\bv}) \subset S^{\bv} \coloneqq \big(\cup_{e \in \cE(\bv)}  S^{e}_{i^e(\bv)} \big) \bigcup 
\big(\cup_{k \in \cK(\bv)}  S^{k}_{\bi^k(\bv)} \big)\subset \Omega(\bv)
\end{equation}
see \eqref{Omega_v}. 
% Note that $S^{\bv}$ may not be patch-wise Cartesian as shown for example in Figure~\ref{fig:v_ext}.
For later purposes we define 
\begin{equation} \label{he}
	\hat h_e \coloneqq \min_{k \in \cK(e)} \diam(\hat S^{k}_{i^k_\perp(e)})
	\quad \text{ and } \quad 
	h_e \coloneqq \min_{k \in \cK(e)} h_k
\end{equation}
and
\begin{equation} \label{hv}
	\hat h_\bv \coloneqq \min_{k \in \cK(\bv), d \in \{1,2\}} \diam(\hat S^{k}_{i^k_d(\bv)})
	\quad \text{ and } \quad 
	h_\bv \coloneqq \min_{k \in \cK(\bv)} h_k 	.
\end{equation}
Note that the local quasi-uniformity assumption \eqref{sdr} and the 
regularity \eqref{boundsDF} yields 
$\hat h_g \sim \hat h_k$, as well as $h_g \sim H_k \hat h_g$, 
for any $g = e$ or $\bv$ and any contiguous patch $k \in \cK(g)$.
Using \eqref{l_norm}, \eqref{S_size} and the scaling relations \eqref{L2_scale_pf}
we also find that both edge and vertex-continuous basis functions satisfy the a priori bounds 
\begin{equation} \label{bound_Lei_Lv}
	\norm{\Lambda^{e}_i}_{L^p} \lesssim h_e^{2/p}
	\quad \text{ and } \quad 
	\norm{\Lambda^{\bv}}_{L^p} \lesssim h_\bv^{2/p}.
\end{equation}

\subsection{Edge and vertex-based domain extension operators}

Given $\omega \subset \Omega(g)$ in an edge-domain (for $g=e \in \cE$) or 
a vertex-domain (for $g = \bv \in \cV$), we now define a domain extension 
$E_g(\omega) \subset \Omega(g)$.
The process is similar to the one used for single-patch domain extensions $E_k(\omega)$
defined in \eqref{Ek_ext}, but we add a few requirements
which will be convenient for studying the locality 
of our antiderivative operators: specifically, we ask that the extension is
(i) {\em patch-wise Cartesian} 
in the sense that each restriction $E^k_g(\omega) \coloneqq E_g(\omega)|_{\Omega_k}$, $k\in \cK(g)$,
is the image by $F_k$ of a convex Cartesian domain in $\hat \Omega$,
(ii) {\em continuous} across every edge $e'$ inside $\Omega(g)$,
in the sense that the closed domains $\bar E^k_g(\omega)$, $k \in \cK(e')$,
coincide on $e'$.
In addition, we ask that (iii) the overlapping supports of the 
{\em conforming basis functions} are also included in these extensions. 

As a result, we define 
the edge-based extension $E_e(\omega)$ 
of some domain $\omega \subset \Omega(e)$
as the smallest patch-wise Cartesian domain that 
is continuous across $e$ in the above sense, % of Definition~\ref{def:cartdom},
and that satisfies
\begin{equation} \label{Ee_ext}
	E_e(\omega) \supset \Big(\cup_{i \in \cI^e(\omega)} S^{e}_i 
			\Big) \bigcup \Big(\cup_{k \in \cK(e)} E_k(\omega)\Big)
\end{equation}
where we have denoted $\cI^e(\omega) = \{i \in \{0, \dots, n_e\} : S^{e}_i \cap \omega \neq \emptyset\}$. 
An illustration is provided in Figure~\ref{fig:e_ext}.

\begin{figure}[ht]
	\centering 
	 {
		\fontsize{18pt}{22pt}\selectfont
	 \resizebox{!}{0.25\textheight}{%% Creator: Inkscape 1.2.2 (b0a8486541, 2022-12-01), www.inkscape.org
%% PDF/EPS/PS + LaTeX output extension by Johan Engelen, 2010
%% Accompanies image file 'dom_ext_e.pdf' (pdf, eps, ps)
%%
%% To include the image in your LaTeX document, write
%%   \input{<filename>.pdf_tex}
%%  instead of
%%   \includegraphics{<filename>.pdf}
%% To scale the image, write
%%   \def\svgwidth{<desired width>}
%%   \input{<filename>.pdf_tex}
%%  instead of
%%   \includegraphics[width=<desired width>]{<filename>.pdf}
%%
%% Images with a different path to the parent latex file can
%% be accessed with the `import' package (which may need to be
%% installed) using
%%   \usepackage{import}
%% in the preamble, and then including the image with
%%   \import{<path to file>}{<filename>.pdf_tex}
%% Alternatively, one can specify
%%   \graphicspath{{<path to file>/}}
%% 
%% For more information, please see info/svg-inkscape on CTAN:
%%   http://tug.ctan.org/tex-archive/info/svg-inkscape
%%
\begingroup%
  \makeatletter%
  \providecommand\color[2][]{%
    \errmessage{(Inkscape) Color is used for the text in Inkscape, but the package 'color.sty' is not loaded}%
    \renewcommand\color[2][]{}%
  }%
  \providecommand\transparent[1]{%
    \errmessage{(Inkscape) Transparency is used (non-zero) for the text in Inkscape, but the package 'transparent.sty' is not loaded}%
    \renewcommand\transparent[1]{}%
  }%
  \providecommand\rotatebox[2]{#2}%
  \newcommand*\fsize{\dimexpr\f@size pt\relax}%
  \newcommand*\lineheight[1]{\fontsize{\fsize}{#1\fsize}\selectfont}%
  \ifx\svgwidth\undefined%
    \setlength{\unitlength}{315.62072754bp}%
    \ifx\svgscale\undefined%
      \relax%
    \else%
      \setlength{\unitlength}{\unitlength * \real{\svgscale}}%
    \fi%
  \else%
    \setlength{\unitlength}{\svgwidth}%
  \fi%
  \global\let\svgwidth\undefined%
  \global\let\svgscale\undefined%
  \makeatother%
  \begin{picture}(1,0.57894737)%
    \lineheight{1}%
    \setlength\tabcolsep{0pt}%
    \put(0,0){\includegraphics[width=\unitlength,page=1]{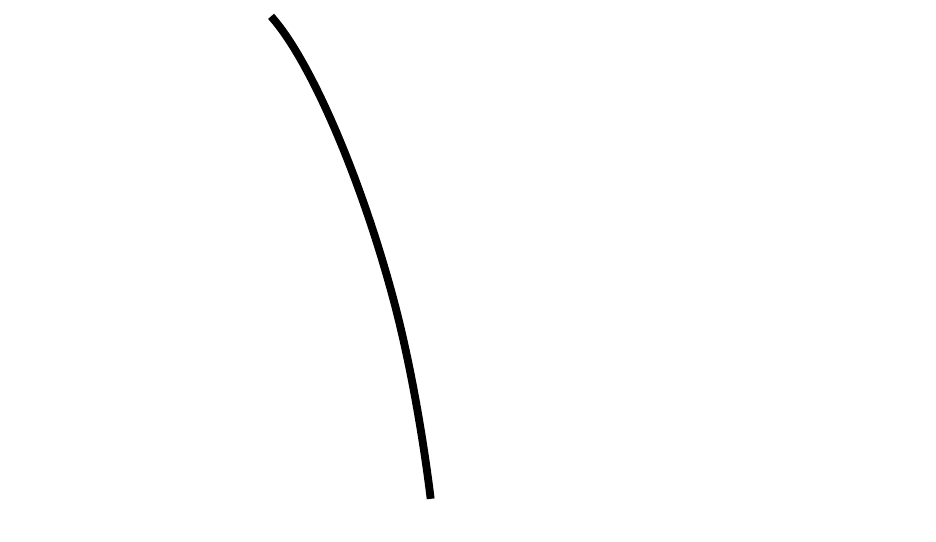}}%
    \put(0.31897897,0.54134396){\color[rgb]{0,0,0}\makebox(0,0)[lt]{\lineheight{1.25}\smash{\begin{tabular}[t]{l}$e$\end{tabular}}}}%
    \put(0,0){\includegraphics[width=\unitlength,page=2]{plots/dom_ext_e.pdf}}%
    \put(0.17927385,0.34340065){\makebox(0,0)[lt]{\lineheight{1.25}\smash{\begin{tabular}[t]{l}$\omega$\end{tabular}}}}%
  \end{picture}%
\endgroup%
}
		}
	\caption{Illustration of the edge-based extension $E_e(\omega)$ characterized by \eqref{Ee_ext}.
	 Here the overlapping supports of broken %and edge-continuous 
	 basis functions are indicated 
	 as shaded cells, and the extended domain $E_e(\omega)$ is the region delimited by the orange boundary.
 	It contains a few additional cells on the 
	 fine patch in order to be patch-wise Cartesian and continuous across the edge interface $e$.
	 % the (blue) set $\omega$ has non-empty intersection with the support of the conforming edge basis function which goes across the edge $e$, which is accounted for in the first part of the right-hand-side. Additionally, we have two patch-wise domains on each patch that intersect with $\omega$, such that all the different shaded areas, which correspond to these supports, are all contained in the edge-based extension $E_e(\omega)$. In order to achieve continuity of $E_e(\omega)$ across the edge $e$, we end up with the (orange) box that also contains two non-shaded cells on the fine domain. 
	 }
	\label{fig:e_ext}
\end{figure}

Similarly, for $\bv \in \cV$ 
we define the vertex-based extension of a 
domain $\omega \subset \Omega(\bv)$
as the smallest patch-wise Cartesian domain $E_\bv(\omega)$ 
that is continuous across every contiguous edge $e \in \cE(\bv)$ 
in the above sense,
and that satisfies
\begin{equation} \label{Ev_ext}
	E_\bv(\omega) \supset S^{\bv} \bigcup \Big(\cup_{k \in \cK(\bv)} E_k(\omega)\Big).
\end{equation}
This domain extension is illustrated in Figure~\ref{fig:v_ext}
\begin{figure}[ht]
	\centering 
	 {
		\fontsize{18pt}{22pt}\selectfont
	 \resizebox{!}{0.25\textheight}{%% Creator: Inkscape 1.2.2 (b0a8486541, 2022-12-01), www.inkscape.org
%% PDF/EPS/PS + LaTeX output extension by Johan Engelen, 2010
%% Accompanies image file 'dom_ext_v.pdf' (pdf, eps, ps)
%%
%% To include the image in your LaTeX document, write
%%   \input{<filename>.pdf_tex}
%%  instead of
%%   \includegraphics{<filename>.pdf}
%% To scale the image, write
%%   \def\svgwidth{<desired width>}
%%   \input{<filename>.pdf_tex}
%%  instead of
%%   \includegraphics[width=<desired width>]{<filename>.pdf}
%%
%% Images with a different path to the parent latex file can
%% be accessed with the `import' package (which may need to be
%% installed) using
%%   \usepackage{import}
%% in the preamble, and then including the image with
%%   \import{<path to file>}{<filename>.pdf_tex}
%% Alternatively, one can specify
%%   \graphicspath{{<path to file>/}}
%% 
%% For more information, please see info/svg-inkscape on CTAN:
%%   http://tug.ctan.org/tex-archive/info/svg-inkscape
%%
\begingroup%
  \makeatletter%
  \providecommand\color[2][]{%
    \errmessage{(Inkscape) Color is used for the text in Inkscape, but the package 'color.sty' is not loaded}%
    \renewcommand\color[2][]{}%
  }%
  \providecommand\transparent[1]{%
    \errmessage{(Inkscape) Transparency is used (non-zero) for the text in Inkscape, but the package 'transparent.sty' is not loaded}%
    \renewcommand\transparent[1]{}%
  }%
  \providecommand\rotatebox[2]{#2}%
  \newcommand*\fsize{\dimexpr\f@size pt\relax}%
  \newcommand*\lineheight[1]{\fontsize{\fsize}{#1\fsize}\selectfont}%
  \ifx\svgwidth\undefined%
    \setlength{\unitlength}{315.62072754bp}%
    \ifx\svgscale\undefined%
      \relax%
    \else%
      \setlength{\unitlength}{\unitlength * \real{\svgscale}}%
    \fi%
  \else%
    \setlength{\unitlength}{\svgwidth}%
  \fi%
  \global\let\svgwidth\undefined%
  \global\let\svgscale\undefined%
  \makeatother%
  \begin{picture}(1,0.65519165)%
    \lineheight{1}%
    \setlength\tabcolsep{0pt}%
    \put(0,0){\includegraphics[width=\unitlength,page=1]{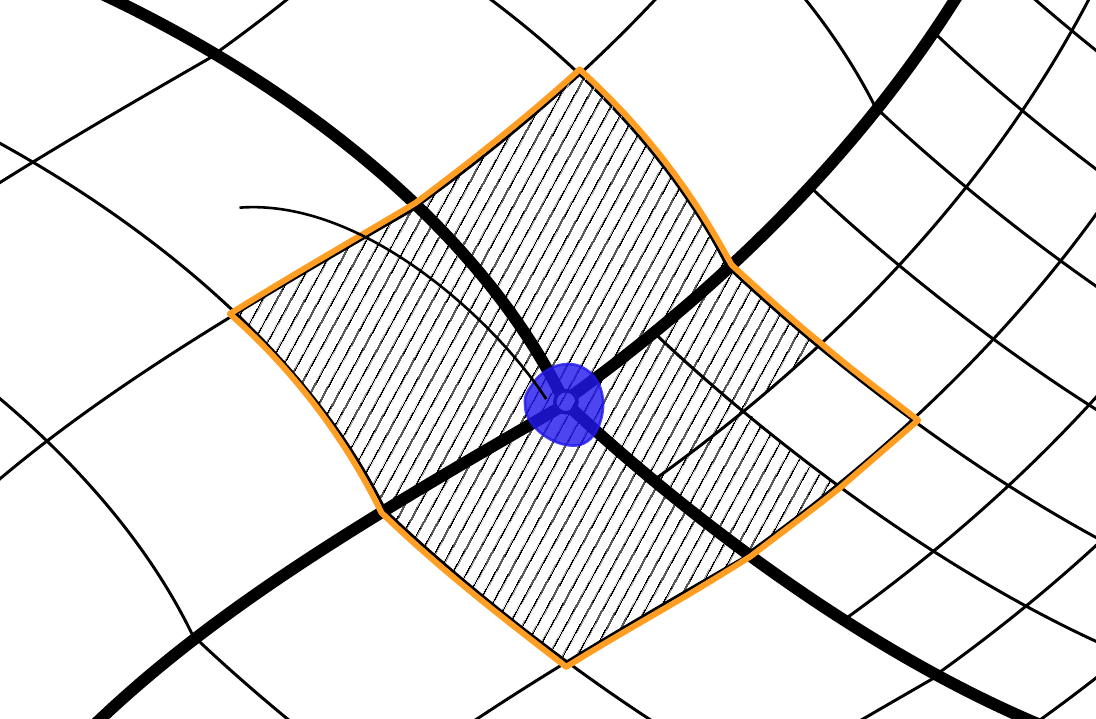}}%
    \put(0.17658191,0.455084497){\makebox(0,0)[lt]{\lineheight{1.25}\smash{\begin{tabular}[t]{l}$\omega$\end{tabular}}}}%
    \put(0.505873267,0.3269984){\makebox(0,0)[lt]{\lineheight{1.25}\smash{\begin{tabular}[t]{l}$\bv$\end{tabular}}}}%
  \end{picture}%
\endgroup%
}
		}
	\caption{
	Illustration of the vertex-based extension $E_\bv(\omega)$ characterized by \eqref{Ev_ext}.
  Here the overlapping supports of broken and vertex-continuous basis functions are indicated 
  as shaded cells, and the extended domain $E_\bv(\omega)$ is the region delimited by the orange boundary.
	It contains an additional cell on the 
  finer patch in order to be patch-wise Cartesian and continuous across the edge interface $e$.	 
	% Since $\omega$ is only supported on the first cell of each grid, the patch-wise extension is just said cells. They are all contained in the support of the vertex-based conforming basis function, that is shaded here. Thus, the vertex-based domain extension $E_\bv(\omega)$ is defined the (orange) box, which additionally is patch-wise Cartesian on the fine patch.
	}
	\label{fig:v_ext}
\end{figure}
% [UPDATE]:
% on each edge $e \in \cE(\bv)$ we 
% consider again the maximal interval \eqref{ab_e} along the parallel coordinate, and 
% on each patch $e \in \cE(\bv)$ we let
% \begin{equation*}
% E^k_\bv(\omega) \coloneqq 
% 	F_k\big(\{\openint{\alpha^{e_1}_\parallel}{\beta^{e_1}_\parallel} \times 
% 		\openint{\alpha^{e_2}_\parallel}{\beta^{e_2}_\parallel}\}\big) 
% \end{equation*}
% where $e_d$ is the edge of $\Omega_k$ contiguous to $\bv$ along the dimension $\hx_d$.
% The vertex-based domain extension is then defined as 
% \begin{equation} \label{Ev_ext}
% 	E_\bv(\omega) \coloneqq \Int\big(\cup_{k \in \cK(\bv)} \bar E^k_\bv(\omega)\big)
% \end{equation}
% Again this corresponds to the domain obtained by gluing 
% the smallest Cartesian domains that 
% contain the mesh extensions of each $\omega_k$,
% and match continuously on each $e \in \cE(\bv)$.

% For $g = e$ or $\bv$, these extensions contain every single patch
% extension of the subdomain $\omega$:
% \begin{equation} \label{Eev_sup_Ek}
% 	E_g(\omega) \supset \cup_{k \in \cK(g)} E_k(\omega_k)
% 	\quad
% 	\text{ where }
% 	\omega_k \coloneqq \omega \cap \Omega_k.
% \end{equation}

\begin{remark} \label{rem:stab_Pi0_ev}
	Using the nestedness properties \eqref{Ee_ext} and \eqref{Ev_ext} of 
	the edge and vertex-based domain extensions $E_e(\omega)$ and $E_\bv(\omega)$,
	we readily see that Lemma~\ref{lem:stab_Pi0k} applies to patch-wise Cartesian 
	domains $\omega \subset \Omega(g)$ with $g = e$ or $\bv$: one has
	\begin{equation} \label{stab_Pi0_ev}
		\norm{\Pi^0_\pw \phi}_{L^p(\omega)} \lesssim \norm{\phi}_{L^p(E_g(\omega))}		
	\end{equation}
	and for any constant $c$,
	\begin{equation} \label{c_Pi0_ev}
	\phi = c \text{ on } E_g(\omega) \quad \implies \quad \Pi^0_\pw \phi = c \text{ on } \omega.
	\end{equation}
\end{remark}

\subsection{Projection operators on local broken and conforming subspaces}
\label{sec:locprojs}

In order to define proper correction terms at the patch interfaces, we now introduce
several projection operators on various local subspaces of the broken space $V^0_\pw$: 
first, we define a projection on the homogeneous single-patch space $V^0_k \cap H^1_0(\Omega_k)$,
\begin{equation} \label{Ik0}
	I^k_0 : \Lambda^k_\bi \mapsto \begin{cases}
		\Lambda^k_\bi & \text{ if $\bi \in \{1,\dots, n_k-1\}^2$ }
		\\
		0 & \text{ otherwise}.
	\end{cases}	
\end{equation}
% involving the sets of interior indices
% \begin{equation} \label{cIk0}
% 	\cI^k_{0} 
% 	\coloneqq %\cI^k \setminus \bigcup_{e \in \cE} \cI^k_e
% 	\{ \bi \in \{1,\dots, n_k-1\}^2\}.
% \end{equation}
% According to \eqref{ipe} we see that $I^k_0$ is a projection on the homogeneous local space
% basis functions which vanish on their patch boundary
% spanned by the 
% broken functions with indices in the set \eqref{cIk0},

We then define two projection operators associated with an edge $e \in \cE$:
the first one is on the space spanned by the broken functions
which do not vanish identically on $e$,
\begin{equation} \label{Ie}
	I^e : \Lambda^k_\bi \mapsto \begin{cases}
		\Lambda^k_\bi \qquad & \text{ if } k \in \cK(e) \text{ and } \bi \in \cI^k_e
		\\
		0 \qquad & \text{ otherwise}
	\end{cases}
\end{equation}
where 
\begin{equation} \label{cIke}
\cI^k_e \coloneqq \{ \bi \in \cI^k : i^k_\perp = i^k_\perp(e) \}
\end{equation}
consists of the patch indices associated with the edge $e$,
see \eqref{ikpe}.
Using the reordering coordinate function $\hat X^k_e$ defined by \eqref{Xke},
we can write this index set as 
\begin{equation} \label{ike}
	\cI^k_e = \{ \bi^k_e(j) : j \in \{0, \dots, n_k\} \}
	\quad \text{ with } \quad
	\bi^k_e(j) \coloneqq \hat X^k_e(j, i^k_\perp(e)).
\end{equation}

The second edge projection is
on the space spanned by the edge-continuous basis functions:
\begin{equation} \label{Pe}
P^e : \Lambda^k_\bi \mapsto \begin{cases}
	\Lambda^e_i \qquad & \text{ if } k = k^-(e) \text{ and } \bi = \bi^k_e(i)
		\text{ with } i \in \{0, \dots, n_e\}
	\\
	0 \qquad & \text{ otherwise}
\end{cases}
\end{equation}
where we remind that $n_e \coloneqq n_{k^-(e)}$. 
Here we point out that in the homogeneous case \eqref{dR0}, 
only a fine patch $k^+(e)$ has been associated to boundary edges $e$ in Section~\ref{sec:ev}: we thus have $P^e = 0$.
Next for each vertex $\bv \in \cV$ we define projection operators on different spaces:
% spanned by vertex functions: 
one on the space spanned by the broken vertex functions  
\begin{equation} \label{Iv}
I^\bv :
\Lambda^k_\bi \mapsto \begin{cases}
	\Lambda^k_\bi & \text{ if } k \in \cK(\bv) \text{ and } \bi = \bi^k(\bv)
	\\
	0 & \text{ else}
\end{cases}
\end{equation}
where $\bi^k(\bv)$ is the local index corresponding to $\bv$,
see \eqref{ikv}.
% \begin{equation} \label{ikv}
% \bi^k(\bv) \coloneqq n_k \hat \bv^k \in \{0,n_k\}^2
% \end{equation}
% with $\hat \bv^k \coloneqq F_k^{-1}(\bv)$ the reference vertex on the logical patch.

% \begin{equation} \label{cIkv}
% \cI^k_\bv \coloneqq \{ \bi \in \cI^k : (\Lambda^{k}_\bi)|_{\Omega_k}(\bv) \neq 0\}. 
% \end{equation}
% % If $k \notin \cK(\bv)$ then $\cI^k_\bv = \emptyset$, otherwise 
% % this condition amounts to $\hat \Lambda^{k}_\bi(\hat \bv^k) \neq 0$
% % where $\hat \bv^k = (F_k)^{-1}(\bv)$: 
% According to \eqref{ipe}, $\cI^k_\bv$ consists of a single index
% and those corresponding to edge basis functions which vanish on every vertex, that is
% \begin{equation} \label{cIke0}
% 	\cI^k_{e,0} \coloneqq \cI^k_e \setminus \bigcup_{\bv \in \cV} \cI^k_\bv
% 	 = \{\bi = \bi^k_e(i) : i \in \{1, \dots, n_k-1\} \}.
% \end{equation}
Another projection is on the single vertex-continuous basis function
\begin{equation} \label{Pv}
P^\bv : \Lambda^k_\bi \mapsto \begin{cases}
	\Lambda^\bv \qquad & \text{ if } k = k^*(\bv) \text{ and } \bi = \bi^k(\bv)
	\\
	0 \qquad & \text{ otherwise}.
\end{cases}
\end{equation}
where $k^*(\bv) \in \cK(\bv)$ is a coarse patch associated to $\bv$
as described in Section~\ref{sec:ev}. For boundary vertices 
in the homogeneous case \eqref{dR0}, where $k^*(\bv)$ is undefined, 
this leads to setting $P^\bv = 0$.

% one arbitrary patch contiguous to $\bv$,
We also define a projection on the broken pieces of the vertex-continuous 
functions, namely 
% which are matching across every edge $e \in \cE(\bv)$ contiguous to $\bv$:
\begin{equation} \label{bIv}
	\bar I^\bv : %= \sum_{k \in \cK(\bv)} \bar I^k_\bv
	% \qquad \text{ where } \qquad
	% \bar I^k_\bv : 
	\Lambda^k_\bi \mapsto \begin{cases}
	\Lambda^\bv \one_{\Omega_k} \qquad & \text{ if } k \in \cK(\bv) \text{ and } \bi = \bi^k(\bv)
	\\
	0 \qquad & \text{ otherwise}.
\end{cases}
\end{equation}
Finally, we define projection operators on different spaces spanned by 
edge-vertex functions. Again, we define three operators: one
that projects on the simple broken functions, % to 
\begin{equation} \label{Iev}
I^e_{\bv} : 
	\Lambda^k_\bi \mapsto \begin{cases}
	\Lambda^k_\bi \qquad & \text{ if } k \in \cK(e) \cap \cK(\bv) \text{ and } \bi = \bi^k(\bv)
	\\
	0 \qquad & \text{ otherwise}
\end{cases}
\end{equation}
one on the edge-continuous functions which do not vanish on a given vertex
\begin{equation} \label{Pev}
	P^e_{\bv} : \Lambda^k_\bi \mapsto \begin{cases}
		\Lambda^e_{\bv} \qquad & \text{ if } k = k^-(e) \in \cK(\bv) \text{ and } \bi = \bi^k(\bv)
		\\
		0 \qquad & \text{ otherwise}
	\end{cases}	
\end{equation}
(again, we note that $P^e_{\bv} = 0$ on homogeneous boundary edges 
where $k^-(e)$ is undefined)
and one on the broken pieces of the edge-continuous functions that do not vanish on the vertex:
\begin{equation} \label{bIev}
	\bar I^e_{\bv} :
	\Lambda^k_\bi \mapsto \begin{cases}
	\Lambda^e_{\bv} \one_{\Omega_k} \qquad & \text{ if } k \in \cK(e)\cap \cK(\bv) \text{ and } \bi = \bi^k(\bv)
	\\
	0 \qquad & \text{ otherwise}.
\end{cases}
\end{equation}

For later reference we observe that for all $\phi \in V^0_\pw$, 
the above edge and vertex-based projections are localized in 
edge and vertex-based supports of the form \eqref{Sei} and \eqref{Sv}:
\begin{equation} \label{supp_PIev}
	\left\{\begin{aligned}
			\big(\supp(P^e \phi) \cup \supp(I^e \phi)\big) &\subset \cup_{j=0}^{n_k} S^{e}_j
			\\
			\big(\supp(P^e_{\bv} \phi) \cup \supp(\bar I^e_{\bv} \phi)\big) 
				&\subset S^{e}_{i^e(\bv)} \subset S^{\bv}
			\\
			\big(\supp(P^\bv \phi) \cup \supp(\bar I^\bv \phi) \big)  &\subset S^{\bv}.
	\end{aligned}\right.
\end{equation}

From these definitions we can infer a few important relations. 
First, we observe that
\begin{equation} \label{sum_e_Iev}
	\sum_{e \in \cE} I^e_{\bv} = 2 I^\bv
\end{equation} 
holds for all $\bv \in \cV$.	
Multiplying \eqref{Lv_def} with $\one_{\Omega_k}$ we further obtain an equality 
relating the operators \eqref{bIv} and \eqref{bIev}
to the broken vertex-based projection \eqref{Iv}:
\begin{equation} \label{Iv_as_bc}
  \bar I^\bv = \sum_{e \in \cE} \bar I^e_{\bv} - I^\bv.
\end{equation}
% Another useful relation can be derived by observing that 
% for $\phi \in V^0_\pw$, we have 
% \begin{equation*}
% \bar I^\bv \phi = \sum_{k \in \cK(\bv)} \phi|_{\Omega_k}(\bv) \Lambda^\bv|_{\Omega_k}
% \quad \text{ and } \quad 
% \bar I^e_\bv \phi = \sum_{k \in \cK(e)} \phi|_{\Omega_k}(\bv) \Lambda^e_\bv|_{\Omega_k}
% \end{equation*}
% Further, %using the above projection operators
Another key relation is the decomposition of 
any broken function $\phi \in V^0_\pw$ as
\begin{equation} \label{V0pw_dec}
	\phi = \Big(\sum_{k} I^k_0
		+ \sum_{e \in \cE} I^e_{0} 
		+ \sum_{v \in \cV} I^{\bv}\Big)  \phi
\end{equation}
where we have set 
\begin{equation} \label{Ie0}
I^e_{0} \coloneqq I^{e} - \sum_{\bv \in \cV} I^e_{\bv}.
\end{equation} 
By using the fact that the functions \eqref{Lk0}--\eqref{Lv} form a basis 
for the conforming space $V^0_h = V^0_\pw \cap V^0$, 
we also define a local projection on the conforming subspace,
% $P: $, as
\begin{equation} \label{P}
	P: V^0_\pw \to V^0_h, \quad \phi \mapsto \Big(\sum_{k \in \cK} I^k_0 + \sum_{e \in \cE} P^e_{0} + \sum_{\bv \in \cV} P^\bv\Big) \phi.
\end{equation}
where we have set 
\begin{equation} \label{Pe0}
	P^e_{0} \coloneqq P^e - \sum_{\bv \in \cV} P^e_{\bv}.
\end{equation} 
Note that in the homogeneous case \eqref{dR0} we have $P^e_{0} = 0$
for boundary edges and $P^\bv = 0$ for boundary vertices, so that 
$P$ is indeed a projection on the homogeneous 
conforming space $V^0_h = V^0_\pw \cap H^1_0(\Omega)$.

Several useful properties can be derived from explicit expressions of the above projections.

\begin{lemma} \label{lem:projs_exp}
Given
$\phi %= \sum_{k \in \cK, \bi \in \cI^k} \phi^k_\bi \Lambda^{k}_\bi
\in V^0_\pw$ and $\bx \in \Omega_k$, $k \in \cK(e)$, the edge based projections read
\begin{equation} \label{IPe_phi_px}
	\left\{\begin{aligned}
	(I^e \phi)|_{\Omega_k}(\bx)
		&= \phi|_{\Omega_k}(\bp_e(\bx))
		\lambda^{k}_{i^k_\perp(e)}(\hx^k_\perp), \\
	(P^e \phi)|_{\Omega_k}(\bx)
	&= \phi|_{\Omega^-}(\bp_e(\bx))
	\lambda^{k}_{i^k_\perp(e)}(\hx^k_\perp)	
	\end{aligned} \right.
\end{equation}
where $X^k_e(\hx^k_\parallel,\hx^k_\perp) = \bx$ as in \eqref{Xke},
$\Omega^- = \Omega_{k^-(e)}$ and
\begin{equation} \label{pex}
	\bp_e(\bx) \coloneqq X^k_e(\hx^k_\parallel,\hat e^k_\perp) 
		% \quad \text{ for } 
\end{equation}
is the point on the edge $e$ that has the same parallel coordinate as $\bx \in \Omega_k$.
Similarly, the vertex based projections read
\begin{equation} \label{IPv_phi}
	\bar I^\bv \phi = \sum_{k \in \cK(\bv)} \phi|_{\Omega_{k}}(\bv) \Lambda^{\bv} \one_{\Omega_k}
	\quad \text{ and } \quad 
	P^\bv \phi 
	= \phi|_{\Omega^*}(\bv) \Lambda^{\bv}
\end{equation}
where $\Omega^* = \Omega_{k^*(\bv)}$
and the edge-vertex based projections read
\begin{equation} \label{IPev_phi}
	\bar I^e_{\bv} \phi = \sum_{k \in \cK(e)} \phi|_{\Omega_{k}}(\bv) \Lambda^{e}_\bv \one_{\Omega_k}
	\quad \text{ and } \quad 
	P^e_{\bv} \phi = \phi|_{\Omega^-}(\bv) \Lambda^{e}_\bv.
\end{equation}
In the homogeneous case \eqref{dR0} where the patches $k^-(e)$ and $k^*(\bv)$
are undefined for boundary edges and vertices, the corresponding values 
of $\phi$ can be replaced by 0.
\end{lemma}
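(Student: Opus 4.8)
The plan is to expand an arbitrary $\phi \in V^0_\pw$ in the broken basis, $\phi = \sum_{k \in \cK}\sum_{\bi \in \cI^k}\phi^k_\bi \Lambda^k_\bi$, to apply each of the operators \eqref{Ie}--\eqref{bIev} through its action on the basis functions, and then to turn the resulting coefficient sums back into point values of $\phi$ using the endpoint interpolation property \eqref{ipe}. The single computation that does all the work is the following collapse of a one-dimensional sum. Fix a patch $k \in \cK(e)$ and evaluate the expansion of $\phi|_{\Omega_k}$ at the edge point $\bp_e(\bx) = X^k_e(\hx^k_\parallel, \hat e^k_\perp)$ of \eqref{pex}. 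Writing $\bi = \bi^k_e(j) = \hat X^k_e(j, i^k_\perp(e))$ via the reordering \eqref{ike}, the tensor-product factorization gives $\Lambda^k_{\bi^k_e(j)}|_{\Omega_k} = \lambda^k_j(\hx^k_\parallel)\lambda^k_{i^k_\perp(e)}(\hx^k_\perp)$, and the interpolation property \eqref{ipe} together with \eqref{ikpe} forces $\lambda^k_i(\hat e^k_\perp) = \delta_{i, i^k_\perp(e)}$, so that the perpendicular factor collapses and
\begin{equation} \label{collapse}
	\phi|_{\Omega_k}(\bp_e(\bx)) = \sum_{j=0}^{n_k} \phi^k_{\bi^k_e(j)}\, \lambda^k_j(\hx^k_\parallel).
\end{equation}

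For the edge projection $I^e$, the definition \eqref{Ie} retains on each patch $k \in \cK(e)$ exactly the functions indexed by $\cI^k_e$, so that $(I^e\phi)|_{\Omega_k}(\bx) = \big(\sum_j \phi^k_{\bi^k_e(j)}\lambda^k_j(\hx^k_\parallel)\big)\lambda^k_{i^k_\perp(e)}(\hx^k_\perp)$; inserting \eqref{collapse} yields the first line of \eqref{IPe_phi_px}. For $P^e$ the definition \eqref{Pe} keeps only the coarse-patch contributions, sending $\Lambda^{k^-}_{\bi^{k^-}_e(i)}$ to $\Lambda^e_i$ with $k^- = k^-(e)$. On $\Omega^-$ the restriction of $\Lambda^e_i$ is $\lambda^{k^-}_i(\hx^-_\parallel)\lambda^{k^-}_{i^-_\perp(e)}(\hx^-_\perp)$ by \eqref{Lambda_ei}, and \eqref{collapse} gives $\phi|_{\Omega^-}(\bp_e(\bx))\lambda^{k^-}_{i^-_\perp(e)}(\hx^-_\perp)$. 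On the opposite patch $\Omega^+$ the restriction of $\Lambda^e_i$ carries the reparametrization $\eta_e$, namely $\lambda^{k^-}_i(\eta_e(\hx^+_\parallel))\lambda^{k^+}_{i^+_\perp(e)}(\hx^+_\perp)$; I would then apply \eqref{collapse} in the coarse patch at parallel coordinate $\eta_e(\hx^+_\parallel)$ and use the continuity of parametrizations \eqref{cont_param} to identify the corresponding edge point with $\bp_e(\bx)$, recovering $\phi|_{\Omega^-}(\bp_e(\bx))\lambda^{k^+}_{i^+_\perp(e)}(\hx^+_\perp)$, i.e. the second line of \eqref{IPe_phi_px}.

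The vertex and edge--vertex formulas follow by the same mechanism, now with corner interpolation in both directions: for $\bi^k(\bv) = n_k \hat \bv^k$ as in \eqref{ikv}, evaluating the expansion of $\phi|_{\Omega_k}$ at $\bv$ and using $\hat \Lambda^k_\bi(\hat \bv^k) = \delta_{\bi, \bi^k(\bv)}$, again from \eqref{ipe}, gives $\phi^k_{\bi^k(\bv)} = \phi|_{\Omega_k}(\bv)$. Substituting this into the definitions \eqref{bIv}, \eqref{Pv}, \eqref{bIev} and \eqref{Pev} yields \eqref{IPv_phi} and \eqref{IPev_phi} directly, the patch selections ($k \in \cK(\bv)$, $k = k^*(\bv)$, $k \in \cK(e)$, $k = k^-(e)$) being read off from each definition. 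In the homogeneous case the patches $k^-(e)$ and $k^*(\bv)$ are undefined on boundary edges and vertices, where by construction $P^e = P^\bv = P^e_\bv = 0$, consistent with replacing the corresponding values of $\phi$ by $0$. The one step needing genuine care is the orientation bookkeeping on the fine patch for $P^e$: the $\eta_e$ introduced by \eqref{Lambda_ei} must be matched against \eqref{cont_param} so that $\phi|_{\Omega^-}(\bp_e(\bx))$ is indeed evaluated at the correct edge point; everything else is a direct application of \eqref{collapse} and the corner interpolation identity.
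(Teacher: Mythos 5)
Your proposal is correct and follows essentially the same route as the paper: expand $\phi$ in the broken basis, apply the operator definitions to obtain the coefficient sums (the paper's \eqref{IPe_phi_coefs}), and use the endpoint interpolation property \eqref{ipe} to identify those sums with point values of $\phi$ at $\bp_e(\bx)$ or $\bv$. Your explicit ``collapse'' identity and the orientation bookkeeping via $\eta_e$ and \eqref{cont_param} simply spell out the step the paper compresses into ``follow from the interpolatory property in the $\perp$ direction.''
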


\begin{proof}
	Write $\phi = \sum_{k \in \cK, \bi \in \cI^k} \phi^k_\bi \Lambda^{k}_\bi$.
	By definition, the projection $I^e \phi$ 
	involves broken functions $\Lambda^{k}_\bi$ 
	with $\bi = \bi^k_e(j) = X^k_e(j,i^k_\perp(e))$ %, %$j \in \{0, \dots, n_k\}$
	as in \eqref{ike}, i.e.
	\begin{equation*}	
	\Lambda^{k}_{\bi^k_e(j)}(\bx) = \lambda^{k}_{j}(\hx^k_\parallel) \lambda^{k}_{i^k_\perp(e)}(\hx^k_\perp)
	\quad \text{ for } \bx \in \Omega_k, ~ j \in \{0, \dots, n_k\}
	\end{equation*}
	while $P^e \phi$
	% \begin{equation} \label{Pe_phi}
	% 	P^e \phi = \sum_{j = 0}^{n_e} \phi^-_{\bi^-_e(j)} \Lambda^{e}_j
	% \end{equation}
	involves conforming functions of the form \eqref{Lambda_ei}. 
	In particular, we have 
	\begin{equation} \label{IPe_phi_coefs}
  	\left\{\begin{aligned}
  		&(I^e \phi)|_{\Omega_k}(\bx) %|_{\Omega_k} 
				= \sum_{\bi \in \cI^k_e} \phi^k_{\bi} \Lambda^{k}_\bi(\bx)
				= \Big(\sum_{j = 0}^{n_k} \phi^k_{\bi^k_e(j)} \lambda^{k}_{j}(\hx^k_\parallel)\Big) \lambda^{k}_{i^k_\perp(e)}(\hx^k_\perp)
  	\\
  	&(P^e \phi)|_{\Omega_k}(\bx) %|_{\Omega_k} 
  		= \sum_{j = 0}^{n_e} \phi^-_{\bi^-_e(j)} \Lambda^{e}_j(\bx)
			= \Big(\sum_{j = 0}^{n_{k^-}} \phi^-_{\bi^-_e(j)} \lambda^{-}_{j}(\eta^k_e(\hx^k_\parallel))\Big) \lambda^{k}_{i^k_\perp(e)}(\hx^k_\perp)
  	\end{aligned}\right.
  \end{equation}
	and the expressions \eqref{IPe_phi_px} 
	follow from the interpolatory property \eqref{ipe} of the basis functions in the 
	$\perp$ direction.
	For the vertex based projections we write
	\begin{equation*}
	P^\bv \phi 
	= \phi^{k^*}_{\bi^{k^*}(\bv)} \Lambda^{\bv},
	\qquad
	\bar I^\bv \phi = \sum_{k \in \cK(\bv)} \phi^{k}_{\bi^{k}(\bv)} \Lambda^{\bv} \one_{\Omega_k}
	\end{equation*}
	and 
	\begin{equation*}
	P^e_{\bv} \phi = \phi^{k^-}_{\bi^{k^-}(\bv)} \Lambda^{e}_\bv,
	\qquad 
	\bar I^e_{\bv} \phi = \sum_{k \in \cK(e)} \phi^{k}_{\bi^{k}(\bv)} \Lambda^{e}_\bv \one_{\Omega_k}.
	\end{equation*}	
	The expressions \eqref{IPv_phi} and \eqref{IPev_phi} follow from the relations 
	$\phi^k_{\bi^k(\bv)} = \phi|_{\Omega_{k}}(\bv)$, %for all $k \in \cK(\bv)$,
	which again follows from the interpolation property
	\eqref{ipe} at the patch boundaries.
\end{proof}

The following properties, which will be needed to analyze the operators $\Pi^\ell$,
are immediate corollaries of Lemma~\ref{lem:projs_exp}
\begin{lemma} \label{lem:0_corr_e}
	Let $\phi \in V^0_\pw$,
	and $e \in \cE$. The equality
	\begin{equation} \label{0_corr_e}
		P^e \phi(\bx) = I^e \phi(\bx)
	\end{equation}
	holds for all $\bx \in \Omega^-$, and all 
	$\bx \in \Omega^+$ such that 
	$\phi|_{\Omega^-}(\bp_e(\bx)) = \phi|_{\Omega^+}(\bp_e(\bx))$
	where $\bp_e(\bx)$ is the projected point on $e$, see \eqref{pex}.
\end{lemma}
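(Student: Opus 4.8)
The plan is to read off both sides of \eqref{0_corr_e} directly from the explicit formulas \eqref{IPe_phi_px} established in Lemma~\ref{lem:projs_exp}, and to compare them pointwise. For $\bx \in \Omega_k$ with $k \in \cK(e)$, those formulas give
\[
(I^e \phi)|_{\Omega_k}(\bx) = \phi|_{\Omega_k}(\bp_e(\bx))\,\lambda^{k}_{i^k_\perp(e)}(\hx^k_\perp),
\qquad
(P^e \phi)|_{\Omega_k}(\bx) = \phi|_{\Omega^-}(\bp_e(\bx))\,\lambda^{k}_{i^k_\perp(e)}(\hx^k_\perp).
\]
The key observation is that the perpendicular factor $\lambda^{k}_{i^k_\perp(e)}(\hx^k_\perp)$ is literally the same in both expressions, since it depends only on the patch $k$ containing $\bx$. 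Hence the claimed equality reduces to comparing the two one-sided trace values $\phi|_{\Omega_k}(\bp_e(\bx))$ and $\phi|_{\Omega^-}(\bp_e(\bx))$ at the projected point $\bp_e(\bx) \in e$.

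First I would treat the case $\bx \in \Omega^- = \Omega_{k^-(e)}$. Here $k = k^-(e)$, so $\Omega_k = \Omega^-$ and the two trace values coincide tautologically; thus \eqref{0_corr_e} holds for every such $\bx$ with no further hypothesis. Next I would treat $\bx \in \Omega^+ = \Omega_{k^+(e)}$, where $k = k^+(e)$ and the two values are $\phi|_{\Omega^+}(\bp_e(\bx))$ and $\phi|_{\Omega^-}(\bp_e(\bx))$. These agree precisely when $\phi|_{\Omega^-}(\bp_e(\bx)) = \phi|_{\Omega^+}(\bp_e(\bx))$, which is exactly the stated hypothesis on $\bx \in \Omega^+$; multiplying by the common factor then yields \eqref{0_corr_e}.

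There is no genuine obstacle here: the statement is an immediate corollary of Lemma~\ref{lem:projs_exp}, its whole content being that $I^e$ and $P^e$ differ only in which of the two one-sided traces of $\phi$ along $e$ they reproduce. The single point worth checking is that the common $\perp$-direction factor is truly independent of this choice, which is clear from \eqref{IPe_phi_px}, since in both formulas it is the same univariate basis function $\lambda^{k}_{i^k_\perp(e)}$ evaluated at the same coordinate $\hx^k_\perp$ of the patch $k$ containing $\bx$. Away from $\Omega^- \cup \Omega^+$ both projections vanish by the definitions \eqref{Ie} and \eqref{Pe}, so nothing further need be said.
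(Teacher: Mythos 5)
Your proof is correct and takes essentially the same route as the paper: the authors also treat this lemma as an immediate corollary of the explicit formulas \eqref{IPe_phi_px} from Lemma~\ref{lem:projs_exp}, the whole content being the comparison of the two one-sided traces of $\phi$ at $\bp_e(\bx)$, which is exactly your argument.
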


\begin{lemma} \label{lem:corr_ev_ker}
	Let $\phi \in V^0_\pw$ and $\bv \in \cV$. It holds:
	\begin{equation} \label{0_corr_v}
		\text{ if } \quad \phi|_{\Omega_k}(\bv) = \phi|_{\Omega_{k'}}(\bv)
		\text{ for all } k, k' \in \cK(\bv),
		\quad \text{ then } \quad 
		(P^\bv - \bar I^\bv)\phi = 0.
	\end{equation}
	Moreover, for all $e \in \cE(\bv)$ it holds:
	\begin{equation} \label{IvIev}
		\bar I^\bv \phi = \bar I^e_{\bv} \phi  \qquad \text{ on } e
	\end{equation}
	and
	\begin{equation} \label{0_corr_ev}
		\text{ if } \quad \phi|_{\Omega_k}(\bv) = \phi|_{\Omega_{k'}}(\bv)
		\text{ for all } k, k' \in \cK(e),
		\quad \text{ then } \quad 		(P^e_{\bv} - \bar I^e_{\bv})\phi = 0.
	\end{equation}
\end{lemma}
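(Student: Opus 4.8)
The plan is to derive all three statements directly from the explicit formulas for the vertex- and edge-vertex projections established in Lemma~\ref{lem:projs_exp}, namely the expressions \eqref{IPv_phi} for $\bar I^\bv \phi$, $P^\bv\phi$ and \eqref{IPev_phi} for $\bar I^e_\bv\phi$, $P^e_\bv\phi$. Once these formulas are in hand, each claim reduces to comparing the two relevant expressions patch by patch, using the common nodal value supplied by the hypothesis together with the support properties of $\Lambda^\bv$ and $\Lambda^e_\bv$.

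For \eqref{0_corr_v} I would denote by $c$ the common value $\phi|_{\Omega_k}(\bv)$, which by assumption is independent of $k \in \cK(\bv)$ and in particular equals $\phi|_{\Omega^*}(\bv)$ since $k^*(\bv) \in \cK(\bv)$. Substituting into \eqref{IPv_phi}, on every $\Omega_k$ with $k\in\cK(\bv)$ only the $k$-term of the sum survives, so both $\bar I^\bv\phi$ and $P^\bv\phi$ restrict to $c\,\Lambda^\bv|_{\Omega_k}$ there, while both vanish outside $\cup_{k\in\cK(\bv)}\bar\Omega_k$ because $\Lambda^\bv$ is supported in $\Omega(\bv)$; hence their difference is zero. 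The proof of \eqref{0_corr_ev} is identical, now with the common value $\phi|_{\Omega_k}(\bv)$, $k\in\cK(e)$, equal to $\phi|_{\Omega^-}(\bv)$ (as $k^-(e)\in\cK(e)$), and with $\Lambda^e_\bv$ supported in $\Omega(e)$.

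For \eqref{IvIev} I would restrict \eqref{IPv_phi} and \eqref{IPev_phi} to the edge $e$. Every patch contiguous to $\bv$ but not lying in $\cK(e)$ meets $\bar e$ only at the single point $\bv$, so on the relatively open edge $e$ only the terms $k \in \cK(e)$ survive in both sums. For each such $k$ the trace of $(\bar I^\bv\phi)|_{\Omega_k}$ on $e$ equals $\phi|_{\Omega_k}(\bv)\,\Lambda^\bv|_e$ and that of $(\bar I^e_\bv\phi)|_{\Omega_k}$ equals $\phi|_{\Omega_k}(\bv)\,\Lambda^e_\bv|_e$; these agree because $\Lambda^\bv|_e = \Lambda^e_\bv|_e$ by \eqref{Levee}. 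Thus the two broken functions carry the same trace on $e$ from each adjoining patch, which is the assertion.

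The one point requiring genuine care is the homogeneous boundary case \eqref{dR0}. There $k^*(\bv)$ and $k^-(e)$ may be undefined and the corresponding conforming functions $\Lambda^\bv$, $\Lambda^e_\bv$ are dropped, so that $P^\bv$, $P^e_\bv$ vanish by definition; but the operators $\bar I^\bv$, $\bar I^e_\bv$ are built from the very same conforming functions and therefore vanish as well, so that every identity holds trivially. I expect this boundary bookkeeping, rather than any analytic content, to be the only delicate aspect of the argument.
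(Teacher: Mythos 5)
Your argument is correct and is essentially the paper's own proof: the authors likewise deduce all three identities directly from the explicit expressions \eqref{IPv_phi}--\eqref{IPev_phi} of Lemma~\ref{lem:projs_exp}, invoking \eqref{Levee} for \eqref{IvIev}, and you have simply spelled out the patch-by-patch comparison that they leave implicit. Your closing remark on the homogeneous case goes slightly beyond what the paper's proof records, but the convention you adopt ($P^\bv, P^e_\bv, \bar I^\bv, \bar I^e_\bv$ all vanishing when the corresponding conforming functions are dropped) is the one under which the stated identities hold, so this is a sound completion rather than a deviation.
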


\begin{proof}
	All these relations follow from the expressions \eqref{IPv_phi}--\eqref{IPev_phi}.
	For the equality \eqref{IvIev} we also use the relation \eqref{Levee}.
\end{proof}

Another important property is that both the broken and conforming edge projections 
preserve the invariance along the parallel direction. This partially extends the 
preservation of directional invariance of the local projections stated in Lemma~\ref{lem:dd_Pi0k}.

\begin{lemma} \label{lem:parinv}
	Let $\vp \in L^p(\Omega(e))$ such that $\nabla^e_\parallel \vp = 0$,
	where the broken parallel gradient along $e$ is defined in \eqref{nabla_ed}.
	Then,
	\begin{equation} \label{parinv}
		\nabla^e_\parallel P^e \Pi^0_\pw \vp = \nabla^e_\parallel I^e \Pi^0_\pw \vp 
		= \nabla^e_\parallel \Pi^0_\pw \vp = 0.		
	\end{equation}
\end{lemma}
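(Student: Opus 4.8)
The plan is to push everything to the logical patches, where the hypothesis $\nabla^e_\parallel \vp = 0$ simply says that the pullbacks are independent of the parallel coordinate, and then to follow this invariance through $\Pi^0_\pw$ and through the edge projections $I^e$, $P^e$.

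First I would unfold the definition \eqref{nabla_ed}. On each contiguous patch $k \in \cK(e)$ set $\hat \vp^k \coloneqq (\cF^0_k)^{-1}(\vp|_{\Omega_k})$; then $\nabla^e_\parallel \vp = 0$ means $\hat \partial_\parallel \hat \vp^k = 0$ (distributionally), i.e.\ $\hat \vp^k$ depends only on $\hx^k_\perp$. Since by \eqref{ppe} the parallel direction coincides with one of the Cartesian logical axes, Lemma~\ref{lem:dd_Pi0k} applies with $d = \parallel$ and gives $\hat \partial_\parallel \hat \Pi^0_k \hat \vp^k = 0$. Because $(\Pi^0_\pw \vp)|_{\Omega_k} = \cF^0_k \hat \Pi^0_k \hat \vp^k$, its pullback is $\hat \Pi^0_k \hat \vp^k$, so $\nabla^e_\parallel \Pi^0_\pw \vp|_{\Omega_k} = \cF^1_k(\hat \btau^k_\parallel \hat \partial_\parallel \hat \Pi^0_k \hat \vp^k) = 0$. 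This is the third equality in \eqref{parinv}.

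It remains to treat $I^e$ and $P^e$. Writing $\psi \coloneqq \Pi^0_\pw \vp$, the previous step shows each pullback $\hat \psi^k$, $k \in \cK(e)$, is parallel-invariant, say $\hat \psi^k(\hx_\parallel, \hx_\perp) = g^k(\hx_\perp)$. I would then invoke the explicit expressions \eqref{IPe_phi_px}. For $I^e$, the projected point $\bp_e(\bx) = X^k_e(\hx^k_\parallel, \hat e^k_\perp)$ has perpendicular coordinate $\hat e^k_\perp$, so $\psi|_{\Omega_k}(\bp_e(\bx)) = g^k(\hat e^k_\perp)$ is constant in $\hx^k_\parallel$; hence $(I^e \psi)|_{\Omega_k}$ equals this constant times $\lambda^k_{i^k_\perp(e)}(\hx^k_\perp)$, whose pullback depends only on $\hx^k_\perp$ and thus has vanishing $\hat \partial_\parallel$. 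For $P^e$ the only change is that the value is taken on $\Omega^- = \Omega_{k^-(e)}$; since $\hat \psi^-$ is likewise parallel-invariant and every $\bp_e(\bx)$ lies on $e$, the factor $\psi|_{\Omega^-}(\bp_e(\bx)) = g^-(\hat e^-_\perp)$ is again constant in $\hx^k_\parallel$ (and when $k^-(e)$ is undefined in the homogeneous case, $P^e = 0$ and the claim is trivial). In both cases the patch-wise pullbacks depend only on $\hx^k_\perp$, so $\nabla^e_\parallel I^e \psi = \nabla^e_\parallel P^e \psi = 0$.

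The verification is essentially routine; the crux is Lemma~\ref{lem:dd_Pi0k}, which transfers parallel-invariance through the tensor-product projection, after which the explicit forms \eqref{IPe_phi_px} make the edge projections inherit it. The only points requiring care are reading $\nabla^e_\parallel \vp = 0$ as distributional parallel-independence of the pullbacks for a general $\vp \in L^p$, and checking the orientation bookkeeping so that the parallel axis matches the index $d$ used in Lemma~\ref{lem:dd_Pi0k}.
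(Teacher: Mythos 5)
Your proof is correct and follows essentially the same route as the paper's: Lemma~\ref{lem:dd_Pi0k} gives the last equality, and the explicit expressions \eqref{IPe_phi_px} show that the prefactor $\phi|_{\Omega_k}(\bp_e(\bx))$ is constant under parallel invariance, so $I^e\Pi^0_\pw\vp$ and $P^e\Pi^0_\pw\vp$ depend only on $\hx^k_\perp$. Your added remarks on the pullback bookkeeping and the homogeneous case $P^e=0$ are fine but not needed beyond what the paper records.
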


\begin{proof}
	The last equality from \eqref{parinv}	follows from Lemma~\ref{lem:dd_Pi0k}.
	Apply then \eqref{IPe_phi_px} to $\phi = \Pi^0_\pw \vp$ and observe
	that $\phi|_{\Omega^k}(\bp_e(\bx))$ is constant as
	$\bp_e(\bx)$ only depends on $\hx^k_\parallel$: the result follows.
\end{proof}

We further verify that these projection operators are locally stable.

\begin{lemma}\label{lem:stab_P_I}
The broken and conforming projection operators associated 
with an edge $e$ satisfy the local bound
\begin{equation} \label{stab_P_I_e}
	\norm{Q^e \phi}_{L^p(S^{e}_j)}	\lesssim \norm{\phi}_{L^p(E_e(S^{e}_j))}
	\quad \text{ with } ~~ Q^e = I^e, P^e \text{ or } P^e_{0}
\end{equation}
where $S^{e}_j$, $j \in \{0, \dots, n_e\}$, is the local domain \eqref{Sei} %supp_Le}
and $E_e$ is the edge-based domain extension operator \eqref{Ee_ext}.
Similarly, 
the broken and conforming projection operators associated 
with a vertex $\bv$ satisfy
\begin{equation} \label{stab_P_I_v}
	\norm{Q_\bv \phi}_{L^p(S^{\bv})}	\lesssim \norm{\phi}_{L^p(E_\bv(S^{\bv}))}
	\quad \text{ with } ~~ Q_\bv = \bar I^\bv \text{ or } P^\bv
\end{equation}
where $S^{\bv}$ is the local domain \eqref{Sv}
and $E_\bv$ is the vertex-based domain extension operator defined in \eqref{Ev_ext}.
\end{lemma}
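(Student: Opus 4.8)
The plan is to reduce every estimate to the coefficient-stability bound \eqref{stab_basis} together with the a priori norm bounds on the basis functions, using the explicit formulas of Lemma~\ref{lem:projs_exp}. Throughout, I write $\phi = \sum_{k\in\cK,\,\bi\in\cI^k}\phi^k_\bi\Lambda^k_\bi\in V^0_\pw$ and recall from \eqref{stab_basis} that $|\phi^k_\bi|\lesssim h_k^{-2/p}\norm{\phi}_{L^p(S^k_\bi)}$, while $\norm{\Lambda^k_\bi}_{L^p}\lesssim h_k^{2/p}$ follows from \eqref{hLT_L2} and \eqref{L2_scale_pf}, and $\norm{\Lambda^e_i}_{L^p}, \norm{\Lambda^\bv}_{L^p}\lesssim h_e^{2/p}, h_\bv^{2/p}$ are given by \eqref{bound_Lei_Lv}. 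The quasi-uniformity \eqref{sdr} guarantees $h_k\sim h_e$ for $k\in\cK(e)$ and $h_k\sim h_\bv$ for $k\in\cK(\bv)$, so these scaling factors cancel.

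For the edge estimate \eqref{stab_P_I_e} with $Q^e=I^e$, the definition \eqref{Ie} gives $I^e\phi=\sum_{k\in\cK(e)}\sum_{\bi\in\cI^k_e}\phi^k_\bi\Lambda^k_\bi$. On $S^e_j$ only boundedly many indices with $S^k_\bi\cap S^e_j\neq\emptyset$ contribute, by the bounded overlapping \eqref{overlap}, so the triangle inequality yields $\norm{I^e\phi}_{L^p(S^e_j)}\lesssim\sum|\phi^k_\bi|\,\norm{\Lambda^k_\bi}_{L^p}\lesssim\sum\norm{\phi}_{L^p(S^k_\bi)}$; each such support $S^k_\bi$ lies in $E_e(S^e_j)$ by the defining inclusion \eqref{Ee_ext}, which gives the claim. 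For $Q^e=P^e$, the formula \eqref{IPe_phi_px} shows $P^e\phi=\sum_i\phi^-_{\bi^-_e(i)}\Lambda^e_i$, a short sum over conforming edge functions whose coefficients are coarse-patch coefficients of $\phi$; the same computation applies, using $\norm{\Lambda^e_i}_{L^p}\lesssim h_e^{2/p}$ and noting that the coarse supports $S^{k^-}_{\bi^-_e(i)}$ feeding a nonzero contribution on $S^e_j$ are contained in $E_e(S^e_j)$. Finally $P^e_0=P^e-\sum_\bv P^e_\bv$ from \eqref{Pe0} is a sum of at most three terms (the edge has two endpoints), so it suffices to bound each $P^e_\bv$ in the same way: by \eqref{IPev_phi} one has $P^e_\bv\phi=\phi|_{\Omega^-}(\bv)\Lambda^e_\bv$, a single term treated exactly as above.

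The vertex estimate \eqref{stab_P_I_v} is identical in spirit. The formulas \eqref{IPv_phi} give $\bar I^\bv\phi=\sum_{k\in\cK(\bv)}\phi^k_{\bi^k(\bv)}\Lambda^\bv\one_{\Omega_k}$ and $P^\bv\phi=\phi^{k^*}_{\bi^{k^*}(\bv)}\Lambda^\bv$, both built from the single conforming function $\Lambda^\bv$ with coefficients among the $\phi^k_\bi$. Bounding these coefficients by \eqref{stab_basis}, using $\norm{\Lambda^\bv}_{L^p}\lesssim h_\bv^{2/p}$ and $h_k\sim h_\bv$, and observing that the finitely many relevant supports $S^k_{\bi^k(\bv)}$ lie in $E_\bv(S^\bv)$ by \eqref{Ev_ext}, yields the bound.

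I expect the only genuine subtlety to be the coarse/fine asymmetry of the conforming projections $P^e$ and $P^e_\bv$: their output lives on both patches adjacent to $e$, while their coefficients are read off only from the coarse patch $k^-(e)$. The point to verify carefully is that the quasi-uniformity \eqref{sdr} makes $h_{k^-}\sim h_e$ so that the scaling factors still cancel, and that every coarse support contributing to $S^e_j$ is indeed captured by the extension $E_e(S^e_j)$, which is exactly what the inclusion \eqref{Ee_ext} was designed to guarantee. Everything else is bookkeeping of bounded-overlap sums.
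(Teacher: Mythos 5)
Your proposal is correct and follows essentially the same route as the paper's proof: both reduce each bound to the coefficient stability \eqref{stab_basis}, the basis-function norm bounds \eqref{hLT_L2}, \eqref{bound_Lei_Lv}, the bounded overlapping \eqref{overlap}, and the inclusions of the relevant supports into the extensions \eqref{Ee_ext}, \eqref{Ev_ext}, with quasi-uniformity \eqref{sdr} cancelling the scaling factors. Your explicit treatment of $P^e_0$ via the decomposition \eqref{Pe0} into $P^e$ and the finitely many $P^e_\bv$ terms is a harmless elaboration of the paper's remark that "the argument is the same for $P^e_0$".
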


\begin{proof}
	Write $\omega = S^{e}_j$ and $\omega_k \coloneqq \Omega_k \cap \omega$ for $k \in \cK(e)$.
	Consider then $I^e \phi$ written as in \eqref{IPe_phi_coefs} and use $\cI^k_e \subset \cI^k$ 
	with the local stability of the basis \eqref{stab_basis}: this yields
	\begin{equation} \label{est_Ie}
		\norm{I^e \phi}_{L^p(\omega_k)}
			\le \sum_{\bi \in \cI^k(\omega_k)} \abs{\phi^k_{\bi}} \norm{\Lambda^{k}_\bi}_{L^p(\omega_k)} 
			\lesssim h_k^{2/p} \sum_{\bi \in \cI^k(\omega_k)} \abs{\phi^k_{\bi}}
			\lesssim \norm{\phi}_{L^p(E_k(\omega_k))}.
	\end{equation}
	Summing over $k \in \cK(e)$
	and using $E_k(\omega_k) \subset E_e(\omega)$, see \eqref{Ee_ext}, 
 yields \eqref{stab_P_I_e} for $Q^e = I^e$.
	For $Q^e = P^e$ we consider the two patches together
	(the argument is the same for $P^e_{0}$). 
	Using \eqref{IPe_phi_coefs} and \eqref{bound_Lei_Lv} we write
	\begin{equation*}
	\norm{P^e \phi}_{L^p(\omega)} 
	\le 
		\sum_{i \in \cI^e(\omega)} \abs{\phi^-_{\bi_e(i)}} \norm{\Lambda^{e}_i}_{L^p(\omega)}
		\lesssim h_e^{2/p} \sum_{i \in \cI^e(\omega)} \abs{\phi^-_{\bi_e(i)}}
	% \sum_{\bi \in \cI^-(\omega)} \abs{\phi^k_{\bi}} h_e
	\end{equation*}	
	where the indices $\cI^e(\omega)$ are as in \eqref{Ee_ext}.
	We then observe that $S^{k^-}_{\bi_e(i)} \subset S^{e}_{i} \subset E_e(\omega)$ 
	for all $i \in \cI^e(\omega)$.
	With \eqref{stab_basis} and the bounded overlapping of the supports 
	this allows us to write
	\begin{equation*}
	h_e^{2/p} \sum_{i \in \cI^e(\omega)} \abs{\phi^-_{\bi_e(i)}} \lesssim \norm{\phi}_{L^p(E_e(\omega))}
	\end{equation*}
	which proves \eqref{stab_P_I_e} for $Q^e = P^e$.
	The same arguments prove the bounds \eqref{stab_P_I_v}.
\end{proof}

The next estimate is a corollary of Lemma~\ref{lem:stab_P_I}.
\begin{lemma} \label{lem:stab_P}
	The conforming projection \eqref{P} satisfies
	\begin{equation} \label{stab_P}
		\norm{P \phi}_{L^p(\Omega)}	\lesssim \norm{\phi}_{L^p(\Omega)},
		\qquad \phi \in V^0_\pw.
	\end{equation}
\end{lemma}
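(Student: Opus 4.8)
The plan is to exploit that $P$ is assembled from the three families of local operators $I^k_0$, $P^e_0$ and $P^\bv$ appearing in \eqref{P}, each of which is supported in a single patch, an edge-neighborhood $\Omega(e)$, or a vertex-neighborhood $\Omega(\bv)$. The strategy is to first bound $P\phi$ patch by patch — where only boundedly many of these operators contribute — and then to recombine the patch-wise estimates into the global bound by raising to the $p$-th power and summing, relying on the bounded overlap of the neighborhoods.

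First I would record the local $L^p$-stability of each building block. For the edge and vertex terms this is exactly Lemma~\ref{lem:stab_P_I}: bound \eqref{stab_P_I_e} gives $\norm{P^e_0\phi}_{L^p(S^e_j)} \lesssim \norm{\phi}_{L^p(E_e(S^e_j))}$ and \eqref{stab_P_I_v} gives $\norm{P^\bv\phi}_{L^p(S^\bv)}\lesssim \norm{\phi}_{L^p(E_\bv(S^\bv))}$, with $E_e(S^e_j)\subset\Omega(e)$ and $E_\bv(S^\bv)\subset\Omega(\bv)$ by \eqref{Ee_ext}, \eqref{Ev_ext}. For the interior single-patch term $I^k_0$, the same argument as in the proof of Lemma~\ref{lem:stab_P_I} applies verbatim (it is a truncation of the broken-basis expansion to interior indices), using the basis stability \eqref{stab_basis} and the bounded overlapping \eqref{overlap} of the supports $S^k_\bi$; this yields $\norm{I^k_0\phi}_{L^p(\Omega_k)}\lesssim\norm{\phi}_{L^p(\Omega_k)}$. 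Since on a single edge the supports $S^e_j$ overlap boundedly and each point of $\Omega(e)$ lies in boundedly many $E_e(S^e_j)$, summing the $p$-th powers of \eqref{stab_P_I_e} over $j$ gives the consolidated edge bound $\norm{P^e_0\phi}_{L^p(\Omega(e))}\lesssim\norm{\phi}_{L^p(\Omega(e))}$, and likewise $\norm{P^\bv\phi}_{L^p(\Omega(\bv))}\lesssim\norm{\phi}_{L^p(\Omega(\bv))}$.

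Next I would localize on a fixed patch $\Omega_k$. By the support localization \eqref{supp_PIev} and the definition \eqref{Pe0}, the restriction $P\phi|_{\Omega_k}$ only involves the single term $I^k_0\phi$, the terms $P^e_0\phi$ for the (at most four) edges $e$ with $k\in\cK(e)$, and the terms $P^\bv\phi$ for the (at most four) vertices $\bv$ with $k\in\cK(\bv)$. The triangle inequality over this bounded collection, together with the local bounds just established, gives
\begin{equation*}
	\norm{P\phi}_{L^p(\Omega_k)}
		\lesssim \norm{\phi}_{L^p(\Omega_k)}
			+ \sum_{e:\,k\in\cK(e)} \norm{\phi}_{L^p(\Omega(e))}
			+ \sum_{\bv:\,k\in\cK(\bv)} \norm{\phi}_{L^p(\Omega(\bv))}
		\lesssim \norm{\phi}_{L^p(\tilde\Omega_k)},
\end{equation*}
where $\tilde\Omega_k$ denotes the union of $\Omega_k$ with the patches sharing an edge or vertex with it. By geometric conformity and Assumption~\ref{as:v_nested} (interior vertices shared by exactly four patches, boundary vertices by at most four), $\tilde\Omega_k$ is a union of at most a fixed number of patches, so the number of summands above is uniformly bounded.

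Finally, for $p<\infty$ I would raise to the $p$-th power and sum over $k\in\cK$,
\begin{equation*}
	\norm{P\phi}_{L^p(\Omega)}^p
		= \sum_{k\in\cK}\norm{P\phi}_{L^p(\Omega_k)}^p
		\lesssim \sum_{k\in\cK}\norm{\phi}_{L^p(\tilde\Omega_k)}^p
		\lesssim \norm{\phi}_{L^p(\Omega)}^p,
\end{equation*}
the last step using that each patch belongs to boundedly many neighborhoods $\tilde\Omega_k$. The case $p=\infty$ follows identically with maxima in place of sums. The only point requiring care — the main, though mild, obstacle — is to perform every recombination at the level of $p$-th powers rather than of the norms themselves, so that the finite-overlap constants, and not the (unbounded) number of patches, control the estimate; localizing to a single patch before summing is precisely what makes the bound uniform in the mesh.
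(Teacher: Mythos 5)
Your proposal is correct and follows essentially the same route as the paper: both rest on the local stability bounds of Lemma~\ref{lem:stab_P_I} (and the analogous estimate for $I^k_0$ via the basis stability \eqref{stab_basis}) recombined through the bounded overlap of the local supports and their extensions. The only difference is bookkeeping — you sum $p$-th powers patch by patch after a triangle inequality over the boundedly many contributing terms, while the paper sums each operator family over its own local domains — and your explicit insistence on working with $p$-th powers rather than norms is, if anything, slightly cleaner for general $p$.
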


\begin{proof}
	The different terms in \eqref{P} can be bounded on local domains 
	which all overlap in a bounded way. For the first term, we 
	argue as in \eqref{est_Ie} and write
	\begin{equation*}
	\norm{I^k_0 \phi}_{L^p(S^{k}_\bi)} 
		\lesssim h_k^{2/p} \sum_{\bj \in \cI^k(S^k_\bi)} \abs{\phi^k_\bj} 
		\lesssim \sum_{\bj \in \cI^k(S^k_\bi)} \norm{\phi}_{L^p(S^k_\bj)}
		\lesssim \norm{\phi}_{L^p(E_k(S^k_\bi))}.
	\end{equation*}
	% where in the last step we have used \eqref{Ek_ext} and the bounded overlapping of the 
	% supports $S^k_\bj$. 
	Summing over $\bi \in \cI^k$ then 
	yields $\norm{I^k_0 \phi}_{L^p(\Omega_k)}  \lesssim \norm{\phi}_{L^p(\Omega_k)}$
	because the extensions $E_k(S^k_\bi)$ also overlap in a bounded way according to 
	\eqref{overlap}, and in particular we have
	\begin{equation*}
	\Bignorm{\sum_k I^k_0 \phi}^2_{L^p(\Omega)} = \sum_k \norm{I^k_0 \phi}^2_{L^p(\Omega_k)}  
	\lesssim \sum_k \norm{\phi}^2_{L^p(\Omega_k)} = \norm{\phi}^2_{L^p(\Omega)}.
	\end{equation*}
	For the edge terms $\sum_e P^e_{0} \phi$ we proceed in the same way, starting with the local bounds 
	\eqref{stab_P_I_e} on the local domains $S^{e}_j$ whose union over $j = 0, \dots, n_e$ cover 
	the support of $P^e_{0} \phi$, see \eqref{supp_PIev}, and using the bounded overlapping 
	of these domains and their edge-based extensions. 
	The vertex terms are treated in the same way, using the local bounds 
	\eqref{stab_P_I_v} on the domains $S^{\bv}$.
\end{proof}

%%%% %%%% %%%% %%%% %%%% %%%% %%%% %%%% %%%% %%%% %%%% %%%% %%%% %%%% %%%% %%%% 
%%%% %%%% %%%% %%%% %%%% %%%% %%%% %%%% %%%% %%%% %%%% %%%% %%%% %%%% %%%% %%%% 

\section{$L^p$ stable antiderivative operators}
\label{sec:antider}

Our construction \eqref{Pi1_intro} for $\Pi^1$ relies on several local antiderivative operators:
\begin{itemize}
		\item a single-patch antiderivative $\Phi^k_d$ for $k \in \cK$
		and a direction $d \in \{1,2\}$,
		\item edge antiderivatives $\Phi^e_d$ 
		for $e \in \cE$
		and a relative direction $d \in \{\parallel, \perp\}$,
		\item a vertex antiderivative $\Phi^\bv$ for $\bv \in \cV$.
\end{itemize}
Given a vector valued function $\bu$, these operators take the general form 
\begin{equation} \label{Phi_a_form}
	\Phi(\bu)(\bx) = \frac{1}{\hat h} \int_0^{\hat h} \int_{\gamma(\bx,a)} \bu \cdot \dd l %(\gamma(\bx,a,z))\cdot \partial_z \gamma(\bx,a,z) 
	\dd a
\end{equation}
where $\hat h$ is an averaging resolution and 
for every value of the averaging parameter $a$, 
$\gamma(\bx,a)$ is a curve connecting $\bx$ and some starting point $\gamma_*(\bx,a)$ 
which may or may not depend on $\bx$. In particular, applied to gradients these will 
satisfy a relation of the form
\begin{equation} \label{Phi_grad}
\Phi(\nabla \phi)(\bx) 
	= \frac{1}{\hat h} \int_0^{\hat h} \int_{\gamma(\bx,a)} \nabla \phi \cdot \dd l \dd a
	= \phi(\bx) - \frac{1}{\hat h} \int_0^{\hat h}\phi(\gamma_*(\bx,a))\dd a.
\end{equation}
In a similar fashion, we will define bivariate antiderivative operators of the form 
\begin{equation*}
\Psi(f)(\bx) = \frac{1}{\hat h} \int_0^{\hat h} \iint_{\sigma(\bx,a)} f \dd \bz \dd a
\end{equation*}
which will be involved in the commuting projection $\Pi^2$.

\subsection{Single-patch antiderivative operators}

In the case of single-patch antiderivative operators $\Phi^k_d$, 
the integration curve does not depend on $a$ and for $\bx \in \Omega_k$
it is fully contained in $\Omega_k$.
Writing $\hbx = F_k^{-1}(\bx)$ we parametrize it as 
\begin{equation*}
\gamma^k_d(\bx) = F_k(\hat \gamma_d(\hat \bx, [0,\hx_d]))
\quad \text{ with } \quad
\hat \gamma_d(\hat \bx, \cdot):  [0,\hx_d] \ni z \mapsto \begin{cases}
	(z, \hx_2) &\text{ if } d = 1
	\\
	(\hx_1, z) &\text{ if } d = 2.
\end{cases}
\end{equation*} 
% with 
% \begin{equation*}
% \gamma^k(\bx) = 
% \quad \text{ where } \quad
% \hat \gamma^k(\hat \bx) = \hat \gamma^k(\hat \bx, [z^k_0(\hat \bx),z^k_1(\hat \bx)])
% \end{equation*}
% . 
% Using the invariance of path integrals through the 1-form 
% pullbacks $(\cF^1_k)^{-1}: \bu \mapsto \hat \bu^k$, we then express the antiderivative as
% \begin{equation} \label{Phi_kd}
% \Phi^k_d(\bu)(\bx) = \hat \Phi(\hat \bu^k)(\hat \bx) 
% 	\coloneqq 
% 	\int_{z^k_0(\hbx)}^{z^k_1(\hbx)} \hat \bu^k(\hat \gamma^k_d(\hat \bx,z)) 
% 				\cdot \partial_z \hat \gamma^k(\hat \bx,z) \dd z
% \end{equation}
% with $\hbx = F_k^{-1}(\bx)$ and %curves defined as
% \begin{equation*}
% \hat \gamma^k_d(\hat \bx, \cdot):  [0,\hx_d] \ni z \mapsto \begin{cases}
% 	(z, \hx_2) &\text{ if } d = 1
% 	\\
% 	(\hx_1, z) &\text{ if } d = 2.
% \end{cases}
% \end{equation*}
Using the invariance of path integrals through 1-form 
pullback $(\cF^1_k)^{-1}: \bu \mapsto \hat \bu^k$, this results 
in defining the directional antiderivative operators as
\begin{equation} \label{Phi_kd_exp}
\Phi^k_1(\bu)(\bx) \coloneqq \int_{0}^{\hx_1} \hat u^k_1(z_1, \hx_2 ) \dd z_1
\quad \text{ and } \quad
\Phi^k_2(\bu)(\bx) \coloneqq \int_{0}^{\hx_2} \hat u^k_2(\hx_1, z_2 ) \dd z_2.
\end{equation}
As already mentioned, these operators play a central role in the 
tensor-product construction of \cite{buffa_isogeometric_2011}. We review their
main properties in our framework.

\begin{lemma} \label{lem:Phi_kd_nabla}
	If $\bu = \nabla \phi$ with $\phi \in C^1(\Omega)$, then
		\begin{equation} \label{Phi_kd_nabla}
			\Phi^k_d(\bu)(\bx) = \phi(\bx) - \phi(F_k(\bar \bx)) \quad \text{ on } \Omega_k,			
		\end{equation}
		where 
		$\bar x_d = 0$ and $\bar x_{d'} = \hx_{d'}$ for the other component. 
\end{lemma}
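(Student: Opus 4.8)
The plan is to work entirely on the reference patch $\hat\Omega$ by pulling back through the $1$-form pullback $(\cF^1_k)^{-1}$, using the fundamental theorem of calculus in one variable. The whole point of the definition \eqref{Phi_kd_exp} is that the line integral of $\bu$ along $\gamma^k_d(\bx)$ becomes, after pullback, an ordinary one-variable integral of a component of $\hat\bu^k$. So the first step is to record that for $\bu = \nabla\phi$, the pullback $\hat\bu^k = (\cF^1_k)^{-1}\bu$ equals the logical gradient of the pulled-back scalar, i.e. $\hat\bu^k = \hat\nabla\hat\phi^k$ with $\hat\phi^k \coloneqq \hat\phi\circ = \phi\circ F_k = (\cF^0_k)^{-1}\phi$. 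This is exactly the commutation property $\nabla\cF^0_k = \cF^1_k\hat\nabla$ recorded after the pushforward definitions \eqref{pf_gc}, read in reverse: applying $(\cF^1_k)^{-1}$ to $\bu = \nabla\phi = \nabla\cF^0_k\hat\phi^k = \cF^1_k\hat\nabla\hat\phi^k$ gives $\hat\bu^k = \hat\nabla\hat\phi^k$, so in particular the $d$-th component is $\hat u^k_d = \hat\partial_d\hat\phi^k$.

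Next I would substitute this into \eqref{Phi_kd_exp}. Take $d = 1$ without loss of generality (the $d=2$ case is identical by symmetry). Then
\begin{equation*}
\Phi^k_1(\bu)(\bx) = \int_0^{\hx_1} \hat u^k_1(z_1,\hx_2)\dd z_1 = \int_0^{\hx_1} \hat\partial_1\hat\phi^k(z_1,\hx_2)\dd z_1 = \hat\phi^k(\hx_1,\hx_2) - \hat\phi^k(0,\hx_2),
\end{equation*}
where the last equality is the one-dimensional fundamental theorem of calculus applied in the first variable with the second variable $\hx_2$ held fixed. Since $\hat\phi^k = \phi\circ F_k$, the first term is $\phi(F_k(\hx_1,\hx_2)) = \phi(\bx)$, and the second is $\phi(F_k(0,\hx_2)) = \phi(F_k(\bar\bx))$ with $\bar x_1 = 0$, $\bar x_2 = \hx_2$, matching the claimed formula.

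The only care needed is to justify the fundamental-theorem step, since it requires $z_1 \mapsto \hat\phi^k(z_1,\hx_2)$ to be absolutely continuous for a.e.\ $\hx_2$; this is immediate from $\phi \in C^1(\Omega)$ and the smoothness (indeed $C^1$-diffeomorphism property) of $F_k$ assumed in \eqref{boundsDF}, which makes $\hat\phi^k \in C^1(\hat\Omega)$ so that $\hat\partial_1\hat\phi^k$ is continuous and the integral recovers the difference of endpoint values pointwise. I do not anticipate a genuine obstacle here: the statement is essentially a bookkeeping of the pullback commutation identity together with the one-variable fundamental theorem of calculus, and the mildest regularity ($C^1$) is already assumed in the hypothesis. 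The one thing worth stating explicitly, rather than a difficulty, is the identification $\hat\bu^k = \hat\nabla\hat\phi^k$, since everything else follows mechanically from it.
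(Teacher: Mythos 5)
Your argument is correct and is precisely the "straightforward" computation the paper leaves implicit: pull back via $(\cF^1_k)^{-1}$ so that $\hat u^k_d = \hat\partial_d\hat\phi^k$ with $\hat\phi^k = \phi\circ F_k$, then apply the one-variable fundamental theorem of calculus in \eqref{Phi_kd_exp}. No discrepancy with the paper's approach.
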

\begin{proof}
	The proof is straightforward.	
\end{proof}

\begin{lemma} \label{lem:stab_Phik}
	The single-patch antiderivative operators are stable in $L^p(\Omega(e))$,
	\begin{equation} \label{L2_bound_Phikd}
		\norm{\Phi^{k}_{d}(\bu)}_{L^p(\Omega_k)} \lesssim \norm{\bu}_{L^p(\Omega_k)}.
	\end{equation}
	Moreover, on a local domain $S^k_\bi$ of the form \eqref{Ski}, $\bi \in \cI^k$, we have 
	the bound 
	\begin{equation} \label{loc_bound_Pik1}
		\norm{\nabla^k_d \Pi^0_k \Phi^{k}_{d}(\bu)}_{L^p(S^k_\bi)} 
		\lesssim \norm{\bu}_{L^p(E_k^2(S^k_\bi))}
	\end{equation}
	where $E_k^2$ is the two-fold single-patch domain extension operator, see \eqref{Ek_ext}. %{Eh_domain}.
\end{lemma}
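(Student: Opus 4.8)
The plan is to prove both bounds by transporting everything to the reference patch $\hat\Omega$ via the scaling relations \eqref{L2_scale_pf} and working with the logical antiderivative $\hat\Phi_d$, so that the two estimates reduce to elementary univariate facts about the indefinite integral. For \eqref{L2_bound_Phikd} I would first record that $\Phi^k_d(\bu) = \cF^0_k\big(\hat\Phi_d(\hat\bu)\big)$ with $\hat\bu = (\cF^1_k)^{-1}\bu$, so by \eqref{L2_scale_pf} it suffices to establish the reference bound $\norm{\hat\Phi_d(\hat\bu)}_{L^p(\hat\Omega)}\lesssim\norm{\hat\bu}_{L^p(\hat\Omega)}$. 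Taking $d=1$, for fixed $\hx_2$ the map $\hx_1\mapsto\int_0^{\hx_1}\hat u_1(z,\hx_2)\dd z$ is an indefinite integral over an interval of length at most $1$; Jensen's (equivalently Hölder's) inequality gives $\big|\int_0^{\hx_1}\hat u_1(z,\hx_2)\dd z\big|^p\le\int_0^1\abs{\hat u_1(z,\hx_2)}^p\dd z$, and integrating in $\hbx$ yields $\norm{\hat\Phi_1(\hat\bu)}_{L^p(\hat\Omega)}\le\norm{\hat u_1}_{L^p(\hat\Omega)}\le\norm{\hat\bu}_{L^p(\hat\Omega)}$, with the obvious $L^\infty$ variant. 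Scaling back with \eqref{L2_scale_pf} then gives \eqref{L2_bound_Phikd}.

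For the local bound \eqref{loc_bound_Pik1} I again reduce to $\hat\Omega$: since $\nabla^k_d\Pi^0_k\Phi^k_d(\bu) = \cF^1_k\big(\hat\btau_d\,\hat\partial_d\hat\Pi^0_k\hat\Phi_d(\hat\bu)\big)$, it is enough to prove $\norm{\hat\partial_1\hat\Pi^0_k\hat\Phi_1(\hat\bu)}_{L^p(\hat S^k_\bi)}\lesssim\norm{\hat u_1}_{L^p(\hat E_k^2(\hat S^k_\bi))}$ in the case $d=1$. The naive route — apply the inverse estimate \eqref{inverse_est} and then the stability \eqref{stab_hPi0} of $\hat\Pi^0_k$ — loses a factor $\hat h_k^{-1}$, so the crucial device is to localize the antiderivative before differentiating. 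Writing $\hat E_k^2(\hat S^k_\bi) = [\hat a_1,\hat b_1]\times[\hat a_2,\hat b_2]$ (a Cartesian box, since the extension of a product support is a product of interval extensions) with $\hx_1$-width $\hat b_1-\hat a_1\lesssim\hat h_k$ by \eqref{S_size}–\eqref{overlap}, I split $\hat\Phi_1(\hat\bu)(\hbx) = r(\hx_2) + \hat g(\hbx)$, where $r(\hx_2) = \int_0^{\hat a_1}\hat u_1(z,\hx_2)\dd z$ is independent of $\hx_1$ and $\hat g(\hbx) = \int_{\hat a_1}^{\hx_1}\hat u_1(z,\hx_2)\dd z$. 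Since $\hat\partial_1 r = 0$, Lemma~\ref{lem:dd_Pi0k} yields $\hat\partial_1\hat\Pi^0_k r = 0$, hence $\hat\partial_1\hat\Pi^0_k\hat\Phi_1(\hat\bu) = \hat\partial_1\hat\Pi^0_k\hat g$.

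It then remains to chain three estimates. The inverse estimate \eqref{inverse_est} applied to $\hat\Pi^0_k\hat g\in\hat V^0_k$ contributes a factor $\hat h_k^{-1}$ and one domain extension; the stability \eqref{stab_hPi0} of $\hat\Pi^0_k$ contributes a second extension, so that $\norm{\hat\partial_1\hat\Pi^0_k\hat g}_{L^p(\hat S^k_\bi)}\lesssim\hat h_k^{-1}\norm{\hat g}_{L^p(\hat E_k^2(\hat S^k_\bi))}$. Finally, because on $\hat E_k^2(\hat S^k_\bi)$ the function $\hat g$ is an integral of $\hat u_1$ over the interval $[\hat a_1,\hx_1]$ of length $\lesssim\hat h_k$, Hölder in $z$ followed by integration in $\hbx$ gives $\norm{\hat g}_{L^p(\hat E_k^2(\hat S^k_\bi))}\lesssim\hat h_k\,\norm{\hat u_1}_{L^p(\hat E_k^2(\hat S^k_\bi))}$; the two powers of $\hat h_k$ cancel, and transporting back with \eqref{L2_scale_pf} yields \eqref{loc_bound_Pik1}.

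The main obstacle is exactly this cancellation. Differentiating the projected antiderivative is an order-$\hat h_k^{-1}$ operation, and it is balanced only after subtracting the $\hx_1$-independent tail $r(\hx_2)$ — a step that is legitimate precisely because of the directional invariance in Lemma~\ref{lem:dd_Pi0k} — so that the surviving antiderivative $\hat g$ runs over a single-cell-sized interval and supplies the compensating factor $\hat h_k$. Choosing the base point $\hat a_1$ at the left edge of the twofold extension $\hat E_k^2(\hat S^k_\bi)$ is what simultaneously makes $r$ depend only on $\hx_2$ and keeps the integration range of $\hat g$ within $\hat E_k^2(\hat S^k_\bi)$, so that the final estimate is genuinely local with the stated twofold extension.
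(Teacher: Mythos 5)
Your proposal is correct and follows essentially the same route as the paper: the global bound via pullback and Hölder on the reference patch, and the local bound by exploiting Lemma~\ref{lem:dd_Pi0k} to replace the antiderivative with one whose integration range has length $\lesssim \hat h_k$, then chaining the inverse estimate, the local stability of $\hat\Pi^0_k$, and a localized Hölder bound so that the factors $\hat h_k^{-1}$ and $\hat h_k$ cancel. Your subtraction of the $\hx_1$-independent tail $r(\hx_2)$ is the same device as the paper's shift of the integration base point to $\tilde x_1 \in \hat S^k_{i_1}$, just written additively.
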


\begin{proof}
	Using the scaling of \eqref{L2_scale_pf}, we work with the 
	pullback $\hat \Phi^k_d(\hat \bu^k)$ on the reference domain 
	$\hat \Omega$. Without loss of generality, we consider $d=1$,
	and with a H\"older %Cauchy-Schwarz 
	inequality we bound 
  \begin{equation} \label{CS_Phi_k}
		\begin{aligned}
		\norm{\hat \Phi^{k}_1(\hat \bu^k)}^p_{L^p(\hat \Omega)}
		&= \iint_{\hat \Omega} \Bigabs{\int_{0}^{\hx_1} \hat u^k_1(z,\hx_2) \dd z }^p \dd \hbx
		\\
		&\le
		\iint_{\hat \Omega} \abs{\hx_1}^{p-1} \int_{0}^{\hx_1} \abs{\hat \bu^k(z,\hx_2)}^p \dd z \dd \hbx
		\le 
		\norm{\hat \bu^k}^2_{L^p(\hat \Omega)}
		\end{aligned}
  \end{equation}
	and \eqref{L2_bound_Phikd} follows from the scaling relations \eqref{L2_scale_pf} and the bound $H_k \lesssim 1$.
	Turning to the local estimate we observe that for any fixed $\tilde x_1 \in [0,1]$, 
	% $\hbx \in \hat S^k_\bi = F_k^{-1}(S^k_\bi)$,
	the antiderivative %of the form 
	\begin{equation*}
	\tilde \Phi^{k}_1(\hat \bu^k)(\hbx) \coloneqq \int_{\tilde x_1}^{\hx_1} \hat u^k_1(z_1, \hx_2 ) \dd z_1
	\end{equation*}
	satisfies 
	$
	\hat \nabla_1\hat \Phi^{k}_1(\hat \bu^k) = \hat\nabla_1 \tilde \Phi^{k}_1(\hat \bu^k),
	$
	hence $\hat\nabla_1 \hat\Pi^0_k \hat \Phi^{k}_1(\hat \bu^k) = \hat \nabla_1 \hat\Pi^0_k \tilde \Phi^{k}_1(\hat \bu^k)$ 
	by Lemma~\ref{lem:dd_Pi0k}. 
	Using the inverse estimate \eqref{inverse_est} and the local stability \eqref{stab_Pi0k} 
	we next bound
	\begin{equation*}
	\norm{\hat\nabla_1 \hat \Pi^0_k \tilde \Phi^{k}_1(\hat \bu^k)}_{L^p(\hat S^k_\bi)} 
	\lesssim
  	\hat h_k^{-1} \norm{\hat \Pi^0_k \tilde \Phi^{k}_1(\hat \bu^k)}_{L^p(\hat E_h(\hat S^k_\bi))}
	\lesssim 
		\hat h_k^{-1} \norm{\tilde \Phi^{k}_1(\hat \bu^k)}_{L^p(\hat E^2_h(\hat S^k_\bi))}.
	\end{equation*}		
	We then observe that $\hat E^2_h(\hat S^k_\bi)$ is of diameter $\lesssim \hat h_k$,
	and we fix $\tilde x_1 \in \hat S^k_{i_1}$ which
	according to the locality properties \eqref{S_size}--\eqref{overlap}, 
	satisfies $\abs{\hx_1-\tilde x_1} \lesssim \hat h_k$
	for all $\hbx \in \hat E^2_h(\hat S^k_\bi)$. We then compute as in \eqref{CS_Phi_k}: this gives 
	\begin{equation*}
	% \begin{aligned}
			\norm{\tilde \Phi^{k}_1(\hat \bu^k)}^p_{L^p(\hat E^2_h(\hat S^k_\bi))}
		% &= \iint_{\hat S^k_\bi} \Bigabs{\int_{\tilde x_1}^{\hx_1} \hat u^k_1(z,\hx_2) \dd z }^2 \dd \hbx
		% \\
		\le
		\iint_{\hat E^2_h(\hat S^k_\bi)} \abs{\hx_1-\tilde x_1}^{p-1} \int_{\tilde x_1}^{\hx_1} \abs{\hat \bu^k(z,\hx_2)}^p \dd z \dd \hbx
		\le 
		 \hat h_k^{p}\norm{\hat \bu^k}^p_{L^p(\hat E^2_h(\hat S^k_\bi))}
	 % \end{aligned}
	\end{equation*} 
	so that we have shown
	\begin{equation*}
	\norm{\hat \nabla_1 \hat\Pi^0_k \hat \Phi^{k}_1(\hat \bu^k)}_{L^p(\hat S^k_\bi)} 
	= \norm{\hat \nabla_1 \hat\Pi^0_k \tilde \Phi^{k}_1(\hat \bu^k)}_{L^p(\hat S^k_\bi)} 
	\lesssim \norm{\hat \bu^k}_{L^p(\hat E^2_h(\hat S^k_\bi))}.
	\end{equation*}
	Estimate \eqref{loc_bound_Pik1} then follows from the scaling 
	\eqref{L2_scale_pf} of 1-form pullbacks.
\end{proof}

\subsection{Edge-based antiderivative operators}
\label{sec:Phi_e}

In a similar way, we define edge-based antiderivative operators $\Phi^e_d$
along $d \in \{\parallel, \perp\}$, the parallel and perpendicular 
directions relative to $e$.
Both are supported in the patches adjacent to $e$ %\eqref{Omega_e}
and the construction is summarized in Figure~\ref{fig:Phi_e}.

\begin{figure}
	\centering %begin{center}
	% \hspace*{\fill}%
 	\subfloat[parallel case]
	 {
		\fontsize{18pt}{22pt}\selectfont
	 \resizebox{!}{0.2\textheight}{%% Creator: Inkscape 1.1.2 (b8e25be8, 2022-02-05), www.inkscape.org
%% PDF/EPS/PS + LaTeX output extension by Johan Engelen, 2010
%% Accompanies image file 'Phi_e_par.pdf' (pdf, eps, ps)
%%
%% To include the image in your LaTeX document, write
%%   \input{<filename>.pdf_tex}
%%  instead of
%%   \includegraphics{<filename>.pdf}
%% To scale the image, write
%%   \def\svgwidth{<desired width>}
%%   \input{<filename>.pdf_tex}
%%  instead of
%%   \includegraphics[width=<desired width>]{<filename>.pdf}
%%
%% Images with a different path to the parent latex file can
%% be accessed with the `import' package (which may need to be
%% installed) using
%%   \usepackage{import}
%% in the preamble, and then including the image with
%%   \import{<path to file>}{<filename>.pdf_tex}
%% Alternatively, one can specify
%%   \graphicspath{{<path to file>/}}
%% 
%% For more information, please see info/svg-inkscape on CTAN:
%%   http://tug.ctan.org/tex-archive/info/svg-inkscape
%%
\begingroup%
  \makeatletter%
  \providecommand\color[2][]{%
    \errmessage{(Inkscape) Color is used for the text in Inkscape, but the package 'color.sty' is not loaded}%
    \renewcommand\color[2][]{}%
  }%
  \providecommand\transparent[1]{%
    \errmessage{(Inkscape) Transparency is used (non-zero) for the text in Inkscape, but the package 'transparent.sty' is not loaded}%
    \renewcommand\transparent[1]{}%
  }%
  \providecommand\rotatebox[2]{#2}%
  \newcommand*\fsize{\dimexpr\f@size pt\relax}%
  \newcommand*\lineheight[1]{\fontsize{\fsize}{#1\fsize}\selectfont}%
  \ifx\svgwidth\undefined%
    \setlength{\unitlength}{315.62072754bp}%
    \ifx\svgscale\undefined%
      \relax%
    \else%
      \setlength{\unitlength}{\unitlength * \real{\svgscale}}%
    \fi%
  \else%
    \setlength{\unitlength}{\svgwidth}%
  \fi%
  \global\let\svgwidth\undefined%
  \global\let\svgscale\undefined%
  \makeatother%
  \begin{picture}(1,0.57894737)%
    \lineheight{1}%
    \setlength\tabcolsep{0pt}%
    \put(0,0){\includegraphics[width=\unitlength,page=1]{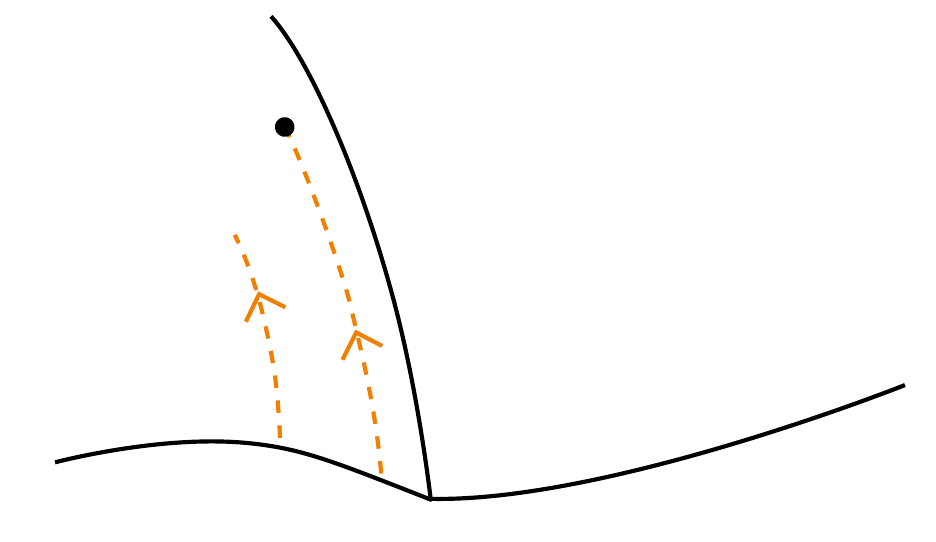}}%
    \put(0.21530711,0.48250721){\color[rgb]{0,0,0}\makebox(0,0)[rt]{\lineheight{1.25}\smash{\begin{tabular}[t]{r}$\boxed{k^-(e)}$\end{tabular}}}}%
    \put(0.21065096,0.35343452){\color[rgb]{0,0,0}\makebox(0,0)[rt]{\lineheight{1.25}\smash{\begin{tabular}[t]{r}$\bx$\end{tabular}}}}%
    \put(0.48827395,0.48159412){\color[rgb]{0,0,0}\makebox(0,0)[lt]{\lineheight{1.25}\smash{\begin{tabular}[t]{l}$\boxed{k^+(e)}$\end{tabular}}}}%
    \put(0,0){\includegraphics[width=\unitlength,page=2]{plots/Phi_e_par.pdf}}%
    \put(0.34669455,0.52631911){\color[rgb]{0,0,0}\makebox(0,0)[lt]{\lineheight{1.25}\smash{\begin{tabular}[t]{l}$e$\end{tabular}}}}%
    %\put(0.2054971,0.01572443){\color[rgb]{0,0,0}\makebox(0,0)[lt]{\lineheight{1.25}\smash{\begin{tabular}[t]{l}$\gamma^e_{\parallel, *}(a)$\end{tabular}}}}%
    \put(0,0){\includegraphics[width=\unitlength,page=3]{plots/Phi_e_par.pdf}}%
  \end{picture}%
\endgroup%
}
		\label{fig:type_I}
		}
    % \hspace{5pt}
		% \centerhfill
		\\
		\subfloat[perpendicular case]	
		{
			\fontsize{18pt}{22pt}\selectfont
      \resizebox{!}{0.2\textheight}{%% Creator: Inkscape 1.1.2 (b8e25be8, 2022-02-05), www.inkscape.org
%% PDF/EPS/PS + LaTeX output extension by Johan Engelen, 2010
%% Accompanies image file 'Phi_e_perp.pdf' (pdf, eps, ps)
%%
%% To include the image in your LaTeX document, write
%%   \input{<filename>.pdf_tex}
%%  instead of
%%   \includegraphics{<filename>.pdf}
%% To scale the image, write
%%   \def\svgwidth{<desired width>}
%%   \input{<filename>.pdf_tex}
%%  instead of
%%   \includegraphics[width=<desired width>]{<filename>.pdf}
%%
%% Images with a different path to the parent latex file can
%% be accessed with the `import' package (which may need to be
%% installed) using
%%   \usepackage{import}
%% in the preamble, and then including the image with
%%   \import{<path to file>}{<filename>.pdf_tex}
%% Alternatively, one can specify
%%   \graphicspath{{<path to file>/}}
%% 
%% For more information, please see info/svg-inkscape on CTAN:
%%   http://tug.ctan.org/tex-archive/info/svg-inkscape
%%
\begingroup%
  \makeatletter%
  \providecommand\color[2][]{%
    \errmessage{(Inkscape) Color is used for the text in Inkscape, but the package 'color.sty' is not loaded}%
    \renewcommand\color[2][]{}%
  }%
  \providecommand\transparent[1]{%
    \errmessage{(Inkscape) Transparency is used (non-zero) for the text in Inkscape, but the package 'transparent.sty' is not loaded}%
    \renewcommand\transparent[1]{}%
  }%
  \providecommand\rotatebox[2]{#2}%
  \newcommand*\fsize{\dimexpr\f@size pt\relax}%
  \newcommand*\lineheight[1]{\fontsize{\fsize}{#1\fsize}\selectfont}%
  \ifx\svgwidth\undefined%
    \setlength{\unitlength}{315.62072754bp}%
    \ifx\svgscale\undefined%
      \relax%
    \else%
      \setlength{\unitlength}{\unitlength * \real{\svgscale}}%
    \fi%
  \else%
    \setlength{\unitlength}{\svgwidth}%
  \fi%
  \global\let\svgwidth\undefined%
  \global\let\svgscale\undefined%
  \makeatother%
  \begin{picture}(1,0.57894737)%
    \lineheight{1}%
    \setlength\tabcolsep{0pt}%
    \put(0,0){\includegraphics[width=\unitlength,page=1]{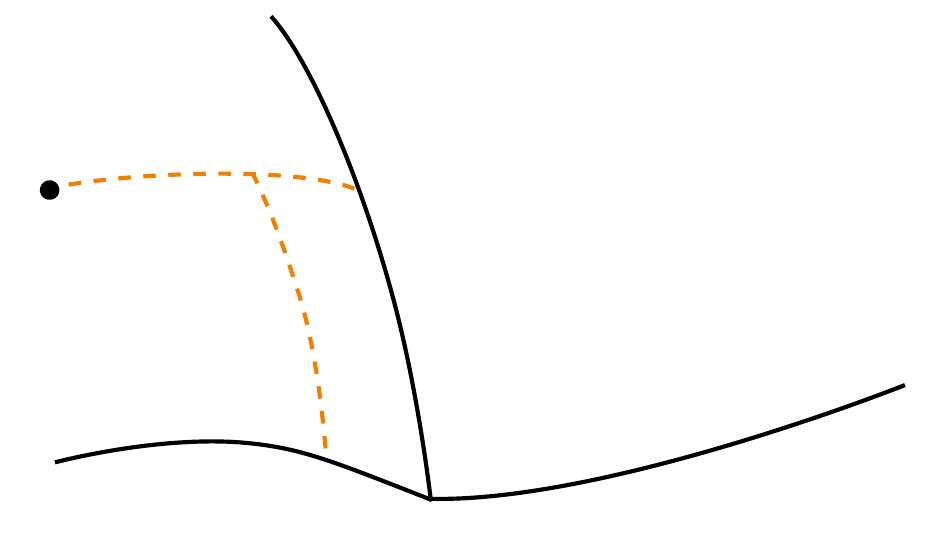}}%
    \put(0.2554971,0.01572443){\color[rgb]{0,0,0}\makebox(0,0)[lt]{\lineheight{1.25}\smash{\begin{tabular}[t]{l}$\gamma^e_{\perp, *}(a)$\end{tabular}}}}%
    \put(0,0){\includegraphics[width=\unitlength,page=2]{plots/Phi_e_perp.pdf}}%
    \put(0.21530711,0.48250721){\color[rgb]{0,0,0}\makebox(0,0)[rt]{\lineheight{1.25}\smash{\begin{tabular}[t]{r}$\boxed{k^-(e)}$\end{tabular}}}}%
    \put(0,0){\includegraphics[width=\unitlength,page=3]{plots/Phi_e_perp.pdf}}%
    \put(0.73151952,0.26372172){\color[rgb]{0,0,0}\makebox(0,0)[lt]{\lineheight{1.25}\smash{\begin{tabular}[t]{l}$\bx$\end{tabular}}}}%
    \put(0,0){\includegraphics[width=\unitlength,page=4]{plots/Phi_e_perp.pdf}}%
    \put(0.48827395,0.48159412){\color[rgb]{0,0,0}\makebox(0,0)[lt]{\lineheight{1.25}\smash{\begin{tabular}[t]{l}$\boxed{k^+(e)}$\end{tabular}}}}%
    \put(0,0){\includegraphics[width=\unitlength,page=5]{plots/Phi_e_perp.pdf}}%
    \put(0.34669455,0.52631911){\color[rgb]{0,0,0}\makebox(0,0)[lt]{\lineheight{1.25}\smash{\begin{tabular}[t]{l}$e$\end{tabular}}}}%
    %\put(0,0){\includegraphics[width=\unitlength,page=6]{plots/%Phi_e_perp.pdf}}%
  \end{picture}%
\endgroup%
}
    	\label{fig:type_II}
		}
		% \hspace*{\fill}
	\caption{Integration paths $\gamma^e_d(\bx,a)$ defining the edge-based 
	antiderivative operators $\Phi^e_d$. In the case $d=\parallel$ (\textsc{a})
	the curves connect various points $\bx$ to different starting points $\gamma^e_{\parallel,*}(\bx)$, represented by white squares, which depend only on the perpendicular
	component $\hx^k_\perp$ of the logical coordinate of $\bx \in \Omega_k$,
	see \eqref{ppe}.
	In the case $d=\perp$ (\textsc{b}) the curves depend 
	on the averaging parameter $a$.
	For a given value of $a \in (0,\hat h_e)$
	they connect every $\bx \in \Omega(e)$ to the same 
	starting point $\gamma^e_{\perp,*}(a)$
	represented by a white square, see \eqref{gam*eperp}.
	% $\gamma^e_{\perp,*}(a)$	  $\Phi^e_\parallel$ %(\ref{fig:type_I}) 
	% and the perpendicular $\Phi^e_{\perp,a}$ on the 
	% (\ref{fig:type_II}) 	
	% Note that the averaging parameter $a$ is a quantity in the reference patch, 
	% here we plot its curvilinear equivalent despite its label. 
	}
	\label{fig:Phi_e}
\end{figure}

For the parallel edge-based antiderivative operator $\Phi^e_\parallel$ 
the integration curve is similar to the single-patch one.
For $\bx \in \Omega_k$, $k \in \cK(e)$, it is fully supported in 
$\Omega_k$. Writing now $\hbx^k = F_k^{-1}(\bx)$, we define it as
\begin{equation*}
\gamma^{e,k}_\parallel(\bx) = F_k(\hat \gamma^{e,k}_\parallel(\hat \bx, [\eta_e^k(0), \hx^k_\parallel]))
\quad \text{ with } \quad
\hat \gamma^{e,k}_\parallel(\hat \bx, \cdot):  
	[\eta_e^k(0), \hx^k_\parallel] \ni z \mapsto \hat X^k_e(z, \hx^k_\perp)
\end{equation*}
where we remind that $\hx^k_\parallel$ and $\hx^k_\perp$ are the parallel and perpendicular
coordinates relative to $e$, %in the patch $k$,
$\hat X^k_e$ is the reordering function \eqref{Xke}
and $\eta_e^k$ is the edge orientation function given by \eqref{eta}.
Thus, the parallel antiderivative reads
\begin{equation} \label{Phi_par}
\Phi^{e}_{\parallel}(\bu)(\bx)
	\coloneqq
	\int_{\eta_e^k(0)}^{\hx^k_\parallel} \hat u^k_\parallel(X^k_e(z, \hx^k_\perp)) \dd z
	\quad \text{ for } \bx \in \Omega_k, \quad k \in \cK(e).
\end{equation}
For the perpendicular edge-based antiderivative we use an averaging step
\begin{equation} \label{Phi_perp}
	\Phi^{e}_{\perp}(\bu)(\bx) = \frac{1}{\hat h_e}\int_0^{\hat h_e} \Phi^{e}_{\perp,a}(\bu)(\bx) \dd a
\end{equation}
where $\hat h_e$ is defined in \eqref{he},
and for each value of the parameter $a$, 
$\gamma(\bx,a)$ is a curve defined as follows: 
%may be supported on both patches across $e$:
% A patch-wise definition must then distinguish the two different patches: 
For $\bx$ on the coarser patch $k = k^-(e)$ the curve
is contained in that patch, and
$\hat \gamma^{e,k}_\perp(\hat \bx, a, \cdot)$ is defined on 
$[-\hx_\parallel, \abs{\hx_\perp - \tilde a}]$ by  
\begin{equation} \label{gamma_perp_-}
	\hat \gamma^{e,-}_\perp(\hat \bx, a, \cdot): z \mapsto \begin{cases}
		X^-_e(\hx_\parallel+z, \tilde a) \quad &\text{for } ~ -\hx_\parallel \le z \le 0
		\\
		X^-_e(\hx_\parallel,\tilde a + z \sign(\hx_\perp - \tilde a)) \quad &\text{for } ~ 0 \le z \le \abs{\hx_\perp - \tilde a}.
	\end{cases}
\end{equation}
Here $\tilde a$ is the perpendicular coordinate at distance $a$ from the edge, namely
\begin{equation} \label{tilde_a}
	\tilde a \coloneqq \begin{cases}
		a &\text{ if } \hat e^-_\perp = 0
		\\
		1-a &\text{ if } \hat e^-_\perp = 1
	\end{cases}.
\end{equation}
On the coarse patch, i.e. $\bx \in \Omega_{k^-(e)}$, the resulting antiderivative reads then
\begin{equation} \label{Phi_perp_-}
\Phi^{e,-}_{\perp,a}(\bu)(\bx)
	\coloneqq
	\int_{0}^{\hx_\parallel} \hat u^-_\parallel(\hat X^-_e(z_\parallel, \tilde a)) \dd z_\parallel
	+ \int_{\tilde a}^{\hx_\perp} \hat u^-_\perp(\hat X^-_e(\hx_\parallel,z_\perp)) \dd z_\perp.
\end{equation}
Next for $\bx \in \Omega_{k^+(e)}$ we define the curve in two pieces: 
one that connects $\bx$ to its projection $\bp_e(\bx)$ on the edge, see \eqref{pex},
and another that corresponds to the coarse curve 
$\hat \gamma^{e,-}_\perp$ on that same point.
This amounts to summing %defining the antiderivative as a sum, 
\begin{equation} \label{Phi_perp_+}
\Phi^{e,+}_{\perp,a}(\bu)(\bx) \coloneqq \delta \Phi^{e,+}_{\perp}(\bu)(\bx) + \Phi^{e,-}_{\perp,a}(\bu)(\bp_e(\bx)) %\gamma^{e,*}_\perp(\bx))
\end{equation}
where the first term is a path integral in the fine patch $\Omega_{k^+(e)}$,
\begin{equation} \label{dPhi_perp}
\delta \Phi^{e,+}_\perp(\bu)(\bx) \coloneqq 
\int_{\hat e^+_\perp}^{\hx_\perp} \hat u^+_\perp(X^+_e(\hx_\parallel, z_\perp)) \dd z_\perp 
\end{equation}
which corresponds to the local curve 
% \begin{equation} \label{gamma_perp_+}
	$\hat \gamma^{e,+}_\perp(\hat \bx, \cdot): [\hat e^+_\perp, \hx_\perp] \ni z \mapsto 
		X^+_e(\hx_\parallel, z),
	$
% \end{equation}
and the second term is the antiderivative on the coarser patch $k^-(e)$,
evaluated at $\bp_e(\bx)$ given by \eqref{pex}, i.e.
% the foot of the curve $\hat \gamma^{e,+}_\perp(\hat \bx, \cdot)$,
\begin{equation} \label{foot_curve}
	\bp_e(\bx) = X^+_e(\hx_\parallel, \hat e^+_\perp) =
	X^-_e(\eta_e(\hx_\parallel), \hat e^-_\perp)
	% \gamma^{e,*}_\perp(\bx) \coloneqq \gamma^{e,+}_\perp(\hbx, \hat e^+_\perp)
	% 	= F_{k^+(e)}(X^+_e(\hx_\parallel, \hat e^+_\perp)).
\end{equation}
This two-term definition thus corresponds to an integration path that,
for each value of the parameter $a$, connects every point $\bx \in \Omega_k$, 
$k \in \cK(e)$, to the common starting point 
\begin{equation} \label{gam*eperp}
	\gamma^e_{\perp,*}(a) = F_{k^-(e)}\big(X^-_e(0,\tilde a))\big)
\end{equation}
where we remind that $\tilde a$ is given by \eqref{tilde_a}.

For boundary edges the above construction yields 
two types of curves $\gamma^e_{\perp}(\bx, a)$: in the inhomogeneous case
\eqref{dR} where boundary patches are by convention of coarse type $k^-(e)$,
all the curves have a common starting point \eqref{gam*eperp} inside the domain
$\Omega$. The situation is different in the homogeneous case
\eqref{dR0} where boundary patches are by convention of fine type $k^+(e)$:
in this case the curves $\gamma^e_{\perp}(\bx, a)$ are all perpendicular to 
the boundary edge in the logical variables. In particular, they have many different starting points, which all lie on the boundary $\partial \Omega$.

We then have the following result.
\begin{lemma} \label{lem:Phi_e_nabla}
Let $e \in \cE$. For all $\bu = \nabla \phi$ with $\phi \in C^1(\Omega)$,
there is a function $\tilde \phi_e$ such that
% for all 
% $\bx \in \Omega(e) \coloneqq \cup_{k \in \cK(e)}\Omega_k$ we have
\begin{equation} \label{Phi_par_nabla}
	\Phi^{e}_{\parallel}(\bu)(\bx) = \phi(\bx) - \tilde \phi_e(\bx)
	\quad \text{ and } \quad \nabla^e_\parallel \tilde \phi_e = 0
\end{equation}
holds on $\Omega(e) \coloneqq \cup_{k \in \cK(e)}\Omega_k$. 
Moreover for interior edges and boundary edges in the 
inhomogeneous case \eqref{dR}, there is a constant $\bar \phi_e$ such that
\begin{equation} \label{Phi_perp_nabla} 
	\Phi^{e}_{\perp}(\bu)(\bx) = \phi(\bx) - \bar \phi_e	
\end{equation}
holds on $\Omega(e)$. Finally, if $e$ is a boundary edge in the 
homogeneous case \eqref{dR0} and $\bu = \nabla \phi$ with 
$\phi \in C^1_0(\Omega)$, then it holds
\begin{equation} \label{Phi_perp_nabla_hom} 
	\Phi^{e}_{\perp}(\bu)(\bx) = \phi(\bx).
\end{equation}
\end{lemma}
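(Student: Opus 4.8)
The plan is to reduce everything to the fundamental theorem for line integrals, namely relation \eqref{Phi_grad}: once $\bu=\nabla\phi$, each operator applied to $\bu$ collapses to $\phi(\bx)$ minus the (possibly averaged) value of $\phi$ at the starting point of the integration curve. The whole task is then to read off these starting points from the explicit curve constructions and to track their dependence on $\bx$.

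For the parallel operator \eqref{Phi_par}, the curve stays in the patch $\Omega_k$ containing $\bx$ and joins $\bx$ to the point with parallel coordinate $\eta_e^k(0)$ and the same perpendicular coordinate $\hx^k_\perp$. Hence \eqref{Phi_grad} gives $\Phi^e_\parallel(\nabla\phi)(\bx)=\phi(\bx)-\tilde\phi_e(\bx)$ with
\[
  \tilde\phi_e(\bx)\coloneqq\phi\big(F_k(\hat X^k_e(\eta_e^k(0),\hx^k_\perp))\big),\qquad \bx\in\Omega_k .
\]
Since the pullback $(\cF^0_k)^{-1}(\tilde\phi_e|_{\Omega_k})$ depends on $\hx^k_\perp$ only, its parallel derivative vanishes on each patch, which is exactly $\nabla^e_\parallel\tilde\phi_e=0$ by \eqref{nabla_ed}. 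This proves \eqref{Phi_par_nabla}.

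For the perpendicular operator \eqref{Phi_perp} I would fix the averaging parameter $a$ first and establish that
\[
  \Phi^e_{\perp,a}(\nabla\phi)(\bx)=\phi(\bx)-\phi(\gamma^e_{\perp,*}(a)),\qquad \bx\in\Omega(e),
\]
with the common starting point $\gamma^e_{\perp,*}(a)$ of \eqref{gam*eperp}. On the coarse patch $k^-(e)$ this comes straight from inserting $\hat u^-=\hat\nabla\hat\phi^-$ into the two integrals of \eqref{Phi_perp_-}: the parallel integral telescopes $\hat\phi^-$ between $0$ and $\hx_\parallel$ at fixed $\hx_\perp=\tilde a$, the perpendicular one between $\tilde a$ and $\hx_\perp$, and the two leftover boundary terms combine to $\phi(\bx)-\phi(\gamma^e_{\perp,*}(a))$. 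On the fine patch $k^+(e)$ the curve is assembled from the two pieces of \eqref{Phi_perp_+}: the local term \eqref{dPhi_perp} gives $\phi(\bx)-\phi|_{\Omega^+}(\bp_e(\bx))$, while the second term is the coarse antiderivative already computed, evaluated at the foot point, equal to $\phi|_{\Omega^-}(\bp_e(\bx))-\phi(\gamma^e_{\perp,*}(a))$. Averaging over $a\in(0,\hat h_e)$ and setting $\bar\phi_e\coloneqq\hat h_e^{-1}\int_0^{\hat h_e}\phi(\gamma^e_{\perp,*}(a))\dd a$ then yields \eqref{Phi_perp_nabla}. In the homogeneous boundary case only the fine patch is present, the curve reduces to the purely perpendicular segment \eqref{dPhi_perp} with starting point $\bp_e(\bx)\in e\subset\partial\Omega$, and since $\phi\in C^1_0(\Omega)$ vanishes there the remainder disappears, giving \eqref{Phi_perp_nabla_hom}.

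The one genuinely delicate step is the fine-patch gluing in the perpendicular case: its two curve pieces are integrated with \emph{different} patch pullbacks ($\hat u^+$ for the local part and $\hat u^-$, via $\Phi^{e,-}_{\perp,a}$, for the part anchored at the foot point), so the intermediate value $\phi(\bp_e(\bx))$ cancels only because $\phi\in C^1(\Omega)$ is continuous across $e$, i.e.\ $\phi|_{\Omega^-}$ and $\phi|_{\Omega^+}$ agree there. The identity \eqref{foot_curve} is precisely what makes the foot point coincide whether expressed in the $k^+$ or $k^-$ coordinates, so that the two pieces are compatible; once this is in place the remainder of the argument is routine bookkeeping with antiderivatives.
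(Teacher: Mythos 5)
Your proposal is correct and follows essentially the same route as the paper's own proof: both reduce each operator to the path integral of a gradient, identify the (common) starting point $\gamma^e_{\perp,*}(a)$, average over $a$ to get $\bar\phi_e$, and use the vanishing of $\phi$ on $\partial\Omega$ in the homogeneous case. You simply spell out the coarse/fine gluing and the role of the continuity of $\phi$ at $\bp_e(\bx)$ more explicitly than the paper does, which is a harmless (and arguably helpful) elaboration.
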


\begin{proof}  
	For $\bu = \nabla \phi$ the parallel antiderivative 
	is the path integral of a gradient. Specifically, \eqref{Phi_par} yields %and we have
	% \begin{equation*}
	% \Phi^{e}_{\parallel}(\bu)(\bx) = \phi(\bx) - \tilde \phi_e(\bx) %\phi(X^k_e(\eta_e^k(0),\hx^k_\perp)
	% \end{equation*}
	% which yields 
	\eqref{Phi_par_nabla} with 
	$\tilde \phi_e(\bx) = \phi(X^k_e(\eta_e^k(0),\hx^k_\perp)$
	for $\bx \in \Omega_k$, $k \in \cK(e)$.
	For all $a \in [0,\hat h_e]$ the perpendicular antiderivative is also the 
	path integral of a gradient,
	hence for interior edges and boundary edges in the 
	inhomogeneous case, we have
	% \begin{equation*}
	$\Phi^{e}_{\perp,a}(\bu)(\bx) = \phi(\bx) - \phi(\gamma^e_{\perp,*}(a))$
	% \end{equation*}
	where $\gamma^e_{\perp,*}(a)$ is the starting point defined in \eqref{gam*eperp}.
	The result follows from the averaging formula \eqref{Phi_perp}, with 
	$\bar \phi_e = \frac{1}{\hat h_e}\int_0^{\hat h_e} \phi(\gamma^e_{\perp,*}(a)) \dd a$.
	For boundary edges in the homogeneous case, the result follows from the fact that 
	all curves $\gamma^e_{\perp}(\bx,a)$ start from the boundary $\partial \Omega$
	where $\phi =0$.
\end{proof}

The following lemma states that these edge antiderivative operators 
are locally stable in $L^p$, and so are the resulting edge-correction terms
in \eqref{Pi1_intro}.

\begin{lemma} \label{lem:stab_e_corr}
	On a patch-wise mapped Cartesian domain %in the $k^-$ patch, 
	of the form
	\begin{equation} \label{omega_e}
		\omega_e = \cup_{k \in \cK(e)} F_{k}(\hat \omega^k_e)
		\quad \text{ with } \quad 
			\hat \omega^k_e
					= \hat \Omega \cap \hat X^k_e\big([0,1] \times 
						[\hat e^k_\perp - \rho\hat h_e,\hat e^k_\perp + \rho\hat h_e]\big)		
	\end{equation} 
	and
	$ 
	1 \le \rho \lesssim 1,
	$
	we have 
	\begin{equation} \label{L2_bound_Phi_e}
		\norm{\Phi^{e}_{d}(\bu)}_{L^p(\omega_e)} \lesssim \norm{\bu}_{L^p(\omega_e)}.
	\end{equation}
	Moreover, on a local domain $S^{e}_j$
	of the form \eqref{Sei}, $j \in \{0, \dots, n_e\}$,
	the bound
	\begin{equation} \label{loc_bound_corr_e}	
	\norm{\nabla^e_d (P^e - I^e)\Pi^0_\pw \Phi^{e}_{d}(\bu)}_{L^p(S^{e}_j)} 
	\lesssim \norm{\bu}_{L^p(E_e^3(S^{e}_j))}
	\end{equation}
	holds, where $E_e^3$ corresponds to a three-fold application of the edge-based domain extension operator \eqref{Ee_ext}.
\end{lemma}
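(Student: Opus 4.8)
The plan is to transport everything to the reference cube through the one-form pullbacks $(\cF^1_k)^{-1} : \bu \mapsto \hat \bu^k$ and to use the scaling relations \eqref{L2_scale_pf}, exactly as in the single-patch Lemma~\ref{lem:stab_Phik}. I treat the two bounds separately. For the global stability \eqref{L2_bound_Phi_e} I argue direction by direction. The parallel antiderivative \eqref{Phi_par} is a one-dimensional integral along a path of length $\le 1$ in the parallel variable, so a H\"older inequality in that variable (as in \eqref{CS_Phi_k}) gives $\norm{\Phi^e_\parallel(\bu)}_{L^p(\omega_e)} \lesssim \norm{\bu}_{L^p(\omega_e)}$ on the strip $\omega_e$. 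For the perpendicular operator \eqref{Phi_perp} I bound each building block: on the coarse patch the expression \eqref{Phi_perp_-} splits into a parallel integral (path length $\le 1$) and a perpendicular integral whose path stays within the strip (length $\lesssim \hat h_e$), both controlled by H\"older; on the fine patch \eqref{Phi_perp_+} adds the local perpendicular integral $\delta \Phi^{e,+}_\perp$ from \eqref{dPhi_perp} and the foot-point trace $\Phi^{e,-}_{\perp,a}(\bp_e(\bx))$. The foot-point term is $\hx_\perp$-independent, so on the fine strip of perpendicular width $\rho \hat h_e$ its $L^p$ norm is $(\rho \hat h_e)^{1/p}$ times a parallel norm; the averaging $\tfrac{1}{\hat h_e}\int_0^{\hat h_e}\dd a$ turns the edge trace into an average over a perpendicular strip of width $\hat h_e$, which is precisely what absorbs this factor and closes the bound by $\norm{\bu}_{L^p(\omega_e)}$.

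For the local bound \eqref{loc_bound_corr_e} in the parallel case $d = \parallel$ I mirror the single-patch argument of Lemma~\ref{lem:stab_Phik}, replacing the directional invariance of Lemma~\ref{lem:dd_Pi0k} by its edge analogue Lemma~\ref{lem:parinv}. Changing the lower limit in \eqref{Phi_par} alters $\Phi^e_\parallel(\bu)$ only by a function that depends on $\hx^k_\perp$ alone, hence is parallel-invariant; since Lemma~\ref{lem:parinv} shows that $\nabla^e_\parallel (P^e - I^e)\Pi^0_\pw$ annihilates such functions, I may replace $\Phi^e_\parallel$ by the localized antiderivative integrating from a nearby parallel coordinate $\tilde x_\parallel$ lying in the parallel support of the index $j$. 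Applying the inverse estimate \eqref{inverse_est} to $\nabla^e_d$, the local stability \eqref{stab_P_I_e} of $P^e - I^e$, and the local stability \eqref{stab_Pi0_ev} of $\Pi^0_\pw$ produces three successive edge-based domain extensions, which is the source of the three-fold extension $E_e^3$, while the localized path of length $\lesssim \hat h_e$ supplies the factor $\hat h_e$ that cancels the $\hat h_e^{-1}$ coming from the inverse estimate.

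The perpendicular case $d = \perp$ is the main obstacle, because no analogue of Lemma~\ref{lem:parinv} is available: neither $I^e$ nor $P^e$ preserves perpendicular invariance, so the localization trick of the parallel case does not carry over. The structural fact I exploit instead is that, by construction \eqref{Phi_perp_+}, the function $\Phi^e_\perp(\bu)$ is continuous across $e$ for \emph{arbitrary} $\bu$: the local term $\delta \Phi^{e,+}_\perp$ vanishes on $e$, so both patch values reduce to the common foot-point value $\Phi^{e,-}_{\perp,a}(\bp_e(\bx))$. Since $(P^e - I^e)$ annihilates conforming functions (Lemma~\ref{lem:0_corr_e}) and $\Pi^0_\pw$ reproduces them, I may subtract from $\Phi^e_\perp(\bu)$ any conforming function, in particular a local constant $c$, without changing $(P^e - I^e)\Pi^0_\pw \Phi^e_\perp(\bu)$. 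Using \eqref{IPe_phi_px}, this correction is then supported on the fine patch and equals the projection jump of $\Pi^0_\pw(\Phi^e_\perp(\bu) - c)$ across $e$, evaluated at the foot point, multiplied by the perpendicular basis function $\lambda^{k}_{i^k_\perp(e)}$.

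Applying $\nabla^e_\perp$ to this product produces, via the inverse bound \eqref{l_norm} on $\partial_\perp \lambda^{k}_{i^k_\perp(e)}$, the unavoidable factor $\hat h_e^{-1}$, which must be compensated by showing that the projection jump is of order $\hat h_e$ on the cell $S^e_j$ of \eqref{Sei}. This is the most delicate estimate: the jump is the difference of the coarse and fine single-patch projections of the \emph{same} continuous function $\Phi^e_\perp(\bu)$ at the edge, and its smallness rests jointly on the perpendicular width $\lesssim \hat h_e$ of $S^e_j$, the nestedness $\VV^0_{k^-(e)} \subset \VV^0_{k^+(e)}$ across $e$, and the perpendicular smoothing produced by the $a$-averaging in \eqref{Phi_perp}, together with the already established global bound \eqref{L2_bound_Phi_e} used as an ingredient. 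Once this jump estimate is in hand, tracking the supports through the projection jump and the two stability steps \eqref{stab_P_I_e} and \eqref{stab_Pi0_ev} again yields the three-fold extension $E_e^3$, completing the bound.
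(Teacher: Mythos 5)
Your treatment of the global bound \eqref{L2_bound_Phi_e} and of the parallel case of \eqref{loc_bound_corr_e} matches the paper's proof: H\"older estimates on the pullbacks patch by patch for the former, and for the latter the inverse estimate plus the local stabilities \eqref{stab_P_I_e}, \eqref{stab_Pi0_ev} (giving $E_e^3$) combined with the shift of the integration constant justified by Lemma~\ref{lem:parinv}. Your reduction of the perpendicular correction to a single ``projection jump'' at the foot point, supported on the fine patch and multiplied by $\lambda^{k}_{i^k_\perp(e)}$, is also a correct use of Lemma~\ref{lem:0_corr_e} and \eqref{IPe_phi_px}.

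The gap is in the perpendicular case, exactly at the step you flag as ``the most delicate estimate'' and then do not prove. You need the jump of the coarse and fine projections of $\Phi^e_\perp(\bu)$ at the foot point to be of order $\hat h_e \norm{\bu}$ on $S^e_j$, but none of the ingredients you list can deliver this. The global bound \eqref{L2_bound_Phi_e} gives $\norm{\Phi^e_\perp(\bu)}_{L^p} \lesssim \norm{\bu}_{L^p}$ with \emph{no} factor of $\hat h_e$ — and there cannot be one, because $\Phi^e_\perp$ contains a parallel path integral of length up to $1$ (the first term of \eqref{Phi_perp_-}). The difference of two local projections of a function $g$ that is merely continuous across $e$ is generically $O(\norm{g})$, not $O(\hat h_e\norm{g})$; nestedness and $a$-averaging do not change this, since they say nothing about the size or regularity of $g$ itself. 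Subtracting a constant $c$ only helps if $\Phi^e_\perp(\bu)-c$ is small near $S^e_j$, i.e.\ if the \emph{oscillation} of $\Phi^e_\perp(\bu)$ on $E^2_e(S^e_j)$ is $O(\hat h_e\norm{\bu})$, and establishing that is precisely the missing idea. The paper obtains it from the geometry of the integration curves: for all $\bx \in E^2_e(S^e_j)$ the portion of $\gamma^e_\perp(\bx,a)$ lying outside $E^2_e(S^e_j)$ is one and the same curve $\Gamma^e_1(a,j)$, independent of $\bx$. Hence, splitting $\bu = \bu\one_{E^2_e(S^e_j)} + \bu(1-\one_{E^2_e(S^e_j)})$, the second piece contributes only an $\bx$-independent constant to the antiderivative on $E^2_e(S^e_j)$ (killed by $(P^e-I^e)\Pi^0_\pw$ via \eqref{c_Pi0_ev} and Lemma~\ref{lem:0_corr_e}), while the first piece is a field supported in a region of diameter $\lesssim h_e$, for which the H\"older computation \eqref{CS_Phi_ep-}--\eqref{CS_Phi_perp_*} with localized integration bounds yields the factor $h_e$ that cancels the $h_e^{-1}$ from the inverse estimate. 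Without this curve-level localization (the paper's property \eqref{loc_corr_perp}), your argument does not close.
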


\begin{proof}
	The bound \eqref{L2_bound_Phi_e} for 
	the parallel direction $(d = \parallel)$ is proven just like \eqref{L2_bound_Phikd}
	since the parallel antiderivative operator coincides with the single-patch 
	\eqref{Phi_kd_exp} along the parallel direction, 
	up to a possible change in the curve starting point and orientation.	
	For the perpendicular direction $(d=\perp)$, we first consider 
	the $k^-$ patch and again work with the pullback 
	$
	\hat \Phi^{e,-}_{\perp}(\hat \bu^-)(\hbx) \coloneqq \Phi^{e,-}_{\perp}(\bu)(\bx)
	$
	where $\hat \bu^- \coloneqq (\cF_{k^-} )^{-1}\bu$.
	For simplicity, we assume an orientation corresponding to $\hat X^{-}_e = I$,
	i.e., $\hbx = %\hat X^{-}_e(\hx_\parallel,\hx_\perp) = 
	(\hx_\parallel,\hx_\perp)$
	and $\hat e^-_\perp = 0$, i.e. $\tilde a = a$. 
	Using H\"older %Cauchy-Schwarz 
	inequalities we compute
  \begin{equation} \label{CS_Phi_ep-}
		\begin{aligned}
		\norm{\hat \Phi^{e,-}_{\perp}(\hat \bu^-)}^p_{L^p(\hat \omega^-_e)}
		&=  \iint_{\hat \omega^-_e} \frac{1}{\hat h_e^p} \Bigabs{
		\int_0^{\hat h_e} 
		\Big(\int_{0}^{\hx_\parallel} \hat u^-_\parallel(z_\parallel, a) \dd z_\parallel
		+
		\int_{a}^{\hx_\perp} \hat u^-_\perp(\hx_\parallel,z_\perp) \dd z_\perp
		\Big)\dd a
		}^p \dd \hbx
		\\
		&\lesssim 
		\iint_{\hat \omega^-_e} \frac{\abs{\hx_\parallel}^{p-1}}{\hat h_e} \int_0^{\hat h_e} 
		\int_{0}^{\hx_\parallel} 
					\abs{\hat u^-_\parallel(z_\parallel, a)}^p \dd z_\parallel  \dd a\dd \hbx
			\\
			& \mspace{100mu} + 
			\iint_{\hat \omega^-_e} \frac{\abs{\hx_\perp-a}^{p-1}}{\hat h_e} \int_0^{\hat h_e} 
			\int_{a}^{\hx_\perp}
					\abs{\hat u^-_\perp(\hx_\parallel,z_\perp)}^p \dd z_\perp
			 \dd a\dd \hbx
		\\
		&\lesssim 
		\rho 
					\norm{\hat u^-_\parallel}^p_{L^p(\hat \omega^-_e)}
		 + (\rho \hat h_e)^p
								\norm{\hat u^-_\perp}^p_{L^p(\hat \omega^-_e)}
		\lesssim 
									\norm{\hat \bu^-}^p_{L^p(\hat \omega^-_e)}.
		\end{aligned}
  \end{equation}
  The scaling relations \eqref{L2_scale_pf} 
  for 0-form and 1-form pullbacks yield then 
  \begin{equation} \label{bound_Phi_perp-}
	  \norm{\Phi^{e,-}_{\perp}(\bu)}_{L^p(\omega^-_e)}
	  \sim H_{k^-}^{2/p} \norm{\hat \Phi^{e,-}_{\perp}(\hat \bu^-)}_{L^p(\hat \omega^-_e)}
	  \lesssim 
		  H_{k^-}^{2/p} \norm{\hat \bu^-}_{L^p(\hat \omega^-_e)}
	  \lesssim \norm{\bu}_{L^p(\omega_e)}.		
  \end{equation} 
	On the $k^+$ patch we also assume an orientation corresponding to 
	$\hat X^{+}_e = I$, i.e. 
	$\hbx = %\hat X^{-}_e(\hx_\parallel,\hx_\perp) = 
	(\hx_\parallel,\hx_\perp)$,
	% $\hbx = (\hx_\parallel,\hx_\perp) = (\hx_\parallel,\hx_\perp)$
	and $\hat e^+_\perp = 0$.
	We first consider the pullback of the integral term 
	$\delta \Phi^{e,+}_\perp$
	\eqref{Phi_perp_+}, and compute
	\begin{equation} \label{CS_dPhi_perp_+}
		\begin{aligned}
			% \delta \Phi^{e,+}_\perp(\bu)(\bx) \coloneqq 
		\norm{\widehat {\delta \Phi}^{e,+}_\perp(\hat \bu^+)}^p_{L^p(\hat \omega^+_e)}
		&= 		
		\iint_{\hat \omega^+_e} \Bigabs{
			\int_{0}^{\hx_\perp} \hat u^+_\perp(\hx_\parallel, z_\perp) 
				\dd z_\perp 
		}^p \dd \hbx
		\\
		&\le \abs{\rho \hat h_e}^{p-1} \iint_{\hat \omega^+_e}
				\int_{0}^{\hx_\perp} \abs{\hat u^+_\perp(\hx_\parallel, z_\perp) }^p
				\dd z_\perp 
			\dd \hbx
		\\
		&\le \abs{\rho \hat h_e}^p \norm{\hat u^+_\perp}^p_{L^p(\hat \omega^+_e)}
		\lesssim \norm{\hat \bu^+}^p_{L^p(\hat \omega^+_e)}.
		\end{aligned}
	\end{equation}
	We next write 
	$\hat \Phi^{e,*}(\hat \bu^-)(\hbx) 
		\coloneqq %\frac{1}{\hat h_e}\int_0^{\hat h_e}
		\Phi^{e,-}_{\perp}(\bu)(\bp_e(\bx))$,
	with $\bx = F_{k^+}(\hbx)$,
	the pullback of the 
	coarse matching term in \eqref{Phi_perp_+}.
	% Here we observe that 
	With our simple orientation 
	the matching point \eqref{foot_curve} is
	$\bp_e(\bx) \coloneqq F_{k^+}(\hx_\parallel,0) = F_{k^-}(\eta_e(\hx_\parallel),0)$.
	Without loss of generality we further assume the same orientation: $\eta_e(z) = z$
	so that
	$\hat\omega^-_e = \hat\omega^+_e$.
	% Computing as in \eqref{CS_Phi_ep-}, 
	We now have	
	\begin{equation} \label{CS_Phi_perp_*}
		\begin{aligned}
		\norm{\hat \Phi^{e,*}(\hat \bu^-)}^p_{L^p(\hat \omega^+_e)}
		&= 		
		\iint_{\hat \omega^+_e} \abs{\hat \Phi^{e,-}_{\perp}(\hat \bu^-)(\hx_\parallel,0)}^p
		\dd \hbx
		\\
		&=
		\iint_{\hat \omega^-_e} \frac{1}{\hat h_e^p} \Bigabs{
		\int_0^{\hat h_e} 
		\Big(\int_{0}^{\hx_\parallel} \hat u^-_\parallel(z_\parallel, a) \dd z_\parallel
		+
		\int_{a}^{0} \hat u^-_\perp(\hx_\parallel,z_\perp) \dd z_\perp
		\Big)\dd a
		}^p \dd \hbx		
		\\
		&\lesssim
		\iint_{\hat \omega^-_e} \frac{\abs{\hx_\parallel}^{p-1}}{\hat h_e} \int_0^{\hat h_e} 
		\int_{0}^{\hx_\parallel} 
					\abs{\hat u^-_\parallel(z_\parallel, a)}^p \dd z_\parallel  \dd a\dd \hbx
			\\
			& \mspace{100mu} + 
			\iint_{\hat \omega^-_e} \int_0^{\hat h_e} 
			\int_{a}^{0}
					\abs{\hat u^-_\perp(\hx_\parallel,z_\perp)}^p \dd z_\perp
			 \dd a\dd \hbx
			 \\
		&\lesssim
			\rho
							\norm{\hat u^-_\parallel}^p_{L^p(\hat \omega^-_e)}
				 + (\rho \hat h_e)^p
										\norm{\hat u^-_\perp}^p_{L^p(\hat \omega^-_e)}
		\lesssim \norm{\hat \bu^-}^p_{L^p(\hat \omega^-_e)}.
		\end{aligned}
	\end{equation}
	With \eqref{CS_dPhi_perp_+} and the scaling relations
	\eqref{L2_scale_pf}, this bound yields 
	\begin{equation} \label{bound_Phi_perp+}
		\begin{aligned}
			\norm{\Phi^{e,+}_{\perp}(\bu)}_{L^p(\omega^+_e)} 
			&\le \norm{\delta \Phi^{e,+}_{\perp}(\bu)}_{L^p(\omega^+_e)} 
					+ \norm{\Phi^{e,-}_{\perp}(\bu)(\bp_e(\cdot))}_{L^p(\omega^+_e)} 
			\\
			&\lesssim H_{k^+}^{2/p}
			\big(\norm{\widehat {\delta \Phi}^{e,+}_\perp(\hat \bu^+)}_{L^p(\hat \omega^+_e)}
			+\norm{\hat \Phi^{e,*}(\hat \bu^-)}_{L^p(\hat \omega^+_e)}\big)
			\\
			&\lesssim H_{k^+}^{2/p}
			(\norm{\hat \bu^+}_{L^p(\hat \omega^+_e)} + \norm{\hat \bu^-}_{L^p(\hat \omega^-_e)})
			\lesssim \norm{\bu}_{L^p(\omega_e)}.
		\end{aligned}
	\end{equation}
	Together with \eqref{bound_Phi_perp-}, this proves the stability 
	\eqref{L2_bound_Phi_e} in the perpendicular case.
	
	Turning to the local bound \eqref{loc_bound_corr_e},
	we observe that the inverse estimate \eqref{inverse_est} and the local stability of 
	$P^e$, $I^e$ and $\Pi^0_\pw$, see \eqref{stab_P_I_e} and \eqref{stab_Pi0k}, 
	also hold with edge-based domain extensions $E_e$ according
	to \eqref{Ee_ext}.
	This allows us to write
	\begin{equation} \label{est_1}
	\begin{aligned}
		\norm{\nabla^e_d (P^e - I^e)\Pi^0_\pw \Phi^{e}_{d}(\bu)}_{L^p(S^{e}_j)} 
		&\lesssim 
		h_e^{-1} \norm{(P^e - I^e)\Pi^0_\pw \Phi^{e}_{d}(\bu)}_{L^p(E_e(S^{e}_j))} 
		\\
		&\lesssim 
		h_e^{-1} \norm{\Pi^0_\pw \Phi^{e}_{d}(\bu)}_{L^p(E^2_e(S^{e}_j))} 
		\\
		&\lesssim 
		h_e^{-1} \norm{\Phi^{e}_{d}(\bu)}_{L^p(E^3_e(S^{e}_j))}		
	\end{aligned}
	\end{equation}
	so that \eqref{loc_bound_corr_e} would follow from a bound like
	$\norm{\Phi^{e}_{d}(\bu)}_{L^p(E^3_e(S^{e}_j))} \lesssim h_e\norm{\bu}_{L^p(E^3_e(S^{e}_j))}$.
	A difficulty is that this property cannot hold a priori, 
	% neither for the parallel term nor for the perpendicular one,
	indeed both antiderivative operators rely on integration curves that 
	are not localized in a domain of the form $E^3_e(S^{e}_j)$.
	Therefore, a localizing argument is needed. For the parallel term 
	$(d = \parallel)$ we can use a similar argument as the one that 
	we used to prove \eqref{loc_bound_Pik1} for the single-patch antiderivative:
	% This estimate, however, does not hold for $\Phi^{e}_{d}(\bu)$ as the integration curves are not localized 
	% in the domain , 
	indeed one may again change the integration constant in 
	$\Phi^{e}_{\parallel}(\bu)$, without changing
	the function $\nabla^e_\parallel (P^e - I^e)\Pi^0_\pw \Phi^{e}_{\parallel}(\bu)$:
	here this is made possible because the invariance with respect
	to the parallel variable is preserved not only by $\Pi^0_\pw$ but also by $P^e$ and $I^e$, 
	see Lemma~\ref{lem:parinv}.
	As a result one can define a localized antiderivative
	\begin{equation*}
	\tilde \Phi^{e}_{\parallel}(\bu)(\bx)
		=
		\int_{\eta_e^k(\tilde x^k_j)}^{\hx^k_\parallel} \hat u^k_\parallel(X^k_e(z, \hx^k_\perp)) \dd z
	\end{equation*}
	with $\tilde x^k_j \in \hat S^{-}_j$ a curvilinear coordinate corresponding to the edge piece 
	$e \cap S^{e}_j$: 
	by Lemma~\ref{lem:parinv} we have
	$\nabla^e_\parallel (P^e - I^e)\Pi^0_\pw \Phi^{e}_{\parallel}(\bu) 
		= \nabla^e_\parallel (P^e - I^e)\Pi^0_\pw \tilde \Phi^{e}_{\parallel}(\bu)$
	and a local estimate for this antiderivative (derived exactly in the same way as for the 
	single-patch antiderivative) gives 
	$\norm{\tilde \Phi^{e}_{d}(\bu)}_{L^p(E^3_e(S^{e}_j))} \le h_e\norm{\bu}_{L^p(E^3_e(S^{e}_j))}$.
	This shows that the local bound \eqref{loc_bound_corr_e} holds indeed for the parallel term.

	For the perpendicular term we cannot use a similar localizing argument, as none of
	the projection operators $P^e$ or $I^e$ preserve an invariance along the 
	perpendicular direction.
	Fortunately our design for the integration curves involved in $\Phi^{e}_\perp$ 
	yields the following localizing property: 
	if $\bu = 0$ on the extended domain $E^2_e(S^{e}_j)$, then 
	\begin{equation} \label{loc_corr_perp}
		\bu = 0 ~~ \text{ on } ~~ E^2_e(S^e_j)
		\quad \implies \quad 
		(P^e - I^e)\Pi^0_\pw \Phi^{e}_\perp(\bu) = 0 ~~ \text{ on } ~~ E_e(S^e_j).	
	\end{equation}
	To establish this property we assume for simplicity that the edge $e$ has the 
	same orientation in both patches, i.e. $\eta^+_e(x_\parallel) = x_\parallel$,
	and recall that $E^2_e(S^e_j)$ is Cartesian on both patches,
	with parallel coordinate in the same interval, see \eqref{Ee_ext}. 
	Let us denote by $\alpha_\parallel$ the minimal parallel coordinate
	in both patches.
	We then see that for all $a \in [0,\hat h_e]$ and all $\bx \in E^2_e(S^{e}_j)$, 
	its parallel coordinate satisfies $\hx_\parallel \ge \alpha_\parallel$ and
	the curve $\gamma = \gamma^e_\perp(\bx)$ is made of two connected parts: 
	a first part $\Gamma^e_1(\bx,a,j)$ 
	with parallel coordinate $\hat \gamma_\parallel \le \alpha_\parallel$
	(and included in the coarse cell $\Omega_{k^-}$), 
	and a second part $\Gamma^e_2(\bx,a,j)$ with parallel coordinate 
	$\hat \gamma_\parallel > \alpha_\parallel$. 
	Because $\abs{\tilde a - \hat e^k_\perp} = a \le \hat h_e$, see \eqref{tilde_a}, 
	this latter part is included in $E^2_e(S^{e}_j)$ while the
	first part $\Gamma^e_1(\bx,a,j)$ is fully outside.
	Moreover, we observe that for all $\bx \in E^2_e(S^{e}_j)$ this first part is 
	independent of $\bx$: $\Gamma^e_1(\bx,a,j) = \Gamma^e_1(a,j)$.
	As a consequence we find that if $\bu$ vanishes on $E^2_e(S^{e}_j)$, 
	then the antiderivative takes the form 
	\begin{equation*}
	\Phi^{e}_{\perp,a}(\bu)(\bx) = \int_{\Gamma^e_1(a,j) \cup \Gamma^e_2(\bx,a,j)} \bu \cdot \dd l 
	 	= \int_{\Gamma^e_1(a,j)} \bu \cdot \dd l
	\end{equation*}
	which is a constant (say, $C(\bu,a,j)$) on $E^2_e(S^{e}_j)$.
	According to Remark~\ref{rem:stab_Pi0_ev} this shows that 
	$\Pi^0_\pw\Phi^{e}_{\perp,a}(\bu) = C(\bu,a,j)$ on $E_e(S^e_j)$.
	Using next Lemma~\ref{lem:0_corr_e} and noting that for all $\bx \in E_e(S^e_j)$ 
	the projected point $\bp_e(\bx)$ on the interface is also in $E_e(S^e_j)$,
	we find that
	$(P^e - I^e)\Pi^0_\pw \Phi^{e}_{\perp,a}(\bu) = 0$ on $E_e(S^e_j)$,
	and the property \eqref{loc_corr_perp} follows 
	by integration over $a \in [0,\hat h_e]$.
	For a general $\bu \in L^p(\Omega)$ decomposed
	as $\bu = \bu \one_{E^2_e(S^{e}_j)} + \bu (1-\one_{E^2_e(S^{e}_j)})$,
	this allows to write (by linearity)
	\begin{equation*}
	( P^e - I^e)\Pi^0_\pw \Phi^{e}_{\perp}(\bu)(\bx) 
		= ( P^e - I^e)\Pi^0_\pw \Phi^{e}_{\perp}(\bu \one_{E^2_e(S^{e}_j)})(\bx)
	\quad \text{ for } \bx \in E_e(S^e_j)
	\end{equation*}
	and to bound
	\begin{equation} \label{est_2}
		\begin{aligned}
			\norm{(P^e - I^e)\Pi^0_\pw \Phi^{e}_{\perp}(\bu)}_{L^p(E_e(S^{e}_j))}
			&= \norm{(P^e - I^e)\Pi^0_\pw \Phi^{e}_{\perp}(\bu \one_{E^2_e(S^{e}_j)})}_{L^p(E_e(S^{e}_j))}
			\\
			&	\lesssim \norm{\Pi^0_\pw \Phi^{e}_{\perp}(\bu \one_{E^2_e(S^{e}_j)})}_{L^p(E^2_e(S^{e}_j))}		
			\\
			&	\lesssim \norm{\Phi^{e}_{\perp}(\bu \one_{E^2_e(S^{e}_j)})}_{L^p(E^3_e(S^{e}_j))}		
		\end{aligned}
	\end{equation}
	by using the local $L^p$ stability of $P^e$, $I^e$ and $\Pi^0_\pw$, see \eqref{stab_P_I_e} which also holds
	with edge-based extension domains as observed above, and \eqref{stab_Pi0_ev}.
	To complete the proof we next observe that in the last antiderivative 
	the integration curve $\gamma^e_\perp(\bx,a)$ can be restricted to $E^2_e(S^{e}_j)$
	which is of diameter $\lesssim h_e$.
	By repeating the steps in \eqref{CS_Phi_ep-} and \eqref{CS_Phi_perp_*} 
	with such a localized integration over $z$, 
	% integration curves, 
	% Since this integration curve is of the same type as some of the vertex ones (of type I or IV)
	% as already observed, the local $L^2$ estimates apply and give
	we find 
	\begin{equation*}
	\norm{\Phi^{e}_{\perp}(\bu \one_{E^2_e(S^{e}_j)})}_{L^p(E^3_e(S^{e}_j))} 
	\lesssim h_e \norm{\bu \one_{E^2_e(S^{e}_j)}}_{L^p(E^3_e(S^{e}_j))}
	\lesssim h_e \norm{\bu}_{L^p(E^3_e(S^{e}_j))}.
	\end{equation*}
	Together with \eqref{est_1} and \eqref{est_2} 
	this proves the local bound \eqref{loc_bound_corr_e}
	for the perpendicular term, and completes the proof. 
\end{proof}

\subsection{Vertex-based antiderivative operators}
\label{sec:Phi_v}

The vertex-based antiderivative $\Phi^\bv(\bu)$
is defined on the patches contiguous to $\bv$ 
in a similar way as the perpendicular edge-based antiderivative 
$\Phi^e_\perp$ from Section~\ref{sec:Phi_e}.
Like the latter it involves an averaging step
\begin{equation} \label{Phi_v}
\Phi^\bv(\bu) = \frac{1}{\hat h_\bv} 
	\int_{0}^{\hat h_\bv} \Phi^\bv_a(\bu) \dd a,
	\qquad
	\Phi^\bv_a(\bu)(\bx) = \int_{\gamma^\bv(\bx,a)} \bu \cdot \dd l
\end{equation}
with $\hat h_\bv$ defined in \eqref{hv},
and parameter-dependent integration curves $\gamma^\bv(\bx,a)$ 
of the same form as the curves $\gamma^e_\perp(\bx,a)$ 
described in Section~\ref{sec:Phi_e}.
Observe that in this construction, each curve %$\gamma^e_\perp(\bx,a)$ 
was fully characterized by the central edge $e$,
the choice of a coarse ($k^-(e)$) and a fine ($k^+(e)$) patch around $e$, 
and finally the choice of a starting edge on the coarse patch $k^-(e)$,
where the starting points are located.
To define the curves $\gamma^\bv(\bx,a)$ we can then specify 
these elements for each vertex, and for this we will use the decomposition  
\eqref{cK_seqs} of the contiguous patches $k \in \cK(\bv)$ in one or two 
sequences of adjacent nested patches $k = k^s_i(\bv)$ with $s \in \{l,r\}$ 
and $1 \le i \le n(\bv,s)$.
An illustration is provided in Figure~\ref{fig:Phi_v} for interior
vertices, and Figure~\ref{fig:Phi_v_bound} for boundary vertices.

On the two patches of a complete sequence, namely for $\bx \in \Omega_k$ 
with $k = k^s_i(\bv)$ such that $n(\bv,s) = 2$, we define the curve 
$\gamma^\bv(\bx,a)$ by taking (i) %$e \coloneqq e^s(\bv)$ as 
the edge $e = e^s(\bv)$ shared by the two patches $k^s_1(\bv)$ and $k^s_2(\bv)$
as the central edge, (ii)
these respective patches as the coarse and fine patches associated with edge $e$, 
and finally (iii) the second edge contiguous to $\bv$ in $k^s_1(\bv)$
as the starting edge for the curves. 
Note that if both patches adjacent to $e$ have the same resolution, 
then it is possible that $k^s_1(\bv) = k^+(e)$ and $k^s_2(\bv) = k^-(e)$:
in other words the orientation used to define the antiderivatives 
$\Phi^e_\perp$ and $\Phi^\bv$ do not need to match.
This covers the case of interior vertices, since their
contiguous patches can always be decomposed in two complete sequences.

For the case of boundary vertices associated with single-patch sequences, 
namely for $\bx \in \Omega_k$ with $k = k^s_1(\bv)$ such that $n(\bv,s) = 1$, 
we observe that $\Omega_k$ has at least one edge contiguous to $\bv$ 
which is a boundary edge $\partial \Omega$, say $e^s_b(\bv)$: we take this edge 
as the central edge $e$. In the inhomogeneous case \eqref{dR},
we take the patch $k^s_1(\bv)$ as the coarse patch (no fine patch is involved here)
and the second edge contigous to $\bv$ as the starting edge for the 
curves $\gamma^e_\perp(\bx,a)$:
we may denote this edge as $e^*(\bv)$, indeed if a second sequence of patches 
exists for $\bv$ then this starting edge must be shared by $k^l_1(\bv)$ 
and $k^r_1(\bv)$. 
In the homogeneous case \eqref{dR0} we take the patch $k^s_1(\bv)$ 
as the fine patch. Then no coarse patch is involved: the curves are 
all perpendicular to $e$ in the logical coordinates.

% we denote this edge by , 
% and by $\tilde e^s(\bv)$ the other edge of $k^s_1(\bv)$ contigous to $\bv$
% (it may be a boundary edge or not).
% The construction of the curve then depends on the type of boundary conditions. 
% In both the inhomogeneous \eqref{dR} and homogeneous \eqref{dR0} cases
% we take $e \coloneqq e^s_b(\bv)$ for the central edge, but in 

% $\Phi^e_\perp$
% In the inhomogeneous case , we 
% define $\gamma^\bv(\bx,a)$ 
% as the edge-perpendicular curve $\gamma^e_\perp(\bx,a)$ 
% take , $k^s_1(\bv)$ again as the coarse edge
% and $\tilde e^s(\bv)$ for the starting edge.
% In the homogeneous case \eqref{dR0} we also take $e \coloneqq e^s_b(\bv)$ as the central edge 
% but now $k^s_1(\bv)$ as the fine patch. 
% Note that in the former case this latter case the curves are simply 
% perpendicular to $e$ in the logical coordinates, and no further coarse patch is involved.

The construction is then similar to what we had for 
the antiderivative $\Phi^e_\perp$:
 % on homogeneous boundary edges, 
for interior vertices and boundary vertices in the inhomogeneous
case, all the curves $\gamma^\bv(\bx,a)$, $\bx \in \Omega(\bv)$, have a unique starting point 
$\gamma^\bv_*(a)$
lying on the coarse edge $e^*(\bv)$
and at a logical distance $a$ %= \abs{\tilde a - \hat e^-_\perp}$ 
from the vertex $\bv$.
For boundary vertices in the homogeneous case, the curves $\gamma^\bv(\bx,a)$
may start from different points $\gamma^\bv_*(\bx,a)$ but they all lie on the 
boundary $\partial \Omega$.

% 
% 
% Specifically, 
% 
% we remind that these curves are defined 
% 
% Remind from that a vertex is shared 
% by at most four patches (exactly four patches if $\bv$ is an interor vertex)
% $\Omega_k$, $k \in \cK(\bv)$, subdivided in two subsequences \eqref{cK_seqs}
% of (at most) two adjacent patches: 
% on the left, resp. right subsequence $\{k^s_i : 1 \le i \le n(\bv,s)\}$ 
% with $s = l$, resp $r$, the curve $\gamma^\bv(\bx,a)$ takes the 
% form of the edge-perpendicular curve $\gamma^e_\perp(\bx,a)$
% associated with the intermediate edge $e = e^s(\bv)$
% with coarse patch set to $k^-(e) \coloneqq k^s_1(\bv)$ 
% and oriented in such a way that it intersects the coarse edge $e^*(\bv)$ 
% (the other edge of patch $k^s_1(\bv)$ contiguous to $\bv$)
% at $\hx^-_\parallel = 0$. (This orientation is assumed here for the construction
% of the vertex antiderivative, it does not need to match that used in the edge 
% antiderivative.)

\begin{figure}
	\centering %begin{center}
	\fontsize{18pt}{22pt}\selectfont
	\resizebox{!}{0.25\textheight}{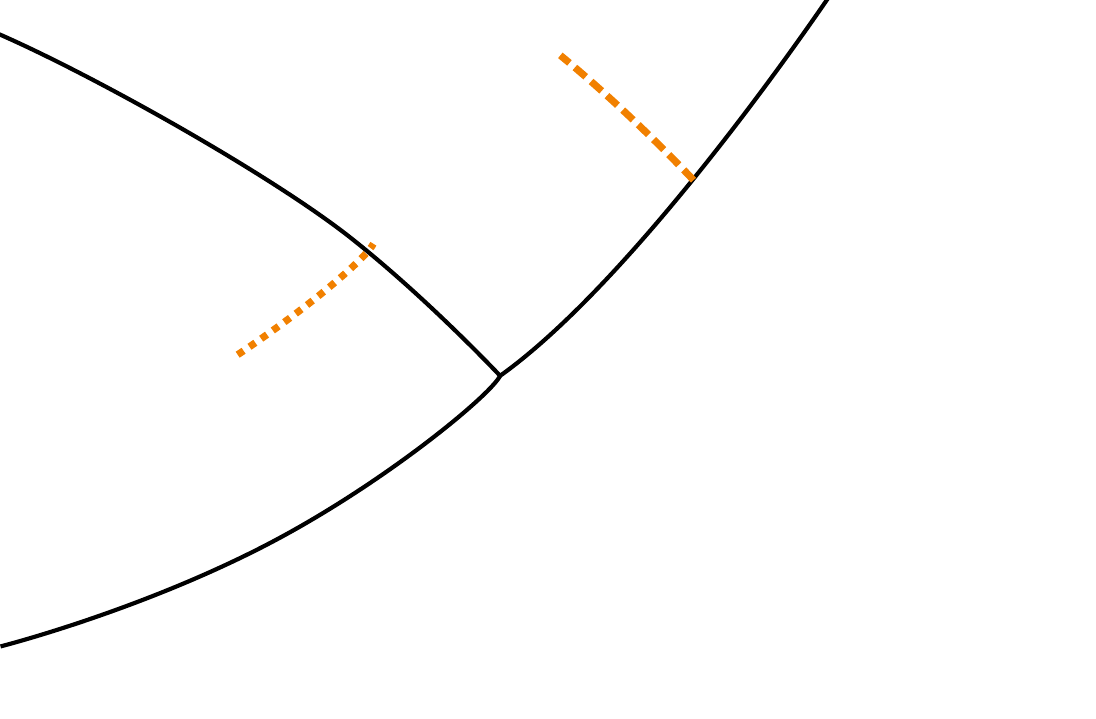}		
	\caption{Integration paths $\gamma^\bv(\bx,a)$ 
	for different points $\bx_1$ and $\bx_2$ 
	involved in the vertex-based antiderivative operator $\Phi^\bv_a$,
	for a given averaging parameter $a \in (0,\hat h_\bv)$. The common starting point $\gamma^\bv_{*}(\bx_1,a) = \gamma^\bv_{*}(\bx_2,a)$
	is represented by a white square.
	}
	\label{fig:Phi_v}
\end{figure}

\begin{figure}[ht]
	\subfloat[inhomogeneous boundary]
	{
   \fontsize{18pt}{22pt}\selectfont
	\resizebox{!}{0.25\textheight}{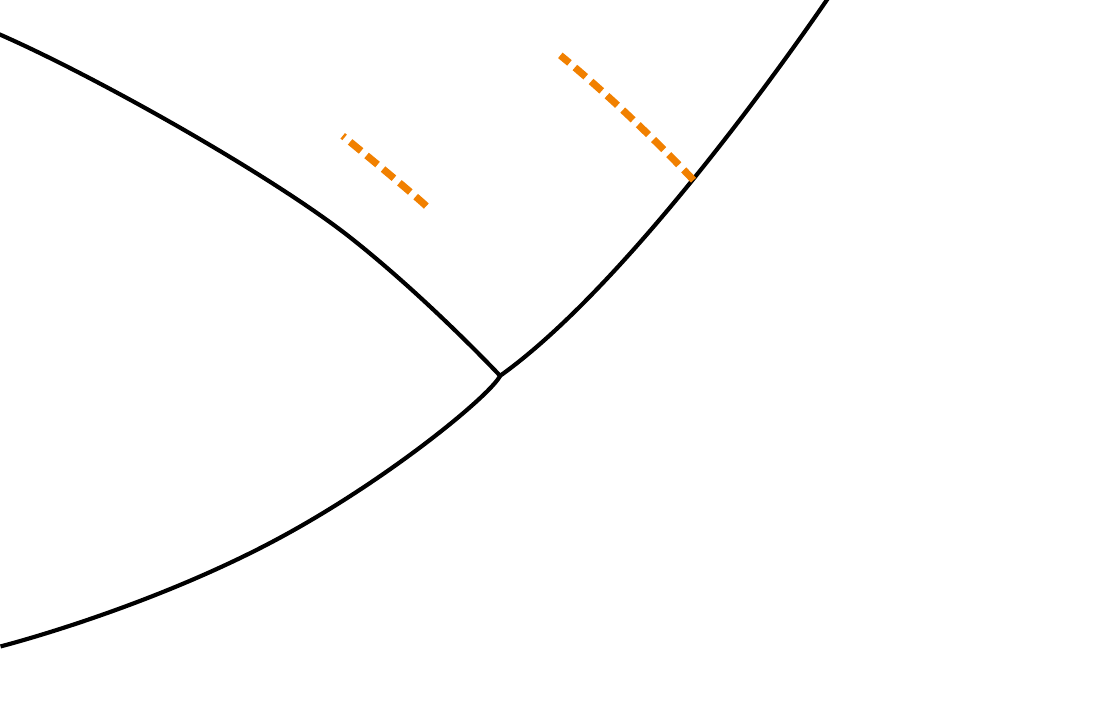}
	   }
   % \hspace{5pt}
	   % \centerhfill
	   \\
   \subfloat[homogeneous boundary]	
   {
   \fontsize{18pt}{22pt}\selectfont
   \resizebox{!}{0.25\textheight}{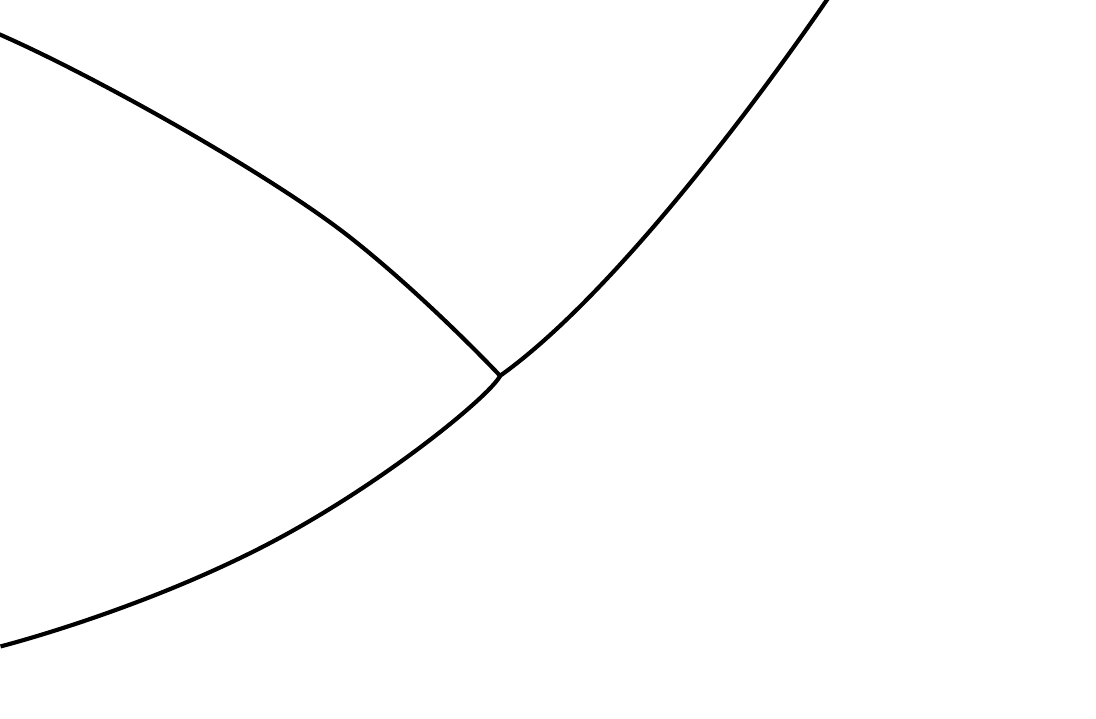}
   \label{fig:Phi_v_type_hom}
   }

   \caption{Integration paths $\gamma^\bv(\bx,a)$ defining the vertex-based antiderivative operators $\Phi^\bv(\bu)$ for boundary vertices.
	 In the inhomogeneous case, integration curves connect every $\bx \in \Omega(\bv)$
	 to a common starting point $\gamma^\bv(a)$. 
	 In the homogeneous case, different points $\bx \in \Omega(\bv)$
	 may be connected to different starting points $\gamma^\bv(\bx, a)$,
	 all on the boundary $\partial \Omega$. 
   }
   \label{fig:Phi_v_bound}
\end{figure}

This antiderivative operator satisfies several properties
which can be directly inferred from those of the perpendicular edge antiderivative.
The first one is similar to \eqref{Phi_perp_nabla} and will be useful 
to prove commuting properties.

\begin{lemma} \label{lem:Phi_v_nabla}
	Let $\bv$ be an interior vertex or a boundary vertex in the inhomogeneous case
	\eqref{dR}, and $\bu = \nabla \phi$ with $\phi \in C^1(\Omega)$. Then
		\begin{equation} \label{Phi_v_nabla}
		\Phi^\bv(\bu)(\bx) = \phi(\bx) - \phi^\bv_*
		\quad \text{ on } \Omega(\bv) %, \quad k \in \cK(\bv),
	\end{equation}
	with a constant
	$\phi^\bv_* \coloneqq 
	\frac{1}{\hat h_\bv} \int_0^{\hat h_\bv} \phi \big(\gamma^\bv_*(a)\big) \dd a$.
	If $\bv$ is a boundary vertex in the homogeneous case
	\eqref{dR0}, and $\bu = \nabla \phi$ with $\phi \in C^1_0(\Omega)$, then
		\begin{equation} \label{Phi_v_nabla_hom}
		\Phi^\bv(\bu)(\bx) = \phi(\bx)
		\quad \text{ on } \Omega(\bv). %, \quad k \in \cK(\bv),
		\end{equation}
\end{lemma}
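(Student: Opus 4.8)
The plan is to recognize this lemma as the vertex analogue of Lemma~\ref{lem:Phi_e_nabla}, so that it follows almost verbatim from the general gradient formula \eqref{Phi_grad} together with the geometric features of the integration curves $\gamma^\bv(\bx,a)$ established in the construction above. The key observation, inherited from the design of the curves, is that when $\bu = \nabla \phi$ each inner integrand $\Phi^\bv_a(\bu)(\bx) = \int_{\gamma^\bv(\bx,a)} \nabla \phi \cdot \dd l$ is a path integral of a gradient along a piecewise $C^1$ curve, hence equals the difference of the endpoint values of $\phi$.

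First I would fix $a \in [0,\hat h_\bv]$ and argue that $\gamma^\bv(\bx,a)$ is a genuine continuous path from its starting point to $\bx$, even though it is assembled patch-wise out of segments of the same kind used for $\Phi^e_\perp$. Exactly as in the perpendicular edge antiderivative, the patch-wise pieces are glued at the edge interfaces through the foot-point matching $\bp_e(\bx)$ (see \eqref{Phi_perp_+}--\eqref{foot_curve}), so that the sum of the local path integrals telescopes into the single path integral along the full curve. Since $\phi \in C^1(\Omega)$, this gives $\Phi^\bv_a(\nabla \phi)(\bx) = \phi(\bx) - \phi(\gamma^\bv_*(\bx,a))$, in agreement with \eqref{Phi_grad}.

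Next I would invoke the two distinguishing geometric properties recorded in the construction. For an interior vertex or an inhomogeneous boundary vertex, all curves sharing a value of $a$ emanate from the \emph{same} point $\gamma^\bv_*(a)$ on the coarse edge $e^*(\bv)$, independent of $\bx$; substituting into the averaging formula \eqref{Phi_v} then yields
\begin{equation*}
\Phi^\bv(\nabla \phi)(\bx) = \phi(\bx) - \frac{1}{\hat h_\bv}\int_0^{\hat h_\bv} \phi\big(\gamma^\bv_*(a)\big)\dd a = \phi(\bx) - \phi^\bv_*,
\end{equation*}
which is \eqref{Phi_v_nabla}. For a boundary vertex in the homogeneous case, the starting points $\gamma^\bv_*(\bx,a)$ may depend on $\bx$ but all lie on $\partial \Omega$; since $\phi \in C^1_0(\Omega)$ vanishes there, the subtracted term is identically zero and \eqref{Phi_v_nabla_hom} follows.

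I expect no substantive obstacle, since the heavy lifting was done in designing the curves and in the parallel Lemma~\ref{lem:Phi_e_nabla}. The only point requiring care is the telescoping of the patch-wise path integrals into a single integral along the composite curve across the two patches of each complete sequence \eqref{cK_seqs}; this is precisely the content of the two-term definition \eqref{Phi_perp_+} and needs only to be invoked rather than reproven here.
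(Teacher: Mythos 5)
Your proposal is correct and follows exactly the route the paper intends: the paper gives no separate proof for this lemma, stating only that it is ``directly inferred'' from the perpendicular edge case, and your argument is precisely the proof of Lemma~\ref{lem:Phi_e_nabla} transposed to the vertex curves --- fix $a$, use that the path integral of $\nabla\phi$ along the composite curve telescopes to $\phi(\bx)-\phi(\gamma^\bv_*(a))$ (or $\phi(\bx)-\phi(\gamma^\bv_*(\bx,a))$ with the starting point on $\partial\Omega$ in the homogeneous case), then average over $a$. Your added remark about the telescoping of the patch-wise pieces via the foot-point matching is a welcome bit of extra care but does not change the argument.
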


The second property is a local $L^p$ stability estimate involving vertex 
neighborhoods of the form
\begin{equation} \label{omega_v}
	\omega_\bv \coloneqq \cup_{k \in \cK(\bv)} F_k(\hat \omega^k_\bv),
	\quad \text{ with } \quad 
	\hat \omega^k_\bv 
	\coloneqq \hat \Omega \cap \big(\hat \bv^k + [-\rho \hat h_\bv,\rho \hat h_\bv]^2 \big)
\end{equation}
with $1 \le \rho \lesssim 1$.

\begin{lemma} \label{lem:stab_Phi_v}
	Let $\bu \in L^p(\Omega)$. The bound
	\begin{equation*}
	\norm{\Phi^\bv(\bu)}_{L^p(\omega_\bv)} \lesssim h_\bv \norm{\bu}_{L^p(\omega_\bv)}
	\end{equation*}
	holds for any vertex neighborhood $\omega_\bv$ of the form \eqref{omega_v}.
\end{lemma}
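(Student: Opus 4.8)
The plan is to reduce the estimate to the single-patch computations already carried out for the perpendicular edge antiderivative in Lemma~\ref{lem:stab_e_corr}. First I would invoke the decomposition \eqref{cK_seqs} of $\cK(\bv)$ into at most two sequences of at most two adjacent nested patches, so that it suffices to bound $\Phi^\bv(\bu)$ on each of the (at most four) patches $\Omega_k$, $k \in \cK(\bv)$, and then sum the contributions. On each such patch the curve $\gamma^\bv(\bx,a)$ was built to have exactly the same shape as the curve $\gamma^e_\perp(\bx,a)$ underlying $\Phi^e_\perp$: on the coarse patch of a sequence the antiderivative $\Phi^\bv_a$ has the two-integral form \eqref{Phi_perp_-}, while on the fine patch it is a local perpendicular path integral of type \eqref{dPhi_perp} plus the coarse antiderivative evaluated at the foot point $\bp_e(\bx)$, as in \eqref{Phi_perp_+}. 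Consequently the three Hölder estimates \eqref{CS_Phi_ep-}, \eqref{CS_dPhi_perp_+} and \eqref{CS_Phi_perp_*} apply, the only change being the domain of integration.

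The essential point — and the source of the extra factor $h_\bv$ absent from the edge bound \eqref{L2_bound_Phi_e} — is that $\omega_\bv$ is a box of side $\lesssim \hat h_\bv$ localized at the vertex rather than a thin strip along a full edge. Pulling back to $\hat \Omega$ with the scaling relations \eqref{L2_scale_pf} and repeating the computation \eqref{CS_Phi_ep-} on the localized domain $\hat\omega^k_\bv$ from \eqref{omega_v}, both the parallel variable $\hx_\parallel$ and the averaging/perpendicular variables now range over intervals of length $\lesssim \hat h_\bv$; in particular the prefactor $\abs{\hx_\parallel}^{p-1}$ is now of order $(\hat h_\bv)^{p-1}$ instead of order one, and integrating it over the box $\hat\omega^k_\bv$ of measure $\lesssim (\hat h_\bv)^2$ produces an overall factor $(\hat h_\bv)^p$. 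Here the averaging step in \eqref{Phi_v} remains crucial: it is what turns the one-dimensional parallel line integral $\int_0^{\hx_\parallel}\hat u_\parallel(z,\tilde a)\dd z$, taken at the fixed perpendicular height $\tilde a$, into a genuine two-dimensional $L^p$ quantity over $\hat\omega^k_\bv$ once the $a$-integral and a Hölder (Jensen) inequality are applied as in \eqref{CS_Phi_ep-}. One thus obtains $\norm{\hat \Phi^\bv(\hat \bu^k)}_{L^p(\hat\omega^k_\bv)} \lesssim \hat h_\bv \norm{\hat\bu^k}_{L^p(\hat\omega^k_\bv)}$ for the pullback, and applying \eqref{L2_scale_pf} to the $0$-form output and the $1$-form input converts this into $\norm{\Phi^\bv(\bu)}_{L^p(F_k(\hat\omega^k_\bv))} \lesssim H_k\hat h_\bv \norm{\bu}_{L^p(F_k(\hat\omega^k_\bv))} = h_k\norm{\bu}_{L^p(F_k(\hat\omega^k_\bv))} \sim h_\bv \norm{\bu}_{L^p(\omega_\bv)}$, where $h_k = H_k \hat h_k \sim h_\bv$ for any contiguous patch, see \eqref{hk} and the remark following \eqref{hv}.

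The main thing to check — the only genuine obstacle beyond routine Hölder bookkeeping — is that for every $\bx \in \omega_\bv$ the whole curve $\gamma^\bv(\bx,a)$, and in particular the coarse-patch portion reached through the foot point $\bp_e(\bx)$ from a fine patch, stays inside $\omega_\bv$, so that the right-hand side involves only $\norm{\bu}_{L^p(\omega_\bv)}$. This is where the hypothesis $1 \le \rho$ is used: since $\bx$ lies within logical distance $\rho\hat h_\bv$ of $\bv$ in both coordinates while the starting points sit at logical distance $a \le \hat h_\bv \le \rho\hat h_\bv$ from $\bv$ on the coarse edge, the piecewise axis-parallel curves remain in the box $\hat\bv^k + [-\rho\hat h_\bv,\rho\hat h_\bv]^2$ on each patch, and by the patch-wise Cartesian and cross-edge continuity of these neighborhoods together with Assumption~\ref{as:v_nested} they never leave $\omega_\bv$. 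Finally I would note that the three vertex types are handled uniformly: for interior and inhomogeneous boundary vertices each complete sequence contributes coarse- and fine-patch terms of the forms just estimated, while for homogeneous boundary vertices the single-patch sequences contribute only pure perpendicular path integrals of type \eqref{dPhi_perp}, which are bounded directly by \eqref{CS_dPhi_perp_+} and again yield the factor $\hat h_\bv$. Summing over the bounded number of contiguous patches then completes the proof.
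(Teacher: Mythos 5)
Your proposal is correct and follows essentially the same route as the paper's own (much terser) proof: reuse the Hölder computations from the perpendicular edge case on the localized vertex box, note that every curve $\gamma^\bv(\bx,a)$ stays inside $\omega_\bv$ so no separate localization argument is needed, and extract the factor $h_\bv$ from the $O(\hat h_\bv)$ length of the integration segments combined with the $0$-form/$1$-form pullback scaling. The only difference is one of detail: you spell out the bookkeeping that the paper compresses into two sentences.
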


\begin{proof}
	This estimate is proven with the same arguments as \eqref{L2_bound_Phi_e}
	in the perpendicular case, as the integration curves $\gamma^\bv(\bx,a)$
	are of the same form.
	Note that in the present case no localization argument is needed since  
	for $\bx \in \omega_\bv$ the integration curve $\gamma^\bv(\bx, a)$ 
	%$\gamma^\bv(\bx,a)$of type I to IV associated with
	is fully contained in %points $\hbx$ in a subdomain \eqref{omega_v} take their values in 
	the vertex neighborhood $\omega_\bv$,
	% : as $\rho \ge 1$ we 
	% verify indeed that  
	% $\gamma^\bv(\bx, a) \subset \omega_\bv$ for all $\bx \in \omega_\bv$.
	which is of diameter $\sim h_\bv$.	
\end{proof}
% The proof will be detailed in Section~\ref{sec:stab_Phi}.

\begin{lemma} \label{lem:loc_bound_corr_v}
	On a domain $S^{\bv}$	of the form \eqref{Sv}, the bound
	\begin{equation} \label{loc_bound_corr_v}	
	\norm{\nabla (P^\bv - \bar I^\bv)\Pi^0_\pw \Phi^{\bv}(\bu)}_{L^p(S^{\bv})} 
	\lesssim \norm{\bu}_{L^p(E_\bv^3(S^{\bv}))}
	\end{equation}
	holds, where $E_\bv^3$ corresponds to a three-fold application of the vertex-based 
	domain extension operator \eqref{Ev_ext}.
\end{lemma}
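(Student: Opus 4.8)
The plan is to follow the same scheme as the perpendicular term in the proof of Lemma~\ref{lem:stab_e_corr}, since the vertex antiderivative $\Phi^\bv$ is built from integration curves of exactly the same type as $\Phi^e_\perp$. First I would strip off the outer operators. Applying the inverse estimate \eqref{inverse_est} patch-wise, then the local stability \eqref{stab_P_I_v} of $P^\bv$ and $\bar I^\bv$ and the local stability \eqref{stab_Pi0_ev} of $\Pi^0_\pw$ on patch-wise Cartesian domains (valid by Remark~\ref{rem:stab_Pi0_ev}), I obtain
\begin{equation*}
\norm{\nabla (P^\bv - \bar I^\bv)\Pi^0_\pw \Phi^{\bv}(\bu)}_{L^p(S^{\bv})}
\lesssim h_\bv^{-1} \norm{(P^\bv - \bar I^\bv)\Pi^0_\pw \Phi^{\bv}(\bu)}_{L^p(E_\bv(S^{\bv}))}.
\end{equation*}
As in the edge case the naive continuation $\norm{\Phi^\bv(\bu)}_{L^p(E_\bv^3(S^{\bv}))} \lesssim h_\bv \norm{\bu}_{L^p(E_\bv^3(S^{\bv}))}$ cannot hold directly, because the curves $\gamma^\bv(\bx,a)$ are not contained in $E_\bv^3(S^{\bv})$, so a localizing argument is required.

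The key step, which is the vertex analogue of \eqref{loc_corr_perp}, is the implication
\begin{equation*}
\bu = 0 \text{ on } E_\bv^2(S^{\bv}) \quad \implies \quad (P^\bv - \bar I^\bv)\Pi^0_\pw \Phi^{\bv}(\bu) = 0 \text{ on } E_\bv(S^{\bv}).
\end{equation*}
To prove it, recall from Section~\ref{sec:Phi_v} that for an interior vertex or an inhomogeneous boundary vertex every curve $\gamma^\bv(\bx,a)$ joins $\bx$ to the common starting point $\gamma^\bv_*(a)$ on the coarse edge. Just as in the edge case, each curve splits into a piece lying inside $E_\bv^2(S^{\bv})$ and a remaining piece that, for $\bx$ ranging over $E_\bv^2(S^{\bv})$, is independent of $\bx$. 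If $\bu$ vanishes on $E_\bv^2(S^{\bv})$ the inside piece contributes nothing, so $\Phi^\bv_a(\bu)$ is a constant (depending only on $a$) throughout $E_\bv^2(S^{\bv})$; averaging over $a$ shows $\Phi^\bv(\bu)$ is a single constant there. By the constant-preservation property \eqref{c_Pi0_ev}, $\Pi^0_\pw \Phi^\bv(\bu)$ equals that same constant on $E_\bv(S^{\bv})$, hence its vertex values agree across all $k \in \cK(\bv)$, and Lemma~\ref{lem:corr_ev_ker}, relation \eqref{0_corr_v}, yields the claim.

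With this localization, I would decompose $\bu = \bu \one_{E_\bv^2(S^{\bv})} + \bu\,(1-\one_{E_\bv^2(S^{\bv})})$ and use linearity to replace $\bu$ by $\bu \one_{E_\bv^2(S^{\bv})}$ inside the correction on $E_\bv(S^{\bv})$, exactly as in \eqref{est_2}. Reapplying the local stability of $P^\bv$, $\bar I^\bv$ and $\Pi^0_\pw$ and then Lemma~\ref{lem:stab_Phi_v} with integration restricted to $E_\bv^2(S^{\bv})$ (a set of diameter $\sim h_\bv$, so $\Phi^\bv$ gains the factor $h_\bv$) gives $\norm{\Phi^\bv(\bu \one_{E_\bv^2(S^{\bv})})}_{L^p(E_\bv^3(S^{\bv}))} \lesssim h_\bv \norm{\bu}_{L^p(E_\bv^3(S^{\bv}))}$, which cancels the $h_\bv^{-1}$ and closes \eqref{loc_bound_corr_v}. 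The main obstacle is verifying the curve decomposition underlying the localizing implication: since $\bv$ can be shared by up to four patches of differing resolutions organized into the two nested sequences of \eqref{cK_seqs}, one must check, on each patch type occurring in that decomposition, that the portion of $\gamma^\bv(\bx,a)$ outside $E_\bv^2(S^{\bv})$ is genuinely $\bx$-independent while the inside portion stays within $E_\bv^2(S^{\bv})$. For boundary vertices in the homogeneous case one has $P^\bv = 0$, and the corresponding term is handled by the same localization scheme.
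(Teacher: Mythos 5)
Your argument reaches the right conclusion and its individual steps would go through, but it is built on a false premise and is considerably more complicated than the paper's proof. You assert that the naive continuation $\norm{\Phi^\bv(\bu)}_{L^p(E_\bv^3(S^{\bv}))} \lesssim h_\bv \norm{\bu}_{L^p(E_\bv^3(S^{\bv}))}$ ``cannot hold directly, because the curves $\gamma^\bv(\bx,a)$ are not contained in $E_\bv^3(S^{\bv})$.'' This is exactly backwards: unlike the edge-perpendicular case, the vertex curves \emph{are} fully contained. Every $\gamma^\bv(\bx,a)$ connects $\bx$ to a starting point at logical distance $a \le \hat h_\bv$ from $\bv$ on the coarse edge, via axis-aligned segments that never leave the Cartesian box spanned (patch-wise) by $\bv$ and $\bx$; since $E_\bv^3(S^\bv)$ is a vertex neighborhood of the form \eqref{omega_v} with bounded $\rho$, the curves stay inside it. This is precisely the content of Lemma~\ref{lem:stab_Phi_v}, whose proof states explicitly that ``no localization argument is needed.'' The paper's proof is therefore a four-line chain: inverse estimate \eqref{inverse_est}, local stability \eqref{stab_P_I_v} of $P^\bv$ and $\bar I^\bv$, local stability of $\Pi^0_\pw$ with vertex-based extensions, and then Lemma~\ref{lem:stab_Phi_v} applied to $\omega_\bv = E^3_\bv(S^\bv)$ to absorb the $h_\bv^{-1}$. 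Your imported localization machinery (the vanishing implication, the cutoff decomposition $\bu = \bu\one_{E_\bv^2(S^\bv)} + \bu(1-\one_{E_\bv^2(S^\bv)})$) is not wrong --- indeed it degenerates, since the ``outside portion'' of each curve is empty --- but it is entirely superfluous, and the ``main obstacle'' you flag at the end (checking $\bx$-independence of the outside portion on each patch type) does not exist. The one genuine subtlety here is not localization but the geometry of the curves across the up-to-four nested patches of $\cK(\bv)$, which is already handled once and for all in Section~\ref{sec:Phi_v} and Lemma~\ref{lem:stab_Phi_v}.
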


\begin{proof}
	Using the inverse estimate \eqref{inverse_est} and the local stability of 
	$P^\bv$, $I^\bv$ and $\Pi^0_\pw$, see \eqref{stab_P_I_v} and \eqref{stab_Pi0k}
	(which also holds with vertex-based domain extensions $E_\bv$ according
	to \eqref{Ev_ext}),
 	we write
	\begin{equation*}
	\begin{aligned}
		\norm{\nabla_\pw (P^\bv - \bar I^\bv)\Pi^0_\pw \Phi^{\bv}(\bu)}_{L^p(S^{\bv})} 
		&\lesssim 
		h_\bv^{-1} \norm{(P^\bv - \bar I^\bv)\Pi^0_\pw \Phi^{\bv}(\bu)}_{L^p(E_\bv(S^\bv))} 
		\\
		&\lesssim 
		h_\bv^{-1} \norm{\Pi^0_\pw \Phi^\bv(\bu)}_{L^p(E^2_\bv(S^\bv))} 
		\\
		&\lesssim 
		h_\bv^{-1} \norm{\Phi^\bv(\bu)}_{L^p(E^3_\bv(S^\bv))}
		\\
		&\lesssim 
		\norm{\bu}_{L^p(E^3_\bv(S^\bv))}.		
	\end{aligned}
	\end{equation*}
	where the last step follows from Lemma~\ref{lem:stab_Phi_v},
	noting that $E^3_\bv(S^\bv)$ has the form of a vertex neighborhood \eqref{omega_v}
	with some bounded $\rho \ge 1$.
\end{proof}

The next property will be useful to show projection properties.
\begin{lemma} \label{lem:Phi_V1}
	If $\bu \in V^1_h$, then
	\begin{itemize}
		\item $\Phi^k_d(\bu)$ belongs to the broken space $V^0_\pw$
		\item for all $e\in \cE$, $\Phi^e_\parallel(\bu)$ and $\Phi^e_\perp(\bu)$ belong to $V^0_\pw$ 
		\item if $e$ is an interior edge, $\Phi^e_\parallel(\bu)$ and $\Phi^e_\perp(\bu)$ are continuous across $e$
		\item if $e$ is a homogeneous boundary edge, %in the  case \eqref{dR0}, 
		$\Phi^e_\parallel(\bu)$ and $\Phi^e_\perp(\bu)$ vanish on $\partial \Omega$ %$e$
		\item for all $\bv \in \cV$, $\Phi^\bv(\bu)$ belongs to $V^0_\pw$ and is continuous across every $e\in \cE(\bv)$
		\item if $\bv$ is a homogeneous boundary vertex, % in the  case \eqref{dR0},  
		$\Phi^\bv(\bu)$ vanishes on $\partial \Omega$.
	\end{itemize}
\end{lemma}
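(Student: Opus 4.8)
The plan is to treat the three kinds of assertions separately --- membership in the broken space $V^0_\pw$, continuity across interfaces, and vanishing on $\partial\Omega$ in the homogeneous case --- and throughout to reduce everything to the logical patches by pulling back. Since $\bu \in V^1_h \subset V^1_\pw$, each restriction $\bu|_{\Omega_k}$ has a pullback $\hat\bu^k = (\cF^1_k)^{-1}(\bu|_{\Omega_k}) \in \hat V^1_k$, whose two components lie in $\VV^1_k \otimes \VV^0_k$ and $\VV^0_k \otimes \VV^1_k$ respectively; all the antiderivatives are then expressed through these components by their defining formulas.

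For the membership claims the workhorse is the univariate antiderivative closure \eqref{ad_1d}, which guarantees $\int^{\hx}\VV^1_k \subset \VV^0_k$. For the single-patch operator in \eqref{Phi_kd_exp} this is immediate: integrating $\hat u^k_d$ in the $\hx_d$ variable turns the $\VV^1_k$ factor into a $\VV^0_k$ factor while the transverse factor stays in $\VV^0_k$, so $\Phi^k_d(\bu)|_{\Omega_k} \in V^0_k$. The parallel edge operator $\Phi^e_\parallel$ in \eqref{Phi_par} is the same computation along the edge-parallel logical direction. The perpendicular operator $\Phi^e_\perp$ is more delicate because of its two-term structure \eqref{Phi_perp_+}: on the fine patch the local term $\delta\Phi^{e,+}_\perp$ of \eqref{dPhi_perp} lies in $\hat V^0_{k^+(e)}$ by the same closure argument, while the matching term $\Phi^{e,-}_{\perp,a}(\bu)(\bp_e(\bx))$ depends only on the parallel coordinate. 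I would argue that, as the edge trace of a function already shown to lie in $V^0_{k^-(e)}$, it is a univariate function in $\VV^0_{k^-(e)}$, hence in $\VV^0_{k^+(e)}$ by the nestedness \eqref{-+_assumption} and the symmetry \eqref{sym_VV0}, and being constant in the perpendicular variable the product lies in $\hat V^0_{k^+(e)}$; the average over $a$ preserves this membership. The vertex operator $\Phi^\bv$ uses curves of exactly the same shape (Section~\ref{sec:Phi_v}), so the same patch-by-patch bookkeeping applied along each subsequence of \eqref{cK_seqs} gives $\Phi^\bv(\bu) \in V^0_\pw$.

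For the continuity claims I would exploit that these operators are path integrals of the $1$-form $\bu$, whose tangential trace is continuous across interior edges by the curl-conformity condition \eqref{V1_confcond_ref}. For $\Phi^e_\parallel$ the trace on $e$ is exactly the edge integral of the tangential component of $\bu$, so after accounting for the orientation factor through \eqref{eta} and \eqref{tau+-} the two one-sided traces coincide. For $\Phi^e_\perp$ continuity is built into the design: on $e$ the fine-patch local term $\delta\Phi^{e,+}_\perp$ vanishes because its integration endpoint is the edge itself, and the remaining matching term equals the coarse-patch trace evaluated at $\bp_e(\bx)=\bx$, so the two sides agree by construction. The vertex case combines the same two mechanisms --- a common starting point $\gamma^\bv_*(a)$ together with tangential continuity along the curves --- applied across each $e \in \cE(\bv)$.

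The homogeneous vanishing statements follow because in that case the boundary patches are taken of fine type, so every curve starts on $\partial\Omega$; since $\bu \in V^1_h \subset H_0(\curl;\Omega)$ has vanishing tangential trace there, the boundary trace of the corresponding antiderivative --- an integral of that tangential component --- vanishes, which is the analogue for a general $\bu$ of the gradient identities \eqref{Phi_perp_nabla_hom} and \eqref{Phi_v_nabla_hom}. I expect the main obstacle to be precisely the perpendicular edge and vertex operators, where a single integration path crosses from a coarse to a fine patch: one must check simultaneously that the coarse-to-fine matching term stays inside the fine logical space (invoking nestedness \eqref{-+_assumption}, the endpoint interpolation \eqref{ipe} and the orientation symmetry \eqref{sym_VV0}) and that the orientation bookkeeping through \eqref{eta} and \eqref{tau+-} remains consistent across all admissible configurations around a vertex, where up to four patches of possibly different resolutions meet.
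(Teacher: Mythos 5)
Your proposal is correct and follows essentially the same route as the paper's proof: membership in $V^0_\pw$ via the segment structure of the curves together with the nestedness assumption \eqref{-+_assumption} for the coarse-to-fine matching term, continuity via the tangential-trace continuity of the curl-conforming field and the matching design of the perpendicular curves, and the homogeneous vanishing via curves that start on $\partial\Omega$ and are either of zero length or integrate the vanishing tangential component. Your treatment is in places more explicit than the paper's (e.g.\ invoking \eqref{ad_1d} and \eqref{sym_VV0} for the membership claims, and checking the two one-sided traces of $\Phi^e_\perp$ directly rather than appealing to Hausdorff continuity of the curves), but the underlying ideas coincide.
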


\begin{proof}
To see that antiderivatives of the form
$\Phi(\bu) = \frac{1}{\hat h}\int_0^{\hat h}\Phi_a(\bu)$ 
with $\bu \in V^1_k$ 
belong to the broken space $V^0_k$ we observe that for each $a$ and each 
$\bx \in \Omega_k$, the function $\Phi_a(\bu)(\bx)$ is a sum of at most two terms:
(i) a path integral over a curve $\gamma^k(\bx,a)$ which is in the same patch $\Omega_k$
and consists of one or two segments aligned with the patch axes and 
whose endpoints along dimension $d \in \{1,2\}$ are either constant or 
the coordinate $\hx^k_d$ of $\hbx^k = F^{-1}_k(\bx)$,
and (ii) a matching antiderivative on a coarse adjacent patch which, as a scalar function
of the parallel (interface) variable, belongs to the coarser space on the interface
and hence also to the finer one due to the nested space assumption \eqref{-+_assumption}.
This shows $\Phi_a(\bu) \in V^0_\pw$ and the inclusion $\Phi(\bu) \in V^0_\pw$
follows by linearity.
To see that these antiderivatives are continuous across the respective edges 
for $\bu \in V^1_h$,
we fix again $a$ and observe that the integration curves $\gamma(\bx,a)$ depend
continuously on $\bx$ (in the Hausdorff distance between curves).
For perpendicular curves crossing an interface this is enough to show the continuity
of the antiderivative, and for parallel curves close to an interface the continuity
follows from the continuity of the tangential component of the curl-conforming,
piecewise continuous field $\bu$.
The boundary vanishing properties of the antiderivatives in the case of homogeneous
boundary edges and vertices follow from the fact that on $\partial \Omega$ 
all integration curves involved in these antiderivative operators 
are either of zero length (being normal to the boundary and starting from it), 
or tangent to the boundary: in this latter case they integrate the vanishing
component of the function $\bu \in V^1_h \subset H_0(\curl;\Omega)$.
% specifically, these matching are of two types:
% between a fine patch and a coarser one (as in $\Phi^e_\perp$ between 
% $k = k^+(e)$ and $k^* = k^-(e)$) or in $\Phi^\bv$ between patches 
% $k=k^s_i(\bv)$ with $i > 1$ and $k^*=k^s_{i-1}(\bv)$) the integration curves cross
% the interface along the perpendicular direction in both patches, so that
% for two points $\bx$ and $\bx' \to \bx$ in $\Omega_k \cup \Omega_{k^*}$, 
% the integration curve $\gamma(\bx',a)$ tends towards $\gamma(\bx,a)$ (in the Hausdorff 
% distance) on both patches, where the restriction of $\bu$ is Lipschitz: this 
% implies the continuity of the integral. For the remaining cases (as in $\Phi^e_\parallel$, 
% or in $\Phi^\bv$ between two coarse patches $k=k^l_1(\bv)$ and $k^*=k^r_1(\bv)$
% or two fine patches $k=k^l_{N_l}(\bv)$ and $k^*=k^r_{N_r}(\bv)$) the 
% integration curves cross the interface along the parallel direction, so that
% the path integral involves the tangential component of $\bu$ on the interface,
% which is continuous for a curl-conforming field: this again implies the continuity
% of the integral in this latter case.
\end{proof}

\subsection{Edge-vertex antiderivative operators}

For the edge-vertex correction terms we also need to specify 
two additional antiderivative operators: 
a parallel one defined as
\begin{equation} \label{Phi_ev_par}
	 \Phi^{e,\bv}_\parallel(\bu) \coloneqq  \Phi^{\bv}(\bu)
\end{equation}
see \eqref{Phi_v}, and a perpendicular one defined as
\begin{equation} \label{Phi_ev_perp}
	\Phi^{e,\bv}_\perp(\bu) \coloneqq  \Phi^{e}_\perp(\bu)
\end{equation}
see \eqref{Phi_perp_-}--\eqref{foot_curve}.

\begin{lemma} \label{lem:loc_bound_corr_ev}
	On a domain $S^e_{\bv} \coloneqq S^e_{i^e(\bv)}$ of the form \eqref{Sei},
	the bounds
	\begin{equation} \label{loc_bound_corr_ev}	
		\left\{	\begin{aligned}
		&\norm{\nabla^e_\parallel (\bar I^e_{\bv}-P^e_{\bv})\Pi^0_\pw \Phi^{e,\bv}_\parallel(\bu)}_{L^p(S^e_\bv)} 
		\lesssim \norm{\bu}_{L^p(E_\bv^3(S^e_\bv))}
		\\
		&\norm{\nabla^e_\perp (\bar I^e_{\bv}-P^e_{\bv})\Pi^0_\pw \Phi^{e,\bv}_\perp(\bu)}_{L^p(S^e_\bv)} 
		\lesssim \norm{\bu}_{L^p(E_e^3(S^e_\bv))}
	\end{aligned} \right.
	\end{equation}
	hold, where $E_\bv^3$ and $E_e^3$ correspond to three-fold applications of the vertex-based 
	and edge-based domain extension operators \eqref{Ev_ext}, \eqref{Ee_ext}.
\end{lemma}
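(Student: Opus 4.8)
The plan is to prove the two bounds separately, following the template of Lemma~\ref{lem:loc_bound_corr_v} for the parallel term and the perpendicular case of Lemma~\ref{lem:stab_e_corr} for the perpendicular term. In both cases the common strategy is to first apply the inverse estimate \eqref{inverse_est} to absorb the directional gradient at the cost of a factor $h_\bv^{-1}$ (resp.\ $h_e^{-1}$) and an enlargement of the domain, then invoke the local $L^p$ stability of the edge-vertex projections $\bar I^e_\bv$, $P^e_\bv$ (which follows from the same arguments as Lemma~\ref{lem:stab_P_I}, since their supports lie in $S^e_{i^e(\bv)} \subset S^\bv$, see \eqref{supp_PIev}) together with the local stability of $\Pi^0_\pw$ from Remark~\ref{rem:stab_Pi0_ev}, and finally the stability of the relevant antiderivative operator.

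For the parallel term, where $\Phi^{e,\bv}_\parallel = \Phi^\bv$ by \eqref{Phi_ev_par}, the situation is identical to Lemma~\ref{lem:loc_bound_corr_v}: the vertex antiderivative $\Phi^\bv$ has integration curves that stay inside vertex neighborhoods, so Lemma~\ref{lem:stab_Phi_v} gives a direct local stability bound and no localization argument is needed. I would simply chain
\begin{equation*}
\begin{aligned}
\norm{\nabla^e_\parallel (\bar I^e_{\bv}-P^e_{\bv})\Pi^0_\pw \Phi^{\bv}(\bu)}_{L^p(S^e_\bv)}
&\lesssim h_\bv^{-1}\norm{(\bar I^e_{\bv}-P^e_{\bv})\Pi^0_\pw \Phi^{\bv}(\bu)}_{L^p(E_\bv(S^e_\bv))}
\\
&\lesssim h_\bv^{-1}\norm{\Pi^0_\pw \Phi^{\bv}(\bu)}_{L^p(E^2_\bv(S^e_\bv))}
\\
&\lesssim h_\bv^{-1}\norm{\Phi^{\bv}(\bu)}_{L^p(E^3_\bv(S^e_\bv))}
\lesssim \norm{\bu}_{L^p(E^3_\bv(S^e_\bv))},
\end{aligned}
\end{equation*}
using that $\nabla^e_\parallel$ is a directional component of the broken gradient so the inverse estimate applies, and that $E^3_\bv(S^e_\bv)$ has the shape of a vertex neighborhood \eqref{omega_v} so that Lemma~\ref{lem:stab_Phi_v} yields the last step.

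For the perpendicular term, where $\Phi^{e,\bv}_\perp = \Phi^e_\perp$ by \eqref{Phi_ev_perp}, the antiderivative uses the non-localized curves of $\Phi^e_\perp$, so — exactly as in Lemma~\ref{lem:stab_e_corr} — I would first establish a localization property analogous to \eqref{loc_corr_perp}, namely that $\bu = 0$ on $E^2_e(S^e_\bv)$ implies $(\bar I^e_{\bv}-P^e_{\bv})\Pi^0_\pw \Phi^e_\perp(\bu) = 0$ on $E_e(S^e_\bv)$. The argument coincides with the one in Lemma~\ref{lem:stab_e_corr} up to its final step: the design of the curves $\gamma^e_\perp(\bx,a)$ forces $\Phi^e_\perp(\bu)$ to be constant on $E^2_e(S^e_\bv)$ when $\bu$ vanishes there, whence $\Pi^0_\pw\Phi^e_\perp(\bu)$ is constant on $E_e(S^e_\bv)$ by \eqref{c_Pi0_ev}. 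Since the vertex $\bv$ lies in $E_e(S^e_\bv)$, this constant value agrees at $\bv$ across both patches of $\cK(e)$, and the kernel relation \eqref{0_corr_ev} of Lemma~\ref{lem:corr_ev_ker} — here replacing the role played by Lemma~\ref{lem:0_corr_e} in the edge case — gives the desired vanishing. With this in hand the estimate is completed as in Lemma~\ref{lem:stab_e_corr}: decomposing $\bu = \bu\one_{E^2_e(S^e_\bv)} + \bu(1-\one_{E^2_e(S^e_\bv)})$, the second piece contributes nothing on $E_e(S^e_\bv)$, and after the inverse estimate and the local stability of $P^e_\bv$, $\bar I^e_\bv$, $\Pi^0_\pw$ one is left with $\norm{\Phi^e_\perp(\bu\one_{E^2_e(S^e_\bv)})}_{L^p(E^3_e(S^e_\bv))}$; restricting the integration to $E^2_e(S^e_\bv)$ (of diameter $\lesssim h_e$) and repeating the H\"older computations \eqref{CS_Phi_ep-}, \eqref{CS_Phi_perp_*} produces the factor $h_e$ that cancels the $h_e^{-1}$ from the inverse estimate. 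I expect the main obstacle to be precisely this localization step: one must verify that $\bv$ belongs to $E_e(S^e_\bv)$ and that the equal-value hypothesis of \eqref{0_corr_ev} is met, so that the edge-vertex projection annihilates the locally constant field, the rest being a routine transcription of the already-established edge and vertex estimates.
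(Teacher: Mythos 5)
Your proposal is correct and follows essentially the same route as the paper: the parallel bound is the vertex-correction argument of Lemma~\ref{lem:loc_bound_corr_v} verbatim, and the perpendicular bound repeats the localization argument from the perpendicular case of Lemma~\ref{lem:stab_e_corr}, with the only change being that the constancy of $\Pi^0_\pw\Phi^e_\perp(\bu)$ on $E_e(S^e_\bv)$ is now annihilated via the kernel relation \eqref{0_corr_ev} of Lemma~\ref{lem:corr_ev_ker} instead of Lemma~\ref{lem:0_corr_e}. The paper's proof is just a two-sentence reference to those earlier arguments; you have correctly identified and spelled out the one substantive point it relies on, namely that $\bar I^e_\bv - P^e_\bv$ also vanishes on locally constant fields.
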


\begin{proof}
	For the parallel direction where $\Phi^{e,\bv}_\parallel(\bu) \coloneqq  \Phi^{\bv}(\bu)$
	the argument is the same as for \eqref{loc_bound_corr_v}.
	For the perpendicular direction where $\Phi^{e,\bv}_\perp(\bu) \coloneqq  \Phi^{e}_\perp(\bu)$ 
	the argument is the same as for 
	\eqref{loc_bound_corr_e} in the perpendicular direction: here the correction terms
	involve different broken and conforming projections, namely $\bar I^e_{\bv}$ and $P^e_{\bv}$ 
	but again these coincide on constants (see Lemma~\ref{lem:corr_ev_ker})
	which was the key property in the argument.
\end{proof}

\subsection{Bivariate antiderivative operators}
\label{sec:Psi}

Our projection operator on $V^2_h$ involves bivariate antiderivative
operators.
The first one is a simple single-patch operator defined as
\begin{equation} \label{Psi_k}
		\Psi^{k}(f)(\bx) 
			\coloneqq \int_{0}^{\hx_1}\int_{0}^{\hx_2} \hat f^k(z_1, z_2) \dd z_2 \dd z_1,
			\qquad \bx \in \Omega_k, ~ k \in \cK,
\end{equation}
where $\hat f^k \coloneqq (\cF^2_k)^{-1}(f)$ is the 2-form pullback
of $f$ on the patch $\Omega_k$.

Second, an edge-based bivariate antiderivative operator defined as
\begin{equation} \label{Psi_e}
		\Psi^{e}(f)(\bx) 
			\coloneqq \frac{1}{\hat h_e}\int_0^{\hat h_e} \iint_{\sigma^e(\bx,a)} f(\bz) 
					\dd \bz \dd a,
			\qquad \bx \in \Omega(e), ~ e \in \cE,
\end{equation}
where %for each $\bx \in \Omega_k$, $k \in \cK(e)$,
$\sigma^e(\bx,a) \subset \Omega(e)$ is the oriented surface whose boundary is 
the algebraic sum of three oriented curves,
\begin{equation} \label{domea}
	\partial \sigma^e(\bx,a) = \gamma^e_\perp(\bx) - \gamma^e_\parallel(\bx,a) 
			+ \tilde \gamma^e(\bx,a)
\end{equation}
where $\gamma^e_\perp(\bx,a)$ 
and $\gamma^e_\parallel(\bx)$ are the curves associated with the 
perpendicular and parallel edge-based antiderivatives in Section~\ref{sec:Phi_e},
while $\tilde \gamma^e(\bx,a)$ is a closing curve following the edges of $\Omega(e)$.

An illustration is given in Figure~\ref{fig:Psi}-(\textsc{a}) for an interior edge $e$:
For $\bx \in \Omega_k$, $k \in \cK(e)$ we remind that
$\gamma^e_\perp(\bx,a)$ connects the point $\gamma^e_{\perp,*}(a)$,
see \eqref{gam*eperp}, to $\bx$, 
while $-\gamma^e_\parallel(\bx)$ 
connects the point $\bx$ to $\gamma^e_{\parallel,*}(\bx)$.
Since $\gamma^e_{\perp,*}(a)$ in on the edge $e^-_* \coloneqq X^-_e(0,[0,1])$
and $\gamma^e_{\parallel,*}(\bx) = X^k_e(\eta_e^k(0),\hx^k_\perp)$ is on the edge 
$e^k_* \coloneqq X^k_e(\eta_e^k(0),[0,1])$,
we see that it is indeed possible to connect $\gamma^e_{\parallel,*}(\bx)$ 
to $\gamma^e_{\perp,*}(a)$ with a curve $\tilde \gamma^e(\bx,a)$ that is included 
in the edges $e^k_*$, $k \in \cK(e)$.

\begin{figure}
	\centering %begin{center}
	% \hspace*{\fill}%
 	\subfloat[edge case]
	 {		
	\fontsize{18pt}{22pt}\selectfont
	 \resizebox{!}{0.25\textheight}{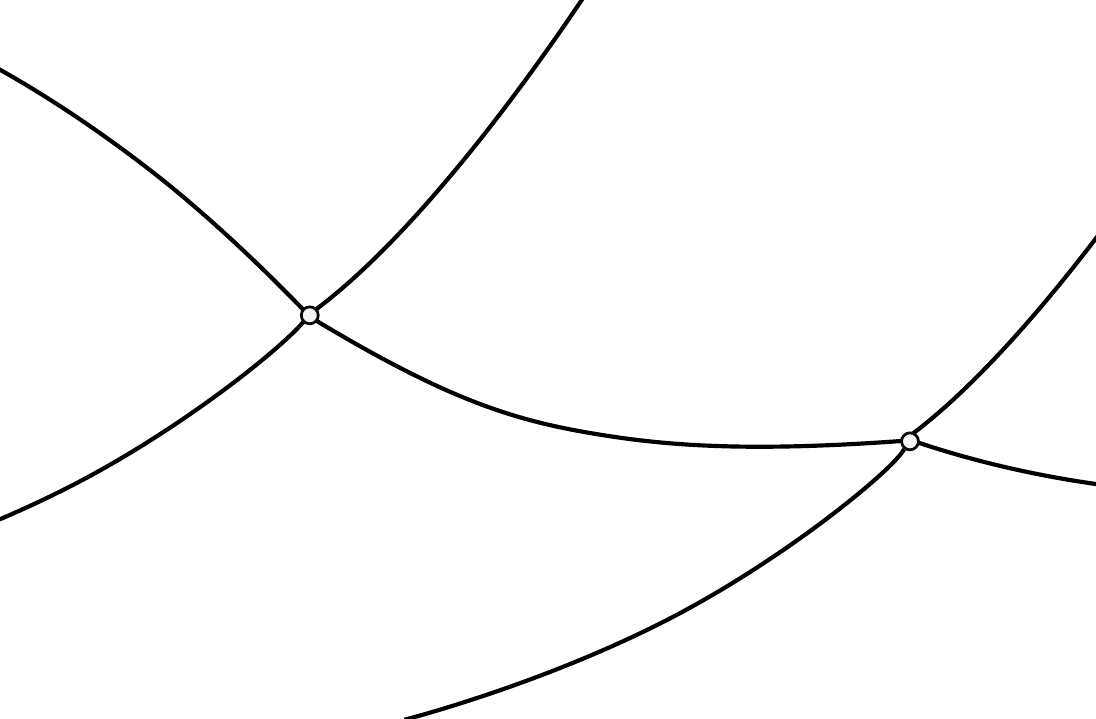}
		\label{fig:Psi_e}
		}
    % \hspace{5pt}
		% \centerhfill
		\\
		\subfloat[edge-vertex case]	
		{		
		\fontsize{18pt}{22pt}\selectfont
      \resizebox{!}{0.25\textheight}{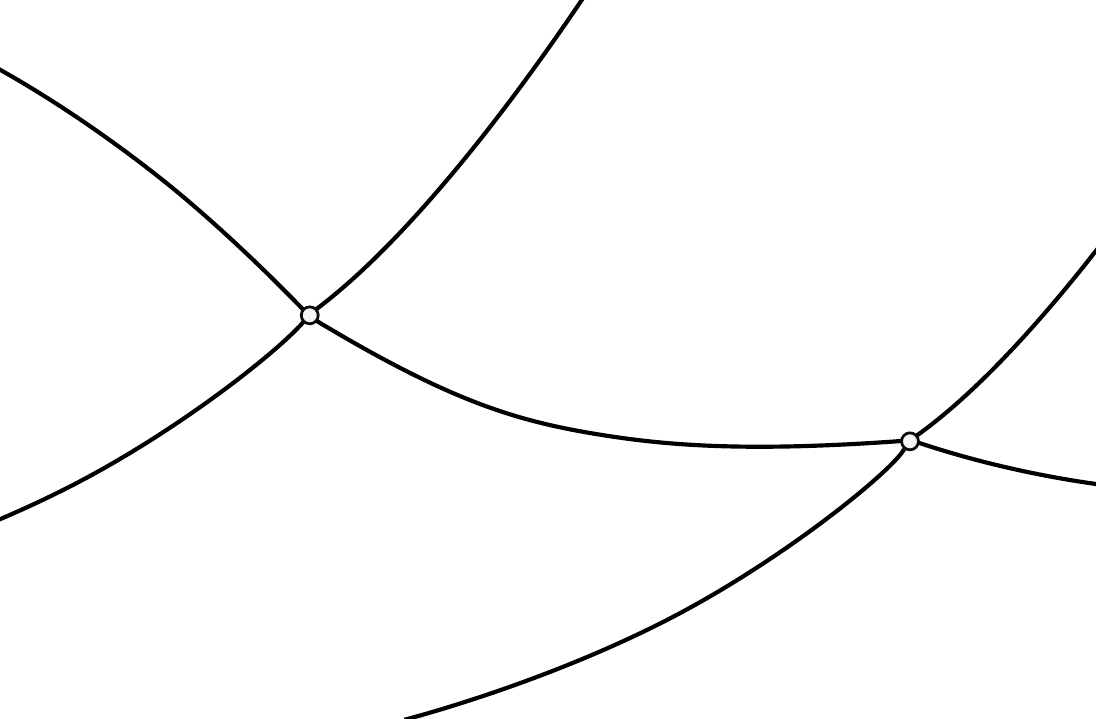}
    	\label{fig:Psi_e_v}
		}
		% \hspace*{\fill}
	\caption{Oriented curves delimiting the integration domains involved in the bivariate
	antiderivative operators. On the top panel the curves for the edge-based 
	antiderivative $\Psi^e$ %(\ref{fig:Psi_e}) 
	are shown for a given averaging parameter $a \in (0, \hat h_e)$
	with starting point 
	$\gamma^e_{\perp,*}(a)$
	% $\bx^e_{*,a}$ 
	represented by a white square as in Figure~\ref{fig:Phi_e}.
	On the bottom panel the curves of the edge-vertex
	antiderivative $\Psi^{e,\bv}$ %(\ref{fig:Psi_e_v}) 
	are shown, for a given averaging parameter $ \bar a \in (0,1)$. Again, we denote the starting points
	$\gamma^e_{\perp,*}(\hat h_e \bar a)$ and $\gamma^\bv_{*}(\hat h_\bv \bar a)$ of the respective perpendicular edge-based and vertex based antiderivative integration curves
	% $\bx^e_{*,a}$ and $\bx^\bv_{*,a}$
	by white squares as in Figure~\ref{fig:Phi_e}
	and \ref{fig:Phi_v}.  
	}
	\label{fig:Psi}
\end{figure}

Finally, we define an edge-vertex based bivariate antiderivative operator, as
\begin{equation} \label{Psi_ev}
		\Psi^{e,\bv}(f)(\bx) 
			\coloneqq 
				\int_0^{1} \iint_{\sigma^{e,\bv}(\bx,\bar a)} f(\bz) 
					\dd \bz \dd \bar a,
			\qquad \bx \in \Omega(\bv), ~ \bv \in \cV, e \in \cE(\bv) 
\end{equation}
where $\sigma^{e,\bv}(\bx,\bar a)$
is the domain which boundary is again the algebraic sum of three oriented curves, namely
\begin{equation} \label{domeva}
	\partial \sigma^{e,\bv}(\bx,\bar a) = \gamma^e_\perp(\bx,\hat h_e \bar a) - \gamma^\bv(\bx,\hat h_\bv \bar a) + \tilde \gamma^{e,\bv}(\bar a).
\end{equation}
An illustration is given in Figure~\ref{fig:Psi}-(\textsc{b}), 
again for an interior edge $e$:
Here, $\gamma^e_\perp(\bx,\hat h_e \bar a)$ connects as above the point 
$\gamma^e_{\perp,*}(\hat h_e \bar a)$
(defined by \eqref{gam*eperp}) to $\bx$,
$-\gamma^\bv(\bx,\hat h_\bv \bar a)$ connects 
$\bx$ to the point $\gamma^\bv_*(\hat h_\bv \bar a)$
% (defined by \eqref{gam*v}),
and finally $\tilde \gamma^{e,\bv}(\bar a)$ is the curve 
that connects $\gamma^\bv_*(\hat h_\bv \bar a)$ to 
$\gamma^e_{\perp,*}(\hat h_e \bar a)$ and is included 
in the edges of the patches contiguous to $\bv$.
Observe that the latter curve does not depend on $\bx$.
% An illustration is provided in Figure~\ref{fig:Psi_e_v}.

The following properties will be useful.

\begin{lemma} \label{lem:Psi_curl}
For $\bu \in C^1(\Omega)$, we have 
\begin{equation*} %\label{Psi_e_curl}
\Psi^{e}(\curl \bu) = 
	\Phi^{e}_{\parallel}(\bu) - \Phi^{e}_{\perp}(\bu) + \tilde \Phi^{e}(\bu)
	\quad \text{ with } \quad 
	\tilde \Phi^{e}(\bu)(\bx) \coloneqq 
			\frac{1}{\hat h_e}\int_0^{\hat h_e} \int_{\tilde \gamma^e(\bx,a)} \mspace{-10mu} \bu \cdot \dd l \dd a
	\end{equation*}
and 
\begin{equation*} %\label{Psi_ev_curl}	
\Psi^{e,\bv}(\curl \bu) = 
	\Phi^{e,\bv}_{\perp}(\bu) - \Phi^{e,\bv}_{\parallel}(\bu) + \tilde \Phi^{e,\bv}(\bu)
	~~ \text{ with } ~~
	\tilde \Phi^{e,\bv}(\bu)(\bx) 
		\coloneqq \int_0^1 \int_{\tilde \gamma^{e,\bv}(\bar a)} \mspace{-10mu}\bu \cdot \dd l \dd \bar a.
\end{equation*}
Moreover, the relations
\begin{equation} \label{Psi_tilde_terms}	
	\nabla^e_\parallel \tilde \Phi^{e}(\bu) = 0
	\qquad \text{ and } \qquad
	\nabla_\pw \tilde \Phi^{e,\bv}(\bu) = 0
\end{equation}
hold on every interior edge $e$. They also hold on boundary edges in the 
inhomogeneous case \eqref{dR}. For the integration curves constructed in the 
homogeneous case \eqref{dR0}, they hold if $\bu \in C^1_0(\Omega)$.
\end{lemma}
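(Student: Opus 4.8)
The plan is to recognize each identity as an instance of Green's theorem (the two-dimensional Kelvin--Stokes formula) applied to the oriented surfaces $\sigma^e(\bx,a)$ and $\sigma^{e,\bv}(\bx,\bar a)$. The key geometric observation is that the three oriented curves appearing in \eqref{domea} genuinely form a closed loop: $\gamma^e_\perp(\bx,a)$ runs from the starting point $\gamma^e_{\perp,*}(a)$ to $\bx$, then $-\gamma^e_\parallel(\bx)$ runs from $\bx$ back to $\gamma^e_{\parallel,*}(\bx)$, and finally the closing curve $\tilde\gamma^e(\bx,a)$ returns from $\gamma^e_{\parallel,*}(\bx)$ to $\gamma^e_{\perp,*}(a)$ along the edges $e^k_*$. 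The same chaining holds for \eqref{domeva}. Since $\bu\in C^1(\Omega)$, Green's theorem applies on each such surface and yields
\begin{equation*}
\iint_{\sigma^e(\bx,a)}\curl\bu\,\dd\bz = \int_{\partial\sigma^e(\bx,a)}\bu\cdot\dd l,
\end{equation*}
with the orientation of $\sigma^e$ fixed by that of its boundary.

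First I would treat the edge identity. Expanding the boundary line integral into its three pieces according to \eqref{domea} and invoking the definitions of the edge antiderivatives from Section~\ref{sec:Phi_e} --- namely $\int_{\gamma^e_\parallel(\bx)}\bu\cdot\dd l = \Phi^e_\parallel(\bu)(\bx)$ and $\int_{\gamma^e_\perp(\bx,a)}\bu\cdot\dd l = \Phi^e_{\perp,a}(\bu)(\bx)$ --- expresses the integrand of $\Psi^e(\curl\bu)$ as a signed combination of $\Phi^e_\parallel(\bu)$, $\Phi^e_{\perp,a}(\bu)$ and $\int_{\tilde\gamma^e(\bx,a)}\bu\cdot\dd l$. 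Averaging over $a\in[0,\hat h_e]$ via \eqref{Phi_perp}, and using that the parallel curve does not depend on $a$, then produces exactly the three terms $\Phi^e_\parallel(\bu)$, $\Phi^e_\perp(\bu)$, $\tilde\Phi^e(\bu)$ with the signs displayed in the statement. The edge-vertex identity is obtained in the same way from \eqref{domeva}, using the definitions $\Phi^{e,\bv}_\perp=\Phi^e_\perp$, $\Phi^{e,\bv}_\parallel=\Phi^\bv$ in \eqref{Phi_ev_par}--\eqref{Phi_ev_perp} and the substitutions $a=\hat h_e\bar a$ and $a=\hat h_\bv\bar a$ in the averaged definitions \eqref{Phi_perp} and \eqref{Phi_v}.

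It then remains to verify the invariance relations \eqref{Psi_tilde_terms}. For $\tilde\Phi^{e,\bv}$ this is immediate: as noted below \eqref{domeva}, the closing curve $\tilde\gamma^{e,\bv}(\bar a)$ does not depend on $\bx$, so $\tilde\Phi^{e,\bv}(\bu)$ is a constant function of $\bx$ and hence $\nabla_\pw\tilde\Phi^{e,\bv}(\bu)=0$. For $\tilde\Phi^e$ the point is that the only $\bx$-dependence of the closing curve $\tilde\gamma^e(\bx,a)$ enters through its endpoint $\gamma^e_{\parallel,*}(\bx)=X^k_e(\eta^k_e(0),\hx^k_\perp)$, which depends on $\bx$ solely through the perpendicular coordinate $\hx^k_\perp$; the other endpoint $\gamma^e_{\perp,*}(a)$ and the supporting edges $e^k_*$ are $\bx$-independent. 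Consequently $\tilde\Phi^e(\bu)(\bx)$ is, on each patch, a function of $\hx^k_\perp$ alone, so it is invariant in the parallel direction and $\nabla^e_\parallel\tilde\Phi^e(\bu)=0$ by the definition \eqref{nabla_ed}.

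For boundary edges and vertices in the inhomogeneous case \eqref{dR} the common-starting-point structure persists, so these arguments carry over verbatim. In the homogeneous case \eqref{dR0} the perpendicular and vertex curves no longer share a single starting point but instead emanate from $\partial\Omega$, so the closing curve may run along the boundary; the invariance then relies on $\bu\in C^1_0(\Omega)$, whose vanishing tangential trace annihilates the boundary contributions and restores the required $\hx^k_\perp$-only dependence, exactly as in the boundary-vanishing argument of Lemma~\ref{lem:Phi_V1}. I expect the main obstacle to be the orientation and sign bookkeeping: one must check that the three curves close up consistently, that the surface orientation of $\sigma^e$ (resp. $\sigma^{e,\bv}$) together with the scalar-curl convention reproduce precisely the signs $\Phi^e_\parallel-\Phi^e_\perp$ (resp. $\Phi^{e,\bv}_\perp-\Phi^{e,\bv}_\parallel$), and that this bookkeeping remains valid across the patch interface, where both the surface and its bounding curves are assembled patch-wise.
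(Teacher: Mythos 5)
Your proposal is correct and follows essentially the same route as the paper: Stokes/Green's theorem on the oriented surfaces $\sigma^e(\bx,a)$ and $\sigma^{e,\bv}(\bx,\bar a)$, identification of the three boundary pieces with the corresponding antiderivatives followed by averaging in $a$, and then the invariance relations from the fact that $\tilde\gamma^{e,\bv}(\bar a)$ is $\bx$-independent while $\tilde\gamma^e(\bx,a)$ depends on $\bx$ only through $\hx^k_\perp$, with the homogeneous boundary contributions along $e$ killed by the vanishing trace of $\bu\in C^1_0(\Omega)$. Your explicit flagging of the orientation/sign bookkeeping is apt, since the stated boundary decomposition \eqref{domea} and the signs in the lemma require fixing a consistent surface orientation, a point the paper's own proof glosses over.
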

\begin{proof}
By Stokes theorem, we have
\begin{equation*} %\label{Psi_curl}
	% \begin{aligned}
		\iint_{\sigma^e(\bx,a)} \curl \bu (\bz) \dd \bz
		= \int_{\gamma^e_\parallel(\bx)} \bu \cdot \dd l 
				- \int_{\gamma^e_{\perp}(\bx,a)} \bu \cdot \dd l 
						+ \int_{\tilde \gamma^e(\bx,a)} \bu \cdot \dd l
	% \end{aligned}
\end{equation*}
which shows the first equation, and 
\begin{equation*}
		\iint_{\sigma^{e,\bv}(\bx,\bar a)} \curl \bu (\bz) \dd \bz
		= \int_{\gamma^{e}_\perp(\bx, \hat h_e \bar a)} \bu \cdot \dd l 
			- \int_{\gamma^\bv(\bx,\hat h_\bv \bar a)} \bu \cdot \dd l 
			+ \int_{\tilde \gamma^{e,\bv}(\bar a)} \bu \cdot \dd l
\end{equation*}
which shows the second one %\eqref{Psi_ev_curl} 
using \eqref{Phi_ev_par} and \eqref{Phi_ev_perp}.
To complete the proof we observe that for interior edges
the circulation
$
\tilde \Phi^{e}(\bu)(\bx)
$
does not depend on $\hx^k_\parallel$ and 
$\tilde \Phi^{e,\bv}(\bu)(\bx)$ is a constant.
For boundary edges in the homogeneous case the circulation 
$\tilde \Phi^{e}(\bu)(\bx)$ also contains a contribution along $e$
which depends on $\hx^k_\parallel$: for $\bu \in C^1_0(\Omega)$ this 
contribution vanishes. The same argument can be used for 
$\tilde \Phi^{e,\bv}(\bu)(\bx)$.
\end{proof}

\begin{lemma}\label{lem:Pi2_V2h}
If $f \in V^2_h$, then 
both $\Psi^{e}(f)$ and $\Psi^{e,\bv}(f)$ belong to $V^0_\pw$
and they are continuous across any interior edge $e$.
Moreover, in the homogeneous case \eqref{dR0}, they vanish 
on any boundary edge.
\end{lemma}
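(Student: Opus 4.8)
The plan is to reduce the claim to the already-established properties of the one-dimensional antiderivatives by writing $f$ as a curl and invoking Lemma~\ref{lem:Psi_curl}. The point is that $\Psi^e$ and $\Psi^{e,\bv}$ are bivariate (surface) antiderivatives whose Stokes identities in Lemma~\ref{lem:Psi_curl} express $\Psi^e(\curl\bu)$ and $\Psi^{e,\bv}(\curl\bu)$ as algebraic combinations of the univariate antiderivatives $\Phi^e_\parallel(\bu)$, $\Phi^e_\perp(\bu)$, $\Phi^\bv(\bu)$ (recall \eqref{Phi_ev_par}--\eqref{Phi_ev_perp}) together with the closing circulations $\tilde\Phi^e(\bu)$ and $\tilde\Phi^{e,\bv}(\bu)$. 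Since Lemma~\ref{lem:Phi_V1} already yields all the desired properties for the first group, it remains only to furnish a suitable preimage $\bu$ and to dispatch the two closing terms.

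First I would construct, on the relevant neighborhood $\Omega(e)$ (resp.\ $\Omega(\bv)$), a field $\bu$ with $\curl\bu=f$ there that is tangentially continuous across every interface crossed by the integration surfaces $\sigma^e(\bx,a)$ (resp.\ $\sigma^{e,\bv}(\bx,\bar a)$); as all operators below are supported in that neighborhood it is harmless to regard $\bu$ as an element of $V^1_h$. Tangential continuity is exactly what lets the Stokes identity of Lemma~\ref{lem:Psi_curl} hold with no spurious jump contributions on the interfaces, the neighborhoods being simply connected. Existence rests on the surjectivity of $\hcurl:\hat V^1_k\to\hat V^2_k$ on each reference box, a consequence of \eqref{ad_1d} and the tensor-product structure as in \cite{buffa_isogeometric_2011}: one picks any local preimage on the coarse patch, reads off its tangential trace on the central edge $e$, and matches it on the fine patch, which is possible because by \eqref{-+_assumption} the coarse trace lies in the finer edge space $\VV^1_{k^+(e)}$ and curl-surjectivity leaves enough gradient freedom to prescribe one edge trace. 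In the homogeneous case \eqref{dR0} the same construction, solved with vanishing tangential boundary data, gives $\bu\in V^1_h\cap H_0(\curl;\Omega)$. With such a $\bu$, Lemma~\ref{lem:Psi_curl} yields $\Psi^e(f)=\Phi^e_\parallel(\bu)-\Phi^e_\perp(\bu)+\tilde\Phi^e(\bu)$ and $\Psi^{e,\bv}(f)=\Phi^e_\perp(\bu)-\Phi^\bv(\bu)+\tilde\Phi^{e,\bv}(\bu)$, and Lemma~\ref{lem:Phi_V1} shows that the $\Phi$ terms lie in $V^0_\pw$, are continuous across any interior edge, and vanish on $\partial\Omega$ in the homogeneous case.

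Finally I would treat the closing terms. By \eqref{Psi_tilde_terms} the term $\tilde\Phi^{e,\bv}(\bu)$ is a constant, hence trivially in $V^0_\pw$ and continuous, while $\tilde\Phi^e(\bu)$ satisfies $\nabla^e_\parallel\tilde\Phi^e(\bu)=0$, so on each patch it is a function of the perpendicular variable alone that antiderives the perpendicular component of $\bu\in V^1_k$ along a fixed transversal edge; by \eqref{ad_1d} it therefore lies in $\VV^0_k\otimes\VV^0_k=\hat V^0_k$, hence in $V^0_k$. Its continuity across $e$ and its vanishing on boundary edges in the homogeneous case follow by the same perpendicular-curve arguments used for $\Phi^e_\perp$ in Lemma~\ref{lem:Phi_V1} (on a homogeneous boundary edge the closing curve integrates the vanishing tangential trace of $\bu$, as already noted in the proof of Lemma~\ref{lem:Psi_curl}, forcing the otherwise constant boundary value to be zero). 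Summing the three contributions and using that $V^0_\pw$ is a linear space then closes the argument. I expect the main obstacle to be the preimage step, namely producing a curl-preimage that is genuinely tangentially continuous across the relevant interfaces under non-matching, merely nested resolutions while simultaneously realizing the prescribed patch-wise curl and the correct homogeneous boundary behavior, rather than the essentially bookkeeping treatment of the closing terms.
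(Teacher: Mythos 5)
Your route is genuinely different from the paper's, and it hinges on a step that is not available: the existence of a \emph{conforming} discrete curl-preimage $\bu$ of $f$ on $\Omega(e)$, resp.\ $\Omega(\bv)$, i.e.\ a field that is tangentially continuous across \emph{every} interior interface met by the integration surfaces (and, in the homogeneous case, has vanishing tangential trace on $\partial\Omega$). Nothing in the paper provides this, and it is not a bookkeeping matter. Single-patch surjectivity of $\hcurl:\hat V^1_k\to\hat V^2_k$ is indeed immediate from \eqref{ad_1d}, and your two-patch matching across one edge via a gradient correction is plausible under \eqref{-+_assumption}; but for $\Psi^{e,\bv}$ the surfaces and the closing curve $\tilde\gamma^{e,\bv}(\bar a)$ run through all four non-matching patches around $\bv$, so the preimage must have matching tangential traces around a closed cycle of interfaces. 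There the last patch receives trace data on two of its edges simultaneously, a discrete circulation compatibility condition $\oint_{\partial\Omega_k}u_t=\iint_{\Omega_k}f$ must be tracked around the cycle, and in the homogeneous case additional vanishing-trace constraints appear on the boundary edges. This is essentially a local discrete exactness statement for the multipatch complex --- precisely the kind of result that is usually \emph{derived from} bounded commuting projections rather than assumed while constructing them --- so invoking it here is at best a substantial unproved construction and at worst circular. A secondary (repairable) point: Lemma~\ref{lem:Psi_curl} is stated for $\bu\in C^1(\Omega)$, so you would also need to extend the Stokes identity to piecewise-smooth, only tangentially continuous discrete fields with the broken curl.

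For contrast, the paper's proof avoids any preimage: it inspects the integration domains $\sigma^e(\bx,a)$ and $\sigma^{e,\bv}(\bx,\bar a)$ directly, writes each surface integral patch-by-patch as an iterated integral over a Cartesian box whose corners are either constant or aligned with $\hbx^k$, and concludes membership in $V^0_k$ from the univariate antiderivative property \eqref{ad_1d}; the contributions from adjacent coarser patches land in $\VV^0_{k'}\subset\VV^0_k$ by nestedness, continuity follows from continuous dependence of the boxes on $\bx$, and the homogeneous vanishing follows because the boxes degenerate to zero measure on boundary edges. If you want to salvage your reduction, you would first have to prove the local conforming curl-surjectivity as a standalone lemma (covering interior vertices, the boundary configurations of Assumption~\ref{as:v_nested}, and the homogeneous constraints); otherwise the direct argument is both shorter and self-contained.
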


\begin{proof}
	This result follows from the fact that on a patch $\Omega_k$
	both antiderivatives, for instance
	$\Psi^{e,\bv}(f)$,
	are a sum of integrals %on mapped Cartesian domains, 
	of the form 
	\begin{equation*}
	\Psi^{e,\bv}_{\bar a}(f)(\bx) 
		= \iint_{\sigma^e_k(\bx,\bar a)}f 
					+ \sum_{k'} \iint_{\sigma^e_{k'}(\bx,\bar a)}f 
		= \iint_{\hat \sigma^e_k(\hbx^k,\bar a)}\hat f^k 
					+ \sum_{k'} \iint_{\hat \sigma^e_{k'}(\hbx^k,\bar a)}\hat f^{k'}
	\end{equation*}
	where $\hat \sigma^e_k(\hbx^k,\bar a)$ is a Cartesian patch 
	which corners are either constant or aligned with $\hbx^k$
	(so that the corresponding integral is in $V^0_k$)
	and the sum over $k'$ involves Cartesian domains 
	$\hat \sigma^e_{k'}(\hbx^k,\bar a)$ corresponding to
	adjacent patches whose perpendicular, resp. parallel length
	(relative to the edge $e'$ 
	shared with $k$) is a constant, resp. affine function of 
	$\hx^k_{\parallel(e')}$ so that the corresponding integral is in $\VV^{0}_{k'}$ 
	as a function of this parallel coordinate. Because such an adjacent patch $k'$ 
	must be of lower resolution than $k$, it is also in $\VV^{0}_{k}$ and hence in 
	$V^0_k$ as a function of $\bx \in \Omega_k$.
	The continuity over the edges follows from the fact that the Cartesian domains
	depend continuously on $\bx$.
	In the homogeneous case, we observe that our definitions of integration 
	curves associated with boundary edges and vertices lead to 
	domains $\sigma^e(\bx,a)$ and $\sigma^{e,\bv}_k(\bx,\bar a)$ which are of 
	zero measure for $\bx \in e$ when $e \subset \partial \Omega$.
	Hence, $\Psi^{e}(f) = \Psi^{e,\bv}(f) = 0$ on $e$.
\end{proof}

Finally, the $\Phi^2$ (bivariate) antiderivatives satisfy local $L^p$ stability bounds 
similar to the $\Phi^1$ (path integral) antiderivative operators, see  
Lemma~\ref{lem:stab_e_corr} and \ref{lem:loc_bound_corr_ev}.

\begin{lemma} \label{lem:stab_Psi_corr}
	On patch-wise mapped Cartesian domain $\omega_e$
	and $\omega_\bv$ of the form \eqref{omega_e} and
	\eqref{omega_v} with
	$ 
	1 \le \rho \lesssim 1,
	$
	we have 
	\begin{equation} \label{L2_bound_Psi_e}
		\norm{\Psi^{e}(f)}_{L^p(\omega_e)} \lesssim \norm{f}_{L^p(\Omega(e))}.
	\end{equation}
	and 
	\begin{equation} \label{L2_bound_Psi_ev}
		\norm{\Psi^{e,\bv}(f)}_{L^p(\omega_\bv)} \lesssim \norm{f}_{L^p(\Omega(\bv))}.
	\end{equation}
	% where $\omega_{e,\bv} = \omega_{e,\bv}$
	Moreover, on a local domain $S^{e}_j$
	of the form \eqref{Sei}, $j \in \{0, \dots, n_e\}$,
	the bounds
	\begin{equation} \label{loc_bound_corr_Pi2_e}
	\norm{D^{2,e}(P^e - I^e)\Pi^0_\pw \Psi^{e}(\bu)}_{L^p(S^{e}_j)} 
	\lesssim \norm{f}_{L^p(E_e^3(S^{e}_j))}
	\end{equation}
	and 
	\begin{equation} \label{loc_bound_corr_Pi2_ev}	
		\norm{D^{2,e}(\bar I^e_{\bv}-P^e_{\bv})\Pi^0_\pw \Psi^{e,\bv}(f)}_{L^p(S^e_\bv)} 
		\lesssim \norm{f}_{L^p(E_\bv^3(S^e_\bv))}
	\end{equation}
	hold with $E_e$ and $E_\bv$ the edge-based and vertex-based 
	domain extension operators \eqref{Ee_ext}, \eqref{Ev_ext}.
\end{lemma}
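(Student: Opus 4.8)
The proof proceeds in two stages, following closely the analysis of the univariate antiderivatives in Lemmas~\ref{lem:stab_e_corr}, \ref{lem:loc_bound_corr_v} and \ref{lem:loc_bound_corr_ev}.

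\textbf{Global stability \eqref{L2_bound_Psi_e}, \eqref{L2_bound_Psi_ev}.} The plan is to pull back each surface integral to the reference patches. As already used in the proof of Lemma~\ref{lem:Pi2_V2h}, the integration of a $2$-form is coordinate invariant, so that $\iint_{\sigma}f\dd\bz = \iint_{\hat\sigma}\hat f^k\dd\hbz$ with $\hat f^k=(\cF^2_k)^{-1}f$; consequently $\Psi^e(f)$ and $\Psi^{e,\bv}(f)$ are genuine $0$-forms whose pullbacks are the reference surface integrals. On each contiguous patch the relevant surface $\sigma^e(\bx,a)$ (resp.\ $\sigma^{e,\bv}(\bx,\bar a)$) maps to a Cartesian rectangle whose parallel side is $\lesssim 1$ (resp.\ $\lesssim\hat h_\bv$) and whose perpendicular side is $\lesssim\hat h_e$ (resp.\ $\lesssim\hat h_\bv$), reflecting that the averaging parameter keeps the starting curves within a logical distance $\hat h_e$ (resp.\ $\hat h_\bv$) of the interface. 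I would then estimate the resulting iterated integral exactly as in \eqref{CS_Phi_ep-}: a H\"older inequality on the two-dimensional integral, Jensen's inequality for the average, and a Fubini exchange with a bounded-multiplicity count of the pairs $(\bx,a)$ reaching a given $\bz$. Together with the scaling relations \eqref{L2_scale_pf}, and treating the fine-patch and coarse-matching contributions separately as in \eqref{CS_dPhi_perp_+}--\eqref{bound_Phi_perp+}, this yields bounds with constants $\lesssim 1$, hence \eqref{L2_bound_Psi_e} and \eqref{L2_bound_Psi_ev}.

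\textbf{Local bounds \eqref{loc_bound_corr_Pi2_e}, \eqref{loc_bound_corr_Pi2_ev}.} Here the mechanism mirrors \eqref{est_1}. Since $D^{2,e}$ is a second-order operator, a second-order inverse estimate holds---obtained by applying the argument of \eqref{inverse_est} twice, using $\norm{\hat\partial_1\hat\partial_2\hat\Lambda^k_\bi}_{L^p}\lesssim\hat h_k^{2/p-2}$---so that after the local stability of $P^e,I^e$ (resp.\ $\bar I^e_\bv,P^e_\bv$) and of $\Pi^0_\pw$ one is reduced to showing
\begin{equation*}
\norm{\Psi^{e}(f)}_{L^p(E^3_e(S^e_j))}\lesssim h_e^2\norm{f}_{L^p(E^3_e(S^e_j))}
\qquad\text{and}\qquad
\norm{\Psi^{e,\bv}(f)}_{L^p(E^3_\bv(S^e_\bv))}\lesssim h_\bv^2\norm{f}_{L^p(E^3_\bv(S^e_\bv))}.
\end{equation*}
The edge-vertex bound follows immediately from Stage~1: for $\bx\in E^3_\bv(S^e_\bv)$ both $\bx$ and the surface $\sigma^{e,\bv}(\bx,\bar a)$ lie within a logical distance $\lesssim\hat h_\bv$ of $\bv$, so the vertex-neighborhood estimate applies verbatim and produces the factor $h_\bv^2$ that absorbs the $h_\bv^{-2}$ of the inverse estimate, exactly as in Lemma~\ref{lem:loc_bound_corr_v}.

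\textbf{The edge bound and the main obstacle.} The delicate point, as for the perpendicular term in Lemma~\ref{lem:stab_e_corr}, is that $\sigma^e(\bx,a)$ is \emph{not} localized: its parallel extent reaches from the starting edge all the way to $\bx$. The plan is to localize through the kernel of the correction. Splitting $f = f\one_{E^2_e(S^e_j)} + f(1-\one_{E^2_e(S^e_j)})$, I observe that for $\bx\in E_e(S^e_j)$ the part of $\sigma^e(\bx,a)$ contained in the complement of $E^2_e(S^e_j)$ is, in logical coordinates, a rectangle whose parallel limits are both independent of $\hx_\parallel$; hence $\Psi^e\big(f(1-\one_{E^2_e(S^e_j)})\big)$ is invariant along the parallel direction on $E_e(S^e_j)$. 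By Lemma~\ref{lem:parinv} this invariance is preserved by $\Pi^0_\pw$ and by both $P^e$ and $I^e$, and since $D^{2,e}$ acts on each patch as $\pm\cF^2_k\hat\partial^k_\parallel\hat\partial^k_\perp$, see \eqref{D2e}, it annihilates any parallel-invariant function. Therefore
\begin{equation*}
D^{2,e}(P^e-I^e)\Pi^0_\pw\Psi^e(f) = D^{2,e}(P^e-I^e)\Pi^0_\pw\Psi^e\big(f\one_{E^2_e(S^e_j)}\big)
\quad\text{on } S^e_j,
\end{equation*}
and for the remaining, genuinely localized integrand the integration surface can be restricted to $E^2_e(S^e_j)$, a set of diameter $\lesssim h_e$ in \emph{both} directions. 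Repeating the H\"older computation of Stage~1 with this shrunk domain then gives the gain $\norm{\Psi^e(f\one_{E^2_e(S^e_j)})}_{L^p(E^3_e(S^e_j))}\lesssim h_e^2\norm{f}_{L^p(E^3_e(S^e_j))}$, the extra power of $h_e$ coming precisely from the now-bounded parallel extent; combined with the $h_e^{-2}$ of the inverse estimate this proves \eqref{loc_bound_corr_Pi2_e}. The edge-vertex estimate \eqref{loc_bound_corr_Pi2_ev} is handled the same way, the role of Lemma~\ref{lem:parinv} being played by the fact that $\bar I^e_\bv$ and $P^e_\bv$ coincide on constants (Lemma~\ref{lem:corr_ev_ker}). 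I expect this parallel-invariance annihilation---the two-dimensional analogue of the constant-killing step \eqref{loc_corr_perp}---to be the crux of the argument, since it is what turns the non-local surface integral into a local one and supplies the second power of $h_e$ needed to absorb the second-order derivative $D^{2,e}$.
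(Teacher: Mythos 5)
Your overall strategy matches the paper's (admittedly terse) proof: a crude H\"older bound for the global estimates, and for the local ones a second-order inverse estimate producing an $h_e^{-2}$ factor that must be absorbed by a localized surface-integral bound, the localization being achieved by splitting $f$ and showing that the far contribution is annihilated by the subsequent operators. Your treatment of the edge term is a correct and in fact more explicit realization of what the paper only sketches: the far part of $\sigma^e(\bx,a)$ is indeed a rectangle with $\hx_\parallel$-independent parallel limits, so its contribution is parallel-invariant, survives $\Pi^0_\pw$, $P^e$, $I^e$ as a parallel-invariant function (Lemma~\ref{lem:parinv}), and is then killed by the $\hat\partial_\parallel$ inside $D^{2,e}$.

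There is, however, one step that fails as written: the claim that the edge--vertex bound \eqref{loc_bound_corr_Pi2_ev} ``follows immediately from Stage~1'' because $\sigma^{e,\bv}(\bx,\bar a)$ lies within a logical distance $\lesssim\hat h_\bv$ of $\bv$. That geometric premise is false. By \eqref{domeva} the surface is bounded in part by $\gamma^e_\perp(\bx,\hat h_e\bar a)$, whose starting point \eqref{gam*eperp} sits on the starting edge of the coarse patch at the \emph{far} end of $e$; consequently $\sigma^{e,\bv}(\bx,\bar a)$ contains a strip of perpendicular width $\lesssim\hat h_e$ running along the entire length of $e$ (see Figure~\ref{fig:Psi}). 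Only the vertex curve $\gamma^\bv(\bx,\hat h_\bv\bar a)$ is confined near $\bv$ (which is why Lemma~\ref{lem:stab_Phi_v} needs no localization, but $\Psi^{e,\bv}$ does). Even ignoring the geometry, the global bound \eqref{L2_bound_Psi_ev} cannot ``apply verbatim'' here: it carries no $h_\bv^2$ gain and its right-hand side is $\norm{f}_{L^p(\Omega(\bv))}$, not the localized norm required in \eqref{loc_bound_corr_Pi2_ev}. The correct argument is the one you state in your final sentence: split $f=f\one_{E^2_\bv(S^e_\bv)}+f(1-\one_{E^2_\bv(S^e_\bv)})$; for $\bx\in E_\bv(S^e_\bv)$ the far part of the surface is independent of $\bx$ (the long strip has a fixed parallel cutoff at the boundary of $E^2_\bv(S^e_\bv)$), so $\Psi^{e,\bv}\big(f(1-\one_{E^2_\bv(S^e_\bv)})\big)$ is a \emph{constant} there, preserved by $\Pi^0_\pw$ and annihilated by $\bar I^e_{\bv}-P^e_{\bv}$ (Lemma~\ref{lem:corr_ev_ker}), exactly as in \eqref{loc_corr_perp}; the remaining localized integrand then yields the $h_\bv^2$ factor by H\"older on a domain of diameter $\sim h_\bv$. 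You should delete the ``follows immediately'' shortcut and keep only this second argument.
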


\begin{proof}
	The $L^p$ stability %bounds \eqref{L2_bound_Psi_e} and \eqref{L2_bound_Psi_ev} 
	is easily verified:
	using that 
	$\sigma^e(\bx,a) \subset \omega_e$ for all $\bx \in \omega_e$ and $a \in [0,\hat h_e]$,
	we estimate
	\begin{equation} \label{bounds_Psi}
	\begin{aligned}
		\norm{\Psi^{e}(f)}_{L^p(\omega_e)}^p
		&= \iint_{\omega_e} \frac{1}{\hat h_e^p} \Bigabs{\int_0^{\hat h_e}\iint_{\sigma^e(\bx,a)} f(\bz)\dd \bz\dd a}^p
			\dd \bx
		\\
		&\le \iint_{\omega_e}  
				\frac{\abs{\omega_e}^{p-1}}{\hat h_e}\int_0^{\hat h_e}\iint_{\sigma^e(\bx,a)} \abs{f(\bz)}	^p \dd \bz \dd a
			\dd \bx
		\le \abs{\omega_e}^p \norm{f}_{L^p(\Omega(e))}^p		
	\end{aligned}
	\end{equation} 
	where $\abs{\omega_e}$ denotes the measures of
	the respective domains. The bound \eqref{L2_bound_Psi_e} follows from 
	$\abs{\omega_e} \lesssim 1$ and
	\eqref{L2_bound_Psi_e} is verified with the same arguments
	(using now $\sigma^{e,\bv}(\bx,\bar a) \subset \Omega(\bv)$ for 
	$\bx \in \Omega(\bv)$ and $\bar a \in [0,1]$).
	To show the local bounds \eqref{loc_bound_corr_Pi2_e} and \eqref{loc_bound_corr_Pi2_ev}
	we use the same arguments as for \eqref{loc_bound_corr_e} and \eqref{loc_bound_corr_ev}.
	Here the inverse estimate \eqref{inverse_est} applied to the broken 
	mixed derivative $D^{2,e}$
	yields quadratic blow-up factors of order $h_e^{-2}$,
	which can be absorbed by a localized version of the bounds in \eqref{bounds_Psi}: 
	the localizing arguments are then essentially the same as those used 
	for the path integrals in \eqref{loc_bound_corr_e} and \eqref{loc_bound_corr_ev}.	
\end{proof}

%%%% %%%% %%%% %%%% %%%% %%%% %%%% %%%% %%%% %%%% %%%% %%%% %%%% %%%% %%%% %%%% 
%%%% %%%% %%%% %%%% %%%% %%%% %%%% %%%% %%%% %%%% %%%% %%%% %%%% %%%% %%%% %%%% 

\section{Commuting projection operators}
\label{sec:cpo}

In this section, we finalize the construction of our commuting projection operators
and state our main results.

\subsection{Projection operator on $V^0_h$}

Our projection on the first conforming multipatch space simply combines
the local $L^p$ stable projection operator \eqref{Pi0pw} on the individual patches 
with the discrete conforming projection operator $P : V^0_\pw \to V^0_h$ defined 
in Section~\ref{sec:conf}. Remind that this construction handles both the
inhomogeneous case \eqref{dR} and the homogeneous one \eqref{dR0}.
The resulting projection is
\begin{equation} \label{Pi0}
	\Pi^0 \coloneqq P \Pi^0_\pw.
\end{equation}	

\begin{lemma} \label{lem:stab_Pi0}
	The operator \eqref{Pi0} is a projection on the space $V^0_h$, 
	and it satisfies
	\begin{equation} \label{stab_Pi0}
		\norm{\Pi^0 \phi}_{L^p(\Omega)} \lesssim \norm{\phi}_{L^p(\Omega)}.
	\end{equation}	
\end{lemma}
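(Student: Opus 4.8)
The plan is to treat the two assertions separately, handling the stability bound by a direct composition estimate and the projection property by reproduction arguments for each of the two factors in $\Pi^0 = P\Pi^0_\pw$.

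First I would establish the stability bound \eqref{stab_Pi0}. Since $\Pi^0 = P\Pi^0_\pw$ by \eqref{Pi0}, I would simply chain the two global bounds already available: Lemma~\ref{lem:stab_P} controls $P$ on $V^0_\pw$, giving $\norm{P\Pi^0_\pw\phi}_{L^p(\Omega)} \lesssim \norm{\Pi^0_\pw\phi}_{L^p(\Omega)}$, while \eqref{stab_Pi0_pw} controls the patch-wise projection, giving $\norm{\Pi^0_\pw\phi}_{L^p(\Omega)} \lesssim \norm{\phi}_{L^p(\Omega)}$. Composing the two yields the claim directly.

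For the projection property I would first record that the range of $\Pi^0$ lies in $V^0_h$, since $P$ maps $V^0_\pw$ into $V^0_h$ by its very construction \eqref{P}. It then remains to check that $\Pi^0$ acts as the identity on $V^0_h$; together with the range statement this gives $(\Pi^0)^2 = \Pi^0$ and identifies $V^0_h$ as the range. So let $\phi \in V^0_h = V^0_\pw \cap V^0$. The key intermediate step is to show that the patch-wise projection reproduces $\phi$, i.e. $\Pi^0_\pw\phi = \phi$ in $V^0_\pw$. To see this I would apply, on each patch, the reproduction property recorded just after \eqref{c_Pi0} in the proof of Lemma~\ref{lem:stab_Pi0k}: taking $\omega = \Omega_k$ (so that $E_k(\Omega_k) = \Omega_k$) and the admissible function $c = \phi|_{\Omega_k} \in V^0_k$, the fact that $\phi$ coincides with $c$ on $E_k(\Omega_k)$ forces $\Pi^0_k\phi = \phi|_{\Omega_k}$ on $\Omega_k$. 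Summing over $k \in \cK$ and using that the patches partition $\Omega$ gives $\Pi^0_\pw\phi = \phi$.

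With this in hand the conclusion is immediate: $\Pi^0\phi = P\Pi^0_\pw\phi = P\phi = \phi$, the last equality holding because $P$ is itself a projection onto $V^0_h$ (the functions \eqref{Lk0}--\eqref{Lv} form a basis of $V^0_h$ and $P$ reproduces it, as established when $P$ was defined). Hence $\Pi^0$ restricts to the identity on $V^0_h$, and since its range is contained in $V^0_h$ we obtain $(\Pi^0)^2 = \Pi^0$, so $\Pi^0$ is indeed a projection onto $V^0_h$. I do not expect a genuine obstacle here, as every ingredient is already in place; the only point requiring a little care is invoking the strengthened form of \eqref{c_Pi0}, valid for arbitrary $c \in V^0_k$ rather than only constants, in order to obtain exact reproduction of broken functions by $\Pi^0_\pw$ rather than mere local stability.
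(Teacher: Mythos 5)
Your proposal is correct and follows essentially the same route as the paper, which also obtains the projection property from the facts that $\Pi^0_\pw$ reproduces $V^0_\pw$ and $P$ reproduces $V^0_h$, and the stability by composing \eqref{stab_Pi0_pw} with \eqref{stab_P}. Your write-up merely spells out the details (in particular the use of the strengthened form of \eqref{c_Pi0} for arbitrary $c \in V^0_k$) that the paper leaves implicit.
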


\begin{proof}
	The projection property is a direct consequence of the fact that $\Pi^0_\pw$ is 
	a projection on the patch-wise space $V^0_\pw$ and that $P$ is a projection on the conforming 
	subspace $V^0_h$.
	The $L^p$ stability \eqref{stab_Pi0} follows from the bounds \eqref{stab_Pi0_pw} and \eqref{stab_P}.
\end{proof}

As described in Section~\ref{sec:approach}, the projection $\Pi^1$ then involves 
single-patch projections which commute with the broken derivatives.

% the local correction terms that involve the stable antiderivative operators defined in Section~\ref{sec:antider} and the local
% conforming and broken projection operators defined in Section~\ref{sec:locprojs}.

\subsection{Single-patch commuting projection operators}

% In order to commute with the gradient, the 
On each patch a projection operator on $V^1_k$ 
is defined following the tensor-product approach of \cite{buffa_isogeometric_2011}, as 
\begin{equation} \label{Pi1k}
\Pi^1_k \bu \coloneqq \sum_{d \in \{1,2\}} \nabla^k_d \Pi^0_k \Phi^k_d(\bu)
\end{equation}
where $\nabla^k_d$ is the patch-wise directional gradient \eqref{nabla_kd}
and $\Phi^k_d$ is the single-patch directional antiderivative \eqref{Phi_kd_exp}.
This definition corresponds to setting 
\begin{equation} \label{hPi1k}
	\Pi^1_k = \cF^1_k \hat \Pi^1_k (\cF^1_k)^{-1}
	\quad \text{ with } \quad
	\hat \Pi^1_k 
	\hat \bu \coloneqq
	\begin{pmatrix}
		\partial_1 \hat \Pi^0_k \big( \int_{0}^{\hx_1} \hat u_1(z_1, \hx_2) \dd z_1 \big) \\
		\partial_2 \hat \Pi^0_k \big( \int_{0}^{\hx_2} \hat u_2(\hx_1, z_2) \dd z_2 \big)
	\end{pmatrix}
	\in \hat V^1_k.
\end{equation}

Similarly, a projection operator on $V^2_k$ is defined as
\begin{equation} \label{Pi2k}
\Pi^2_k f \coloneqq D^{2,k} \Pi^0_k \Psi^{k}(f)
\end{equation}
where $D^{2,k}$ and $\Psi^{k}$ are the single-patch mixed derivative and 
bivariate antiderivative operators, see \eqref{D2k} and \eqref{Psi_k}.
This amounts to writing
\begin{equation} \label{hPi2k}
	\Pi^2_k = \cF^2_k \hat \Pi^2_k (\cF^2_k)^{-1}
	\quad \text{ with } \quad
	\hat \Pi^2_k \hat f \coloneqq
	\partial_1 \partial_2 
			\hat \Pi^0_k \big( \int_{0}^{\hx_1}\int_{0}^{\hx_2} \hat f(z_1, z_2) \dd z_2 \dd z_1 \big)
	\in \hat V^2_k.
\end{equation}

These single-patch operators satisfy some key properties 
which essentially follow from \cite{buffa_isogeometric_2011}.
\begin{lemma} \label{lem:Pik}
	The operators \eqref{Pi1k} and \eqref{Pi2k} are projections 
	on the space $V^1_k$ and $V^2_k$, and they satisfy
	\begin{equation} \label{stab_Pik}
		\norm{\Pi^1_k \bu}_{L^p(\Omega_k)} \lesssim \norm{\bu}_{L^p(\Omega_k)},
		\qquad 
		\norm{\Pi^2_k f}_{L^p(\Omega_k)} \lesssim \norm{f}_{L^p(\Omega_k)}
	\end{equation}
	and 
	\begin{equation} \label{cpk_1}
		\nabla^k \Pi^0_k \phi = \Pi^1_k \nabla^k \phi 
			\quad \text{ for all } \phi \in H^1(\Omega_k)
	\end{equation}
	as well as
	\begin{equation} \label{cpk_2}
		\curl^k \Pi^1_k \bu = \Pi^2_k \curl^k \bu 
			\quad \text{ for all } \bu \in H(\curl;\Omega_k).
	\end{equation}
\end{lemma}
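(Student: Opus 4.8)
The plan is to pull everything back to the reference domain $\hat\Omega$, where the tensor-product structure makes all three properties transparent, and then transport the statements back using the pushforward operators \eqref{pf_gc}. Since $\Pi^\ell_k = \cF^\ell_k \hat\Pi^\ell_k (\cF^\ell_k)^{-1}$, the pushforwards are $L^p$-isomorphisms obeying the scaling relations \eqref{L2_scale_pf}, and they intertwine the physical and logical derivatives (so that $\nabla^k\Pi^0_k = \cF^1_k\hat\nabla\hat\Pi^0_k(\cF^0_k)^{-1}$ and $\curl\,\cF^1_k = \cF^2_k\hcurl$), it suffices to verify each claim for the logical operators $\hat\Pi^1_k$ and $\hat\Pi^2_k$ given by \eqref{hPi1k} and \eqref{hPi2k}.

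For the projection property I would use that the univariate spaces form the de Rham pair \eqref{1D_seq} with antiderivative \eqref{ad_1d}. For $\hat\bu \in \hat V^1_k$, the first component $\hat u_1 \in \VV^1_k \otimes \VV^0_k$ has partial antiderivative $\int_0^{\hx_1}\hat u_1(z_1,\hx_2)\dd z_1 \in \VV^0_k\otimes\VV^0_k = \hat V^0_k$, which is therefore fixed by $\hat\Pi^0_k$; applying $\partial_1$ recovers $\hat u_1$ by the fundamental theorem of calculus, and the same argument applies to the second component, so $\hat\Pi^1_k\hat\bu = \hat\bu$. The analogous reasoning with the bivariate antiderivative $\int_0^{\hx_1}\int_0^{\hx_2}$ (which lands in $\hat V^0_k$ for $\hat f\in\hat V^2_k$) shows that $\hat\Pi^2_k$ fixes $\hat V^2_k$. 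As the images of $\hat\Pi^1_k$ and $\hat\Pi^2_k$ lie in $\hat V^1_k$ and $\hat V^2_k$ by construction, both operators are projections.

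For the commuting relations I would argue on the reference domain and rely on the directional-invariance preservation of Lemma~\ref{lem:dd_Pi0k}. For \eqref{cpk_1}, the first component of $\hat\Pi^1_k\hat\nabla\hat\phi$ equals $\partial_1\hat\Pi^0_k(\hat\phi-\hat\phi(0,\cdot))$ by the fundamental theorem of calculus; since $\hat\phi(0,\cdot)$ is independent of $\hx_1$, Lemma~\ref{lem:dd_Pi0k} kills its contribution and leaves $\partial_1\hat\Pi^0_k\hat\phi$, the first component of $\hat\nabla\hat\Pi^0_k\hat\phi$. For \eqref{cpk_2}, I would integrate $\hcurl\hat\bu = \partial_1\hat u_2-\partial_2\hat u_1$ once in each term of the bivariate antiderivative; this produces exactly the two single-direction antiderivatives appearing in $\hcurl\hat\Pi^1_k\hat\bu$, together with two spurious terms depending only on $\hx_2$ and only on $\hx_1$, both annihilated by $\partial_1\partial_2\hat\Pi^0_k$ via Lemma~\ref{lem:dd_Pi0k} (up to the commutation of mixed partials). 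Transporting back through $\nabla\cF^0_k = \cF^1_k\hat\nabla$ and $\curl\,\cF^1_k = \cF^2_k\hcurl$ then yields \eqref{cpk_1} and \eqref{cpk_2}.

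The stability bounds are where the real work lies. For $\Pi^1_k$ the estimate follows by summing the local bound \eqref{loc_bound_Pik1} of Lemma~\ref{lem:stab_Phik} over $\bi\in\cI^k$ and over $d\in\{1,2\}$, using the bounded overlap \eqref{overlap} of the extensions $E_k^2(S^k_\bi)$. For $\Pi^2_k$ no such local bound is stated, and I expect this to be the main obstacle: applying the inverse estimate \eqref{inverse_est} to the second-order operator $D^{2,k}$, see \eqref{D2k}, produces a blow-up factor of order $h_k^{-2}$ that must be absorbed by localizing the bivariate antiderivative $\Psi^k$ of \eqref{Psi_k} in \emph{both} coordinate directions. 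The key point is that changing either integration constant in $\Psi^k$ alters the result only by a function constant in $\hx_1$ or in $\hx_2$, which is annihilated by $D^{2,k}\Pi^0_k$ through Lemma~\ref{lem:dd_Pi0k}; one may therefore replace $\Psi^k$ by a version integrating from a nearby point of $\hat S^k_\bi$ without changing $\Pi^2_k f$ on $S^k_\bi$, and a two-dimensional Hölder estimate as in \eqref{CS_Phi_k} then gives the local bound $\norm{\Pi^2_k f}_{L^p(S^k_\bi)}\lesssim\norm{f}_{L^p(E_k^2(S^k_\bi))}$. Summing over $\bi$ completes the proof of \eqref{stab_Pik}.
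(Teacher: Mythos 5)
Your proposal is correct and follows essentially the same route as the paper: pullback to the reference domain, the fundamental theorem of calculus combined with the directional-invariance Lemma~\ref{lem:dd_Pi0k} for the commuting relations, and summation of local bounds with bounded overlap (plus localization of the antiderivative to absorb the inverse-estimate factors) for stability. The paper's own proof is terser — it treats only \eqref{cpk_1} and the $\Pi^1_k$ bound explicitly and defers the projection property, \eqref{cpk_2} and the $\Pi^2_k$ stability to ``similar arguments'' and to \cite{buffa_isogeometric_2011} — so you have simply supplied, correctly, the details it omits.
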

\begin{proof}
	The $L^p$ bound follows by summing up the estimates \eqref{loc_bound_Pik1}
	on the supports $S^k_\bi$ which cover the patch $\Omega_k$, and using the 
	bounded overlapping of the extensions $E^2_k(S^k_\bi) \cup \Omega_k$.
	To show the commuting property \eqref{cpk_1} we consider $\phi \in C^1(\Omega_k)$
	and observe that the path integral \eqref{Phi_kd_exp} of $\bu = \nabla \phi$ along 
	a logical dimension $d$ reads
	% \begin{equation} \label{Phi_kd_nabla}
		$\Phi^{k}_d(\bu) = \phi - \phi^*$
		%\quad \text{ where } \phi^*(\bx) \coloneqq  \text{ with }  .
	% \end{equation}
	where $\phi^*(\bx) \coloneqq \phi(F_k(\hbx^*))$ with $\hx^*_d \coloneqq 0$ and $\hx^*_{d'} \coloneqq \hx_{d'}$
	for the other component.
	In particular $\nabla^k_d \phi^* = 0$ and the preservation 
	of directional invariance by $\Pi^0_k$ (see Lemma~\ref{lem:dd_Pi0k}) 
	yields 
	$
	\nabla^k_d \Pi^0_k \Phi^{k}_d(\bu) = \nabla^k_d \Pi^0_k \phi.
	$
	This shows \eqref{cpk_1} for $\phi \in C^1(\Omega)$ and the result follows by density.
	The $L^p$ stability of $\Pi^2_k$, as well as the commuting relation \eqref{cpk_2},
	are shown with similar arguments. We also refer to \cite{buffa_isogeometric_2011} for more details.
\end{proof}

Summing over the patches, we obtain projection operators 
$\Pi^1_\pw$ and $\Pi^2_\pw$ on the patch-wise spaces,
 % \coloneqq \coloneqq \sum_{k \in \cK} \Pi^1_k $ 
see \eqref{Pipw}.
% \begin{equation} \label{Pi1pw}
% 	\Pi^1_\pw \coloneqq \sum_{k \in \cK} \Pi^1_k ~ : \quad L^2(\Omega) \to V^1_\pw
% \end{equation}
% and 
% \begin{equation} \label{Pi2pw}
% 	\Pi^2_\pw \coloneqq \sum_{k \in \cK} \Pi^2_k ~ : \quad L^2(\Omega) \to V^2_\pw
% \end{equation}
These operators are obviously stable in $L^p$ and they 
commute with the patch-wise, broken gradient and curl:
\begin{equation} \label{pw_grad_cp}
\Pi^1_\pw \nabla \phi = \nabla_\pw \Pi^0_\pw \phi, \qquad \phi \in H^1(\Omega)
\end{equation}
and 
\begin{equation} \label{pw_curl_cp}
\Pi^2_\pw \curl \bu = \curl_\pw \Pi^1_\pw \bu, \qquad \bu \in H(\curl;\Omega).
\end{equation}
Our next task is to modify $\Pi^1_\pw$ so that it becomes a projection on 
the conforming space $V^1_h$ with commuting properties involving the projection 
$\Pi^0$ defined by \eqref{Pi0}.

\subsection{The commuting projection operator on $V^1_h$.}

A suitable projection operator on $V^1_h$ is obtained by adding correction terms
to the single-patch projections \eqref{Pi1k}. Thanks to the
specific definition of the local projections and antiderivative operators,
we can handle both the inhomogeneous and homogeneous cases in the same way.
Thus, we set
\begin{equation}
	\label{Pi1}
	\Pi^1 \coloneqq %\Pi^1_\pw 
		\sum_{k \in \cK} \Pi^1_k 
			+ \sum_{e \in \cE} \tilde \Pi^1_e
					+ \sum_{\bv \in \cV} \tilde \Pi^1_\bv
						+ \sum_{\bv \in \cV, e \in \cE(\bv)} \tilde \Pi^1_{e,\bv}
\end{equation}
% Here, the first sum is the patch-wise commuting projection \eqref{Pi1pw}
% \begin{equation}
% 	\label{Pi1k_rec}
% 	\Pi^1_k : \bu \mapsto	\sum_{d \in \{1,2\}} \nabla^k_d \Pi^0_k \Phi^k_d(\bu)
% \end{equation}
% involving the directional antiderivative operator \eqref{Phi_kd_exp},
% the patch-wise stable projection 
% and the directional patch-wise gradient operator \eqref{pw_nabla}.
with edge correction terms 
\begin{equation}
	\label{tPi1_e}
	\tilde \Pi^1_e : 
	\left\{\begin{aligned}
	&L^p(\Omega) \to V^1_\pw,
	\\
 	&\bu \mapsto \sum_{d \in \{\parallel, \perp\}} 
	 				\nabla^e_d (P^e - I^e) \Pi^0_\pw \Phi^e_{d}(\bu)
	\end{aligned}
\right.
\end{equation}
that involve the edge antiderivative operators
\eqref{Phi_par} and \eqref{Phi_perp}--\eqref{foot_curve},
the patch-wise projection \eqref{Pi0k} on $V^0_\pw$, 
the local (edge-based) conforming and broken projection operators \eqref{Ie}, \eqref{Pe}
and the edge-directional broken gradient operator \eqref{nabla_ed},
vertex correction terms
\begin{equation}
	\label{tPi1_v}
	\tilde \Pi^1_\bv : 
	\left\{\begin{aligned}
	&L^p(\Omega) \to V^1_\pw,
	\\
 	&\bu \mapsto 
	 				\nabla_\pw (P^{\bv} - \bar I^{\bv}) \Pi^0_\pw \Phi^\bv(\bu)
	\end{aligned}
\right.
\end{equation}
that involve the vertex antiderivative operator \eqref{Phi_v},
% the same patch-wise projection, 
the vertex-based conforming and broken projection operators \eqref{Iv}, \eqref{Pv}
and the patch-wise gradient operator \eqref{nabla_pw}.
Finally, the last terms are edge-vertex corrections
\begin{equation}
	\label{tPi1_ev}
	\tilde \Pi^1_{e,\bv} : 
	\left\{\begin{aligned}
	&L^p(\Omega) \to V^1_\pw,
	\\
 	&\bu \mapsto 
					\sum_{d \in \{\parallel, \perp\}} 
						\nabla^e_d (\bar I^e_{\bv} - P^e_{\bv})\Pi^0_\pw \Phi^{\bv,e}_{d}(\bu)
	\end{aligned}
\right.
\end{equation}
that involve the edge-vertex antiderivative operators \eqref{Phi_ev_par}
and \eqref{Phi_ev_perp},
the edge-vertex broken and conforming projection operators \eqref{Iev} and \eqref{Pev},
and again the edge-directional broken gradient operator \eqref{nabla_ed}.

\subsection{The commuting projection operator on $V^2_h$.}

A commuting projection $V^2_h$ is also obtained by adding correction terms
to the single-patch projections \eqref{Pi2k}. Specifically, it is defined as 
% Specifically, we set
\begin{equation}
	\label{Pi2}
	\Pi^2 \coloneqq
		\sum_{k \in \cK} \Pi^2_k 
			+ \sum_{e \in \cE} \tilde \Pi^2_e
						+ \sum_{\bv \in \cV, e \in \cE(\bv)} \tilde \Pi^2_{e,\bv}
\end{equation}
with edge correction terms 
\begin{equation}
	\label{tPi2_e}
	\tilde \Pi^2_e : 
	\left\{\begin{aligned}
	&L^p(\Omega) \to V^2_\pw,
	\\
 	&f \mapsto 
	 				D^{2,e} (P^e - I^e) \Pi^0_\pw \Psi^{e}(f)
	\end{aligned}
\right.
\end{equation}
and edge-vertex corrections
\begin{equation}
	\label{tPi2_ev}
	\tilde \Pi^2_{e,\bv} : 
	\left\{\begin{aligned}
	&L^p(\Omega) \to V^2_\pw,
	\\
 	&f \mapsto 
					D^{2,e} 
						(\bar I^e_{\bv} - P^e_{\bv})\Pi^0_\pw \Psi^{\bv,e}(f).
	\end{aligned}
\right.
\end{equation}
These terms involve the bivariate edge and edge-vertex antiderivatives
\eqref{Psi_e} and \eqref{Psi_ev},
the patch-wise projection \eqref{Pi0k} on $V^0_\pw$, 
the local broken and conforming projection operators \eqref{Ie}, \eqref{Pe}, \eqref{Iev} and \eqref{Pev}
and the broken mixed derivative operator \eqref{D2e}.

\subsection{The main result}
\label{sec:main}

We are now in a position to state our main result for the inhomogeneous and homogeneous 
$\nabla$-$\curl$ sequences \eqref{dR} and \eqref{dR0}. 
Our result for the $\bcurl$-$\Div$ sequences \eqref{cD} and \eqref{cD0} will be presented immediately afterwards. 
We remind that for the sequence \eqref{dR0} the homogeneous boundary conditions 
are handled by removing boundary basis functions from the conforming basis of $V^0_h$ (with a consistent adaptation of the conforming projection $P$ and its localized counterparts $P^e, P^\bv, P^e_\bv$ in Section~\ref{sec:locprojs}), 
and by defining the antiderivative operators $\Phi^e_\perp$ and $\Phi^\bv$ 
through integration curves that start from the homogeneous boundaries 
in Sections~\ref{sec:Phi_e} and \ref{sec:Phi_v}. 
\begin{theorem} \label{thm:main_result}
	The operators $\Pi^\ell$ defined in 
	\eqref{Pi0}, \eqref{Pi1} and \eqref{Pi2}
	are local projections on the respective spaces $V^\ell_h$, $\ell = 0, 1, 2$.
	They satisfy the a priori bounds
	\begin{equation} \label{stab_Pi}
		\norm{\Pi^\ell v}_{L^p(\Omega)} \lesssim \norm{v}_{L^p(\Omega)}
	\end{equation}
	for $1 \le p \le \infty$, with constants that only depend on the smoothness parameters 
	$\kappa_1, \dots, \kappa_8$
	described in Section~\ref{sec:broken}. Moreover, the commuting relations
	\begin{equation} \label{cp}
		\nabla \Pi^0 \phi = \Pi^1 \nabla \phi
		\quad  \text{ and } \quad
		\curl \Pi^1 \bu = \Pi^2 \curl \bu 
	\end{equation}	
	hold for all $\phi \in H^1(\Omega)$ and all $\bu \in H(\curl;\Omega)$
	in the inhomogeneous case \eqref{dR}, and for 
	all $\phi \in H^1_0(\Omega)$ and all $\bu \in H_0(\curl;\Omega)$
	in the homogeneous case \eqref{dR0}.
\end{theorem}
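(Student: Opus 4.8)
The plan is to work with the $\nabla$-$\curl$ sequences \eqref{dR}, \eqref{dR0} and to deduce the $\bcurl$-$\Div$ sequences \eqref{cD}, \eqref{cD0} at the very end: the $90^\circ$ rotation $\bu\mapsto\bu^\perp$ interchanges $\nabla\leftrightarrow\bcurl$ and $\curl\leftrightarrow\Div$ and is an $L^p$ isometry, so conjugating the operators below by it transports every property verbatim. For $\ell=0$ the claims are Lemma~\ref{lem:stab_Pi0}; for $\ell=1,2$ I must show that $\Pi^\ell$ (i) reproduces $V^\ell_h$, (ii) has range in $V^\ell_h$, and (iii) is $L^p$-stable, together with the commuting relations \eqref{cp}. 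Locality is inherited from the local supports and domain-extension bounds of the constituent antiderivative, projection and derivative operators and needs no separate argument.

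I would first dispatch the reproduction property and the commuting relations, which are essentially algebraic. For $\bu\in V^1_h$ the patch-wise part gives $\sum_k\Pi^1_k\bu=\bu$ because each $\Pi^1_k$ projects onto $V^1_k$ (Lemma~\ref{lem:Pik}); every correction then vanishes, since by Lemma~\ref{lem:Phi_V1} the antiderivatives $\Phi^e_d(\bu),\Phi^\bv(\bu),\Phi^{\bv,e}_d(\bu)$ already lie in $V^0_\pw$ (so $\Pi^0_\pw$ fixes them) and are continuous across the relevant interfaces, whence $(P^e-I^e)$, $(P^\bv-\bar I^\bv)$ and $(\bar I^e_\bv-P^e_\bv)$ kill them by Lemmas~\ref{lem:0_corr_e} and \ref{lem:corr_ev_ker}; the identical argument with Lemma~\ref{lem:Pi2_V2h} gives $\Pi^2 f=f$ on $V^2_h=V^2_\pw$. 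For the gradient relation, \eqref{cpk_1} handles the patch-wise term and the identities \eqref{Phi_par_nabla}, \eqref{Phi_perp_nabla}, \eqref{Phi_v_nabla} reduce each $\Phi^g(\nabla\phi)$ to $\phi$ modulo a constant or parallel-invariant term that is annihilated by the matching correction (Lemma~\ref{lem:parinv} and the constant kernels \eqref{0_corr_e}, \eqref{0_corr_v}, \eqref{0_corr_ev}); with $\nabla^e_\parallel+\nabla^e_\perp=\nabla_\pw$ from \eqref{sum_nabla_ked} this yields
\begin{equation*}
\Pi^1\nabla\phi=\nabla_\pw\Big(I+\sum_{e}(P^e-I^e)+\sum_{\bv}(P^\bv-\bar I^\bv)+\sum_{\bv,e\in\cE(\bv)}(\bar I^e_\bv-P^e_\bv)\Big)\Pi^0_\pw\phi,
\end{equation*}
and the bracketed operator collapses to $P$ upon inserting \eqref{V0pw_dec}--\eqref{Ie0}, \eqref{sum_e_Iev}, \eqref{Iv_as_bc}, \eqref{P}, \eqref{Pe0}, giving $\Pi^1\nabla\phi=\nabla\Pi^0\phi$. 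Applying $\curl_\pw$ to \eqref{Pi1}, the vertex term dies as $\curl_\pw\nabla_\pw=0$, Lemma~\ref{lem:D2e} rewrites the edge and edge-vertex terms through $D^{2,e}$, and Lemma~\ref{lem:Psi_curl} matches them to $\Pi^2\curl\bu$, the remainders $\tilde\Phi^e,\tilde\Phi^{e,\bv}$ dropping out because they are parallel-invariant, resp.\ constant, by \eqref{Psi_tilde_terms}, while $D^{2,e}$ annihilates parallel-invariant functions; this proves the broken identity $\curl_\pw\Pi^1\bu=\Pi^2\curl\bu$, which becomes the strong relation \eqref{cp} once conformity is known.

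The conformity of the range (ii) is the crux and the step I expect to be hardest. Nothing is needed for $\Pi^2$ since $V^2_h=V^2_\pw$, so the issue is to show $\Pi^1\bu\in H(\curl;\Omega)$, i.e.\ that $\btau\cdot\Pi^1\bu$ has no jump across any interior edge $e$ (and vanishes on $\partial\Omega$ in the homogeneous case). Along $e$ only parallel gradients carry a tangential component, and using Lemma~\ref{lem:dd_Pi0k} to trade $\Phi^k_d$ for $\Phi^e_\parallel$ together with the interpolation property \eqref{ipe} through Lemma~\ref{lem:projs_exp}, the tangential trace of $\Pi^1_k\bu$ on side $k$ equals $\partial_\parallel[\Pi^0_\pw\Phi^e_\parallel(\bu)|_{\Omega_k}]|_e$, whereas $\tilde\Pi^1_e\bu$ contributes, on the fine side only, precisely $\partial_\parallel(\Pi^0_\pw\Phi^e_\parallel(\bu)|_{\Omega^-}-\Pi^0_\pw\Phi^e_\parallel(\bu)|_{\Omega^+})|_e$; the two combine to the single coarse-side value, so the trace is continuous in the interior of $e$. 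The genuinely delicate part is the neighborhood of each vertex $\bv$: there the corrections $\tilde\Pi^1_{e'}$ of the other edges $e'\ni\bv$ also feed a tangential-to-$e$ component (their perpendicular gradient being parallel to $e$), and these must cancel against $\tilde\Pi^1_\bv$ and the edge-vertex terms. I would organize this through the same bookkeeping as the bracketed identity above, now carried on tangential traces and structured by the decomposition \eqref{cK_seqs} of $\cK(\bv)$; Assumption~\ref{as:v_nested} (exactly four nested patches, each connected to a coarse edge) is what makes the cancellation close.

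Stability (iii) is then a routine local-to-global summation. Writing $\Pi^1=\Pi^1_\pw+\sum_e\tilde\Pi^1_e+\sum_\bv\tilde\Pi^1_\bv+\sum_{\bv,e}\tilde\Pi^1_{e,\bv}$, the patch-wise term is controlled by summing \eqref{stab_Pik}, and the three correction families by summing the local bounds \eqref{loc_bound_corr_e}, \eqref{loc_bound_corr_v}, \eqref{loc_bound_corr_ev} over the cells $S^e_j$, $S^\bv$ and over edges and vertices; the passage to a global $L^p$ bound uses that the threefold extensions $E^3_g$ have bounded overlap, which follows from \eqref{boundsDF}, \eqref{sdr} and the nestedness assumptions. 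The identical argument with \eqref{loc_bound_corr_Pi2_e}, \eqref{loc_bound_corr_Pi2_ev} and \eqref{stab_Pik} bounds $\Pi^2$, yielding \eqref{stab_Pi} with constants depending only on $\kappa_1,\dots,\kappa_8$. Combining (i)--(iii) with the commuting relations and transporting through the rotation completes the proof for all four sequences.
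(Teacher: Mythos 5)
Your proposal is correct and follows essentially the same route as the paper: the same decomposition into projection, range, stability and commutation, resting on the same key lemmas (Lemma~\ref{lem:Pik}, \ref{lem:Phi_V1}, \ref{lem:0_corr_e}, \ref{lem:corr_ev_ker}, \ref{lem:parinv}, \ref{lem:D2e}, \ref{lem:Psi_curl} and the local stability bounds), with the algebraic collapse of the correction operators onto $P$ carried out exactly as in the paper. You rightly identify the tangential-trace cancellation near vertices as the crux; the paper executes that bookkeeping explicitly via the $A^k_e,B^k_e,C^k_e,D^k_e$ decomposition and the identities \eqref{IvIev}, $P^{e'}\phi=P^{e'}_{\bv}\phi$ and $I^{e'}\phi=\bar I^{e'}_{\bv}\phi$ on $e$, which close the cancellation as you anticipated.
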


\begin{remark} \label{rem:L1cd}
	The commuting relations \eqref{cp} actually hold on larger spaces: in 
	the inhomogeneous case \eqref{dR} they hold for all $\phi \in W^{1,1}(\Omega)$ and all 
	$\bu \in W^{1}(\curl;\Omega)$ where the latter space consists of the functions 
	$\bu \in L^1$ with $\curl \bu \in L^1$, equipped with the norm 
	$\norm{\bu}_{1,\curl} \coloneqq \norm{\bu}_{L^1} + \norm{\curl \bu}_{L^1}$.
	In the homogeneous case \eqref{dR0} they hold for all $\phi \in W^{1,1}_0(\Omega)$ and all 
	$\bu \in W^{1}_{0}(\curl;\Omega)$ where the latter space is the closure of 
	$C^1_0(\Omega)$ in $W^{1}(\curl;\Omega)$.
\end{remark}

To handle the $\bcurl$-$\Div$ sequences \eqref{cD} and \eqref{cD0}, we observe that
	\begin{equation} \label{startdiffeq}
		\bcurl \phi = R\nabla \phi 
		 \quad \text{ and }  \quad 
		\Div \bv = \curl R^{-1} \bv 
		 \quad \text{ with}  \quad 
		R = \begin{pmatrix} 0 & 1 \\ -1 & 0  \end{pmatrix}.
	\end{equation}
	 This leads to defining
	\begin{equation}	\label{Pikstar}
		 \Pi^{0,*}   \coloneqq  \Pi^{0} ,
		\qquad 
		 \Pi^{1,*}  \coloneqq R  \Pi^{1}  R^{-1} ,
		\qquad 
		 \Pi^{2,*}  \coloneqq  \Pi^{2} ,
	\end{equation}
	which allows to transfer the good properties of the $\Pi^\ell$ projections 
	to the $\bcurl$-$\Div$ sequence.
	
	Analogously, the pushforward operators for the $\nabla$-$\curl$ sequence in 
	\eqref{pf_gc} are also related % by the rotation $R$ 
	to the ones of the 	$\bcurl$-$\Div$ sequence, namely
	\begin{equation} \label{pf_cd_rk}
	  \left\{
	  \begin{aligned}
	  &\cF^{0,*}_k : \hat \phi \mapsto \phi \coloneqq \hat \phi \circ F_k^{-1}
	  \\
	  &\cF^{1,*}_k : \hat \bu \mapsto \bu \coloneqq  \big(J_{F_k}^{-1} DF_k \hat \bu \big)\circ F_k^{-1}
	  \\
	  &\cF^{2,*}_k : \hat f \mapsto f \coloneqq  \big(J_{F_k}^{-1} \hat f \big)\circ F_k^{-1}.
	  \end{aligned}
	  \right.
	\end{equation}
	Using the matrix relation $R DF_k^{-T} = J_{F_k}^{-1} DF_k R$, we find indeed
	\begin{equation}
	\cF^{0,*}_k = \cF^{0}_k, \qquad
	\cF^{1,*}_k = R \cF^1_k R^{-1}, \qquad
	\cF^{2,*}_k = \cF^{2}_k.
	\end{equation}

	Following Section \ref{tensorprodspaces}, we define the reference patch finite element spaces as
\begin{equation*}
		\hat V^{0,*}_{k}  \coloneqq \hat V^{0}_{k} = \VV^{0}_k \otimes \VV^{0}_k,
		\quad
		\hat V^{1,*}_{k}  \coloneqq R \hat V^{1}_{k} = \begin{pmatrix}
			\VV^{0}_k\otimes \VV^{1}_{k}
		  \\ \noalign{\smallskip}
		  \VV^{1}_{k} \otimes \VV^{0}_k		\end{pmatrix},
		\quad
		\hat V^{2, *}_{k}  \coloneqq \hat V^{2}_{k} = \VV^{1}_{k} \otimes \VV^{1}_{k},
		\end{equation*}
		and upon pushing forward the patch-wise spaces with
		$V^{\ell, *}_k = \cF^{\ell,*}_k( \hat V^{\ell, *}_k)$, we define the global conforming spaces as
	\begin{equation*} %\label{Vh_sub}
		V^{0,*}_h = V^{0,*}_\pw \cap H(\bcurl;\Omega),
		\quad 
		V^{1,*}_h = V^{1,*}_\pw \cap H(\Div;\Omega),
		\quad 
		V^{2,*}_h = V^{2,*}_\pw \cap L^2(\Omega) = V^2_\pw.
	\end{equation*}
	
These definitions allow us to extend the main theorem \ref{thm:main_result} to this sequence:
\begin{theorem} \label{thm:main_result_star}
	The operators $\Pi^{\ell, *}$ defined in \eqref{Pikstar} 
	are local projections on the respective spaces $V^{\ell,*}_h$, $\ell = 0, 1, 2$.
	They satisfy the a priori bounds
	\begin{equation} \label{stab_Pi_star}
		\norm{\Pi^{\ell,*} v}_{L^p(\Omega)} \lesssim \norm{v}_{L^p(\Omega)}
	\end{equation}
	for $1 \le p \le \infty$, with constants that only depend on the smoothness parameters 
	$\kappa_1, \dots, \kappa_8$
	described in Section~\ref{sec:broken}. Moreover, the commuting relations
	\begin{equation} \label{cp_star}
		\bcurl \Pi^{0,*} \phi = \Pi^{1,*} \bcurl \phi
		\quad  \text{ and } \quad
		\Div \Pi^{1,*} \bu = \Pi^{2,*} \Div \bu 
	\end{equation}	
	hold for all $\phi \in H(\bcurl; \Omega)$ and $\bu \in H(\Div;\Omega)$
	in the inhomogeneous case \eqref{cD}, and 
	hold for all $\phi \in H_0(\bcurl; \Omega)$ and $\bu \in H_0(\Div;\Omega)$
	in the homogeneous case \eqref{cD0}.
\end{theorem}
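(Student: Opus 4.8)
The plan is to obtain Theorem~\ref{thm:main_result_star} entirely from Theorem~\ref{thm:main_result} by exploiting that the matrix $R$ in \eqref{startdiffeq} is orthogonal, so that conjugation by $R$ transfers the projection, stability, locality and commuting properties of the $\nabla$-$\curl$ operators to the $\bcurl$-$\Div$ setting without any new estimate. The first step is to identify the spaces under the action of $R$. At the patch level the pushforward relations $\cF^{0,*}_k = \cF^0_k$, $\cF^{1,*}_k = R\cF^1_k R^{-1}$, $\cF^{2,*}_k = \cF^2_k$ give $V^{0,*}_k = V^0_k$, $V^{1,*}_k = R V^1_k$ and $V^{2,*}_k = V^2_k$, hence at the broken level $V^{0,*}_\pw = V^0_\pw$, $V^{1,*}_\pw = R V^1_\pw$ and $V^{2,*}_\pw = V^2_\pw$. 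For the conforming constraints I would use the identity $\Div(R\bu) = \curl R^{-1}(R\bu) = \curl \bu$ coming from \eqref{startdiffeq}, which together with $\norm{R\bu}_{L^2} = \norm{\bu}_{L^2}$ shows that $R$ maps $H(\curl;\Omega)$ bijectively onto $H(\Div;\Omega)$; in the homogeneous case the vanishing of the tangential trace $\btau\cdot\bu$ is equivalent to the vanishing of the normal trace $\bn\cdot(R\bu)$ since $\bn = R\btau$ on $\partial\Omega$, so $R$ maps $H_0(\curl;\Omega)$ onto $H_0(\Div;\Omega)$. Combining with the broken identities yields $R\, V^1_h = V^{1,*}_h$, while the scalar identities $H(\bcurl;\Omega) = H^1(\Omega)$ and $V^{2,*}_h = V^2_\pw = V^2_h$ give $V^{0,*}_h = V^0_h$ and $V^{2,*}_h = V^2_h$.

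Next I would establish the projection and stability properties. For $\ell = 0, 2$ the operators and spaces coincide with those of Theorem~\ref{thm:main_result}, so the claims are immediate. For $\ell = 1$, writing $\Pi^{1,*} = R\Pi^1 R^{-1}$ and using that $\Pi^1$ is a projection onto $V^1_h$, conjugation gives $(\Pi^{1,*})^2 = R(\Pi^1)^2 R^{-1} = \Pi^{1,*}$ with range $R\,V^1_h = V^{1,*}_h$, and for $\bu\in V^{1,*}_h$ one has $R^{-1}\bu\in V^1_h$, whence $\Pi^{1,*}\bu = R\Pi^1 R^{-1}\bu = \bu$. Since $R$ is orthogonal, $\abs{R\bw} = \abs{\bw}$ pointwise and therefore $\norm{R\bw}_{L^p} = \norm{\bw}_{L^p}$ for every $p\in[1,\infty]$; this turns \eqref{stab_Pi} into $\norm{\Pi^{1,*}\bu}_{L^p} = \norm{\Pi^1 R^{-1}\bu}_{L^p} \lesssim \norm{R^{-1}\bu}_{L^p} = \norm{\bu}_{L^p}$, which is \eqref{stab_Pi_star}. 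Locality is inherited as well, because $R$ is a fixed pointwise matrix multiplication and hence support-preserving.

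It then remains to verify the commuting relations \eqref{cp_star}, which is a direct chaining of \eqref{startdiffeq} with the relations \eqref{cp} already proved in Theorem~\ref{thm:main_result}. For the first diagram, $\bcurl\Pi^{0,*}\phi = R\nabla\Pi^0\phi = R\Pi^1\nabla\phi$ by \eqref{cp}, while $\Pi^{1,*}\bcurl\phi = R\Pi^1 R^{-1}R\nabla\phi = R\Pi^1\nabla\phi$, so the two agree. For the second, $\Div\Pi^{1,*}\bu = \curl R^{-1}(R\Pi^1 R^{-1}\bu) = \curl\Pi^1 R^{-1}\bu = \Pi^2\curl R^{-1}\bu$ by \eqref{cp}, whereas $\Pi^{2,*}\Div\bu = \Pi^2\curl R^{-1}\bu$ by \eqref{startdiffeq}; again the two coincide. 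The same computation applies verbatim on the enlarged domains of Remark~\ref{rem:L1cd} and in the homogeneous case, once the space correspondence of the first step is in place.

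I do not expect a genuine obstacle here: the rotation argument is designed precisely so that no new analysis is required. The only point demanding care is the bookkeeping of the first step, namely checking that $R$ carries the curl-conforming spaces (and their homogeneous counterparts with the correct tangential/normal trace condition) onto the div-conforming ones, so that $\Pi^{1,*}$ genuinely lands in $V^{1,*}_h$ rather than merely in $V^{1,*}_\pw$.
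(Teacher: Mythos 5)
Your proposal is correct and follows essentially the same route as the paper: conjugation by the orthogonal rotation $R$, the identification $V^{1,*}_h = R\,V^1_h$ via the rotated conformity conditions, the $L^p$ isometry of $R$ for stability, and the chaining of \eqref{startdiffeq} with \eqref{cp} for the commuting diagrams. Your treatment of the trace correspondence between $H_0(\curl;\Omega)$ and $H_0(\Div;\Omega)$ is slightly more explicit than the paper's, but the argument is the same.
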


\begin{remark} Again, these commuting relations actually hold in 
	larger spaces characterized by $L^1$
	integrability, defined similarly as in Remark~\ref{rem:L1cd}.
\end{remark}

\begin{proof}

The stability follows from Theorem ~\ref{thm:main_result} and the fact that $R$ and $R^{-1}$ are isometries in any $L^p$. 

The range and projection properties for $\ell = 0, 2$ follow from 
Theorem~\ref{thm:main_result} as the projectors and conforming spaces are equal. For $\ell=1$, we first realize that the spaces $H(\curl; \Omega)$ and $H(\Div; \Omega)$ have similar, but rotated, conformity conditions, i.e. continuity along tangent and normal vectors respectively. Since $V^{1,*}_k = \cF^{1,*}_k( \hat V^{1, *}_k) = R \cF^1_k (R^{-1} R \hat V^1_k) = R V^1_k$, we have 
\begin{equation} \label{RV1}
	V^{1,*}_h = R V^{1}_\pw \cap H(\Div; \Omega) = R V^1_h.
\end{equation}
 Thus, the range and projection properties are direct consequences of \eqref{RV1} and definition of $\Pi^{1,*}$.

The commuting properties are again a consequence of Theorem~\ref{thm:main_result} and the relations of differential operators in \eqref{startdiffeq}, i.e. 
 \begin{equation*}
 \Pi^{1,*}  \bcurl  \phi = 
R   \Pi^{1} R^{-1}  \bcurl  \phi =
R   \Pi^{1}  \nabla  \phi 
= R  \nabla  \Pi^{0}  \phi = \bcurl  \Pi^{0,*}  \phi
\end{equation*}
and
\begin{equation*}	
		 \Pi^{2,*}  \Div  \bv 
	% =  \Pi^{2}  \Div  \bv
	=  \Pi^{2}  \curl R^{-1}  \bv 
	=  \curl \Pi^{1}  R^{-1}  \bv
	=  \curl R^{-1}  \Pi^{1,*}  \bv
	=  \Div \Pi^{1,*}  \bv.
\end{equation*}
\end{proof}	

%%% VERSION 1
% As we are now in the position to 
We conclude this section by listing a few corollaries of 
Theorems~\ref{thm:main_result} and \ref{thm:main_result_star}.
% The same results also hold for the $\bcurl$-$\Div$ sequence
% according to Theorem~.
 % the analysis of \cite{Arnold.Falk.Winther.2010.bams}, we want to collect some important results here. 
The first one is a direct consequence of the $L^p$ stability and commuting sequence properties.

\begin{corollary}
The operators defined in \eqref{Pi0} and \eqref{Pi1} satisfy the bounds
\begin{equation*}
	\norm{\Pi^\ell_h v}_{V^\ell,L^p} \lesssim \norm{v}_{V^\ell,L^p}
\end{equation*}
for $\ell = 0, 1$, with the norms
\begin{equation*}
	\left\{ \begin{array}{l}
		\norm{\phi}_{V^0,L^p} = (\norm{\phi}^p_{L^p} + \norm{\nabla \phi}^p_{L^p})^{\frac 1p} 
		\\
		\norm{\bv}_{V^1,L^p} = (\norm{\bv}^p_{L^p} + \norm{\curl \bv}^p_{L^p})^{\frac 1p}
	\end{array} \right. , 
\end{equation*}
and constants that only depend on the smoothness parameters $\kappa_1, \dots, \kappa_8$
described in Section~\ref{sec:broken}.
Similarly, the operators defined in \eqref{Pikstar} satisfy the bounds
\begin{equation*}
	\norm{\Pi^{\ell,*} v}_{V^\ell,L^p} \lesssim \norm{v}_{V^\ell,L^p}
\end{equation*}
for $\ell = 0, 1$, with the same constants and the norms
\begin{equation*}
	\left\{ \begin{array}{l}
		\norm{\phi}_{V^0,L^p} = (\norm{\phi}^p_{L^p} + \norm{\bcurl \phi}^p_{L^p})^{\frac 1p} 
		\\
		\norm{\bv}_{V^1,L^p} = (\norm{\bv}^p_{L^p} + \norm{\Div \bv}^p_{L^p})^{\frac 1p}
	\end{array} \right. .
\end{equation*}
\end{corollary}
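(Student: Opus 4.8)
The plan is to obtain both graph-norm bounds directly from the $L^p$ stability estimates \eqref{stab_Pi} and the commuting relations \eqref{cp} of Theorem~\ref{thm:main_result}, exploiting the simple observation that, by commutation, the derivative of a projected field equals the projection of the derivative, to which the $L^p$ stability of the \emph{next} operator in the sequence applies. Since the graph norm is just the $\ell^p$-combination of two $L^p$ norms, and $(a^p+b^p)^{1/p}$ is monotone under bounding each argument by a constant multiple, the overall constant simply carries over without degradation and without any new dependence beyond $\kappa_1,\dots,\kappa_8$.

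For $\ell=0$ I would bound the two contributions separately. The $L^p$ term is controlled by \eqref{stab_Pi} with $\ell=0$, giving $\norm{\Pi^0\phi}_{L^p(\Omega)} \lesssim \norm{\phi}_{L^p(\Omega)}$. For the gradient term, the first relation in \eqref{cp} gives $\nabla\Pi^0\phi = \Pi^1\nabla\phi$, and then \eqref{stab_Pi} with $\ell=1$ yields $\norm{\nabla\Pi^0\phi}_{L^p(\Omega)} = \norm{\Pi^1\nabla\phi}_{L^p(\Omega)} \lesssim \norm{\nabla\phi}_{L^p(\Omega)}$. Summing the $p$-th powers produces $\norm{\Pi^0\phi}_{V^0,L^p}^p \lesssim \norm{\phi}_{L^p(\Omega)}^p + \norm{\nabla\phi}_{L^p(\Omega)}^p = \norm{\phi}_{V^0,L^p}^p$, as claimed. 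The case $\ell=1$ is the same argument shifted one step up: \eqref{stab_Pi} with $\ell=1$ controls the $L^p$ part, while the second relation $\curl\Pi^1\bv = \Pi^2\curl\bv$ in \eqref{cp} combined with \eqref{stab_Pi} for $\ell=2$ controls $\norm{\curl\Pi^1\bv}_{L^p(\Omega)} \lesssim \norm{\curl\bv}_{L^p(\Omega)}$. The bounds for the rotated operators $\Pi^{\ell,*}$ follow verbatim from \eqref{stab_Pi_star} and \eqref{cp_star} of Theorem~\ref{thm:main_result_star}, with $\bcurl$ and $\Div$ in the roles of $\nabla$ and $\curl$.

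The only point requiring a word of justification is the domain of validity: the commuting relations in \eqref{cp} are stated for $\phi\in H^1(\Omega)$ and $\bu\in H(\curl;\Omega)$, whereas a finite graph norm $\norm{\cdot}_{V^\ell,L^p}$ only presupposes $L^p$-integrability of the field together with its derivative. This gap is closed by Remark~\ref{rem:L1cd}, which extends the commuting relations to the larger $W^{1,1}$-type spaces; on the bounded domain $\Omega$, any field with finite $V^\ell,L^p$ norm for $p\ge 1$ lies in these spaces (since $L^p(\Omega)\hookrightarrow L^1(\Omega)$), so the relations indeed apply throughout. I expect no genuine obstacle in this corollary: it is a formal consequence of the two preceding theorems, and the mild care needed concerns only the stated domain inclusions and the harmless universal constant in combining the two $L^p$ norms.
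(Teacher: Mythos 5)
Your proposal is correct and follows exactly the route the paper intends: the corollary is stated there as "a direct consequence of the $L^p$ stability and commuting sequence properties," which is precisely your argument of bounding the $L^p$ part by \eqref{stab_Pi} (resp.\ \eqref{stab_Pi_star}) and the derivative part by combining the commuting relation \eqref{cp} (resp.\ \eqref{cp_star}) with the stability of the next projection in the sequence. Your additional care about the domain of validity via Remark~\ref{rem:L1cd} is a sensible refinement the paper leaves implicit.
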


A second stability result follows by reasoning as in \cite[Theorem 3.6]{Arnold.Falk.Winther.2010.bams}: 
\begin{corollary} 
If the spaces $V^\ell$ in the $\nabla$-$\curl$ sequence \eqref{dR} 
satisfy a Poincaré-Friedrichs inequality 
\begin{equation} \label{PI}
	\| v \|_{L^p} \le c_\mathrm{P} \| d^\ell v \|_{L^p}, \quad v \in K
\end{equation}
with $K =  (\ker d^\ell )^\perp$, $d^0 = \nabla, \ d^1 = \curl$ and additionally the following bound holds
\begin{equation} \label{norm-bd}
	\| v \|_{L^p} \le c_\mathrm{s} \sup_{w \in K_h} \frac{\sprod
	{v}{w}}{\norm{w}_{L^q}}, \quad v \in K_h
\end{equation}
where $K_h = V^\ell_h \cap (\ker d^\ell|_{V^\ell_h} )^\perp$ and $\frac1p + \frac1q = 1$, then the discrete spaces $V^\ell_h$
satisfy a discrete Poincaré-Friedrichs inequality 
\begin{equation}\label{PIh}
	\| v \|_{L^p} \le c_\mathrm{P} c_\mathrm{s} c_\Pi \| d^\ell v \|_{L^p}, \quad v \in K_h
\end{equation}
where $c_\Pi$ only depends on the parameters $\kappa_1, \dots, \kappa_8$ from Section~\ref{sec:broken}.
Similarly, if the spaces $V^\ell$ in the $\bcurl$-$\Div$ sequence \eqref{cD} 
satisfy a Poincaré-Friedrichs inequality of the form \eqref{PI} with 
$d^0 = \bcurl, \ d^1 = \Div$, then the discrete spaces $V^{\ell,*}_h$ 
% and the stable commuting projection $\Pi^\ell_h$, there holds 
satisfy a discrete Poincaré-Friedrichs inequality of the form \eqref{PIh}.
Note that for $p=2$ the bound \eqref{norm-bd} always holds.
% by the definition of the $L^2$ scalar product. 
\end{corollary}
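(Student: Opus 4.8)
The plan is to adapt the Hilbert-space argument of \cite[Theorem~3.6]{Arnold.Falk.Winther.2010.bams} to the $L^p$ setting, the new ingredient being the duality bound \eqref{norm-bd}, which replaces the use of orthogonality together with the Cauchy--Schwarz inequality. I would carry out the argument for the $\nabla$-$\curl$ sequence \eqref{dR}; the homogeneous case and the $\bcurl$-$\Div$ sequences \eqref{cD}, \eqref{cD0} follow verbatim upon replacing the projections $\Pi^\ell$ by $\Pi^{\ell,*}$ and invoking \eqref{cp_star} instead of \eqref{cp}. Throughout I write $c_\Pi$ for the uniform $L^p$ stability constant furnished by \eqref{stab_Pi}, which depends only on $\kappa_1, \dots, \kappa_8$.

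Fix $\ell \in \{0,1\}$ and $v \in K_h$. First I would lift the coboundary $d^\ell v$ to the continuous level: since $v \in V^\ell_h \subset V^\ell$, the field $d^\ell v$ lies in the range of $d^\ell$, and the continuous Poincar\'e--Friedrichs inequality \eqref{PI} provides a lifting $u \in K = (\ker d^\ell)^\perp$ with $d^\ell u = d^\ell v$ and $\norm{u}_{L^p} \le c_\mathrm{P}\norm{d^\ell u}_{L^p} = c_\mathrm{P}\norm{d^\ell v}_{L^p}$.

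Next I would apply the commuting projection $\Pi^\ell$ to this lifting. The commuting relation \eqref{cp} gives $d^\ell \Pi^\ell u = \Pi^{\ell+1} d^\ell u = \Pi^{\ell+1} d^\ell v = d^\ell v$, where the last equality holds because $d^\ell v \in V^{\ell+1}_h$ and $\Pi^{\ell+1}$ is a projection onto that space. Hence $\Pi^\ell u - v$ lies in the discrete kernel $\ker d^\ell|_{V^\ell_h}$, and by the very definition of $K_h$ every $w \in K_h$ satisfies $\sprod{\Pi^\ell u - v}{w} = 0$, i.e.\ $\sprod{v}{w} = \sprod{\Pi^\ell u}{w}$. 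Combining this identity with the bound \eqref{norm-bd}, a H\"older inequality, the $L^p$ stability \eqref{stab_Pi}, and the lifting estimate, I would conclude
\[
	\norm{v}_{L^p} \le c_\mathrm{s}\sup_{w \in K_h}\frac{\sprod{v}{w}}{\norm{w}_{L^q}} = c_\mathrm{s}\sup_{w \in K_h}\frac{\sprod{\Pi^\ell u}{w}}{\norm{w}_{L^q}} \le c_\mathrm{s}\norm{\Pi^\ell u}_{L^p} \le c_\mathrm{s}c_\Pi\norm{u}_{L^p} \le c_\mathrm{P}c_\mathrm{s}c_\Pi\norm{d^\ell v}_{L^p},
\]
which is exactly \eqref{PIh}. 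Finally, for $p=2$ the pairing $\sprod{\cdot}{\cdot}$ is the $L^2$ inner product and $q=2$, so choosing $w=v$ shows the supremum in \eqref{norm-bd} is at least $\norm{v}_{L^2}$; hence \eqref{norm-bd} holds with $c_\mathrm{s}=1$ and needs no separate hypothesis.

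The only genuinely delicate point is the existence of the continuous lifting $u$ with the stated control: it rests on the closed-range structure implicit in \eqref{PI}, namely that $d^\ell$ restricts to a bounded bijection from $K$ onto its range, so that the coboundary $d^\ell v$ admits a preimage in $K$ bounded by $c_\mathrm{P}\norm{d^\ell v}_{L^p}$. Everything else is bookkeeping: the commutation \eqref{cp}, the projection property of $\Pi^{\ell+1}$ on $V^{\ell+1}_h$, and the orthogonality built into the definition of $K_h$.
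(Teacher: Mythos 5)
Your proof is correct and follows exactly the route the paper intends: the paper gives no written proof of this corollary, only the instruction to ``reason as in Theorem 3.6 of Arnold--Falk--Winther'', and your argument is precisely that reasoning (lift $d^\ell v$ via \eqref{PI}, transfer with the commuting projection, use the discrete-kernel orthogonality together with \eqref{norm-bd} in place of Cauchy--Schwarz, and close with the $L^p$ stability \eqref{stab_Pi}). The $p=2$ remark and the rotation to the $\bcurl$-$\Div$ sequence are also handled as the paper would.
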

Another corollary of Theorems~\ref{thm:main_result} and \ref{thm:main_result_star} is 
the well-posedness and a priori error estimates for FEEC approximations %using the spaces $V^\ell_h$
of Hodge Laplacian source problems of the form
\begin{equation} \label{hod-lap}
	\cL \bu = \bsf %\quad \text{ for } f \in L^2, 
	\quad \text{ where } \quad \cL = -\nabla \Div + \bcurl \curl
\end{equation}
either in the curl-conforming space $V^1_h$, or in the div-conforming space $V^{1,*}_h$, 
see Theorem 3.8, 3.9 and 3.11 of \cite{Arnold.Falk.Winther.2010.bams}.
One further application regards the associated eigenvalue problem.
% Using Theorem 3.9, 3.11 and 3.21 of \cite{Arnold.Falk.Winther.2010.bams}, we can furthermore conclude a priori error estimates when solving this equation and make some statements about its corresponding eigenvalue problem: 
\begin{corollary}
	If the continuous sequence $V$ satisfies the compactness property, then the FEEC approximation to the eigenvalue problem 
	$\cL \bu = \lambda \bu$ converges towards the exact one in the sense of 
	Theorems~3.19 and 3.21 in \cite{Arnold.Falk.Winther.2010.bams}.
	% Solving \eqref{hod-lap} using a mixed discretization involving the vector-spaces $V_h^\ell$, we can bound the error of the solution in any $L^p$ norm or in the norm of the corresponding spaces $V^\ell$ with constants independent of the mesh-size $h$. \\
	% Furthermore, the corresponding eigenvalue problem is spectrally correct, i.e. discrete eigenvalues are converging towards their continuous equivalent.
\end{corollary}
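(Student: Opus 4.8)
The plan is to verify that the operators $\Pi^\ell$ (and, for the $\bcurl$-$\Div$ sequence, $\Pi^{\ell,*}$) constitute uniformly bounded cochain projections for the $L^2$ de Rham complex, and then to invoke the abstract spectral approximation theory of \cite{Arnold.Falk.Winther.2010.bams} essentially verbatim. Indeed, the eigenvalue convergence results quoted in the statement are formulated at the level of Hilbert complexes equipped with such projections, so once the discrete complex $V^\bullet_h \subset V^\bullet$ is equipped with bounded commuting projections enjoying an approximation property, the conclusion is purely a matter of citation.

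First I would specialise Theorem~\ref{thm:main_result} to $p=2$: this already provides that the $\Pi^\ell$ are projections onto $V^\ell_h$ making the diagram \eqref{cd} commute through the relations \eqref{cp}, and that they are $L^2$ stable via \eqref{stab_Pi}. Combining this with the graph-norm bounds of the first corollary (read with $p=2$) shows that $\{\Pi^\ell\}_\ell$ is a bounded cochain projection onto the subcomplex $V^\bullet_h$, with operator norm controlled only by $\kappa_1,\dots,\kappa_8$ and hence independent of the patch diameters $H_k$ and inner resolutions $n_k$. The same reasoning applies to $\{\Pi^{\ell,*}\}_\ell$ through Theorem~\ref{thm:main_result_star} and \eqref{cp_star}.

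Next I would establish the approximation property: along any admissible sequence of refinements the spaces $V^\ell_h$ are dense in $V^\ell$ for the graph norm, which follows from the multipatch spline approximation estimates of \cite{buffa_approximation_2015,Buffa_Doelz_Kurz_Schoeps_Vazquez_Wolf_2019}. Because the $\Pi^\ell$ are uniformly bounded and commute with the exterior derivative, a standard density argument then yields $\|v-\Pi^\ell v\|_{V^\ell}\to 0$ for every $v\in V^\ell$, so that the cochain projections are \emph{asymptotically exact}.

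With the compactness property assumed for $V^\bullet$, the three hypotheses of Theorems~3.19 and 3.21 of \cite{Arnold.Falk.Winther.2010.bams} are then in place — compactness, uniform boundedness of the cochain projections, and the approximation property — and invoking those theorems gives convergence of the discrete eigenvalues with correct multiplicities, convergence of the discrete eigenspaces in the gap metric, and the absence of spurious modes, for the $\nabla$-$\curl$ complex through $\{\Pi^\ell\}_\ell$ and for the $\bcurl$-$\Div$ complex through $\{\Pi^{\ell,*}\}_\ell$. I expect the main obstacle to be the graph-norm approximation property of the third step rather than the spectral theory itself: one must ensure that the uniform $L^2$ stability of the main theorems prevents any degeneration of the projections under refinement, which is exactly what the resolution-independent constants guarantee.
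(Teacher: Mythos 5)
Your proposal is correct and matches the paper's intent: the corollary is stated in the paper without proof precisely because, once Theorem~\ref{thm:main_result} (with $p=2$) and its graph-norm corollary supply uniformly bounded commuting cochain projections onto the subcomplex $V^\bullet_h$, the eigenvalue convergence is a direct citation of Theorems~3.19 and 3.21 of \cite{Arnold.Falk.Winther.2010.bams}. Your additional care in spelling out the approximation property (via the multipatch approximation estimates and the standard quasi-optimality argument $\norm{v-\Pi^\ell v}\lesssim \inf_{v_h}\norm{v-v_h}$) is exactly the implicit remaining hypothesis, so the argument is complete and takes the same route.
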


% %%% VERSION 2
% Obtaining stable and commuting projections from the previous theorems, we can apply the analysis of \cite{Arnold.Falk.Winther.2010.bams}, in particular Theorems 3.6, 3.9, 3.11 and 3.21:
% \begin{corollary} Solving the abstract Hodge Laplace problem in a mixed formulation, as for example introduced in Section 3 of \cite{Arnold.Falk.Winther.2010.bams}, with discrete Hilbert complexes and commuting projectors as introduced in this work, we can derive the following results: 
% 	\begin{itemize}
% 		\item There exists a discrete Poincaré inequality for the discrete complex with constants only depending on the operator norm of the commuting projectors and the continuous Hilbert complex. This yields the inf-sup stability of the mixed formulation. 
% 		\item The error of the mixed problem can be estimated by the energy norm (or more general the $L^p$ norm) of the best approximations in the discrete spaces. 
% 		This allows to apply the Galerkin approximation properties of the finite element spaces in order to obtain the classical order of convergence of the method.  
% 		\item The discrete Hodge Laplace eigenvalue problem is spectrally correct, i.e. discrete eigenvalues are converging towards their continuous equivalent.
% 	\end{itemize}
% \end{corollary}

\section{Proof of the main result}
\label{sec:proof}

This section is devoted to the proof of Theorem ~\ref{thm:main_result},
which we decompose in several Lemmas.

\subsection{$L^p$ stability}

\begin{lemma} \label{lem:stab_Pi12}
	The projection operators \eqref{Pi1} and \eqref{Pi2} satisfy
\begin{equation} \label{stab_Pi1}
		\norm{\Pi^1 \bu}_{L^p(\Omega)} \lesssim \norm{\bu}_{L^p(\Omega)}
\end{equation}
and
\begin{equation} \label{stab_Pi2}
		\norm{\Pi^2 f}_{L^p(\Omega)} \lesssim \norm{f}_{L^p(\Omega)}.
\end{equation}
\end{lemma}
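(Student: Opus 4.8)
The plan is to reduce the global bounds \eqref{stab_Pi1} and \eqref{stab_Pi2} to the local estimates already established for each constituent operator, and to assemble these by bounded-overlap counting. Since $\Pi^1$ and $\Pi^2$ are finite sums of four, respectively three, groups of operators (single-patch, edge, vertex and edge-vertex), I would bound each group separately in $L^p(\Omega)$ and then recombine by the triangle inequality. The only analytic inputs needed are the local bounds from Lemmas~\ref{lem:Pik}, \ref{lem:stab_e_corr}, \ref{lem:loc_bound_corr_v}, \ref{lem:loc_bound_corr_ev} and \ref{lem:stab_Psi_corr}; everything else is bookkeeping resting on the geometric assumptions of Section~\ref{sec:broken}. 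The elementary tool for the summation is the power-mean inequality $|\sum_\alpha g_\alpha|^p \le N^{p-1}\sum_\alpha |g_\alpha|^p$, valid wherever at most $N$ of the $g_\alpha$ are nonzero (the endpoint $p=\infty$ reducing to a supremum over boundedly many terms).

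For the single-patch group $\sum_{k} \Pi^1_k \bu$ the operators have the disjoint supports $\Omega_k$, so $\norm{\sum_k \Pi^1_k\bu}_{L^p(\Omega)}^p = \sum_k \norm{\Pi^1_k\bu}_{L^p(\Omega_k)}^p$, and the single-patch stability \eqref{stab_Pik} bounds this by $\sum_k \norm{\bu}_{L^p(\Omega_k)}^p = \norm{\bu}_{L^p(\Omega)}^p$. The edge group $\sum_{e}\tilde\Pi^1_e\bu$ is handled in two stages. For fixed $e$, the support of $\tilde\Pi^1_e\bu$ is covered by the local domains $S^e_j$, $j=0,\dots,n_e$, see \eqref{supp_PIev}, so by subadditivity and the local bound \eqref{loc_bound_corr_e}, $\norm{\tilde\Pi^1_e\bu}_{L^p(\Omega(e))}^p \le \sum_j \norm{\tilde\Pi^1_e\bu}_{L^p(S^e_j)}^p \lesssim \sum_j \norm{\bu}_{L^p(E^3_e(S^e_j))}^p \lesssim \norm{\bu}_{L^p(\Omega(e))}^p$, the last step using that the three-fold extensions $E^3_e(S^e_j)\subset\Omega(e)$ overlap boundedly by \eqref{overlap} and the quasi-uniformity \eqref{sdr}. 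Summing over $e$ then invokes that the neighborhoods $\Omega(e)$ overlap boundedly (each patch abuts a bounded number of edges), giving $\norm{\sum_e\tilde\Pi^1_e\bu}_{L^p(\Omega)}^p \lesssim \sum_e \norm{\bu}_{L^p(\Omega(e))}^p \lesssim \norm{\bu}_{L^p(\Omega)}^p$.

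The vertex and edge-vertex groups would be treated identically, now with a single local domain $S^\bv$ (resp. $S^e_\bv$) per vertex (resp. contiguous edge-vertex pair), using \eqref{loc_bound_corr_v} and \eqref{loc_bound_corr_ev} together with the bounded overlap of $\Omega(\bv)$ and of $E^3_\bv(S^\bv)$; since each vertex has boundedly many contiguous edges, the double sum defining the edge-vertex group has boundedly many terms per vertex. Combining the four groups by the triangle inequality yields \eqref{stab_Pi1}. The bound \eqref{stab_Pi2} follows verbatim, replacing the $\Pi^1_k$ bound in \eqref{stab_Pik} by the $\Pi^2_k$ one, and the correction estimates \eqref{loc_bound_corr_e}, \eqref{loc_bound_corr_ev} by their counterparts \eqref{loc_bound_corr_Pi2_e}, \eqref{loc_bound_corr_Pi2_ev} from Lemma~\ref{lem:stab_Psi_corr} (note there is no purely vertex correction in $\Pi^2$).

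I expect the only delicate point — not a genuine obstacle, since the hard localization work already sits inside the cited lemmas — to be verifying that every support family ($\{S^e_j\}$, $\{\Omega(e)\}$, $\{S^\bv\}$ and their iterated extensions $E^2_k$, $E^3_e$, $E^3_\bv$) overlaps with multiplicity bounded solely in terms of $\kappa_1,\dots,\kappa_8$, so that the power-mean constants are uniform. This is precisely where geometric conformity, the at-most-four-patches-per-vertex condition, nestedness and the quasi-uniformity \eqref{sdr} enter, and I would state the counting argument once in a way that applies uniformly across $p\in[1,\infty]$.
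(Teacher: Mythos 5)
Your proposal is correct and follows essentially the same route as the paper: decompose $\Pi^1$ and $\Pi^2$ into the single-patch, edge, vertex and edge-vertex groups, apply the local bounds of Lemmas~\ref{lem:Pik}, \ref{lem:stab_e_corr}, \ref{lem:loc_bound_corr_v}, \ref{lem:loc_bound_corr_ev} and \ref{lem:stab_Psi_corr} on the covering supports $S^e_j$, $S^{\bv}$, and sum using the bounded overlap of the extensions and of the neighborhoods $\Omega(e)$, $\Omega(\bv)$. Your explicit use of $p$-th powers and the power-mean inequality (with the $p=\infty$ endpoint as a supremum) is in fact a cleaner way to write the summation step than the paper's own display, which is otherwise identical in substance.
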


\begin{proof}
	We assemble the local estimates for the different terms: 
	the stability of the patch-wise projection $\Pi^1_\pw$ has been 
	established in Lemma~\ref{lem:Pik}. 
	For the edge-based correction terms we use
  Lemma~\ref{lem:stab_e_corr}: since the domains $S^e_j$, $j \in \cI^e$ 
	cover the support of the term $\tilde \Pi^1_e$ according to \eqref{supp_PIev}, 
	we have
	\begin{equation*}
	\norm{\tilde \Pi^1_e \bu}^2_{L^p(\Omega)} 
		\le \sum_{j \in \cI^e} \norm{\tilde \Pi^1_e \bu}^2_{L^p(S^e_j)}
		 \lesssim \sum_{j \in \cI^e} \norm{\bu}^2_{L^p(E^3_e(S^e_j))}
		 \lesssim \norm{\bu}^2_{L^p(\Omega(e))}
	\end{equation*}	
	% summing the bounds \eqref{loc_bound_corr_e} over $j$ and 
	where we remind that $\Omega(e) = \cup_{k \in \cK(e)} \Omega_k$
	and we have used the bounded overlapping of the extensions $E^3_e(S^e_j)$, 
	which are all in $\Omega(e)$. 
	% $\norm{\tilde \Pi^1_e \bu}_{L^2(\Omega(e))} \lesssim \norm{\bu}_{L^2(\Omega(e))}$.
	For the vertex and edge-vertex correction terms $\tilde \Pi^1_\bv \bu$ and 
	$\tilde \Pi^1_{e,\bv} \bu$
	which are all supported in $S^\bv$ according to \eqref{supp_PIev}, we use the fact that 
	vertex-based extensions $E_\bv(\omega)$ are always included in the vertex domain 
	$\Omega(\bv)$, so that the bounds 
	\ref{loc_bound_corr_v} and \ref{loc_bound_corr_ev} readily provide us with the bounds
	\begin{equation*}
	\norm{\tilde \Pi^1_\bv \bu}_{L^p(\Omega(\bv))} 
		+ \sum_{e \in \cE(\bv)} \norm{\tilde \Pi^1_{e,\bv} \bu}_{L^p(\Omega(\bv))}
		\lesssim \norm{\bu}_{L^p(\Omega(\bv))}.
	\end{equation*}
	Estimate \eqref{stab_Pi1} is then obtained by summing the above estimates 
	over the edges and vertices,
	and using the bounded overlapping of the domains $\Omega(e)$ and $\Omega(\bv)$.
	Estimate \eqref{stab_Pi2} is proven with the same arguments, using 
	Lemma~\ref{lem:stab_Psi_corr}.
\end{proof}

\subsection{Range property}

\begin{lemma} \label{lem:range}
	For all $\phi, \bu, f \in L^p(\Omega)$, $\Pi^0 \phi$, $\Pi^1 \bu$ and 
	$\Pi^2 f$ belong to the respective spaces $V^0_h$, $V^1_h$ and $V^2_h$.
\end{lemma}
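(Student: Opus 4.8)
The statement asserts that the three operators map into the conforming spaces $V^0_h$, $V^1_h$, $V^2_h$. The plan is to handle each $\ell$ separately, exploiting that each correction term has already been shown to land in the broken space $V^0_\pw$ (for the antiderivatives) and that the surrounding operators preserve or enforce the relevant conformity. For $\ell = 0$ the claim is immediate: $\Pi^0 = P\Pi^0_\pw$ with $\Pi^0_\pw : L^p \to V^0_\pw$ and $P : V^0_\pw \to V^0_h$ by construction \eqref{P}, so the range property was in fact already recorded in Lemma~\ref{lem:stab_Pi0}. I would simply cite that.

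For $\ell = 1$, I would first note that $\Pi^1 \bu \in V^1_\pw$ is clear, since every term in \eqref{Pi1} is built from patch-wise gradients $\nabla^k_d$, $\nabla^e_d$, $\nabla_\pw$ applied to elements of $V^0_\pw$, and the single-patch $\Pi^1_k$ maps into $V^1_k$. The real work is to verify the curl-conformity condition \eqref{V1_confcond}, i.e.\ continuity of the tangential component across every interior edge (and the homogeneous boundary condition where relevant). The strategy is to test an arbitrary $\bu \in L^p$ against the edge condition \eqref{V1_confcond_ref} on a fixed edge $e$, and to show that the tangential jump produced by $\Pi^1_\pw$ is exactly cancelled by the correction terms $\tilde\Pi^1_e$, $\tilde\Pi^1_\bv$, $\tilde\Pi^1_{e,\bv}$ localized on and near $e$. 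The key algebraic identities are the decompositions \eqref{V0pw_dec}, \eqref{sum_e_Iev}, \eqref{Iv_as_bc}, which relate the broken projections $I^e, I^\bv, \bar I^\bv, \bar I^e_\bv$ to their conforming counterparts $P^e, P^\bv, P^e_\bv$, together with Lemma~\ref{lem:D2e} and Lemma~\ref{lem:Phi_V1} (which guarantees that the antiderivatives of $V^1_h$ functions already sit in $V^0_\pw$ and are continuous across edges). I would show that, along $e$, the tangential trace of $\Pi^1 \bu$ depends only on the conforming pieces $P^e\Pi^0_\pw\Phi^e_\parallel(\bu)$ etc., which are by design continuous, so the jump vanishes.

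For $\ell = 2$ there are no conformity constraints at all: since $V^2_h = V^2_\pw$, it suffices to check $\Pi^2 f \in V^2_\pw$. This follows termwise: $\Pi^2_k$ maps into $V^2_k$ by Lemma~\ref{lem:Pik}, while each correction term applies the broken mixed derivative $D^{2,e}$ (which maps $V^0_\pw \to V^2_\pw$ by \eqref{D2e}) to the output of $\Pi^0_\pw$ composed with the bivariate antiderivatives $\Psi^e$, $\Psi^{e,\bv}$; Lemma~\ref{lem:Pi2_V2h} ensures $\Psi^e(f), \Psi^{e,\bv}(f) \in V^0_\pw$, and the intervening projections preserve this. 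Hence $\Pi^2 f \in V^2_\pw = V^2_h$ with no further argument.

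The main obstacle is the $\ell = 1$ case, specifically the bookkeeping that shows the tangential jumps cancel. The subtlety is that $\Pi^1_\pw \bu$ is generically \emph{discontinuous} across every interface, and the correction terms must repair this exactly, not merely approximately. The cancellation relies on carefully pairing the broken projector $I^e$ (which reproduces $\Pi^1_\pw$'s edge contribution) against the conforming projector $P^e$ (which installs a single continuous value), using the kernel properties in Lemma~\ref{lem:0_corr_e} and Lemma~\ref{lem:corr_ev_ker} and the fact, from Lemma~\ref{lem:parinv}, that $P^e$ and $I^e$ preserve parallel invariance so the parallel gradient corrections behave correctly. The homogeneous boundary case adds a parallel strand: there one must instead check that the tangential trace vanishes on $\partial\Omega$, which follows from the boundary-vanishing of the antiderivatives established in the last bullets of Lemma~\ref{lem:Phi_V1} together with $P^e = P^e_\bv = P^\bv = 0$ on boundary edges and vertices.
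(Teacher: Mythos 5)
Your overall strategy matches the paper's: $\ell=0$ is already contained in Lemma~\ref{lem:stab_Pi0}, $\ell=2$ is trivial since $V^2_h=V^2_\pw$ and every term visibly lands in the broken space, and the substance is the tangential-jump cancellation across edges for $\ell=1$, which the paper carries out by splitting $\btau_e\cdot(\Pi^1\bu)|^k_e$ into the four contributions from $\Pi^1_k$, $\tilde\Pi^1_{e'}$, $\tilde\Pi^1_\bv$, $\tilde\Pi^1_{e',\bv}$ and pairing the broken pieces ($I^e$, $\bar I^\bv$, $\bar I^{e'}_\bv$) so that only tangential derivatives of continuous ($P^e$-type) fields survive. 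Your identification of the cancellation mechanism --- $I^e$ versus $P^e$ together with the parallel-invariance preservation of Lemma~\ref{lem:parinv} --- is exactly right.

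One caution: you cite Lemma~\ref{lem:Phi_V1} as a key ingredient, but that lemma concerns $\bu\in V^1_h$ and is the engine of the \emph{projection} property (Lemma~\ref{lem:proj}), not the range property. Here $\bu$ is an arbitrary $L^p$ field, so $\Phi^e_d(\bu)$ is in general neither in $V^0_\pw$ nor continuous across $e$; the continuity you need is manufactured only \emph{after} applying $\Pi^0_\pw$ and the conforming projector $P^e$ (and analogously $P^\bv$, $P^e_\bv$), together with the identities $\bar I^\bv\phi=\bar I^e_\bv\phi$ on $e$ from \eqref{IvIev} and the fact that the difference $\Phi^e_\parallel(\bu)-\Phi^k_{\parallel(e)}(\bu)$ depends only on $\hx^k_\perp$. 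Similarly, Lemma~\ref{lem:D2e} and the decompositions \eqref{V0pw_dec}, \eqref{sum_e_Iev}, \eqref{Iv_as_bc} belong to the commuting-property arguments rather than to this one; the range proof instead uses $\nabla^e_\parallel+\nabla^e_\perp=\nabla_\pw$ on $\Omega(e)$ and $\btau_e\cdot\nabla^e_\perp=0$. Your closing sentence of the $\ell=1$ paragraph states the correct conclusion, so this is a matter of citing the right tools rather than a flaw in the plan, but as written the appeal to Lemma~\ref{lem:Phi_V1} would not go through.
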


\begin{proof}
The property for $\Pi^0$ has been established in Lemma~\ref{lem:stab_Pi0}.
By construction, it is clear that $\Pi^1$ and $\Pi^2$ maps into the broken 
spaces $V^{1}_\pw$ and $V^{2}_\pw$. For $\Pi^2$ this is enough since
$V^{2}_h = V^{2}_\pw$, while for $\Pi^1$
we need to show that it also maps in $H(\curl;\Omega)$ in the inhomogeneous case \eqref{dR}, and respectively
$H_0(\curl;\Omega)$ in the 
homogeneous case \eqref{dR0}.
This amounts to verifying that the tangential component of $\Pi^1 \bu$ is continuous across any edge 
$e \in \cE$, and in the homogeneous case \eqref{dR0} that it further vanishes on boundary edges.
For this we consider some unit tangent vector $\btau_e$ and $k \in \cK(e)$. 
Denoting by ${\cdot}|^k_e$ the restriction on $e$ of the $\Omega_k$ piece of some broken
field, we write
\begin{equation*}
\btau_e \cdot (\Pi^1 \bu)|^k_e = A^k_e  + B^k_e  + C^k_e + D^k_e
\quad \text{ with } \quad
\left\{
\begin{aligned}
	A^k_e 
		&= \btau_e \cdot (\Pi^1_k \bu) |^k_e
	\\	
	B^k_e 
		&= \btau_e \cdot \sum_{e' \in \cE(k)} (\tilde \Pi^1_{e'} \bu)|^k_e
		\\
	C^k_e 
			&= \btau_e \cdot \sum_{\bv \in \cV(e)} (\tilde \Pi^1_{\bv} \bu)|^k_e
	\\
	D^k_e 
			&= \btau_e \cdot \sum_{\substack{\bv \in \cV(e) \\ e' \in \cE(\bv)}} (\tilde \Pi^1_{e',\bv} \bu)|^k_e ~.
\end{aligned}
\right.
\end{equation*}
Here, we have restricted the vertex sums over $\cV(e)$ (the vertex contiguous to $e$), 
since all the vertex and edge-vertex
projection operators map into functions which vanish on $e$ for $\bv \notin \cV(e)$
(this follows from the interpolation property of the basis functions at the patch boundaries).
Using \eqref{sum_nabla_ked}, i.e., $\nabla_\pw = \nabla^e_\parallel + \nabla^e_\perp$ 
on $\Omega(e)$, and $\btau_e \cdot \nabla^e_\perp = 0$, we compute 
\begin{equation*}
A^k_e 
	= \btau_e \cdot \Pi^1_k \bu |^k_e = \btau_e \cdot (\nabla^e_\parallel \Pi^0_k \Phi^k_{\parallel(e)}(\bu))|^k_e
	= \btau_e \cdot (\nabla^e_\parallel I^e \Pi^0_k \Phi^k_{\parallel(e)}(\bu))|^k_e
\end{equation*}
where the third equality follows from the fact that basis functions vanishing on $e$ have also a vanishing parallel 
gradient on $e$. Here the antiderivative \eqref{Phi_kd_exp} is taken in the direction parallel to $e$, that is 
\begin{equation*}
\Phi^k_{\parallel(e)}(\bu)(\bx) 
	= \int_{0}^{\hx^k_\parallel} \hat u^k_{\parallel(e)}(\hat X^k_e(z_\parallel, \hx^k_\perp)) \dd z_\parallel
\quad \text{ with } \quad \hbx = \hat X^k_e(\hx^k_\parallel, \hx^k_\perp) = (F_k)^{-1}(\bx).
\end{equation*}
Then,
\begin{equation*}
\begin{aligned}
	B^k_e 
		&= 
		\sum_{e' \in \cE(k)} \btau_e \cdot (\tilde \Pi^1_{e'} \bu)|^k_e 
	\\
		&= \btau_e \cdot \sum_{e' \in \cE(k)} \sum_{d \in \{\parallel, \perp\}} 
		\nabla^{e'}_d (P^{e'} - I^{e'}) \Pi^0_\pw \Phi^{e'}_{d}(\bu) |^k_e 
	\\
	&= 
	\bar B^k_e - \tilde A^k_e + \tilde B^k_e 
\end{aligned}
\end{equation*}
with 
\begin{equation*}
\left\{
\begin{aligned}
	\bar B^k_e &= \btau_e \cdot \nabla^e_\parallel P^{e} \Pi^0_\pw \Phi^{e}_{\parallel}(\bu)	
	\\
	\tilde A^k_e &= \btau_e \cdot \nabla^e_\parallel I^{e} \Pi^0_\pw \Phi^{e}_{\parallel}(\bu)
	% = A^k_e
	\\
	\tilde B^k_e &= 
	\btau_e \cdot \sum_{\bv \in \cV(e)} \mspace{0mu} \nabla^{e'(\bv)}_\perp (P^{e'(\bv)} - I^{e'(\bv)}) \Pi^0_\pw \Phi^{e'(\bv)}_{\perp}(\bu)|^k_e
\end{aligned}
\right.
\end{equation*}
where $e'(\bv)$ is the edge $e'\neq e$ contiguous to $\bv$,
and we used $\btau_e \cdot \nabla^{e'(\bv)}_\parallel = 0$.
For an interior edge we see that $\bar B^k_e$ is continuous across $e$ 
(in the sense that $\bar B^-_e = \bar B^+_e$) as the tangential derivative of a 
function continuous across $e$, and for a homogeneous boundary edge we have $\bar B^k_e =0$ 
since $P^{e} = 0$.
By observing that  
$
\Phi^{e}_{\parallel}(\bu)(\bx) - \Phi^k_{\parallel(e)}(\bu)(\bx)
= \int_{\eta_e^k(0)}^{0} \hat u^k_{\parallel(e)}(X^k_e(z_\parallel, \hx^k_\perp)) \dd z_\parallel
$
is a function of $\hx^k_\perp$ only, we further infer from Lemma~\ref{lem:parinv} that 
\begin{equation*}
\tilde A^k_e - A^k_e = \nabla^e_\parallel I^e \Pi^0_k (\Phi^{e}_{\parallel}(\bu) - \Phi^k_{\parallel(e)}(\bu)) = 0.
\end{equation*}
% This allows us to write
% \begin{equation*}
% \tilde B^k_e  = 
%  \btau_e \cdot \sum_{\bv \in \cV(e)} \nabla^{e'(\bv)}_\perp  (\bar P^{e'(\bv),\bv} - \bar I^{e'(\bv),\bv}) \Pi^0_\pw \Phi^{e'(\bv)}_{\perp}(\bu)|^k_e ~.
% \end{equation*}
For the third term we compute, using again $\btau_e \cdot \nabla_\pw = \btau_e \cdot \nabla^e_\parallel$,
\begin{equation*}
\begin{aligned}
C^k_e 
	&= \btau_e \cdot \sum_{\bv \in \cV(e)} (\tilde \Pi^1_{\bv} \bu)|^k_e
	\\
	&=  \btau_e \cdot \sum_{\bv \in \cV(e)}
	\nabla^e_\parallel P^{\bv} \Pi^0_\pw \Phi^\bv(\bu) |^k_e
	- \btau_e \cdot \sum_{\bv \in \cV(e)}
	\nabla^e_\parallel \bar I^{\bv} \Pi^0_\pw \Phi^\bv(\bu) |^k_e
	\\
	&=: \bar C^k_e - \tilde C^k_e
\end{aligned}
\end{equation*}
and for the last one we write, using \eqref{Phi_ev_par} and \eqref{Phi_ev_perp},
\begin{equation*}
\begin{aligned}
	D^k_e &= 
		\btau_e \cdot \sum_{\bv \in \cV(e), e' \in \cE(\bv)} (\tilde \Pi^1_{e',\bv} \bu)|^k_e
	\\
	&= \btau_e \cdot \sum_{\bv \in \cV(e)} \nabla^e_\parallel \bar I^e_{\bv} \Pi^0_\pw \Phi^{\bv}(\bu) |^k_e
	- \btau_e \cdot \sum_{\bv \in \cV(e)}	\nabla^e_\parallel P^e_{\bv} \Pi^0_\pw \Phi^{\bv}(\bu) |^k_e
		\\
		& \qquad \qquad + \btau_e \cdot \sum_{\bv \in \cV(e)} \nabla^{e'(\bv)}_\perp 
				(\bar I^{e'(\bv)}_{\bv} - P^{e'(\bv)}_{\bv}) \Pi^0_\pw \Phi^{e'(\bv)}_\perp(\bu) |^k_e
		\\
		&	
		=: \tilde D^k_e - \bar D^k_e + \check D^k_e.
\end{aligned}
\end{equation*}
According to \eqref{IvIev} the equality $\bar I^\bv \phi = \bar I^e_{\bv} \phi$ holds on $e$: this yields
$
\tilde C^k_e = \tilde D^k_e,
$
moreover for $e' = e'(\bv)$ we have $P^{e'} \phi = P^{e'}_{\bv} \phi$ 
and $I^{e'} \phi = \bar I^{e'}_{\bv} \phi$ on $e$
(this holds both in the inhomogeneous and homogeneous cases).
This yields
$
\tilde B^k_e = -\check D^k_e.
$
Thus, we obtain that 
\begin{equation*}
\btau_e \cdot (\Pi^1 \bu)|^k_e = \bar B^k_e + \bar C^k_e - \bar D^k_e
\end{equation*}
where these three terms are tangential derivatives of 
fields which are continuous across $e$ (and hence are also continuous across $e$) 
if the latter is an interior edge,
or vanish if $e$ is a homogeneous boundary edge.
This shows that $\Pi^1 \bu \in H(\curl;\Omega)$, resp. $H_0(\curl;\Omega)$ in the homogeneous case
\eqref{dR0}, and completes the proof.
\end{proof}

\subsection{Projection property}

\begin{lemma} \label{lem:proj}
	For all $\bu \in V^1_h$ and $f \in V^2_h$, we have $\Pi^1 \bu = \bu$
	and $\Pi^2 f = f$.
\end{lemma}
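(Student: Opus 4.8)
The plan is to verify the projection property separately for $\Pi^1$ and $\Pi^2$, exploiting the fact that on conforming inputs the broken and conforming local projections coincide, so that every correction term vanishes and only the patch-wise projection survives.

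First I would treat $\Pi^1$ on $\bu \in V^1_h$. The patch-wise sum $\sum_k \Pi^1_k \bu$ restricts on each patch to the single-patch projection, which by Lemma~\ref{lem:Pik} satisfies $\Pi^1_k \bu|_{\Omega_k} = \bu|_{\Omega_k}$ since $\bu|_{\Omega_k} \in V^1_k$. Thus $\sum_{k} \Pi^1_k \bu = \bu$ already, and it remains to show that all correction terms $\tilde\Pi^1_e$, $\tilde\Pi^1_\bv$, $\tilde\Pi^1_{e,\bv}$ vanish. The key is that each correction is built from a difference of a conforming and a broken projection applied to $\Pi^0_\pw \Phi^g(\bu)$ for an antiderivative $\Phi^g$. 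By Lemma~\ref{lem:Phi_V1}, for $\bu \in V^1_h$ the antiderivatives $\Phi^e_d(\bu)$ and $\Phi^\bv(\bu)$ lie in $V^0_\pw$ and are \emph{continuous} across the relevant interfaces (and vanish on homogeneous boundaries). I would then invoke the kernel lemmas: for the edge terms, Lemma~\ref{lem:0_corr_e} gives $(P^e - I^e)\Pi^0_\pw \Phi^e_d(\bu) = 0$ because continuity of $\Phi^e_d(\bu)$ across $e$ forces the hypothesis $\phi|_{\Omega^-}(\bp_e(\bx)) = \phi|_{\Omega^+}(\bp_e(\bx))$; for the vertex terms, the continuity of $\Phi^\bv(\bu)$ makes the vertex values agree across all contiguous patches, so Lemma~\ref{lem:corr_ev_ker}, equation \eqref{0_corr_v}, yields $(P^\bv - \bar I^\bv)\Pi^0_\pw \Phi^\bv(\bu) = 0$; the edge-vertex terms vanish likewise via \eqref{0_corr_ev}. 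Applying the relevant gradient operator to zero gives $\tilde\Pi^1_g \bu = 0$, hence $\Pi^1 \bu = \bu$.

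For $\Pi^2$ on $f \in V^2_h = V^2_\pw$ the argument is parallel. The patch-wise sum $\sum_k \Pi^2_k f$ reproduces $f$ by Lemma~\ref{lem:Pik}, since $f|_{\Omega_k} \in V^2_k$. For the correction terms $\tilde\Pi^2_e$ and $\tilde\Pi^2_{e,\bv}$, I would use Lemma~\ref{lem:Pi2_V2h}, which guarantees that the bivariate antiderivatives $\Psi^e(f)$ and $\Psi^{e,\bv}(f)$ belong to $V^0_\pw$ and are continuous across interior edges (vanishing on boundary edges in the homogeneous case). Continuity again triggers the same kernel lemmas, giving $(P^e - I^e)\Pi^0_\pw \Psi^e(f) = 0$ and $(\bar I^e_\bv - P^e_\bv)\Pi^0_\pw \Psi^{e,\bv}(f) = 0$; applying $D^{2,e}$ then kills both correction terms, so $\Pi^2 f = f$.

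The main obstacle I anticipate is ensuring that the continuity established in Lemmas~\ref{lem:Phi_V1} and \ref{lem:Pi2_V2h} is exactly the hypothesis required by the kernel Lemmas~\ref{lem:0_corr_e} and \ref{lem:corr_ev_ker}: one must match the pointwise interface conditions carefully, keeping track of the projected point $\bp_e(\bx)$ and the vertex values, and handle the homogeneous boundary case where the relevant conforming projections are identically zero (so the difference trivially vanishes). I expect this matching to be clean given the preparatory lemmas, so the proof should reduce to citing them in the right order rather than new computation.
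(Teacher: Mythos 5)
Your proposal is correct and follows essentially the same route as the paper: the patch-wise sum reproduces the input by the single-patch projection property, and each correction term is killed by combining Lemma~\ref{lem:Phi_V1} (resp.\ Lemma~\ref{lem:Pi2_V2h}) with the kernel Lemmas~\ref{lem:0_corr_e} and \ref{lem:corr_ev_ker}, treating homogeneous boundary edges via the vanishing of the antiderivatives there. The only slight imprecision is the remark that on homogeneous boundaries ``the difference trivially vanishes'' because the conforming projections are zero --- one still needs $I^e\Pi^0_\pw\Phi^e_d(\bu)=0$, which follows from the antiderivative lying in $V^0_\pw$ and vanishing on $\partial\Omega$, exactly as the paper argues.
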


\begin{proof}
	We first consider $\Pi^1$ and 
	observe that for all $k$, the restriction $\bu|_{\Omega_k}$ belongs to the local space 
	$V^1_k$. Hence, the projection property of the local projection operator 
	gives $(\Pi^1_k \bu)|_{\Omega_k} = \bu|_{\Omega_k}$: it follows that
	\begin{equation*}
	\sum_{k \in \cK} \Pi^1_k \bu = \bu.
	\end{equation*}
	We thus need to show that the correction terms $\tilde \Pi^1_{e} \bu$, $\tilde \Pi^1_{\bv} \bu$
	and $\tilde \Pi^1_{e,\bv} \bu$ all vanish for $\bu \in V^1_h$.
	As for the first term we know from Lemma~\ref{lem:Phi_V1} that the parallel and perpendicular antiderivatives 
	$\Phi^e_\parallel(\bu)$ and $\Phi^e_\perp(\bu)$ belong to $V^0_\pw$, hence they are left unchanged by the patch-wise projection $\Pi^0_\pw$. 
	Moreover, they are continuous across any interior edge $e$ so that
	Lemma~\ref{lem:0_corr_e} and allows us to write
	\begin{equation} \label{ker_par}	
		(P^e - I^e) \Pi^0_\pw \Phi^e_d(\bu) = 0, \qquad d \in \{\parallel, \perp\}.
	\end{equation}
	We further observe that this equality also holds on boundary edges: 
	in the inhomogeneous case this follows from the fact that $P^e = I^e$,
	and in the homogeneous case (where $P^e = 0$) it follows from the 
	fact that $\Phi^e_d(\bu) = 0$ (again by \eqref{lem:Phi_V1}).
	As a result the edge correction terms vanish: 
	$\tilde \Pi^1_{e} \bu = 0$ for $\bu \in V^1_h$.
	The same reasoning applies to the vertex correction term: 
	according again to Lemma~\ref{lem:Phi_V1}, the antiderivative $\Phi^\bv(\bu)$ belongs to $V^0_\pw$ and it is continuous across any interior edge $e \in \cE(\bv)$ (and in the homogeneous case it vanishes on the boundary). 
	Then Lemma~\ref{lem:corr_ev_ker} applies, which yields
	\begin{equation}
		(P^\bv - \bar I^\bv) \Pi^0_\pw \Phi^\bv(\bu) = 0
	\end{equation}
	and hence $\tilde \Pi^1_{\bv} \bu = 0$.
	Turning to the edge-vertex correction terms we infer from \eqref{Phi_ev_par} and \eqref{Phi_ev_perp}
	that both $\Phi^{e,\bv}_\parallel(\bu)$ and $\Phi^{e,\bv}_\perp(\bu)$ are in $V^0_\pw$ and continuous across interior edges $e$ (and vanish on boundary edges in the homogeneous case). Applying again Lemma~\ref{lem:corr_ev_ker} yields then
	\begin{equation*}
	(P^e_{\bv} - \bar I^e_{\bv}) \Pi^0_\pw \Phi^{e,\bv}_d(\bu) = 0, \qquad d \in \{\parallel, \perp\},
	\end{equation*}
	which shows that $\tilde \Pi^1_{e,\bv} \bu = 0$ and finishes the proof.
	To show that $\Pi^2$ is a projection, we use a similar argument based on 
	Lemma~\ref{lem:Pi2_V2h}.
\end{proof}

\subsection{Commuting property}
 
\begin{lemma}
 	For the projection operators built in the inhomogeneous
	case \eqref{dR}, the equality
		\begin{equation} \label{cd_grad}
		\Pi^1 \nabla \phi = \nabla \Pi^0 \phi
		\end{equation}
		holds for all $\phi \in H^1(\Omega)$, and 
		for all $\phi \in H^1_0(\Omega)$ in the homogeneous
		case \eqref{dR0}.
\end{lemma}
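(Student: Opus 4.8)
The plan is to reduce to smooth $\phi$ by density and then track each summand of $\Pi^1$ separately. Since $C^1(\bar\Omega)$ is dense in $H^1(\Omega)$ (resp.\ $C^1_0$ in $H^1_0$) and every operator at play is continuous into $L^p$, it suffices to prove \eqref{cd_grad} for $\phi \in C^1(\Omega)$. I would set $\psi \coloneqq \Pi^0_\pw \phi \in V^0_\pw$. Because $\Pi^0 \phi = P\psi$ is conforming, its true gradient agrees with the broken one, so the right-hand side is $\nabla \Pi^0 \phi = \nabla_\pw P \psi$; and the patch-wise part of the left-hand side is handled by the single-patch commuting relation \eqref{pw_grad_cp}, giving $\sum_{k} \Pi^1_k \nabla \phi = \Pi^1_\pw \nabla \phi = \nabla_\pw \psi$.

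First I would evaluate each interface correction on $\bu = \nabla\phi$. For an edge $e$, Lemma~\ref{lem:Phi_e_nabla} gives $\Phi^e_\perp(\nabla\phi) = \phi - \bar\phi_e$ with $\bar\phi_e$ constant and $\Phi^e_\parallel(\nabla\phi) = \phi - \tilde\phi_e$ with $\nabla^e_\parallel \tilde\phi_e = 0$. Since $(P^e - I^e)$ annihilates constants (Lemma~\ref{lem:0_corr_e} applied to a constant field) and $\nabla^e_\parallel(P^e - I^e)\Pi^0_\pw \tilde\phi_e = 0$ by the parallel invariance of $P^e, I^e, \Pi^0_\pw$ (Lemma~\ref{lem:parinv}), both directional contributions collapse to $\nabla^e_d(P^e - I^e)\psi$; combining them via $\nabla^e_\parallel + \nabla^e_\perp = \nabla_\pw$ on $\Omega(e)$, see \eqref{sum_nabla_ked}, yields $\tilde\Pi^1_e \nabla\phi = \nabla_\pw(P^e - I^e)\psi$. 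The same mechanism, now using Lemma~\ref{lem:Phi_v_nabla} ($\Phi^\bv(\nabla\phi) = \phi - \phi^\bv_*$ constant), the identities $\Phi^{e,\bv}_\parallel = \Phi^\bv$, $\Phi^{e,\bv}_\perp = \Phi^e_\perp$, and the constant annihilation \eqref{0_corr_v}, \eqref{0_corr_ev} of $(P^\bv - \bar I^{\bv})$ and $(\bar I^e_{\bv} - P^e_{\bv})$, gives $\tilde\Pi^1_\bv\nabla\phi = \nabla_\pw(P^\bv - \bar I^{\bv})\psi$ and $\tilde\Pi^1_{e,\bv}\nabla\phi = \nabla_\pw(\bar I^e_{\bv} - P^e_{\bv})\psi$. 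Summing, the correction part of $\Pi^1\nabla\phi$ becomes
\begin{equation*}
\sum_{e\in\cE}\nabla_\pw(P^e - I^e)\psi + \sum_{\bv\in\cV}\nabla_\pw(P^\bv - \bar I^{\bv})\psi + \sum_{\bv\in\cV}\sum_{e\in\cE(\bv)}\nabla_\pw(\bar I^e_{\bv} - P^e_{\bv})\psi.
\end{equation*}

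Next I would expand the target. Subtracting the decomposition \eqref{V0pw_dec} of $\psi$ from the definition \eqref{P} of $P\psi$, the single-patch pieces $\sum_k I^k_0\psi$ cancel and, after applying $\nabla_\pw$,
\begin{equation*}
\nabla_\pw P\psi = \nabla_\pw\psi + \sum_{e\in\cE}\nabla_\pw(P^e_{0} - I^e_{0})\psi + \sum_{\bv\in\cV}\nabla_\pw(P^\bv - I^\bv)\psi,
\end{equation*}
with $P^e_{0} = P^e - \sum_\bv P^e_{\bv}$ and $I^e_{0} = I^e - \sum_\bv I^e_{\bv}$. Both expressions share $\nabla_\pw\psi$ and $\sum_e \nabla_\pw(P^e - I^e)\psi$, so matching reduces to the purely vertex-indexed identity
\begin{equation*}
\sum_{\bv\in\cV}\nabla_\pw(I^\bv - \bar I^{\bv})\psi + \sum_{\bv\in\cV}\sum_{e\in\cE(\bv)}\nabla_\pw(\bar I^e_{\bv} - I^e_{\bv})\psi = 0,
\end{equation*}
which I would settle from the combinatorial relations $\sum_{e\in\cE(\bv)} I^e_{\bv} = 2I^\bv$ \eqref{sum_e_Iev} and $\sum_{e\in\cE(\bv)}\bar I^e_{\bv} = \bar I^{\bv} + I^\bv$ \eqref{Iv_as_bc}: together these give $\sum_{e\in\cE(\bv)}(\bar I^e_{\bv} - I^e_{\bv}) = \bar I^{\bv} - I^\bv$, cancelling the first sum vertex by vertex.

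The main obstacle is exactly this last bookkeeping step: the vertex and edge-vertex corrections are assembled from three distinct families of local projections ($P^\bv$, $\bar I^{\bv}$, $\bar I^e_{\bv}$, $P^e_{\bv}$), and only the algebraic identities \eqref{sum_e_Iev}--\eqref{Iv_as_bc}, with the crucial multiplicity $2$, make them collapse onto the $(P^\bv - I^\bv)$ and $(P^e_{0} - I^e_{0})$ pieces produced by the conforming decomposition of $P$. Some care is also needed to replace the directional gradients by $\nabla_\pw$ only on the local domains $\Omega(e)$ where \eqref{sum_nabla_ked} is valid and where the correction supports are contained. Finally, the homogeneous case \eqref{dR0} follows by the identical computation: the homogeneous antiderivative identities $\Phi^e_\perp(\nabla\phi) = \phi$ and $\Phi^\bv(\nabla\phi) = \phi$ remove the constants automatically, while the conventions $P^e = P^\bv = P^e_{\bv} = 0$ on boundary edges and vertices keep both the correction sum and the decomposition of $P$ consistent, so the same cancellation closes the argument.
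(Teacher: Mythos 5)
Your proposal is correct and follows essentially the same route as the paper: density reduction, the patch-wise commutation \eqref{pw_grad_cp}, the antiderivative identities of Lemmas~\ref{lem:Phi_e_nabla} and \ref{lem:Phi_v_nabla} combined with parallel invariance (Lemma~\ref{lem:parinv}) and constant annihilation (Lemmas~\ref{lem:0_corr_e}, \ref{lem:corr_ev_ker}) to collapse each correction onto $\nabla_\pw$ of a local projection of $\Pi^0_\pw\phi$, and finally the combinatorial identities \eqref{sum_e_Iev} and \eqref{Iv_as_bc} to recombine everything into $\nabla_\pw P\,\Pi^0_\pw\phi$. The only cosmetic difference is that you verify the cancellation by subtracting the two sides, whereas the paper assembles the sum directly into $P\phi_h$; the algebra is identical.
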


\begin{remark}
	The commuting relation \eqref{cd_grad} also holds in the respective (larger) spaces $W^{1,1}(\Omega)$ and $W^{1,1}_0(\Omega)$, as mentioned 
	in Remark~\ref{rem:L1cd}.
\end{remark}
 
\begin{proof}
We consider $\bu = \nabla \phi$, with $\phi \in C^1(\Omega)$ in the 
inhomogeneous case and $\phi \in C^1_0(\Omega)$ in the homogeneous case: 
the result will then follow by a density argument, 
using the $L^1$ stability of the projection operators.  
Throughout this proof we write $\phi_h = \Pi^0_\pw \phi \in V^0_\pw$.
For the volume terms, 
we have seen in \eqref{pw_grad_cp} that the commutation of the patch-wise 
projection operators yield
\begin{equation} \label{CP_k}
	\sum_{k \in \cK} \Pi^1_k \bu = \nabla_\pw \phi_h.
\end{equation}
For the parallel edge correction terms, \eqref{Phi_par_nabla} reads
\begin{equation*}
\Phi^e_\parallel(\bu)(\bx) = \phi(\bx) - \tilde \phi_e(\bx) %^*(\hx^k_\perp))
\end{equation*}
on $\Omega(e)$, $e\in \cE$, for some function $\tilde \phi_e$ independent of $\hx^k_\parallel$.
Hence, Lemma~\ref{lem:parinv} yields
\begin{equation} \label{CP_e_par}
\nabla^e_\parallel ( P^e - I^e) \Pi^0_\pw \Phi^e_\parallel(\bu) = 
\nabla^e_\parallel ( P^e - I^e) \phi_h.
\end{equation}
Next for the perpendicular edge correction terms, 
\eqref{Phi_perp_nabla} and \eqref{Phi_perp_nabla_hom} yield
\begin{equation}	\label{Phi_phi_cst}	
	\Phi^e_{\perp, a}(\bu)(\bx) = \phi(\bx) - \bar \phi_e
\end{equation}	
(on $\Omega(e)$) with some constant value $\bar \phi_e$
%this holds on all interior edges, and 
for interior and boundary edges.
In the homogeneous case we further have $\bar \phi_e = 0$
on boundary edges.
For interior and inhomogeneous boundary edges we can use the fact that 
$\Pi^0_\pw$ preserve global constants (which are continuous across $e$),
so that Lemma~\ref{lem:0_corr_e} gives
$(P^e - I^e) \Pi^0_\pw \bar \phi_e = (P^e - I^e) \bar \phi_e = 0$.
Hence, in both cases we have for any $e \in \cE$
\begin{equation} \label{CP_e_perp}
\nabla^e_\perp (P^e - I^e) \Pi^0_\pw \Phi^e_\perp(\bu) = 
\nabla^e_\perp (P^e - I^e) \phi_h.
\end{equation}
Using again that $\nabla^e_\parallel + \nabla^e_\perp = \nabla_\pw$ on $\Omega(e)$, it follows that 
\begin{equation} \label{CP_e}
\tilde \Pi^1_e = \sum_d \nabla^e_d (P^e - I^e) \Pi^0_\pw \Phi^e_d(\bu)
	= \nabla_\pw (P^e - I^e) \phi_h.
\end{equation}
Relations similar to \eqref{Phi_perp_nabla} and \eqref{Phi_perp_nabla_hom}, 
namely \eqref{Phi_v_nabla} and \eqref{Phi_v_nabla_hom},
hold for the vertex antiderivative, hence on $\bv \in \cV$ we have
\begin{equation} \label{CP_v}
\tilde \Pi^1_\bv = \nabla_\pw (P^\bv-\bar I^\bv) \Pi^0_\pw \Phi^\bv_\perp(\bu) 
	= \nabla_\pw (P^\bv-\bar I^\bv) \phi_h.
\end{equation}
Finally, for the edge-vertex correction, the respective antiderivative 
operators \eqref{Phi_ev_par} and \eqref{Phi_ev_perp} are of vertex 
and edge perpendicular type, hence they also satisfy
a relation of the form \eqref{Phi_phi_cst}. It follows that 
\begin{equation} \label{CP_ev}
\tilde \Pi^1_{e,\bv} \bu = 
	\sum_{d \in \{\parallel, \perp\}} 
		\nabla^e_d (\bar I^e_{\bv} - P^e_{\bv})\Pi^0_\pw \Phi^{e,\bv}_d(\bu)
	= \nabla_\pw (\bar I^e_{\bv} - P^e_{\bv}) \phi_h.
\end{equation}
With the decomposition \eqref{V0pw_dec}, i.e.
$
\phi_h = \Big(\sum_k I^k_0 + \sum_{e} I^e_{0} + \sum_{\bv} I^\bv\Big)\phi_h
$,
this allows us to write $\Pi^1 \bu = \nabla_\pw \psi_h$ with 
\begin{equation*}
\begin{aligned}
	\psi_h
		&= \phi_h + \Big(\sum_e (P^e - I^e) + \sum_\bv (P^\bv - \bar I^\bv)
		+ \sum_{e,\bv} (\bar I^e_{\bv} - P^e_{\bv}) \Big) \phi_h
		\\
		&= \Big(\sum_k I^k_0 + \sum_{e} (I^e_{0} + P^e - I^e)
			+ \sum_\bv (I^\bv - P^\bv - \bar I^\bv) 
			+ \sum_{e,\bv} (\bar I^e_{\bv} - P^e_{\bv}) \Big)\phi_h.
\end{aligned}
\end{equation*}
We then observe that \eqref{sum_e_Iev}, \eqref{Ie0} yield
$\sum_{e} (I^e_{0} - I^e) = -\sum_{e,\bv} I^e_{\bv} = - \sum_\bv 2 I^\bv$,
while \eqref{Iv_as_bc} is 
$\bar I^\bv = \sum_e \bar I^e_{\bv} - I^\bv$.
With \eqref{Pe0}, i.e., $P^e - \sum_\bv P^e_{\bv} = P^e_{0}$, and the 
decomposition \eqref{P}, this gives
\begin{equation*}
\psi_h
	= \Big(\sum_k I^k_0 + \sum_{e} P^e
		+ \sum_\bv P^\bv 
		- \sum_{e,\bv} P^e_{\bv} \Big)\phi_h = P \phi_h.
\end{equation*}
This shows that $\psi_h$ is continuous on $\Omega$, hence 
$\nabla_\pw \psi_h = \nabla \psi_h$, and finally we find
$\Pi^1 \nabla \phi = \nabla \psi_h = \nabla P \phi_h = 
\nabla P \Pi^0_\pw \phi = \nabla \Pi^0 \phi$,
which completes the proof.
\end{proof}

\begin{lemma}
	For the projection operators built in the inhomogeneous
	case \eqref{dR}, the equality
		\begin{equation} \label{cd_curl}
		\Pi^2 \curl \bu = \curl \Pi^1 \bu
		\end{equation}
		holds for all $\bu \in H(\curl;\Omega)$,
		and 
		for all $\bu \in H_0(\curl;\Omega)$ in the homogeneous
		case \eqref{dR0}.
\end{lemma}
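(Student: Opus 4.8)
The plan is to follow the same template as the proof of the grad-commuting relation \eqref{cd_grad}, reducing everything to the patch-wise curl and then matching the interface corrections of $\Pi^1$ with those of $\Pi^2$ through the Stokes-type identities of Lemma~\ref{lem:Psi_curl}. First I would reduce to smooth fields: taking $\bu \in C^1(\Omega)$ (resp. $\bu \in C^1_0(\Omega)$ in the homogeneous case \eqref{dR0}), the general statement then follows by density, using the $L^1$ stability of the operators and the $W^1(\curl)$ framework of Remark~\ref{rem:L1cd}. The crucial structural observation is that, by the range property (Lemma~\ref{lem:range}), $\Pi^1 \bu$ lies in $V^1_h \subset H(\curl;\Omega)$, so its distributional curl carries no interface contributions and coincides with the broken curl, $\curl \Pi^1 \bu = \curl_\pw \Pi^1 \bu$. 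It therefore suffices to compute $\curl_\pw \Pi^1 \bu$ term by term from \eqref{Pi1} and to recognize the result as $\Pi^2 \curl \bu$ from \eqref{Pi2}.

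For the volume terms, the patch-wise commuting relation \eqref{pw_curl_cp} (itself a consequence of the single-patch identity \eqref{cpk_2}) gives directly $\curl_\pw \sum_{k} \Pi^1_k \bu = \sum_k \Pi^2_k \curl \bu$. The vertex corrections require no counterpart in $\Pi^2$: since $\tilde\Pi^1_\bv \bu = \nabla_\pw(P^\bv - \bar I^\bv)\Pi^0_\pw\Phi^\bv(\bu)$ is a patch-wise gradient, we have $\curl_\pw \tilde\Pi^1_\bv \bu = 0$, consistent with the absence of a pure vertex term in \eqref{Pi2}.

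The heart of the argument is the treatment of the edge and edge-vertex corrections. For an edge $e$, writing $\psi^e_d = (P^e - I^e)\Pi^0_\pw\Phi^e_d(\bu)$ so that $\tilde\Pi^1_e\bu = \sum_{d}\nabla^e_d\psi^e_d$, Lemma~\ref{lem:D2e} converts the broken curl of this directional-gradient sum into a mixed derivative, $\curl_\pw\tilde\Pi^1_e\bu = D^{2,e}(\psi^e_\perp - \psi^e_\parallel) = D^{2,e}(P^e-I^e)\Pi^0_\pw(\Phi^e_\perp(\bu) - \Phi^e_\parallel(\bu))$. I would then substitute the Stokes identity of Lemma~\ref{lem:Psi_curl}, which expresses $\Phi^e_\parallel(\bu) - \Phi^e_\perp(\bu)$ in terms of $\Psi^e(\curl\bu)$ and the closing-curve term $\tilde\Phi^e(\bu)$; comparing with the definition \eqref{tPi2_e} of $\tilde\Pi^2_e$ this identifies the edge contribution with $\tilde\Pi^2_e \curl\bu$, provided the closing-curve term disappears. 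That is where the design pays off: $\tilde\Phi^e(\bu)$ is invariant along the parallel direction (see \eqref{Psi_tilde_terms}), so by Lemma~\ref{lem:parinv} both $P^e\Pi^0_\pw\tilde\Phi^e(\bu)$ and $I^e\Pi^0_\pw\tilde\Phi^e(\bu)$ remain parallel-invariant and are therefore annihilated by the mixed derivative $D^{2,e}$. The edge-vertex terms follow the same scheme: Lemma~\ref{lem:D2e} turns $\curl_\pw\tilde\Pi^1_{e,\bv}\bu$ into $D^{2,e}(\bar I^e_\bv - P^e_\bv)\Pi^0_\pw(\Phi^{e,\bv}_\perp(\bu) - \Phi^{e,\bv}_\parallel(\bu))$, the second relation of Lemma~\ref{lem:Psi_curl} introduces $\Psi^{e,\bv}(\curl\bu)$ as in \eqref{tPi2_ev}, and the closing term $\tilde\Phi^{e,\bv}(\bu)$, now a constant by \eqref{Psi_tilde_terms}, is killed by the difference $\bar I^e_\bv - P^e_\bv$ via Lemma~\ref{lem:corr_ev_ker}. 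Summing the volume, vertex, edge and edge-vertex contributions then yields $\curl_\pw\Pi^1\bu = \Pi^2\curl\bu$.

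I expect the main obstacle to be the sign and orientation bookkeeping. The mixed-derivative identity of Lemma~\ref{lem:D2e} always produces the combination $\psi_\perp - \psi_\parallel$, whereas the two Stokes identities of Lemma~\ref{lem:Psi_curl} carry opposite orderings of $\parallel$ and $\perp$ (reflecting the orientation of the integration curves chosen for $\Psi^e$ and $\Psi^{e,\bv}$); these conventions must be tracked carefully to verify that both the edge and the edge-vertex contributions land on the corresponding $\Pi^2$ terms with the correct sign. A related delicate point is to confirm the vanishing of the closing-curve terms in every boundary configuration --- interior edges and vertices, inhomogeneous boundaries, and homogeneous boundaries where $\bu \in C^1_0(\Omega)$ and the curves emanate from $\partial\Omega$ --- since this is precisely what guarantees that no spurious interface terms survive after applying $D^{2,e}$ and the local conforming and broken projections.
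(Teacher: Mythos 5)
Your proposal follows essentially the same route as the paper's proof: reduce to $C^1$ (resp.\ $C^1_0$) fields by density, identify $\curl\Pi^1\bu$ with $\curl_\pw\Pi^1\bu$, use the patch-wise commutation \eqref{pw_curl_cp} for the volume terms, note that the vertex corrections are patch-wise gradients, and convert the edge and edge-vertex corrections via Lemma~\ref{lem:D2e} and the Stokes identities of Lemma~\ref{lem:Psi_curl}, killing the closing-curve terms by parallel invariance (Lemma~\ref{lem:parinv}) for edges and by constancy plus Lemma~\ref{lem:corr_ev_ker} for edge-vertex pairs. The sign/orientation bookkeeping you flag is a genuine point of care (the two identities in Lemma~\ref{lem:Psi_curl} do carry opposite orderings of $\parallel$ and $\perp$), but it does not change the structure of the argument, which matches the paper's.
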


\begin{remark}
The commuting relation \eqref{cd_curl} also holds in the respective (larger) spaces 
$W^{1}(\curl;\Omega)$ and $W^{1}_0(\curl;\Omega)$ defined
in Remark~\ref{rem:L1cd}.
\end{remark}

\begin{proof}
By a density argument we may consider $\bu \in C^1(\Omega)$
and $\bu \in C^1_0(\Omega)$ in the respective inhomogeneous and homogeneous cases.
According to Lemma~\ref{lem:Pik} we know that 
the single-patch projections commute with the patch-wise curl operator, namely
\begin{equation*}
\curl^k \Pi^1_k \bu = \Pi^2_k \curl \bu. 
\end{equation*}
Since every vertex correction term 
\eqref{tPi1_v} is a patch-wise gradient, we also have
\begin{equation*}
\curl_\pw \tilde \Pi^1_{\bv}\bu = 0.
\end{equation*}
For the edge correction terms \eqref{tPi1_e}, we use Lemma~\ref{lem:D2e}
with $\psi_d = (P^e - I^e) \Pi^0_\pw \Phi^e_d(\bu)$
and compute
\begin{equation*}
\begin{aligned}
	\curl_\pw \tilde \Pi^1_e \bu
	&= D^{2,e} (P^e - I^e) \Pi^0_\pw 
			\big(\Phi^e_{\perp}(\bu) - \Phi^e_{\parallel}(\bu)\big)
	\\
	&= D^{2,e} (P^e - I^e) \Pi^0_\pw 
			\big(\Phi^e_{\perp}(\bu) - \Phi^e_{\parallel}(\bu) + \tilde \Phi^e(\bu)\big)
	\\
	&= D^{2,e} (P^e - I^e) \Pi^0_\pw 
			\Psi^{e}(\curl \bu)
\end{aligned}
\end{equation*}
where the second and third equalities follow from Lemma~\ref{lem:Psi_curl} 
and the parallel invariance preserving property of the operators
$\Pi^0_\pw$, $P^e$ and $I^e$, see Lemma~\ref{lem:parinv}:
note that an invariance along $\hx_\parallel$ leads indeed to
the cancellation of the mixed derivative $D^{2,e}$ on each patch.
For the edge-vertex correction terms \eqref{tPi1_ev} we use again
Lemma~\ref{lem:D2e} and write
\begin{equation*}
\begin{aligned}
	\curl_\pw \tilde \Pi^1_{e,\bv} \bu
	&= D^{2,e} (\bar I^e_{\bv} - P^e_{\bv}) \Pi^0_\pw 
			\big(\Phi^{e,\bv}_{\perp}(\bu) - \Phi^{e,\bv}_{\parallel}(\bu)\big)
	\\
	&= D^{2,e} (\bar I^e_{\bv} - P^e_{\bv}) \Pi^0_\pw 
			\big(\Phi^{e,\bv}_{\perp}(\bu) - \Phi^{e,\bv}_{\parallel}(\bu) 
			+ \tilde \Phi^{e,\bv}(\bu)\big)
	\\
	&= D^{2,e} (\bar I^e_{\bv} - P^e_{\bv}) \Pi^0_\pw 
			\Psi^{e}(\curl \bu)
\end{aligned}
\end{equation*}
where the second and third equalities follow from Lemma~\ref{lem:Psi_curl}
and the preservation of constants by the operator $\Pi^0_\pw$,
which are in the kernel of $\bar I^e_{\bv} - P^e_{\bv}$, see
Lemma~\ref{lem:corr_ev_ker}.
Gathering the computations above and using the form of the 
$\Pi^2$ projection, we find	
\begin{equation*}
\begin{aligned}
	\curl \Pi^1 \bu 
	&= \curl_\pw \Pi^1 \bu 
	\\
	&= \sum_{k \in \cK} \curl^k \Pi^1_k  \bu
				+ \sum_{e \in \cE} \curl_\pw \tilde \Pi^1_e\bu
						+ \sum_{\bv \in \cV} \curl_\pw \tilde \Pi^1_\bv\bu
							+ \sum_{\substack{\bv \in \cV \\ e \in \cE(\bv)}} \curl_\pw \tilde \Pi^1_{e,\bv}\bu
	\\
	&= \sum_{k \in \cK} \Pi^2_k \curl \bu
				+ \sum_{e \in \cE} \tilde \Pi^2_e \curl \bu
							+ \sum_{\substack{\bv \in \cV \\ e \in \cE(\bv)}} 
								\tilde \Pi^2_{e,\bv}\curl \bu
	= \Pi^2 \curl \bu.
\end{aligned}
\end{equation*}
\end{proof}

\section{Conclusion}
 \label{sec:conclusion}

 In this article we have proposed a new approach for constructing
 $L^2$ stable commuting projection operators on de Rham sequences
 of multipatch spaces, 
 which applies to locally refined patches
 with tensor-product structure. 

 Our construction involves single-patch projections that rely on the tensor-product 
structure of the single-patch spaces, and correction terms for the interfaces.
Like the single-patch projections, the correction terms are composed of partial 
derivatives, local projections and antiderivative operators: the specifity of 
the latter is to involve projections on the local conforming and broken spaces
associated with an interface.

Being local, our construction naturally yields projection operators which are stable 
in any $L^p$ norm with $p \in [1,\infty]$.
It also applies to de Rham sequences with homogeneous boundary conditions.

Looking ahead, we believe that our construction can be extended to 3D domains and this 
will be the subject of a separate work.
Applying these theoretical findings to the design of stable numerical schemes 
is a work in progress. Preliminary experiments conducted on curl-curl eigenvalue 
problems have yielded promising results, particularly when employing broken-FEEC 
schemes \cite{campos-pinto_broken-feec_2022}: these and further studies will be described in a forthcoming article.

%    Bibliographies can be prepared with BibTeX using amsplain,
%    amsalpha, or (for "historical" overviews) natbib style.
\bibliographystyle{amsplain}
\bibliography{refs}

\end{document}